\theoremstyle{plain}
\newtheorem{theorem}{Theorem}[section]
\newtheorem{lemma}[theorem]{Lemma}
\newtheorem{proposition}[theorem]{Proposition}
\newtheorem{corollary}[theorem]{Corollary}
\theoremstyle{definition}
\newtheorem{definition}[theorem]{Definition}
\newtheorem{example}[theorem]{Example}
\newtheorem{remark}[theorem]{Remark}
\renewcommand{\(}{\begin{equation}}
\renewcommand{\)}{\end{equation}}
\newcommand{\bea}{\begin{eqnarray}}
\newcommand{\eea}{\end{eqnarray}}
\newcommand{\oo}{\infty}
\renewcommand{\d}{\mathrm{d}}
\begin{document}

\title{Synthetic geometry of differential equations:\\
	I. Jets and comonad structure.}
\author{Igor Khavkine\thanks{Department of Mathematics, Univesity of Milan,
Via Cesare Saldini, 50, 20133 Milano (MI), Italy}
	~and
	Urs Schreiber\thanks{Mathematics Institute of the Academy, {\v Z}itna 25, 115 67 Praha 1, Czech Republic; on leave at Bonn, Germany}}
\maketitle

\begin{abstract}
	We give an abstract formulation of the formal theory partial
	differential equations (PDEs) in synthetic differential geometry, one
	that would seamlessly generalize the traditional theory to a range of
	enhanced contexts, such as super-geometry, higher (stacky)
	differential geometry, or even a combination of both. A motivation for
	such a level of generality is the eventual goal of solving the open
	problem of covariant geometric pre-quantization of locally variational
	field theories, which may include fermions and (higher) gauge fields.

	A remarkable observation of Marvan~\cite{Marvan86} is that the jet
	bundle construction in ordinary differential geometry has the
	structure of a comonad, whose (Eilenberg-Moore) category of coalgebras
	is equivalent to Vinogradov's category of PDEs. We give a synthetic
	generalization of the jet bundle construction and exhibit it as the base
	change comonad along the unit of the ``infinitesimal shape'' functor,
	the differential geometric analog of Simpson's ``de~Rham shape''
	operation in algebraic geometry. This comonad structure coincides with
	Marvan's on ordinary manifolds. This suggests to consider PDE theory
	in the more general context of any topos equipped with an
	``infinitesimal shape'' monad (a ``differentially cohesive'' topos).
	We give a new natural definition of a category of formally integrable
	PDEs at this level of generality and prove that it is always
	equivalent to the Eilenberg-Moore category over the synthetic jet
	comonad. When restricted to ordinary manifolds, Marvan's result shows
	that our definition of the category of PDEs coincides with
	Vinogradov's, meaning that it is a sensible generalization in the
	synthetic context.

	Finally we observe that whenever the unit of the ``infinitesimal
	shape'' $\Im$ operation is epimorphic, which it is in examples of
	interest, the category of formally integrable PDEs with independent
	variables ranging in $\Sigma$ is also equivalent simply to the slice
	category over $\Im\Sigma$. This yields in particular a convenient site
	presentation of the categories of PDEs in general contexts.
\end{abstract}

\newpage

\tableofcontents

\newpage

\section*{Notation}

\begin{tabular}{lll}
	$\mathcal{C}_{/c}$
	& slice category  & \ref{SliceCategory}
    \\
    $\mathcal{C}/X$ & slice site & \ref{SliceSite}
    \\
	$[Z \to X]$
	& object in slice
   \\
	$\mathrm{FormalSmoothSet} = \mathrm{Sh}(\mathrm{FormalCartSp})$
	& Cahiers topos & \ref{FormalSmoothSets}
    \\
    $\mathbf{H}$ & arbitrary differentially cohesive topos & \ref{DifferentialCohesion}
    \\
    $\mathrm{SmthMfd} \hookrightarrow \mathrm{FormalSmoothSet}$ & smooth manifolds &  \ref{SmoothManifolds}
    \\
    $\mathrm{DiffSp} \hookrightarrow \mathrm{FormalSmoothSet}$ & diffeological spaces & \ref{DiffeologicalSpace}
    \\
    $\mathrm{SmoothSet} \hookrightarrow \mathrm{FormalSmoothSet}$ & smooth sets & \ref{SmoothManifolds}
    \\
    $\mathrm{LocProMfd} \hookrightarrow \mathrm{FormalSmoothSet}$ & smooth locally pro-manifolds & \ref{LocallyProManifold}
    \\
    $\mathrm{LocProMfd}_{\downarrow \Sigma} \hookrightarrow \mathrm{LocProMfd}_{/\Sigma}$ & fibered manifolds & \ref{FiberedManifold}
   \\
     $\xymatrix{X \ar[r]^-{\mathrm{et}} & Y}$ & formally {\'e}tale morphism & \ref{FormallyEtaleMorphism}
   \\
   $\xymatrix{V \ar@{<-}[r]^-{\mathrm{et}} & U \ar@{->>}[r]^-{\mathrm{et}} & X }$ & $V$-atlas for $V$-manifold $X$ & \ref{VManifold}
   \\
	$\mathbb{D}^n(k) \hookrightarrow \mathbb{R}^n$
	& standard infinitesimal $n$-disk of order $k$  & \ref{StandardInfinitesimalnDisk}, \ref{InfinitesimalDiskInManifold} \\
	$\mathbb{D}_x \hookrightarrow X$ & abstract infinitesimal disk around $x$ in $X$
	& \ref{FormalDiskAbstractly} \\
	$T^\oo \Sigma$
	& formal disk bundle of $\Sigma$ & \ref{InfinitesimalDiskBundle}  \\
	$T^\infty_\Sigma : \mathbf{H}_{/\Sigma} \to \mathbf{H}_{/\Sigma}$ &
	formal disk bundle functor & --\\
	$\pi\colon T^\oo \Sigma \to \Sigma$
	& sends each formal disk to its origin & -- \\
	$\mathrm{ev}\colon T^\oo \Sigma \to \Sigma$
	& sends each formal disk to itself & -- \\
	$\eta_E : E \to T^\infty_\Sigma E$
	& unit of $T^\oo_\Sigma$ at $E$
		& \ref{MonadOperationsOnInfinitesimalDiskBundles}  \\
	$\nabla_E$
	& product of $T^\oo_\Sigma$ at $E$
		& -- \\
	$J^\oo_\Sigma : \mathbf{H}_{/\Sigma} \to \mathbf{H}_{/\Sigma}$
	& jet bundle functor & \ref{jetcomonad}\\
	$\epsilon_E$
	& counit of $J^\oo_\Sigma$ at $E$ & -- \\
	$\Delta_E$
	& coproduct of $J^\oo_\Sigma$ at $E$ & -- \\
	$L \dashv R$ & adjoint functors
		& \ref{AdjointFunctor} \\
	$\widetilde{\sigma} \colon E \to J^\oo_\Sigma Y$
	& $(T^\oo_\Sigma \dashv J^\oo_\Sigma)$-adjunct morphism of $\sigma \colon T^\oo_\Sigma E \to Y$
		& \ref{jetcomonad} \\
	$\overline{\tau} \colon T^\oo_\Sigma E \to Y$
	& $(T^\oo_\Sigma \dashv J^\oo_\Sigma)$-adjunct morphism of $\tau \colon E \to J^\oo_\Sigma Y$
		& -- \\
	$j^\oo \sigma\colon \Sigma \to J^\oo_\Sigma E$
	& jet prolongation of section $\sigma\colon \Sigma \to E$
		& \ref{jetprolongation} \\
	$\hat{D}\colon \Gamma_\Sigma(E) \to \Gamma_\Sigma(F)$
	& differential operator induced by $D\colon J^\oo E \to F$
		& \ref{DifferentialOperatorsAsMapsOutOfJetBundle} \\
	$D = \underline{\hat{D}}$
	& formal differential operator underlying $\hat{D}$
		& -- \\
	$p^\oo D\colon J^\oo_\Sigma E \to J^\oo_\Sigma F$
	& prolongation of formal differential operator $D$
		& \ref{InfiniteProlongation} \\
	$\mathrm{DiffOp}(\mathcal{C}) \simeq \mathrm{Kl}(J^\oo_\Sigma|_\mathcal{C})$
	& \begin{tabular}{l}
			differential operators in $\mathcal{C} \hookrightarrow \mathbf{H}_{/\Sigma}$, \\
			co-Kleisli category of $J^\oo_\Sigma$ \end{tabular}
		& \ref{DifferentialOperatorsCategory}\\
	$\mathrm{DiffOp}_{\downarrow\Sigma}(\mathrm{LocProMfd})$
	& $\mathcal{C} = \mathrm{LocProMfd}_{\downarrow\Sigma}$, in fibered manifolds
		& \ref{DifferentialOperatorsCategory} \\
	$\mathrm{DiffOp}_{/\Sigma}(\mathbf{H})$
	& $\mathcal{C} = \mathbf{H}_{/\Sigma}$, generalized differential operators
		& \ref{DifferentialOperatorGeneralized} \\
	$\mathcal{E} \hookrightarrow J^\infty_\Sigma Y$ & generalized PDE & \ref{generalpdes} \\
	$\mathcal{E}^\oo \hookrightarrow \mathcal{E}$
	& canonical inclusion of prolongation of PDE $\mathcal{E}$
		& \ref{FormallyIntegrable}\\
	$\mathrm{PDE}(\mathcal{C}) \simeq \mathrm{EM}(J^\oo_\Sigma|_\mathcal{C})$
	& \begin{tabular}{l}
			formally integrable PDEs in $\mathcal{C} \hookrightarrow \mathbf{H}_{/\Sigma}$, \\
			coalgebras over $J^\oo_\Sigma$ \end{tabular}
		& \ref{FormallyIntegrable}, \ref{PDEIsEM} \\
	$\mathrm{PDE}_{\downarrow\Sigma}(\mathrm{LocProMfd})$
	& $\mathcal{C} = \mathrm{LocProMfd}_{\downarrow\Sigma}$, in fibered manifolds
		& \ref{RemarksOnPDECategory} \\
	$\mathrm{PDE}_{/\Sigma}(\mathbf{H})$
	& $\mathcal{C} = \mathbf{H}_{/\Sigma}$, generalized PDEs
		& -- \\
	$\rho_{\mathcal{E}} \colon \mathcal{E} \to J^\oo_\Sigma \mathcal{E}$
	& jet coalgebra structure & \ref{InputForPDEisEM}
\end{tabular}

\newpage

%%%%%%%%%%%%%%%%%%%%%%%%%%%%%%%%%%%%%%%
\section{Introduction and Summary}
%%%%%%%%%%%%%%%%%%%%%%%%%%%%%%%%%%%%%%%

Local variational calculus---equivalently, local Lagrangian classical
field theory---concerns the analysis of variational partial differential
equations (PDEs), their symmetries and other properties.
It is most usefully formulated in the language
of jet bundles (e.g.,~\cite{Olver93,Anderson-book}). Much of the time, in this
formalism, it is sufficient to work in local coordinates, which leads to
very useful but quite intricate formulas. Unfortunately, this means that
often certain technical aspects go ignored or unnoticed, which includes
the consequences of the global topology of the spaces of either dependent
or independent variables, as well as aspects of analysis on infinite
dimensional jet bundles. Of course, in order to consider some
generalizations of this framework (to, for instance, supermanifolds,
differentiable spaces more singular than manifolds, or even the more
general spaces of higher geometry), it is desirable to have a
comprehensive discussion of these issues based on a precise and
sufficiently flexible technical foundation.

In this work, we achieve this goal by giving a succinct and transparent
formalization of jet bundles and PDEs (eventually also laying the ground
for a formalization of local variational calculus) within a ``convenient
category'' for differential geometry. A key example of such a category
is Dubuc's ``Cahiers topos'' \cite{Dubuc79} (reviewed below in
section~\ref{FormalSmoothSet}), which was originally introduced as a
well-adapted model of the Kock-Lawvere axioms for \emph{synthetic
differential geometry}~\cite{KockBookA}.

Here ``synthetic'' means that the axioms formulate natural properties of
objects (for instance that to every object $X$ there is naturally
associated an object $T X$ that behaves like the tangent bundle of $X$),
instead of prescribing the objects themselves. Synthetic axioms instead
prescribe what the \emph{category} of objects is to be like (for
instance that it carries an endomorphism $T$ with a natural
transformation $T \to \mathrm{id}$). While our formulation of
differential geometry is ``synthetic'' in this sense, we do not actually use the
Kock-Lawvere axiom scheme, but another axiom called ``differential
cohesion'' in \cite{dcct}, see proposition
\ref{diffcohesionofFormalSmoothSet} and definition
\ref{ReImEtAdjunctionOnFormalSmoothSets} below.

The Cahiers topos is the category of sheaves on formal Cartesian spaces
(meaning $\mathbb{R}^n$'s), possibly ``thickened'' by infinitesimal
directions. From the perspective of the foundations of what is called
the \emph{formal theory} of PDEs, having access to spaces with actual
infinitesimal directions turns out to be of great practical and
conceptual help. Traditionally, the formal (meaning infinitesimal)
aspects of PDE theory have only been treated informally (meaning
heuristically). But in the Cahiers topos, such heuristic arguments can
actually be made precise. In a technical sense, this category is
``convenient'' because it fully-faithfully includes ordinary manifolds%
	\footnote{This statement may require some interpretation for those not
	familiar with the use of sheaves in category theory. A manifold $X$
	may be characterized by its coordinate atlas, which technically forms
	a sheaf on the category of all Cartesian spaces $\mathbb{R}^n$, with
	$n=\dim X$, and local diffeomorphisms as arrows. The Cahier topos
	generalizes this correspondence by dropping the dimensionality
	condition on $\mathbb{R}^n$ and the smooth invertibility condition on
	arrows. In addition the Cahiers topos contains infinitesimal spaces.
	\label{sheaves-ftn}} %,
but also ensures the existence of objects resulting from constructions
with intersections, quotients and limits, which may be too singular or
too infinite dimensional to correspond to ordinary manifolds. In
particular the categories of infinite-dimensional Fr{\'e}chet manifolds
and of smooth locally pro-manifolds, $\mathrm{LocProMfd}$ (introduced in
section~\ref{FrechetManifolds} below), embed fully faithfully into the
Cahiers topos, this we discuss in section~\ref{FrechetManifolds}. Locally
pro-manifolds constitute a slight extension of the category of smooth
manifolds that admits spaces locally modeled on projective limits of
smooth manifolds. This class of spaces has been used extensively, though
only semi-explicitly, as a minimalistic setting for discussing
analytical aspects of infinite jet
bundles~\cite{Takens79,Michor80,Saunders89,Marvan86,GuPf13}. By giving
it a precise definition and by embedding it fully faithfully in the
Cahiers topos, we finally provide a precise and flexible categorical
framework for locally pro-manifolds.

Another ``convenience,'' which will be of more use in a followup is the
existence of moduli spaces (or more loosely \emph{classifying spaces})
of differential form data as bona fide objects in this category.

In \ref{FormalDisks} we discuss how a differentially cohesive topos
$\mathbf{H}$, such as the Cahiers topos, comes equipped with a natural
and intrinsic notion of formal infinitesimal neighborhoods, which are
formalized with the help of a monad functor $\Im\colon \mathbf{H} \to
\mathbf{H}$ with the property that the following pullback diagram in
$\mathbf{H}$ defines the formal infinitesimal neighborhood bundle
$T^\infty \Sigma \to \Sigma$
$$
  \begin{gathered}
  \xymatrix{
    T^\infty \Sigma \ar[r]_-{\ }="s" \ar[d] & \Sigma \ar[d]^{\eta_\Sigma}
    \\
    \Sigma \ar[r]_{\eta_\Sigma}^-{\ }="t" & \Im \Sigma
    \ar@{}|{\mbox{\tiny (pb)}} "s"; "t"
  }
  \end{gathered}
  \,,
$$
where $\eta_\Sigma \colon \Sigma\to \Im \Sigma$ the unit (natural
transformation) of the monad. We may call $\Im \Sigma$ the ``de~Rham
shape'' (or ``infinitesimal shape'') of $\Sigma$, by analogy with a
well-known construction in algebraic geometry~\cite{Simpson96}. We
recall basics of (co-)monads in section \ref{monads}.

Since infinite-order jets are essentially functions on formal
infinitesimal neighborhoods, we may synthetically define the
construction
$$
  J^\oo_\Sigma
    \;:\;
  \mathbf{H}_{/\Sigma}
    \longrightarrow
  \mathbf{H}_{/\Sigma}
$$
of jet bundles over some base manifold $\Sigma$ as nothing but the
\emph{base change comonad} (definition \ref{BaseChangeAdjunctions}
below) along the unit of $\Im$, thus abstractly exhibiting its comonad
structure. This is the content of section \ref{Jets}. (Here
$\mathbf{H}_{/\Sigma}$ denotes the slice topos, recalled as definition
\ref{SliceCategory} below.)

Aspects of the theory of jet bundles have been formalized in synthetic
differential geometry before, in~\cite{Kock80}, but only at finite jet
order and without the comonad structure. It is important to note that
for us the $J^\oo_\Sigma$ functor is then defined on any synthetic
category $\mathbf{H}$, such as the Cahiers topos, which when suitably
restricted to ordinary smooth bundles over $\Sigma$ (spaces of dependent
variables) yields the usual infinite jet bundles, with their canonical
comonad structure coinciding with the explicit comonad structure
previously observed in the work of Marvan~\cite{Marvan86} (presented in
expanded form in~\cite{Marvan89}). This we prove as theorem
\ref{ReproducingTraditionalJetBundle} below.

The central result in~\cite{Marvan86} is that key aspects of the theory
of differential operators and PDEs over $\Sigma$ can be reinterpreted as
certain well-known categorical constructions. For instance, given a
comonad like $J^\infty_\Sigma$ acting on the category of bundles over
$\Sigma$, we can consider the corresponding category of cofree
coalgebras over it (the \emph{Kleisli category} over the comonad,
definition~\ref{coKleisli} below), with the result being precisely the
category of (possibly non-linear) partial differential operators over
$\Sigma$ (this is discussed in section~\ref{DifferentialOperators}
below):
$$
  \mathrm{DiffOp}_{\downarrow\Sigma}(\mathrm{LocProMfd})
    \simeq
  \mathrm{Kl}(J^\infty_\Sigma|_{\mathrm{LocProMfd}_{\downarrow \Sigma}})
  \,.
$$
This we discuss in section~\ref{DifferentialOperators}.

Moreover, dropping the cofreeness condition, the category of
coalgebras over the jet comonad (the \emph{Eilenberg-Moore category}
over the comonad, definition~\ref{EMOverComonad} below) is equivalent
to Vinogradov's category of formally integrable partial differential
equations (PDEs)~\cite{Vinogradov80,VinKras} with free variables ranging
in $\Sigma$:
$$
  \mathrm{PDE}_{\downarrow\Sigma}(\mathrm{LocProMfd})
    \simeq
  \mathrm{EM}(J^\infty_\Sigma|_{\mathrm{LocProMfd}_{\downarrow \Sigma}})
  \,.
$$

In section~\ref{PDEs} we prove a  synthetic generalization of this
result of \cite{Marvan86}. First we consider a general definition of the
category $\mathrm{PDE}_{/\Sigma}(\mathbf{H})$ of formally integrable
PDEs in any topos $\mathbf{H}$ equipped with an infinitesimal shape
operation $\Im$. This is definition~\ref{FormallyIntegrable} below. Then
we show that generally there is an equivalence of categories
$$
  \mathrm{PDE}_{/\Sigma}(\mathbf{H})
    \simeq
  \mathrm{EM}(J^\infty_\Sigma)
  \,.
$$
Moreover for $\mathcal{C} \hookrightarrow \mathbf{H}_{/\Sigma}$
any full subcategory stable under forming jet bundles via
$J^\infty_\Sigma$, then the above restricts to an equivalence
$$
  \mathrm{PDE}(\mathcal{C})
    \simeq
  \mathrm{EM}(J^\infty_\Sigma|_{\mathcal{C}})
  \,.
$$
This is theorem~\ref{PDEIsEM} below. In the special case that
$\mathcal{C} = \mathrm{LocProMfd}_{\downarrow \Sigma}$,
$\mathrm{PDE}_{\downarrow\Sigma}(\mathrm{LocProMfd}) :=
\mathrm{PDE}(\mathcal{C})$ is the
category of fibered locally pro-manifolds over $\Sigma$ (definition
\ref{LocallyProManifold} below). This reduces via
\cite{Marvan86} to an equivalence between our synthetic PDEs and
Vinogradov's classical definition of PDEs (this is corollary \ref{GeneralizedPDEVsVinogradovCorollary}
below, see remark \ref{GeneralizedPDEVsVinogradov}).

An advantage of working with a synthetic formalization of PDEs that is
worth mentioning is the ability to form products and equalizers
(theorem~\ref{PDEHasKernels}), hence also arbitrary finite limits
(corollary~\ref{PDEHasFiniteLimits}), in the resulting category. From a
``big picture'' point of view, having worked out the notion of a category
of PDEs at this abstract level, this notion can now be extended in a
mechanical way to the supergeometric version of the Cahiers
topos~\cite{Yetter88}, or to an $\oo$-topos of $\oo$-stacks, or
even a combination of both \cite{dcct}.

Finally we observe in section~\ref{TheToposOfSyntheticPDEs} that whenever the
unit $\Sigma \to \Im \Sigma$ of the infinitesimal shape operation on
$\Sigma$ is epimorphic, which it is for the examples of interest such as
in the Cahiers topos, then there is a further equivalence of categories
$$
  \mathrm{PDE}_{/\Sigma}(\mathbf{H})
    \simeq
  \mathbf{H}_{/\Im \Sigma}
$$
which identifies the category of formally integrable PDEs in
$\mathbf{H}$ over $\Sigma$ simply with the slice topos over the
infinitesimal shape $\Im \Sigma$. This is theorem
\ref{generalPDEsIsSliceOverInfinitesimalShape} below.

In algebraic geometry the linear version of this equivalence is
familiar: linear differential equations over a scheme $\Sigma$
incarnated as $\mathcal{D}$-modules over $\Sigma$ are equivalently
quasicoherent sheaves over the de Rham shape $\Im \Sigma$
(e.g.~\cite[above theorem 0.4]{Lurie09}, \cite[sections 2.1.1 and
5.5]{GaitsgoryRozenblyum11}). Under this identification the
algebro-geometric analog of the category
$\mathrm{PDE}_{\downarrow\Sigma}(\mathrm{LocProMfd})$ of general
(non-linear) PDEs in manifolds are the \emph{$\mathcal{D}$-schemes}
of~\cite[chapter 2.3]{BeilinsonDrinfeld04}.

The above equivalence of categories has various interesting
implications. One is that it allows to exhibit PDEs in the generalized sense as
sheaves over (and hence as colimits of) PDEs in the ordinary sense: There is an equivalence of
categories
$$
  \mathrm{PDE}_\Sigma(\mathbf{H})
    \;\simeq\;
  \mathrm{Sh}( \mathrm{PDE}_\Sigma(\mathrm{FormalLocProMfd}) )
  \,.
$$
This is theorem \ref{PDESite} below.

Particularly interesting sheaves on the ordinary category of PDEs are
the variational bicomplex and the Euler-Lagrange complex of variational
calculus~\cite{Anderson-book,Olver93,VinKras}. With the above
equivalence this means that collection of all Euler-Lagrange complexes,
is secretly a single complex of generalized PDEs, in fact it turns out
to be the \emph{moduli stack} of variational calculus. This we will
discuss in a followup.

\medskip
\medskip

\paragraph{Acknowledgements.}
We thank Dave Carchedi for discussion of pro-manifolds and Michal Marvan
for discussions of his work on the jet comonad.
IK was partially supported by the ERC Advanced Grant 669240 QUEST
``Quantum Algebraic Structures and Models'' at the University of Rome 2
(Tor Vergata). IK also thanks for their hospitality the Czech Academy of
Sciences (Prague) and the Max Planck Institute for Mathematics (Bonn),
where part of this work was carried out.
US was supported by RVO:67985840.

\section{Spaces}
\label{CategoriesOfSPaces}

We work in a ``convenient category'' for differential geometry, which
faithfully contains smooth manifolds, but also contains more general
smooth spaces. The idea is that even when we are interested only in
smooth manifolds, then for some constructions it is convenient to be
able to pass through this larger category that contains them.

The actual model we use is the category $\mathbf{H}$ of sheaves on the
category of all formal manifolds~\cite{Dubuc79,dcct}, containing the
category $\mathbf{H}_\Re$ of sheaves on the category of ordinary
manifolds, recalled below in section~\ref{FormalSmoothSet}. This model
serves to exhibit our constructions in section~\ref{Jets} as subsuming
and generalizing traditional constructions in differential geometry.

But the only property of $\mathbf{H}$ that we actually use for the
formal development of variational calculus below is that the inclusion
$\mathbf{H}_\Re \hookrightarrow \mathbf{H}$ is exhibited by an
idempotent monad $\Re$ (``reduction'') which has two further right
adjoint functors:
$$
  (\Re \; \dashv \; \Im \; \dashv \; \mathrm{E}\!\mathrm{t})
    \;:\;
  \mathbf{H} \longrightarrow \mathbf{H}
  \,.
$$
See proposition~\ref{diffcohesionofFormalSmoothSet} and
definition~\ref{ReImEtAdjunctionOnFormalSmoothSets} below.

For technical aspects of the discussion of jet  bundles in
section~\ref{Jets}, it is important that $\mathbf{H}_\Re$ (hence also
$\mathbf{H}$) not only contains finite dimensional manifolds fully
faithfully, but also infinite-dimensional Fr{\'e}chet manifolds, in particular
manifolds locally modeled on $\mathbb{R}^n$, for $n \in \mathbb{N}\cup \{\infty\}$. This we
recall below in section~\ref{FrechetManifolds}.

%%%%%%%%%%%%%%%%%%%%%%%%%%%%%%%%%%%%%%%%%%%%%%
\subsection{Formal smooth sets}
 \label{FormalSmoothSet}
%%%%%%%%%%%%%%%%%%%%%%%%%%%%%%%%%%%%%%%%%%%%%

Here we discuss the category of ``formal smooth sets,'' which is the
category in which all our developments of variational calculus in
section~\ref{Jets} take place. A ``formal smooth set'' is a sheaf on the
category of Cartesian spaces (which can be interpreted as a generalized
differentiable space in the sense of footnote~\ref{sheaves-ftn}) that
may be equipped with an infinitesimal thickening.

The most immediate way to improve smooth manifolds to a ``convenient
category'' (a topos) is to pass to sheaves over the category of all
smooth manifolds. These sheaves we may think of as very general ``smooth
sets.''

\begin{definition}
  \label{SmoothManifolds}
  (a) Write
  \begin{itemize}
    \item $\mathrm{SmthMfd}$ for the category of finite-dimensional paracompact smooth ($C^\infty$) manifolds;
    \item $\mathrm{CartSp} \hookrightarrow \mathrm{SmthMfd}$ for its full subcategory on the Cartesian spaces $\mathbb{R}^n$, $n \in \mathbb{N}$;
  \end{itemize}
	We agree to say ``manifold'' for ``finite-dimensional paracompact
	smooth manifold'' from now on.

	(b) Regard $\mathrm{SmthMfd}$ and $\mathrm{CartSp}$ as sites
	(definition \ref{site}), equipped with the Grothendieck pre-topology
	of (good) open covers, where ``good'' as usual means locally finite
	with contractible multiple intersections. Write
  $$
    \mathrm{SmoothSet} := \mathrm{Sh}(\mathrm{SmthMfd}) \simeq \mathrm{Sh}(\mathrm{CartSp})
  $$
	for the corresponding category of sheaves on smooth manifolds
	(``smooth sets'').
\end{definition}
We are to think of an object $X$  in $\mathbf{H}_\Re$ as a generalized
smooth space, defined not in terms of an underlying set of points, but
defined entirely by a consistent declaration of the sets $X(U)$ of
smooth maps ``$U \to X$'' out of smooth manifolds into the would-be
space $X$.
\begin{example}
	The Yoneda embedding $M \mapsto \mathrm{Hom}(-,M)$ constitutes a fully
	faithful inclusion
  $$
    \mathrm{SmthMfd} \hookrightarrow \mathrm{SmoothSet}
  $$
  of smooth manifolds into smooth sets.
\end{example}
\begin{example} \label{DiffeologicalSpace}
    A \emph{diffeological space} (or \emph{Chen
	smooth space}) is a ``concrete'' smooth set, in the sense of concrete
	sheaves, i.e. a sheaf $X$ on $\mathrm{SmthMfd}$ with the property that
	there exists a set $X_s$ such that for each $U \in \mathrm{SmthMfd}$
	there is a natural inclusion
  $$
    X(U) \hookrightarrow \mathrm{Hom}_{\mathrm{Set}}(U_s, X_s)
  $$
	of the set $X(U)$ (of functions declared to be smooth) into the set of
	all functions from the underlying set $U_s$ of $U$ into $X_s$.

	A homomorphism of diffeological spaces is a morphism of the
	corresponding sheaves. Smooth manifolds form a full subcategory of
	diffeological spaces, which in turn form a full subcategory of smooth
	sets:
  $$
    \mathrm{SmthMfd} \hookrightarrow \mathrm{DiffSp} \hookrightarrow \mathrm{SmoothSet}
    \,.
  $$
\end{example}
The intuition about the objects in $\mathrm{SmoothSet}$ as being smooth spaces defined by a rule for how to \emph{locally}
probe them by smoothly parameterized ways of mapping points into them is further supported by the
following fact:
\begin{proposition}[{\cite{dcct}}]
  \label{StalksOnCartSp}
	For each $n \in \mathbb{N}$ consider the operation of forming the
	stalk of some sheaf $X \in \mathrm{SmoothSet} :=
	\mathrm{Sh}(\mathrm{CartSp})$ at the origin of $\mathbb{R}^n$
  $$
    n^\ast X
    :=
    \varinjlim_{\delta > 0} X(B^n_{\delta})
    \,.
  $$
  Here $\delta \in \mathbb{R}_{> 0}$ and $B^n_\delta \hookrightarrow \mathbb{R}^n$
  denotes the ball of radius $\delta$ around the origin in $\mathbb{R}^n$, and
  the colimit is over the diagram of inclusions $B^n_{\delta_1} \hookrightarrow B^n_{\delta_2}$
  for all $\delta_1 < \delta_2$. (Here we are implicitly using diffeomorphisms $B^n_\delta \simeq \mathbb{R}^n$
  in order to evaluate $X$ on $B^n_{\delta}$.)

  Then:
  \begin{enumerate}
    \item the functor $n^\ast$ is a \emph{point of the topos}  $\mathrm{SmoothSet}$ in the sense of definition \ref{SitePoint};
    \item the set $\{n^*\}_{n \in \mathbb{N}}$ constitutes \emph{enough points} for $\mathrm{SmoothSet}$ in the sense of
    definition \ref{SitePoint}.
  \end{enumerate}
\end{proposition}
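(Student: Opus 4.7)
The plan is to verify, for each $n \in \mathbb{N}$, that $n^\ast$ underlies a geometric morphism $\mathrm{Set} \to \mathrm{SmoothSet}$ (preserves finite limits and admits a right adjoint), and then that the collection $\{n^\ast\}_{n \in \mathbb{N}}$ is jointly conservative. For preservation of finite limits under $n^\ast$, I would note that each evaluation $X \mapsto X(B^n_\delta)$ is Yoneda-representable by $y(B^n_\delta) \simeq y(\mathbb{R}^n)$ on the subcanonical site $\mathrm{CartSp}$, and hence preserves all limits; the index diagram of radii $\delta > 0$ under reverse inclusion is a directed poset, so the colimit is filtered; and filtered colimits commute with finite limits in $\mathrm{Set}$. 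For the right adjoint, I would identify $n^\ast$ as the inverse image of the geometric morphism associated to the flat cover-preserving site morphism $p_n \colon \mathrm{CartSp} \to \mathrm{Set}$, $\mathbb{R}^m \mapsto \varinjlim_\delta C^\infty(B^n_\delta, \mathbb{R}^m)$, of germs at $0 \in \mathbb{R}^n$ of smooth maps to $\mathbb{R}^m$. Flatness and cover-preservation are routine verifications: germs at a fixed origin admit common restrictions on smaller balls, and the image of any germ eventually lies in one member of any good open cover of its codomain. Standard topos-theoretic generalities (the equivalence between points of a Grothendieck topos and flat continuous site morphisms) then produce the right adjoint.

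For the enough-points statement, suppose $f \colon X \to Y$ in $\mathrm{SmoothSet}$ makes each $n^\ast f$ an isomorphism. Translation by $-p$ is a self-diffeomorphism of $\mathbb{R}^m$ identifying the stalk of any sheaf at an arbitrary $p \in \mathbb{R}^m$ with the stalk at the origin, so $f$ is stalkwise invertible at every point of every Cartesian space. To globalize, I would fix $m$, and from the stalkwise inverses construct local inverses $g_\alpha \colon Y|_{U_\alpha} \to X|_{U_\alpha}$ on a good open cover $\{U_\alpha\}$ of $\mathbb{R}^m$; these are forced to agree on overlaps (being inverse to a common morphism at each stalk of the overlap), and the sheaf condition glues them into a sheaf-level inverse over $\mathbb{R}^m$. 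Naturality in morphisms of $\mathrm{CartSp}$ assembles these into an inverse morphism of sheaves.

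The main obstacle lies in this final step: passing from stalkwise isomorphism to a sheaf isomorphism in the setting of the big site $\mathrm{CartSp}$. Unlike the classical case of sheaves on a single topological space, one must first reduce to sheaves on each $\mathrm{Open}(\mathbb{R}^m)$ via restriction along $\mathrm{Open}(\mathbb{R}^m) \hookrightarrow \mathrm{CartSp}$ (observing that this family of restrictions, as $m$ varies, is jointly conservative) and then invoke the classical theorem that sheaves on paracompact locally Euclidean spaces have enough topological stalks. The countable dimension-indexed family $\{n^\ast\}_{n \in \mathbb{N}}$ is sufficient precisely because every stalk of every Cartesian space is, by translation, equivalent to the origin stalk of some $\mathbb{R}^n$.
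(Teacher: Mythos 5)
The paper does not actually prove this proposition: it is imported verbatim from \cite{dcct} with no argument given, so there is no in-text proof to compare against. Your proposal is a correct self-contained proof along the standard lines one would expect that reference to use. For part 1, writing $n^\ast$ as a filtered colimit of evaluations at (objects isomorphic to) $\mathbb{R}^n$ gives finite-limit preservation, and identifying it as the inverse image attached to the flat, cover-preserving ``germs at the origin'' functor $\mathrm{CartSp} \to \mathrm{Set}$ is exactly the right way to produce the right adjoint --- note that you cannot shortcut this by claiming evaluation preserves colimits of sheaves (it does not, because of sheafification), so the detour through Diaconescu-type generalities is genuinely needed and you take it. For part 2, the reduction via translation invariance and joint conservativity of the restrictions to the small sites of the $\mathbb{R}^m$, followed by the classical ``stalkwise iso implies iso'' theorem for sheaves on a topological space, is sound; the paper's own proposition \ref{EpimorphismsInCategoriesOfSheaves} supplies the fact that isomorphisms of sheaves are detected objectwise, which is what makes the dimension-by-dimension reduction legitimate.

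One small mechanical caveat: your description of the final gluing step --- choosing a single good cover $\{U_\alpha\}$ on which sheaf-level local inverses $g_\alpha$ exist --- is not quite how the classical argument runs, since the neighborhood on which a germwise preimage of a section can be realized depends on the section, so no single cover works for all sections at once. The standard proof instead shows $f_U$ is injective (equal germs everywhere plus separatedness) and then surjective (realize germ-preimages of a given section on section-dependent neighborhoods, check agreement on overlaps using injectivity, glue). Since you explicitly defer to the classical theorem rather than relying on your sketch of it, this is a presentational imprecision rather than a gap, but if you were to write the step out you should follow the section-by-section argument rather than the fixed-cover one.
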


In order to appreciate the following constructions, it is useful to
recall the following fact:
\begin{proposition}[{\cite[\textsection35.8--10]{KolarMichorSlovak93}}]
  \label{EmbeddingOfManifoldsInRAlgebras}
	The functor that sends a smooth manifold $X$ to the
	$\mathbb{R}$-algebra of its smooth functions is fully faithful, hence
	exhibits a full subcategory inclusion
  $$
    C^\infty(-)
      \;:\;
    \mathrm{SmthMfd} \hookrightarrow \mathrm{CAlg}_\mathbb{R}^{\mathrm{op}}
  $$
  of smooth manifolds into the opposite of commutative $\mathbb{R}$-algebras.
\end{proposition}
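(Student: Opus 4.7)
My plan is to split the claim into faithfulness and fullness of the assignment $f \mapsto f^* := (- \circ f)$, noting that this is a classical result whose cleanest presentation proceeds via the characterization of points of a manifold as $\mathbb{R}$-algebra homomorphisms $C^\infty(X) \to \mathbb{R}$.

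For \emph{faithfulness}, I would argue that for manifolds $M, N$ and smooth maps $f, g \colon M \to N$, if $f^* = g^*$ as algebra maps $C^\infty(N) \to C^\infty(M)$, then for every $h \in C^\infty(N)$ and every $x \in M$ we have $h(f(x)) = h(g(x))$. Using the existence of smooth bump functions (which I can invoke because manifolds are paracompact and Hausdorff), $C^\infty(N)$ separates points of $N$, so $f(x) = g(x)$ for all $x$, hence $f = g$.

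For \emph{fullness}, given an $\mathbb{R}$-algebra morphism $\varphi \colon C^\infty(N) \to C^\infty(M)$, I first define $f$ on underlying sets. The key step is the classical fact that every $\mathbb{R}$-algebra homomorphism $\chi \colon C^\infty(N) \to \mathbb{R}$ is evaluation at a unique point of $N$; this is where paracompactness enters crucially, since it guarantees the existence of a proper smooth function $N \to \mathbb{R}$, which forces the kernel of $\chi$ to contain a maximal ideal supported at an actual point rather than merely a ``point at infinity.'' Granted this, for each $x \in M$ the composite $\mathrm{ev}_x \circ \varphi \colon C^\infty(N) \to \mathbb{R}$ determines a unique $f(x) \in N$, yielding a well-defined set map $f \colon M \to N$, and by construction $\varphi(h)(x) = h(f(x))$, i.e.\ $\varphi = f^*$. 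This is the step I expect to be the main obstacle, since proving ``$\mathbb{R}$-points $=$ algebra characters'' is the genuinely non-formal input; I would cite it from \cite{KolarMichorSlovak93} rather than reprove it.

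Finally I would verify that the set map $f$ is actually smooth. Since $f^* = \varphi$ carries every $h \in C^\infty(N)$ to a smooth function on $M$, and smoothness of $f$ can be tested locally in coordinate charts of $N$ by pulling back coordinate functions (which extend to globally defined smooth functions using bump-function cutoffs on paracompact manifolds), this follows immediately. Combining the two directions gives the desired full and faithful embedding $C^\infty(-) \colon \mathrm{SmthMfd} \hookrightarrow \mathrm{CAlg}_{\mathbb{R}}^{\mathrm{op}}$.
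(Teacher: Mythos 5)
Your outline is correct and is essentially the standard proof from the cited source \cite{KolarMichorSlovak93}; the paper itself offers no proof but simply defers to that reference. You correctly isolate the one genuinely non-formal ingredient, namely that every $\mathbb{R}$-algebra character on $C^\infty(N)$ is evaluation at a point (where paracompactness enters via the existence of a proper smooth function), and the remaining steps (separation of points for faithfulness, bump-function extension of chart coordinates for smoothness of the induced map) are routine and correctly handled.
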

(Notice that this statement is \emph{not} restricted to compact
manifolds.) We may hence generalize the category of smooth manifolds by
expanding it inside the opposite of the category of commutative
$\mathbb{R}$-algebras.

\begin{definition}
  \label{InfinitesimallyThickenedPoints}
  (a) Write
  $$
    \mathrm{InfThPoints}
      \hookrightarrow
    \mathrm{CAlg}_{\mathbb{R}}^{\mathrm{op}}
  $$
	for category of ``infinitesimally thickened points'', being the full subcategory
    of the opposite of that of commutative
	$\mathbb{R}$-algebras, on those whose underlying $\mathbb{R}$-vector
	space is of the form
  $$
    C^\infty(\mathbb{D}) := \mathbb{R} \oplus V
    \,,
  $$
	where $V$ is a finite dimensional nilpotent ideal, i.e. such that
	there exists $n \in \mathbb{N}$ with $V^n = 0$. Hence these  are
	\emph{local Artin $\mathbb{R}$-algebras}; in the literature on
	synthetic differential geometry they are called \emph{Weil algebras}.

	(b) The formal dual of such a Weil algebra we generically denote by
	$\mathbb{D}$ and think of it as an ``infinitesimally thickened
	point''. Write
  $$
    \mathrm{Sh}(\mathrm{InfPoint})
  $$
  for the category of presheaves on this category--- these are sheaves with respect to the induced Grothendieck topology on $\mathrm{InfPoint}$ from that on $\mathrm{FormalCartSp}$ from Definition  \ref{FormalSmoothSets} since there are no non-trivial open covers of a point.
\end{definition}
The key fact for discussion of these infinitesimally thickened points is this:
\begin{proposition}[Hadamard's lemma]
  \label{HadamardLemma}
  For $f : \mathbb{R} \to \mathbb{R}$ a smooth function, then there exists a smooth function
  $g : \mathbb{R}\to \mathbb{R}$ such that
  $$
    f(x) = f(0)+ x \cdot g(x)
    \,.
  $$
  It follows that $g(0) = f'(0)$ is the derivative of $f$ at 0, and more generally that
  for every $k \in \mathbb{N}$ then the remainder of the partial Tailor expansion of $f$ at 0
  to order $k$ is a smooth function $h : \mathbb{R} \to \mathbb{R}$:
  $$
    f(x) = f(0) + f'(0)\cdot x + \tfrac{1}{2} f''(0) \cdot x^2 + \cdots + x^{k+1} \cdot h(x)
    \,.
  $$
  Still more generally, this implies that for $f : \mathbb{R}^n \to \mathbb{R}$ a smooth function,
  then the remainder of any partial Taylor expansion in partial derivatives are smooth functions
  $h_{i_1\cdots i_{k+1}} : \mathbb{R}^n \to \mathbb{R}$:
  $$
    f(\vec x)
      =
    f(0)
      +
    \sum_{i = 1}^n \frac{\partial f}{\partial x^i}(0) \cdot x^i
      +
    \tfrac{1}{2} \sum_{i,j = 1}^n \frac{\partial^2 f}{\partial x^i \partial x^j}(0) \cdot x^i \cdot x^j
      +
    \cdots
      +
    \sum_{i_1, \cdots, i_{k+1} = 1}^n x^{i_1} \cdots x^{i_{k+1}} \cdot h_{i_1\cdots i_{k+1}}(\vec x)
    \,.
  $$
\end{proposition}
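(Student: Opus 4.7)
My plan is to prove Hadamard's lemma by the standard integral formula, then bootstrap to the higher-order and multivariate statements by iteration.

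First, for the one-variable statement I would define
\[
  g(x) := \int_0^1 f'(t x)\, \d t,
\]
and verify $f(x) - f(0) = x \cdot g(x)$ via the fundamental theorem of calculus applied to $t \mapsto f(tx)$, whose derivative is $x \cdot f'(tx)$. Smoothness of $g$ follows from differentiation under the integral sign (the integrand is smooth jointly in $(t,x) \in [0,1] \times \R$), and the value $g(0) = \int_0^1 f'(0)\, \d t = f'(0)$ falls out immediately.

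Next I would establish the higher-order one-variable remainder by induction on $k$. Given the decomposition $f(x) = f(0) + x \cdot g(x)$ from the base case, applying the same lemma to $g$ yields $g(x) = g(0) + x \cdot g_1(x) = f'(0) + x \cdot g_1(x)$, and iterating $k$ times produces the partial Taylor polynomial with a smooth remainder factor $h$ that satisfies $h(0) = \tfrac{1}{(k+1)!}f^{(k+1)}(0)$. At each step one only has to track that the coefficient of $x^j$ in the accumulated polynomial agrees with $\tfrac{1}{j!}f^{(j)}(0)$, which is a routine computation from the chain rule applied to the integral representation.

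For the multivariate statement, I would run the analogous argument along the straight line segment from $0$ to $\vec x$. Setting
\[
  g_i(\vec x) := \int_0^1 \frac{\partial f}{\partial x^i}(t \vec x)\, \d t,
\]
the fundamental theorem applied to $t \mapsto f(t \vec x)$ gives $f(\vec x) = f(0) + \sum_i x^i \cdot g_i(\vec x)$, with each $g_i$ smooth and $g_i(0) = \tfrac{\partial f}{\partial x^i}(0)$. Iterating this construction on each $g_i$ (subtracting off $g_i(0)$ and reapplying the same lemma) produces the multivariate Taylor polynomial up to any desired order $k$ together with smooth remainder functions $h_{i_1 \cdots i_{k+1}}$.

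The only nontrivial point is verifying that the remainder coefficients organize themselves into the symmetric form displayed in the statement. This is really a matter of bookkeeping with multi-indices and the symmetry of mixed partials, and I would handle it by observing that any two smooth choices of remainder for a given monomial $x^{i_1} \cdots x^{i_{k+1}}$ differ only in their non-symmetric part, which may be symmetrized without affecting smoothness or the identity. No serious analytic obstacle arises anywhere: differentiation under the integral sign, smoothness of compositions, and induction on order carry the entire argument.
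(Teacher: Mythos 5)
Your proof is correct, and it is the standard argument: the integral formula $g(x) = \int_0^1 f'(tx)\,\d t$ together with the fundamental theorem of calculus, differentiation under the integral sign for smoothness, induction on the order $k$, and the line-segment version for several variables. The paper itself states Hadamard's lemma as a classical background fact and gives no proof at all, so there is nothing to compare against; your write-up supplies exactly the argument that the paper implicitly relies on. The only small remark is that your closing paragraph about symmetrizing the remainder coefficients is unnecessary, since the statement only asserts the existence of \emph{some} smooth functions $h_{i_1\cdots i_{k+1}}$ and does not require them to be symmetric in their indices.
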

\begin{definition}
  \label{StandardInfinitesimalnDisk}
  For $n \in \mathbb{N}$ write
  $$
    \mathbb{D}^n(k) \in \mathrm{InfPoint}
  $$
  for the infinitesimally thickened point whose corresponding algebra is
  a \emph{jet algebra}, namely a quotient algebra of the form
  $$
    C^\infty(\mathbb{R}^n)/ (x^1, \cdots, x^n)^{k+1}
    \,,
  $$
  where $\{x^i\}_{i = 1}^n$ are the canonical coordinates on $\mathbb{R}^n$.
  We call this the \emph{standard infinitesimal $n$-disk of order $k$}.
\end{definition}
The archetypical example is the following:
\begin{example}
  \label{RingOfDualNumbers}
	Write $\mathbb{D}^1(1)$ for the infinitesimally thickened point
	(definition~\ref{InfinitesimallyThickenedPoints}) corresponding to the
	``ring of dual numbers'' over $\mathbb{R}$, i.e.
  $$
    C^\infty(\mathbb{D}^1(1)) := \mathbb{R} \oplus \varepsilon \mathbb{R}
  $$
  with $\varepsilon^2 = 0$.
	By Hadamard's lemma (proposition~\ref{HadamardLemma}), this is
	equivalently the quotient of the $\mathbb{R}$-algebra of smooth
	functions on $\mathbb{R}$ by the ideal generated by $x^2$ (for $x
	\colon \mathbb{R} \to \mathbb{R}$ the canonical coordinate function):
  $$
    C^\infty(\mathbb{D}^1(1))
    \simeq
    C^\infty(\mathbb{R}^1)/(x^2)
    \,.
  $$
\end{example}
\begin{proposition}[e.g. {\cite[proposition4.43]{CarchediRoytenberg12}}]
  \label{InfinitesimallyThickenedPointsInsideInfinitesimalDisks}
  Every infinitesimally thickened point $\mathbb{D}$ (definition~\ref{InfinitesimallyThickenedPoints})
  embeds into an infinitesimal $n$-disk of some order $k$ (example \ref{StandardInfinitesimalnDisk})
  $$
    \mathbb{D} \hookrightarrow \mathbb{D}^n(k)
    \,.
  $$
  Dually, every Weil algebra $\mathbb{R} \oplus V$ is a quotient of a jet algebra
  $$
    \mathbb{R} \oplus V \simeq C^\infty(\mathbb{R}^n)/(x_1, \cdots x_n)^{k+1}
  $$
\end{proposition}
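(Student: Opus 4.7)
The two formulations are dual by definition of the equivalence between infinitesimally thickened points and Weil algebras, so it suffices to prove the algebraic statement: every Weil algebra $A = \mathbb{R} \oplus V$ is a quotient of some jet algebra $C^\infty(\mathbb{R}^n)/(x_1,\ldots,x_n)^{k+1}$. The plan is to construct the surjection by mapping coordinate functions to a chosen finite generating set of the nilpotent ideal $V$.

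First, I would exploit the hypotheses on $V$: since $V$ is a finite-dimensional $\mathbb{R}$-vector space with $V^m = 0$ for some $m$, it is in particular a finitely generated nilpotent ideal. Choose elements $v_1,\ldots,v_n \in V$ whose images form a basis of $V/V^2$; by a standard Nakayama-type argument (applied to the filtration $V \supset V^2 \supset \cdots \supset V^m = 0$) these elements generate $V$ as an ideal, and all monomials in them of degree $\le m-1$ span $V$ as an $\mathbb{R}$-vector space. Take $k := m-1$, so that $V^{k+1} = 0$.

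Next, I would define an $\mathbb{R}$-algebra homomorphism from the polynomial algebra
\[
\varphi \colon \mathbb{R}[x_1,\ldots,x_n] \longrightarrow A, \qquad x_i \longmapsto v_i.
\]
The generation statement above ensures $\varphi$ is surjective, and since $\varphi((x_1,\ldots,x_n)^{k+1}) \subseteq V^{k+1} = 0$, it descends to a surjection $\bar\varphi \colon \mathbb{R}[x_1,\ldots,x_n]/(x_1,\ldots,x_n)^{k+1} \twoheadrightarrow A$.

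To upgrade this to a surjection out of the $C^\infty$-algebra $C^\infty(\mathbb{R}^n)/(x_1,\ldots,x_n)^{k+1}$, I would invoke Hadamard's lemma (proposition~\ref{HadamardLemma}): every smooth function $f \in C^\infty(\mathbb{R}^n)$ agrees modulo $(x_1,\ldots,x_n)^{k+1}$ with its Taylor polynomial of order $k$ at the origin. This says precisely that the natural map
\[
\mathbb{R}[x_1,\ldots,x_n]/(x_1,\ldots,x_n)^{k+1} \longrightarrow C^\infty(\mathbb{R}^n)/(x_1,\ldots,x_n)^{k+1}
\]
is an isomorphism of $\mathbb{R}$-algebras (injectivity is immediate, surjectivity is Hadamard). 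Composing with $\bar\varphi$ yields the required quotient map, and dualizing produces the embedding $\mathbb{D} \hookrightarrow \mathbb{D}^n(k)$ of infinitesimally thickened points. The main technical point is the last step, identifying the jet algebra with the truncated polynomial algebra via Hadamard's lemma; the rest is pure nilpotent commutative algebra of Weil algebras.
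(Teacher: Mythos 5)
Your proof is correct, and in fact it supplies an argument where the paper gives none: the paper simply cites this statement from Carchedi--Roytenberg, so there is no in-text proof to compare against. Your route is the standard one and it is essentially complete. The Nakayama-type step is sound: lifting a basis of $V/V^2$ to $v_1,\dots,v_n$ and inducting down the filtration $V\supset V^2\supset\cdots\supset V^m=0$ shows that the monomials of degree $1\le j\le m-1$ in the $v_i$ span $V$, so $\varphi$ is surjective and kills $(x_1,\dots,x_n)^{m}$. Two small points deserve one extra line each. First, the injectivity of $\mathbb{R}[x_1,\dots,x_n]/(x_1,\dots,x_n)^{k+1}\to C^\infty(\mathbb{R}^n)/(x_1,\dots,x_n)^{k+1}$ is not quite ``immediate'': you need that a polynomial of degree $\le k$ lying in the \emph{smooth} ideal $(x_1,\dots,x_n)^{k+1}\subset C^\infty(\mathbb{R}^n)$ vanishes, which follows because every element of that ideal has vanishing partial derivatives at the origin up to order $k$ (Leibniz), and a degree-$\le k$ polynomial is determined by that Taylor data; surjectivity is exactly Hadamard's lemma (proposition~\ref{HadamardLemma}), as you say. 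Second, note that definition~\ref{InfinitesimallyThickenedPoints} places Weil algebras inside $\mathrm{CAlg}_{\mathbb{R}}^{\mathrm{op}}$ as plain commutative $\mathbb{R}$-algebras, so your purely algebraic construction of the surjection is exactly what is needed here and no $C^\infty$-ring compatibility has to be checked; dually, a surjection of algebras is an epimorphism in $\mathrm{CAlg}_{\mathbb{R}}$, hence the desired monomorphism $\mathbb{D}\hookrightarrow\mathbb{D}^n(k)$ of formal duals.
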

\begin{definition}
  \label{FormalSmoothSets}
  Write
  $$
    \mathrm{FormalCartSp} \hookrightarrow \mathrm{CAlg}_{\mathbb{R}}^{\mathrm{op}}
  $$
	for the full subcategory of the opposite of commutative
	$\mathbb{R}$-algebras which are Cartesian products $\mathbb{R}^n
	\times \mathbb{D}$ of a Cartesian space (via the embedding of
	proposition~\ref{EmbeddingOfManifoldsInRAlgebras}) with an
	infinitesimally thickened point (via the defining embedding of
	definition~\ref{InfinitesimallyThickenedPoints}).

	Explicitly, since the coproduct in $\mathrm{CAlg}_{\mathbb{R}}$ is the
	tensor product over $\mathbb{R}$, this is the full subcategory on
	those commutative $\mathbb{R}$-algebras of the form
  $$
    C^\infty(\mathbb{R}^n \times \mathbb{D})
      :=
    C^\infty(\mathbb{R}^n) \otimes_{\mathbb{R}} (\mathbb{R}\oplus V)
    \,,
  $$
  for $n \in \mathbb{N}$ and where $V$ is a finite-dimensional nilpotent ideal.

	We regard this as a site by taking the covering families those sets of
	morphisms
  $$
    \left\{
      \xymatrix{
      U_i \times \mathbb{D}
        \ar[rr]^{\phi_i \times \mathrm{id}_{\mathbb{D}}}
        &&
      \mathbb{R}^n \times \mathbb{D}
      }
    \right\}_{i \in I}
  \quad \text{such that} \quad
    \left\{
      U_i
        \overset{\phi_i }{\longrightarrow}
      \mathbb{R}^n
    \right\}_{i \in I}
  $$
  is a covering family in $\mathrm{CartSp}$ (i.e.,\ a good open cover).

  We write
  $$
    \mathrm{FormalSmoothSet} := \mathrm{Sh}(\mathrm{FormalSmoothCartSp})
  $$
  for the category of sheaves over this site (``formal smooth sets'').
\end{definition}
The category $\mathrm{FormalSmoothSet}$ in definition~\ref{FormalSmoothSets} was
introduced in \cite{Dubuc79} as a well-adapted model of the
Kock-Lawvere axioms \cite[I.12]{KockBookA} \cite[1.3]{KockBook} for
synthetic differential geometry. It is sometimes referred to as the
``Cahiers topos.'' While we do discuss a kind of ``synthetic''
axiomatization of differential geometry in $\mathrm{FormalSmoothSet}$ in section~
\ref{JetsPDEs}, we do not use the Kock-Lawvere axioms, but instead the
fact that $\mathrm{FormalSmoothSet}$ satisfies the axioms of ``differential cohesion''
\cite{dcct}, a property recalled as proposition~\ref{diffcohesionofFormalSmoothSet} below.

\begin{example}
  \label{VectorFieldsAsMapsFromSpecOfRingOfDualNumbers}
  \label{JetsOfFunctionsBetweenCartesianSpaces}
  By Hadamard's lemma (proposition~\ref{HadamardLemma}), morphisms in $\mathrm{FormalCartSp}$ (definition~\ref{FormalSmoothSets})
  out of $\mathbb{D}^1(1)$ into any Cartesian space
  $$
    \mathbb{D}^1(1) \longrightarrow \mathbb{R}^n
  $$
  are in bijection with tangent vectors
  $$
    \sum_{i = 1}^n v^i \frac{\partial}{\partial x^i} \in T_x \mathbb{R}^n
    \,.
  $$
  the corresponding algebra homomorphism
  $$
    C^\infty(\mathbb{R}^n) \longrightarrow (\mathbb{R} \oplus \varepsilon \mathbb{R})
  $$
  is given by
  $$
    f \mapsto f(x) + \varepsilon \sum_{i = 1}^n v^i \frac{\partial f}{\partial x^i}(x)
    \,.
  $$

  More generally, a morphism
  $$
    \mathbb{D}^n(k) \longrightarrow \mathbb{R}^n
  $$
  out of the order-$k$ infinitesimal $n$-disk (definition~\ref{StandardInfinitesimalnDisk})
  is equivalently the equivalence class of a smooth function
  $$
    \mathbb{R}^n \longrightarrow \mathbb{R}^n
    \,,
  $$
  where two such functions are regarded as equivalent if they have the same
  partial derivatives at 0 up to order $k$. Such an equivalence class is called a ``$k$-jet'' of a smooth function.

  The corresponding algebra homomorphism
  $$
    C^\infty(\mathbb{R}^n) \longrightarrow C^\infty(\mathbb{R}^n)/(x^1, \cdots, x^n)^{k+1}
  $$
	is given by restricting smooth functions to their partial derivatives
	up to order $k$ around a given point. In particular there is an open
	neighbourhood $U \subset \mathbb{R}^n$ around that point such that
	functions supported on $\mathbb{R}^n \setminus U$ are sent to zero by
	this algebra homomorphism.
\end{example}
\begin{proposition}
  \label{InfinitesimalPointInSmoothManifoldFactorsThroughInfinitesimalDisk}
	For $X \in \mathrm{SmthMfd} \hookrightarrow \mathrm{FormalSmoothSet}$ a smooth
	manifold of dimension $d$, regarded as a formal smooth set
	(definition~\ref{FormalSmoothSets}) and $\mathbb{D} \in
	\mathrm{FormalCartSp}\hookrightarrow \mathrm{FormalSmoothSet}$ an infinitesimally
	thickened point (definition \ref{InfinitesimallyThickenedPoints}), then every morphism
  $$
    \mathbb{D} \longrightarrow X
  $$
	in $\mathrm{FormalSmoothSet}$ factors through an infinitesimal $d$-disk of order $k$
	(definition~\ref{StandardInfinitesimalnDisk}) as
  $$
    \mathbb{D} \longrightarrow \mathbb{D}^d(k) \longrightarrow X
    \,
  $$ for some $k.$
\end{proposition}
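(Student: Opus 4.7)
My plan is to translate the statement to the dual algebra side, using that $X \in \mathrm{SmthMfd}$ is represented by the smooth $\mathbb{R}$-algebra $C^\infty(X)$ (proposition \ref{EmbeddingOfManifoldsInRAlgebras}) and $\mathbb{D}$ by a Weil algebra $C^\infty(\mathbb{D}) = \mathbb{R}\oplus V$ with $V$ a finite-dimensional nilpotent ideal satisfying $V^{k+1} = 0$ for some $k \in \mathbb{N}$ (definition \ref{InfinitesimallyThickenedPoints}). A morphism $\mathbb{D} \to X$ in $\mathrm{FormalSmoothSet}$ then corresponds to a $C^\infty$-ring homomorphism $\phi \colon C^\infty(X) \to \mathbb{R}\oplus V$, and composing with the quotient $\mathbb{R}\oplus V \twoheadrightarrow \mathbb{R}$ (modding out the unique maximal ideal $V$ of this local Artin algebra) yields a point evaluation at a uniquely determined classical point $x \in X$.

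The argument then proceeds in two steps. \emph{First}, I would pick a coordinate chart $\psi \colon U \xrightarrow{\sim} \mathbb{R}^d$ around $x$ with $\psi(x)=0$ and show that $\phi$ factors through the restriction $C^\infty(X) \to C^\infty(U)$. The key observation is that $\phi$ kills every $f \in C^\infty(X)$ vanishing on some neighborhood $W$ of $x$: choosing a bump function $b$ with $b=1$ on a smaller neighborhood $W' \ni x$ and $\mathrm{supp}(b) \subset W$, one has $f = (1-b)^n f$ for every $n \geq 1$, whence
$$
  \phi(f) \;=\; \phi(1-b)^n\, \phi(f) \;\in\; V^n (\mathbb{R}\oplus V),
$$
since $\phi(1-b)$ has zero classical part and therefore lies in $V$; taking $n = k+1$ forces $\phi(f) = 0$. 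Transporting the resulting map $C^\infty(U) \to \mathbb{R}\oplus V$ along the chart $\psi$ yields $\tilde\phi \colon C^\infty(\mathbb{R}^d) \to \mathbb{R}\oplus V$. \emph{Second}, since $\tilde\phi(x^i) \in V$ for every coordinate function $x^i$ on $\mathbb{R}^d$, any generator $x^{i_1}\cdots x^{i_{k+1}}\! \cdot g$ of the ideal $(x^1,\cdots,x^d)^{k+1}$ is sent into $V^{k+1}(\mathbb{R}\oplus V) = 0$. Hence $\tilde\phi$ descends to the quotient $C^\infty(\mathbb{D}^d(k)) = C^\infty(\mathbb{R}^d)/(x^1,\cdots,x^d)^{k+1}$ of definition \ref{StandardInfinitesimalnDisk}, giving dually the required factorization $\mathbb{D} \to \mathbb{D}^d(k) \hookrightarrow \mathbb{R}^d \cong U \hookrightarrow X$.

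The step I expect to require the most care is the germ-reduction/localization argument: one must genuinely use that $\phi$ is a $C^\infty$-ring map (so that bump functions and Hadamard-type manipulations are available, cf.\ proposition \ref{HadamardLemma}) and that $\mathbb{R}\oplus V$ is a \emph{local} Artin algebra (so that the classical point $x$ is uniquely defined and the bump argument collapses). Neither the Yoneda embedding of $\mathrm{SmthMfd}$ into a sheaf topos nor a purely algebraic viewpoint would suffice on its own, so the proof combines proposition \ref{EmbeddingOfManifoldsInRAlgebras} with the structure of the Cahiers site (definition \ref{FormalSmoothSets}). Once this step is in place, the jet order bound $k$ is forced by the nilpotency degree of $V$, so $k$ depends only on $\mathbb{D}$, not on the particular morphism into $X$.
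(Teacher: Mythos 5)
Your analytic core is correct and, in substance, coincides with what the paper's proof does: the bump-function computation showing that $\phi$ annihilates every function vanishing near the classical base point $x$ (using that $\phi(1-b)\in V$ and $V^{k+1}=0$), followed by the observation that the coordinate functions of a centered chart are sent into $V$, so that the ideal $(x^1,\dots,x^d)^{k+1}$ dies and $\tilde\phi$ descends to $C^\infty(\mathbb{D}^d(k))$. However, there is one step you assume rather than prove, and it is precisely the step the paper's proof is engineered to avoid: the identification
$$
  \mathrm{Hom}_{\mathrm{FormalSmoothSet}}(\mathbb{D}, X)
  \;\cong\;
  \mathrm{Hom}_{\mathrm{CAlg}_{\mathbb{R}}}\bigl(C^\infty(X),\, C^\infty(\mathbb{D})\bigr)
$$
for a general (non-Cartesian) manifold $X$. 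Proposition \ref{EmbeddingOfManifoldsInRAlgebras} only gives full faithfulness of $C^\infty(-)$ on morphisms \emph{between manifolds}, and $\mathbb{D}$ is not a manifold; meanwhile $X$ enters $\mathrm{FormalSmoothSet}$ via the Yoneda embedding on $\mathrm{CartSp}$ followed by left Kan extension, so $\mathrm{Hom}(\mathbb{D},X)$ is a priori computed as a coend over pairs $(\mathbb{D}\to\mathbb{R}^m,\ \mathbb{R}^m\to X)$, not read off from the algebra $C^\infty(X)$. The identification is true for the Cahiers topos, but it is not established anywhere in the paper, and justifying it from scratch contains essentially the same localization content as the proposition itself --- so as written the proof begs part of the question.

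The paper sidesteps this by first choosing a Whitney embedding $X\hookrightarrow\mathbb{R}^n$, so that the composite $\mathbb{D}\to X\to\mathbb{R}^n$ is a morphism between objects of $\mathrm{FormalCartSp}$, where ``morphism $=$ algebra homomorphism'' holds by definition of the site; it then localizes to a chart and truncates, much as you do. Your route can be repaired without Whitney: the coend formula of proposition \ref{FactsAboutLeftKanExtension} (noting that sheafification does not change the value at $\mathbb{D}$, since $\mathbb{D}$ has only the trivial cover) already presents the given morphism as a composite $\mathbb{D}\to\mathbb{R}^m\to X$ for some $m$, and composing dually yields an honest algebra homomorphism $C^\infty(X)\to\mathbb{R}\oplus V$ on which your bump argument runs verbatim; one then checks the resulting $\mathbb{D}\to\mathbb{D}^d(k)\to X$ represents the same element of the coend. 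With that reduction in place, your version is arguably cleaner --- it needs no embedding theorem and is closer to the standard $C^\infty$-ring treatment --- but the reduction does need to be stated.
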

\begin{proof}
	There exists a smooth embedding $X \hookrightarrow \mathbb{R}^n$, for
	some $n \in \mathbb{N}$. By
	proposition~\ref{InfinitesimallyThickenedPointsInsideInfinitesimalDisks}
	the composite
  $$
    \mathbb{D} \longrightarrow X \hookrightarrow \mathbb{R}^n
  $$
  factors as
  $$
    \mathbb{D} \longrightarrow \mathbb{D}^n(k) \longrightarrow \mathbb{R}^n
    \,.
  $$
	Let $U \subset \mathbb{R}^n$ an open neighbourhood around the
	corresponding point in $\mathbb{R}^n$, and let $U_X$ be its pre-image
	in $X$. We may find an open $d$-ball $\simeq \mathbb{R}^d \subset U_X$
	around the given point. By example
	\ref{JetsOfFunctionsBetweenCartesianSpaces} the original morphism
	factors now as
  $$
    \mathbb{D} \longrightarrow \mathbb{R}^d \hookrightarrow X
    \,.
  $$
	Using again
	proposition~\ref{InfinitesimallyThickenedPointsInsideInfinitesimalDisks},
	this now factors as
  $$
    \mathbb{D} \longrightarrow \mathbb{D}^d(k) \longrightarrow X
    \,.
    \qedhere
  $$
\end{proof}

\begin{proposition}[differential cohesion of formal smooth sets]
  \label{diffcohesionofFormalSmoothSet}
	The canonical embedding of Cartesian spaces into formal Cartesian
	spaces (definition~\ref{FormalSmoothSets}) is coreflective, i.e.,\ the
	embedding functor $i$ has a right adjoint functor $p$
  $$
    \xymatrix{
      \mathrm{CartSp}
      \ar@{^{(}->}@<+5pt>[rr]^-i
      \ar@{<-}@<-5pt>[rr]_-p^-{\bot}
      &&
      \mathrm{FormalCartSp}
    }
    \,.
  $$
	Moreover, Kan extension (proposition \ref{FactsAboutLeftKanExtension}) along these functors induces a quadruple of
	adjoint functors (definition \ref{AdjointFunctor}) of the form
  $$
    \xymatrix{
      \mathrm{SmoothSet}\;
      \ar@<+18pt>@{^{(}->}[rr]|-{i_!}
      \ar@<+9pt>@{<-}[rr]|-{i^\ast \simeq p_!}
      \ar@<+0pt>@{^{(}->}[rr]|-{i_\ast \simeq p^\ast}
      \ar@<-9pt>@{<-}[rr]|-{p_\ast}
      &&
      \;\mathrm{FormalSmoothSet}
    }
    \,.
  $$
  (where each functor on top is left adjoint to the functor below)
  between smooth sets (definition \ref{SmoothManifolds}) and formal smooth sets (definition \ref{FormalSmoothSets}).
\end{proposition}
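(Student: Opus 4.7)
The plan is to build the adjoint quadruple in three stages: first establish the adjunction $i \dashv p$ at the level of sites, then lift it to an adjoint triple of left Kan extensions on presheaves, and finally check that the resulting functors restrict (or sheafify appropriately) between the categories of sheaves.

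For the first stage, I would define the putative right adjoint by
\[
  p(\mathbb{R}^n \times \mathbb{D}) := \mathbb{R}^n,
\]
acting on morphisms by the evident projection on the smooth factor. To verify $i \dashv p$, I would compute directly: a morphism $\mathbb{R}^k \to \mathbb{R}^n \times \mathbb{D}$ in $\mathrm{FormalCartSp}$ is by definition (using Definition~\ref{FormalSmoothSets}) an $\mathbb{R}$-algebra homomorphism
\[
  \varphi\colon C^\infty(\mathbb{R}^n) \otimes_{\mathbb{R}} (\mathbb{R} \oplus V) \longrightarrow C^\infty(\mathbb{R}^k),
\]
where $V$ is finite-dimensional and nilpotent. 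Since $C^\infty(\mathbb{R}^k)$ is reduced (contains no nonzero nilpotents), $\varphi$ must annihilate $V$, so $\varphi$ factors uniquely through $C^\infty(\mathbb{R}^n)$. By Proposition~\ref{EmbeddingOfManifoldsInRAlgebras} this is the same as a smooth map $\mathbb{R}^k \to \mathbb{R}^n$, i.e.\ a morphism $\mathbb{R}^k \to p(\mathbb{R}^n \times \mathbb{D})$ in $\mathrm{CartSp}$. Naturality in both arguments is immediate, giving the claimed adjunction.

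For the second stage, both $i$ and $p$ are functors between small categories, so left and right Kan extension along each (Proposition~\ref{FactsAboutLeftKanExtension}) gives adjoint triples $i_! \dashv i^* \dashv i_*$ and $p_! \dashv p^* \dashv p_*$ on the presheaf categories, where $i^* = (-)\circ i$ and $p^* = (-) \circ p$ are restriction. The adjunction $i \dashv p$ at the level of small categories implies, by uniqueness of adjoints applied to the precomposition functors, the natural isomorphisms $i^* \simeq p_!$ and $i_* \simeq p^*$ (equivalently, left Kan extension along a right adjoint coincides with restriction along its left adjoint, and dually). Concatenating, we obtain the quadruple $i_! \dashv i^* \simeq p_! \dashv i_* \simeq p^* \dashv p_*$ on presheaves.

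The third stage is to descend this quadruple to sheaves. The covering families of $\mathrm{FormalCartSp}$ in Definition~\ref{FormalSmoothSets} are declared to be $\{U_i \times \mathbb{D} \to \mathbb{R}^n \times \mathbb{D}\}$ coming from covers $\{U_i \to \mathbb{R}^n\}$ in $\mathrm{CartSp}$, so both $i$ (which sends $\{U_i \to \mathbb{R}^n\}$ to $\{U_i \to \mathbb{R}^n\}$ viewed in $\mathrm{FormalCartSp}$) and $p$ (which projects $\{U_i \times \mathbb{D} \to \mathbb{R}^n \times \mathbb{D}\}$ to $\{U_i \to \mathbb{R}^n\}$) are morphisms of sites. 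Hence each restriction functor $i^*$ and $p^*$ preserves the sheaf condition, and by adjointness so do their right adjoints $i_*$ and $p_*$. The remaining left adjoint $i_!$ is defined on sheaves by composing presheaf $i_!$ with sheafification; it remains fully faithful because $i$ itself is, and the Yoneda embeddings are dense. Taking the restrictions/identifications, the four functors assemble into the displayed adjoint quadruple between $\mathrm{SmoothSet}$ and $\mathrm{FormalSmoothSet}$.

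The main technical obstacle I expect is the bookkeeping at the sheaf level: one has to know that the identifications $i^* \simeq p_!$ and $i_* \simeq p^*$ established on presheaves are compatible with the sheafifications, so that no distinct ``sheafified'' versions of $p_!$ and $i^*$ arise. This is handled by observing that $i^*$ already preserves sheaves (being a site morphism), so its composite with the sheafification reflector agrees with the unsheafified $i^*$, and dually for $p^*$; thus the isomorphisms transport to the sheaf categories without modification. The rest of the argument is essentially the standard Kan-extension package plus the nilpotent-kernel computation of the first paragraph.
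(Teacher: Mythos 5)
Your proposal is correct and follows essentially the same route as the paper: the nilpotent-kernel computation showing every algebra map $(\mathbb{R}\oplus V)\to C^\infty(\mathbb{R}^k)$ kills $V$, hence $i\dashv p$ with $p(\mathbb{R}^n\times\mathbb{D})=\mathbb{R}^n$, followed by the Kan-extension package with $i^\ast\simeq p_!$ and $i_\ast\simeq p^\ast$ and the observation that the topology on $\mathrm{FormalCartSp}$ is trivial along the infinitesimal directions, so everything descends to sheaves. Your justification that $i_\ast$ and $p_\ast$ preserve sheaves ``by adjointness'' is not a valid general principle, but here it is harmless since both $i$ and $p$ manifestly preserve covers and the identifications $i_\ast\simeq p^\ast$ reduce the check to the restriction functors, which is exactly the point the paper also leans on.
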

\begin{proof}
  For the first statement, observe that in $\mathrm{FormalCartSp}$ every morphism out of a finite
  manifold into an infinitesimally thickened point
  $$
    \mathbb{R}^n \longrightarrow \mathbb{D}
  $$
  necessarily factors through the actual point
  $$
    \mathbb{R}^n \overset{\exists !}{\longrightarrow} \ast \overset{\exists!}{\longrightarrow} \mathbb{D}
    \,.
  $$
  This is because, dually, an algebra homomorphism of the form
  $$
    C^\infty(\mathbb{R}^n ) \longleftarrow (\mathbb{R} \oplus V)
  $$
  has to vanish on the nilpotent ideal $V$ (since respect for the product implies that nilpotent
  algebra elements are sent to nilpotent algebra elements, but there are no non-zero nilpotent elements in
  $C^\infty(\mathbb{R}^n)$).
  This means that there is a natural bijection between morphisms in $\mathrm{FormalCartSp}$ of the form
  $$
    \mathbb{R}^{n_1} \longrightarrow \mathbb{R}^{n_2} \times \mathbb{D}
  $$
  and morphisms in $\mathrm{CartSp} \hookrightarrow \mathrm{FormalCartSp}$ of the form
  $$
    \mathbb{R}^{n_1} \longrightarrow \mathbb{R}^{n_2}
    \,.
  $$
  This means that if a right adjoint $p$ to the inclusion exists then it is given by projection on
  the non-thickened part (the reduced part)
  $$
    p(\mathbb{R}^n \times \mathbb{D}) \simeq \mathbb{R}^n
    \,.
  $$
  It only remains to observe that this is indeed functorial.

  With this, the adjoint quadruple between categories of sheaves follows directly from the
  corresponding system of adjunctions on presheaves (proposition \ref{FactsAboutLeftKanExtension})
  because the site $\mathrm{FormalCartSp}$ is by definition such that the Grothendieck
  topology along the infinitesimal directions is trivial.
\end{proof}
The synthetic theory that we develop below will be based entirely on the existence of an adjoint quadruple as in
proposition \ref{diffcohesionofFormalSmoothSet}. Therefore, while Dubuc's \emph{Cahiers topos} of formal smooth sets
is the archetypical example of an ambient category that we consider, we consider this structure in more generality:
\begin{definition}[differential cohesion \cite{dcct}]
  \label{ReImEtAdjunctionOnFormalSmoothSets}
  \label{DifferentialCohesion}
  We say that a full inclusion of toposes
  $$
    i_!
    :
    \mathbf{H}_{\Re}
      \hookrightarrow
    \mathbf{H}
  $$
  exhibits $\mathbf{H}$ as being \emph{differentially cohesive} over $\mathbf{H}_{\Re}$
  (and exhibits $\mathbf{H}_\Re$ as being the full subcategory of \emph{reduced objects} of $\mathbf{H}$)
  if the inclusion has three adjoint functors to the right
  $$
    \xymatrix{
      \mathbf{H}_{\Re}
        \ar@<+18pt>@{^{(}->}[rr]|-{i_!}
        \ar@<+9pt>@{<-}[rr]|-{i^\ast}
        \ar@<+0pt>@{^{(}->}[rr]|-{i_\ast}
        \ar@<-9pt>@{<-}[rr]|-{i_!}
        &&
      \mathbf{H}
    }
    \,.
  $$
  We then write
  $$
    (\Re \,\dashv\, \Im \,\dashv\, \mathrm{E}\!\mathrm{t})
    \;:=\;
    (i_!\circ i^\ast
    \,\dashv\, i_\ast \circ i^\ast
    \,\dashv\, i_\ast \circ i_!)
    \;:\;
    \mathbf{H}
      \longrightarrow
    \mathbf{H}
  $$
	for the triple of adjoint idempotent (co-)monads on formal smooth set induced
    (via example \ref{AdjointPairFromAdjointTriple}) by this adjoint quadruple.
We pronounce $\Re$ ``reduction'' (due to proposition \ref{ReductionOnFormalManifolds} below)
and $\Im$ ``de Rham shape'' \cite{Simpson96} or ``infinitesimal shape'' \cite{dcct}.
\end{definition}
So in this terminology proposition \ref{diffcohesionofFormalSmoothSet}
says that the topos $\mathbf{H} = \mathrm{FormalSmoothSet}$ is
differentially cohesive over the full subtopos $\mathbf{H}_{\Re} =
\mathrm{SmoothSet}$.
\begin{proposition}
  \label{ReductionOnFormalManifolds}
	For $\mathbb{R}^n \times \mathbb{D}\in
	\mathrm{FormalCartSp}\hookrightarrow \mathrm{FormalSmoothSet}$ we have
  $$
    \Re(\mathbb{R}^n \times \mathbb{D}) \simeq \mathbb{R}^n
  $$
	and the counit (definition \ref{monad}) of the comonad $\Re$
	(definition~\ref{ReImEtAdjunctionOnFormalSmoothSets}) is the canonical
	inclusion
  $$
    \epsilon_{\mathbb{R}^n\times \mathbb{D}} : \Re(\mathbb{R}^n\times \mathbb{D})
			\simeq \mathbb{R}^n
			\stackrel{(\mathrm{id},0)}{\longrightarrow} \mathbb{R}^n \times \mathbb{D}
    \,.
  $$
	It follows that for $X \in \mathbf{H}=
	\mathrm{Sh}(\mathrm{FormalCartSp})$ a sheaf, then $\Im X \in
	\mathrm{Sh}(\mathrm{FormalCartSp})$ is the sheaf given by
  $$
    \Im X
      \;:\;
    \mathbb{R}^n \times \mathbb{D}
      \mapsto
    X(\mathbb{R}^n)
    \,.
  $$
  and that the unit (definition \ref{monad})
  $$
    \eta_X \;:\; X \longrightarrow \Im X
  $$
	of the monad $\Im$
	(definition~\ref{ReImEtAdjunctionOnFormalSmoothSets}) is the morphism
	of sheaves which over any $\mathbb{R}^n \times \mathbb{D} \in
	\mathrm{FormalCartSp}$ that is given by the function
  $$
    \mathrm{Hom}_{\mathbf{H}}(\mathbb{R}^n \times \mathbb{D}, X)
      \longrightarrow
    \mathrm{Hom}_{\mathbf{H}}(\mathbb{R}^n \times \mathbb{D}, \Im X)
			\simeq
		\mathrm{Hom}_{\mathbf{H}}(\Re(\mathbb{R}^n\times \mathbb{D}), X)
		\,,
  $$
  given by precomposition with $\epsilon$,
  $$
    (\mathbb{R}^n \times \mathbb{D} \stackrel{\phi}{\longrightarrow} X)
     \;\mapsto\;
    \left(
    	\Re(\mathbb{R}^n\times \mathbb{D})
    	\overset{\epsilon}{\longrightarrow}
			\mathbb{R}^n \times \mathbb{D}
			\stackrel{\phi}{\longrightarrow}
			X
		\right)
    \,.
  $$
\end{proposition}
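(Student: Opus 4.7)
The plan is to derive each assertion from the adjoint quadruple of proposition \ref{diffcohesionofFormalSmoothSet} using the standard fact that, for an adjunction $i \dashv p$ of site functors, the induced presheaf-level adjunctions satisfy $i^* \simeq p_!$ and $i_* \simeq p^*$. Because, as noted in the proof of proposition \ref{diffcohesionofFormalSmoothSet}, the Grothendieck topology on $\mathrm{FormalCartSp}$ is trivial along infinitesimal directions, all of the presheaf-level identifications descend to sheaves without further sheafification.

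For the identification $\Re(\mathbb{R}^n \times \mathbb{D}) \simeq \mathbb{R}^n$, I would unfold $\Re = i_! \circ i^*$ on the representable at $\mathbb{R}^n \times \mathbb{D}$. Via $i^* \simeq p_!$, the functor $i^*$ sends this representable to the representable at $p(\mathbb{R}^n \times \mathbb{D}) = \mathbb{R}^n$, where the identification of $p$ on objects was done in the proof of proposition \ref{diffcohesionofFormalSmoothSet}. Since $i$ is fully faithful, $i_!$ preserves representables, giving $\Re(\mathbb{R}^n \times \mathbb{D}) \simeq \mathbb{R}^n$. To pin down the counit of the comonad $\Re$ at this representable, I would argue by Yoneda that it is the Yoneda image of some morphism $\mathbb{R}^n \to \mathbb{R}^n \times \mathbb{D}$ in the site, and that the naturality of the comparison of the two adjunctions forces this morphism to be the counit of $i \dashv p$ at $\mathbb{R}^n \times \mathbb{D}$. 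That counit was identified in the proof of proposition \ref{diffcohesionofFormalSmoothSet} as the unique map factoring through the underlying point, namely $(\mathrm{id}, 0)$.

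Next, for $\Im = i_* \circ i^*$, I would combine two site-level facts. Restriction along $i$ gives directly $(i^* X)(\mathbb{R}^n) = X(\mathbb{R}^n)$. The identification $i_* \simeq p^*$ means that $i_*$ is computed by precomposition with $p$; applied to $i^* X$ this yields
$$
(\Im X)(\mathbb{R}^n \times \mathbb{D}) \;=\; (i^* X)(p(\mathbb{R}^n \times \mathbb{D})) \;=\; X(\mathbb{R}^n),
$$
as claimed. For the unit $\eta_X \colon X \to \Im X$, I would observe that its component at $\mathbb{R}^n \times \mathbb{D}$ is a function $X(\mathbb{R}^n \times \mathbb{D}) \to X(\mathbb{R}^n)$, and that by the right-Kan-extension description of $i_*$ (whose indexing comma category has a terminal object supplied by the counit of $i \dashv p$), or equivalently by naturality of the base-change adjunction, this function is precisely $X(\epsilon)$, i.e., precomposition with $\epsilon \colon \mathbb{R}^n \to \mathbb{R}^n \times \mathbb{D}$.

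The main obstacle is not analytic but bookkeeping: correctly matching which of the four natural transformations in the adjoint quadruple produces which (co)unit of the composite (co)monads, and verifying that each (co)unit on representables is the Yoneda image of the single site-level counit of $i \dashv p$. Once one writes out the triangle identities for $(i_! \dashv i^*)$ and $(i^* \dashv i_*)$ and exploits the fact that $i^*$ is simultaneously a left adjoint to $i_*$ and a right adjoint to $i_!$, everything reduces to the explicit computation of $p$ and of the counit of $i \dashv p$ carried out in the proof of proposition \ref{diffcohesionofFormalSmoothSet}.
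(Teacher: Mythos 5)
Your proposal is correct, and it follows the same overall skeleton as the paper's proof (everything is extracted from the adjoint quadruple and the explicit description of $p$ in proposition \ref{diffcohesionofFormalSmoothSet}), but it pins down the (co)units by a different mechanism. The paper identifies the counit $\epsilon_{\mathbb{R}^n\times\mathbb{D}}$ by an explicit element-level verification: it writes down the characterizing identity $g=\epsilon_Y\circ i_!(f)$, observes that every $g$ factors uniquely as $(\mathrm{id},0)\circ g'$, and then spends most of the proof checking the hypothesis $g'=f$ via the special case $\mathbb{D}=\{\ast\}$ and naturality with respect to the projection $p_2$. You instead invoke the general principle that the presheaf-level adjunction $i_!\dashv p_!\simeq i^\ast$ is the image of the site-level adjunction $i\dashv p$ under $(-)_!$, so that its counit on representables is the Yoneda image of the site-level counit; this is legitimate (by uniqueness of adjoints and the fact that left Kan extension preserves representables --- full faithfulness of $i$ is not actually needed for that step) and shortcuts the paper's naturality check. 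Likewise your derivation of $(\Im X)(\mathbb{R}^n\times\mathbb{D})\simeq X(\mathbb{R}^n)$ and of $\eta_X=X(\epsilon)$ directly from $i_\ast\simeq p^\ast$ and the standard formula for the unit of $i^\ast\dashv p^\ast$ is cleaner than the paper's ``it is straightforward to see.'' Two small cautions: the proof of proposition \ref{diffcohesionofFormalSmoothSet} identifies $p$ and the unique factorization through $(\mathrm{id},0)$, but does \emph{not} literally identify the counit of $i\dashv p$ --- that identification is part of what is being proven here, so you should add the one-line remark that the adjunction bijection \emph{is} the factorization bijection, under which $\mathrm{id}_{\mathbb{R}^n}$ corresponds to $(\mathrm{id},0)$; and your parenthetical about the ``indexing comma category'' having a terminal object is really the statement that $i^\ast y(\mathbb{R}^n\times\mathbb{D})$ is representable by $p(\mathbb{R}^n\times\mathbb{D})$, which is the same fact you already use, so the alternative route via the base-change adjunction is the one to keep.
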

\begin{proof}
	Using that left Kan extension $i_!$ along a functor $i$ is on
	representables given by that functor
	(proposition~\ref{FactsAboutLeftKanExtension}), we have, for all
	$\mathbb{R}^{n_1}\times \mathbb{D} \in \mathrm{FormalCartSp}$ and all
	$\mathbb{R}^{n_2}\in \mathrm{CartSp} \hookrightarrow
	\mathrm{FormalCartSp}$, the natural isomorphisms
  $$
    \begin{aligned}
      \mathrm{Hom}_{\mathbf{H}_\Re}(\mathbb{R}^{n_2}, i^\ast ( \mathbb{R}^{n_1}\times \mathbb{D} ))
      & \simeq
      \mathrm{Hom}_{\mathbf{H}}(i_!(\mathbb{R}^{n_2}), \mathbb{R}^{n_1} \times \mathbb{D})
      \\
      & \simeq \mathrm{Hom}_{\mathrm{FormalCartSp}}(\mathbb{R}^{n_2},\mathbb{R}^{n_1}\times \mathbb{D})
      \\
      & \simeq \mathrm{Hom}_{\mathrm{FormalCartSp}}(\mathbb{R}^{n_2}, \mathbb{R}^{n_1})
      \, ,
    \end{aligned}
  $$
	where in the last line we used proposition
	\ref{diffcohesionofFormalSmoothSet}. This shows that
  $$
    i^\ast ( \mathbb{R}^{n_1}\times \mathbb{D})
    \simeq
    \mathbb{R}^{n_1}
    \;\;\;\;
    \in \mathbf{H}_\Re
    \,.
  $$
	Now use again that left Kan extension $i_!$ preserves representables
	to find that
  $$
		\Re(\mathbb{R}^{n_1}\times \mathbb{D})
			:=
		i_! i^*(\mathbb{R}^{n_1}\times \mathbb{D})
			\simeq
		i_! (\mathbb{R}^{n_1}) \simeq \mathbb{R}^{n_1}
			\,.
  $$

	Now, the counit $\epsilon=\epsilon_{\mathbb{R}^{n_2}\times \mathbb{D}}
	: \Re(\mathbb{R}^{n_2}\times \mathbb{D}) \to \mathbb{R}^{n_2}\times
	\mathbb{D}$ is characterized by the following identity $g = \epsilon_Y
	\circ i_!(f)$ with respect to an arbitrary morphism $f\colon X \to
	i^*(Y)$ and its adjoint $g\colon i_!(X) \to Y$. Setting $Y =
	\mathbb{R}^{n_2} \times \mathbb{D}$ and $X = i^*(\mathbb{R}^{n_1}
	\times \mathbb{D}')$, and recalling also that $i_!(f) = f$, this means
	that the formula $g = \epsilon \circ f$ must give a bijection between
	morphisms of the form
	$$
		\mathbb{R}^{n_1}
			\simeq
		i^*(\mathbb{R}^{n_1} \times \mathbb{D}')
			\overset{f}{\longrightarrow}
		i^*(\mathbb{R}^{n_2} \times \mathbb{D})
			\simeq
		\mathbb{R}^{n_2}
		\quad \text{and} \quad
		\mathbb{R}^{n_1}
			\simeq
		i_! i^*(\mathbb{R}^{n_1} \times \mathbb{D}')
			\simeq
		\Re(\mathbb{R}^{n_1} \times \mathbb{D}')
			\overset{g}{\longrightarrow}
		\mathbb{R}^{n_2} \times \mathbb{D}
		\,.
	$$
	From the proof of proposition~\ref{diffcohesionofFormalSmoothSet}, we
	know that $g$ uniquely factors as
	$$
		g\colon \mathbb{R}^{n_1}
			\overset{g'}{\longrightarrow}
		\mathbb{R}^{n_2}
			\overset{(\mathrm{id},0)}{\longrightarrow}
		\mathbb{R}^{n_2} \times \mathbb{D}
		\,.
	$$
	Hence, under the hypothesis that $g' = f$, we conclude that we can
	identify $\epsilon = (\mathrm{id},0)$. It remains only to verify this
	hypothesis. First, note that when $\mathbb{D} = \{*\}$, we simply have
	$g \simeq f$ (the adjunction is unique, if it exists, and this
	satisfies all the desired properties). Second, recalling the
	naturality of the adjunction, we find that, with $p_2 \colon
	\mathbb{R}^{n_2} \times \mathbb{D} \to \mathbb{R}^{n_2}$ the
	projection onto the second factor, the adjunct of $i^*(p_2) \circ f =
	f$ is $p_2 \circ g = g'$, from which we can conclude that $g'=f$.

	Finally, since $\Re \dashv \Im$ and the unit $\eta_X$ is correspondingly
	adjunct to the counit $\epsilon_X$, it is straight forward to see that
	the sheaf morphism representing $\eta_X$ is given by the formulas in
	the statement of the proposition.
\end{proof}
\begin{example}
  \label{ReductionOnRepresentablesIsRetraction}
  The reduction counit on $\mathbb{R}^n \times \mathbb{D}$
  (proposition \ref{ReductionOnFormalManifolds}) has in fact a retraction:
  $$
    \xymatrix{
      \mathbb{R}^n
        \ar[rr]^{(\mathrm{id}, 0)}
        \ar@/_1pc/[rrrr]_{\mathrm{id}}
      &&
      \mathbb{R}^n \times \mathbb{D}
        \ar[rr]
        &&
      \mathbb{R}^n
    }
  $$
  given by the product with $\mathbb{R}^n$ applied to the basic retraction
  $$
    \xymatrix{
      \ast
        \ar[rr]^{\exists !}
        \ar@/_1pc/[rrrr]_{\mathrm{id}}
      &&
      \mathbb{D}
        \ar[rr]^{\exists !}
        &&
      \ast
    }
    \,.
  $$
  Dually this is the retraction of commutative $\mathbb{R}$-algebras
  $$
    \xymatrix{
      C^\infty(\mathbb{R}^n)
        \ar@{<-}[rr]^-{\mathrm{reduction}}
        \ar@{<-}@/_1.4pc/[rrrr]_{\mathrm{id}}
      &&
      C^\infty(\mathbb{R}^n) \otimes_{\mathbb{R}} C^\infty(\mathbb{D})
        \ar@{<-}[rr]^-{f \mapsto (f,0)}
        &&
      C^\infty(\mathbb{R}^n)
    }
    \,,
  $$
  where the left morphism $\mathrm{reduction}$ sends every nilpotent algebra element to 0 and is the identity on the remaining elements.

  From this the statements about $\Im$ follow by the adjunction $(\Re \dashv \Im)$.
\end{example}
\begin{proposition}
  \label{UnitOfInfinitesimalShapeIsEpiInCahiersTopos}
  For every $X \in \mathrm{FormalSmoothSet}$ (definition \ref{FormalSmoothSets}),
  the unit $\eta_X : X \longrightarrow \Im X$ of the infinitesimal shape monad
  from definition \ref{ReImEtAdjunctionOnFormalSmoothSets}
  is an epimorphism in $\mathrm{FormalSmoothSet}$.
\end{proposition}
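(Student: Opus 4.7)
My plan is to prove the strictly stronger assertion that each component $(\eta_X)_U : X(U) \to (\Im X)(U)$ is a split surjection of sets for every representable $U = \mathbb{R}^n \times \mathbb{D} \in \mathrm{FormalCartSp}$. In a Grothendieck topos, a morphism of sheaves is an epimorphism iff it is a local surjection, in the sense that every section of the target, after restriction along some cover of its domain of definition, lies in the image. Sectionwise surjectivity on each representable trivially implies this condition (take the identity cover), so exhibiting a set-theoretic right inverse to $(\eta_X)_U$ for each representable $U$ suffices.

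To execute this, I combine two previous results. First, proposition \ref{ReductionOnFormalManifolds} identifies $(\eta_X)_{\mathbb{R}^n \times \mathbb{D}}$ with the map $X(\epsilon) : X(\mathbb{R}^n \times \mathbb{D}) \to X(\mathbb{R}^n) = (\Im X)(\mathbb{R}^n \times \mathbb{D})$ obtained by applying the presheaf $X$ to the reduction counit $\epsilon = (\mathrm{id}, 0) : \mathbb{R}^n \hookrightarrow \mathbb{R}^n \times \mathbb{D}$. Second, example \ref{ReductionOnRepresentablesIsRetraction} supplies a retraction $r : \mathbb{R}^n \times \mathbb{D} \to \mathbb{R}^n$ of $\epsilon$, namely the projection onto the first factor, so that $r \circ \epsilon = \mathrm{id}_{\mathbb{R}^n}$. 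Functoriality of $X$ then gives
$$
    X(\epsilon) \circ X(r) \;=\; X(r \circ \epsilon) \;=\; X(\mathrm{id}) \;=\; \mathrm{id},
$$
so that $X(r)$ is a right inverse of $(\eta_X)_{\mathbb{R}^n \times \mathbb{D}}$. Hence this component is surjective, and $\eta_X$ is an epimorphism in $\mathrm{FormalSmoothSet}$.

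There is no substantive obstacle here; the entire content of the argument is the observation that the retraction recorded in example \ref{ReductionOnRepresentablesIsRetraction}, together with the explicit description of $\eta_X$ in proposition \ref{ReductionOnFormalManifolds}, directly yields a sectionwise splitting on every representable. In particular, a slightly stronger statement than the one claimed is true: $\eta_X$ is not only an epimorphism of sheaves, but is componentwise split surjective on the site $\mathrm{FormalCartSp}$.
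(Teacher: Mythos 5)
Your proof is correct and follows essentially the same route as the paper's: both reduce to sectionwise surjectivity on representables via the epimorphism criterion for sheaves, identify the component of $\eta_X$ at $\mathbb{R}^n\times\mathbb{D}$ with restriction along $\epsilon=(\mathrm{id},0)$ using proposition \ref{ReductionOnFormalManifolds}, and split it using the retraction of example \ref{ReductionOnRepresentablesIsRetraction}. Your phrasing via an explicit set-theoretic section $X(r)$ is just a slightly more explicit rendering of the paper's factorization statement.
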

\begin{proof}
  By proposition \ref{EpimorphismsInCategoriesOfSheaves} it is sufficient to see that for
  $\mathbb{R}^n \times \mathbb{D} \in \mathrm{FormalCartSp} \hookrightarrow \mathbf{H}$
  any representable,
  then the induced function on hom-sets
  $$
    \mathrm{Hom}_{\mathbf{H}}( \mathbb{R}^n \times \mathbb{D} , X  )
      \longrightarrow
    \mathrm{Hom}_{\mathbf{H}}( \mathbb{R}^n \times \mathbb{D}, \Im X )
  $$
  is a surjection. By proposition \ref{ReductionOnFormalManifolds}
  this is the case precisely if every morphism in $\mathbf{H}$ of the form
  $$
    \mathbb{R}^n \longrightarrow X
  $$
  factors as
  $$
    \mathbb{R}^n
       \overset{(\mathrm{id},0)}{\longrightarrow}
    \mathbb{R}^n \times \mathbb{D}
      \longrightarrow
    X
    \,.
  $$
  This follows from example \ref{ReductionOnRepresentablesIsRetraction}.
\end{proof}

%%%%%%%%%%%%%%%%%%%%%%%%%%%%%%%%%%%%%%%%%
  \subsection{Locally pro-manifolds}
  \label{FrechetManifolds}
%%%%%%%%%%%%%%%%%%%%%%%%%%%%%%%%%%%%%%%%%%

We consider here a category of ``locally pro-manifolds''
(definition \ref{LocallyProManifold} below), by
which we mean projective limits of finite-dimensional smooth
manifolds, formed inside the category of Fr{\'e}chet manifolds.

Fr{\'e}chet manifolds are smooth manifolds of possibly
infinite-dimension, which are however only ``mildly
infinite-dimensional'' in that they arise as projective limits of Banach
manifolds. They still embed fully faithfully into the category
$\mathrm{SmoothSet}$ of smooth sets
(proposition~\ref{SmoothSetsReceiveFrechetManifolds} below), hence also
into the category $\mathrm{FormalSmoothSet}$ of formal smooth sets from the previous
section. The key point is that projective limits of finite dimensional
smooth manifolds do exist in the category of Fr{\'e}chet manifolds
(proposition~\ref{ProjectiveLimitNatureOfRInfinity} below). Accordingly,
infinite jet bundles of smooth manifolds
(definition~\ref{InfiniteJetBundleOfManifolds} below) may naturally be
regarded as Fr{\'e}chet manifolds (a point of view advocated in
\cite{Saunders89,Michor80}), in fact they are in the more restrictive
class of $\mathbb{R}^\infty$-manifolds (definition~\ref{RInfinity}
below).

Beware that sometimes jet bundles are instead regarded as pro-objects in
the category of finite dimensional smooth manifolds~\cite{GuPf13},
hence as purely formal projective limits. This perspective is not
equivalent: While a smooth function on a pro-manifold is, by definition,
globally a function on one of the finite stages in the projective
system, a smooth function on the corresponding projective limit formed
in Fr{\'e}chet manifolds is in general only \emph{locally} a function on
one of the finite stages
(proposition~\ref{SmoothFunctionsOnFrechetRInfinity} below).
This is why we speak of ``locally pro-manifolds''.
Notice that while the smooth functions are not the same in both cases, the cohomology of the
complexes of the corresponding differential forms may still agree, see
\cite{GMS-cohom} or \cite[appendix to Chapter 2]{DeligneFreed}.

\medskip

The calculus of smooth functions on Fr{\'e}chet manifolds  is a
special case of a calculus that may be defined more generally on manifolds locally
modeled on locally convex topological vector spaces (see~\cite{Michor80}
for details, in particular Chapter~8 for the precise definition of
smoothness).

\begin{definition}[{\cite[\textsection8.1--3]{Michor80}}, {\cite[Definition~7.1.5]{Saunders89}}]
	\label{FrechetSmoothFunction}
	Let $f\colon X \to Y$ be a continuous map (not necessarily linear)
	between two Fr\'echet spaces $X$ and $Y$. The \emph{G\^ateaux
	derivative} $Df \colon U \times X \to Y$ of $f$ on an open $U\subset
	X$ is defined as the directional derivative
	\[
		Df(x,w) := D_w f(x) = \lim_{t\to 0} \frac{f(x+tw) - f(x)}{t} \, ,
	\]
	with the limit taken in the topology of $Y$. The map $f$ is smooth at
	$x\in X$ when there exists an open neighborhood $U \ni x$ such that
	the iterated G\^ateaux derivatives $(u,v_1,\ldots, v_k) \mapsto
	D^k_{v_1,\ldots, v_k} f(u)$, defined in the obvious way, exist and are
	jointly continuous
	\[
		D^kf \colon U \times \underbrace{X \times \cdots \times X}_{k\text{ times}}
			\to Y
	\]
	for each $k\ge 0$.
\end{definition}

This notion of smoothness is at the very least compatible with the
fundamental theorem of calculus.

\begin{proposition}[{\cite[\textsection 8.4]{Michor80}}]
	\label{FrechetFundamentalTheorem}
	Let $f\colon X\to Y$ be a map between Fr\'echet spaces, with $f$
	smooth on an open convex subset $U\subset X$. Then, for any $u,v \in
	U$,
	\[
		f(v) - f(u) = \int_0^1 \d{t} \, D_{v-u}f(u+t(v-u)) ,
	\]
	using the Bochner integral (Riemann sums convergent in the topology of
	$Y$).
\end{proposition}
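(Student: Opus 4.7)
The plan is to reduce the statement to the one-dimensional fundamental theorem of calculus for smooth curves into a Fr\'echet space, and then to reduce that to the classical scalar-valued theorem via Hahn--Banach separation.

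First, set $\gamma(t) := u + t(v-u)$ for $t \in [0,1]$. Since $U$ is convex and $u,v \in U$, the curve $\gamma$ takes values in $U$. Applying the chain rule for G\^ateaux derivatives (Definition~\ref{FrechetSmoothFunction}) shows that $g := f \circ \gamma \colon [0,1] \to Y$ is smooth with derivative $g'(t) = D_{v-u} f(u + t(v-u))$. Hence it suffices to prove the one-dimensional version: for any smooth $g \colon [0,1] \to Y$,
\[
  g(1) - g(0) = \int_0^1 g'(t) \, \d t .
\]

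Next, I would establish existence of the integral of the continuous curve $g' \colon [0,1] \to Y$ as a limit of Riemann sums in the topology of $Y$. Since $g'$ is continuous on the compact interval $[0,1]$, it is uniformly continuous with respect to each of the countably many seminorms defining the Fr\'echet topology on $Y$. This yields a Cauchy criterion for tagged Riemann sums in every seminorm simultaneously, and completeness of $Y$ produces the limit, which serves as the integral on the right-hand side.

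Then, for each continuous linear functional $\lambda \in Y^\ast$, the composite $\lambda \circ g \colon [0,1] \to \R$ is smooth with derivative $\lambda \circ g'$. Because $\lambda$ is continuous and linear, it commutes with the Riemann-sum limit, so
\[
  \lambda\!\left(\int_0^1 g'(t) \, \d t\right)
    = \int_0^1 \lambda(g'(t)) \, \d t
    = (\lambda \circ g)(1) - (\lambda \circ g)(0)
    = \lambda\bigl(g(1) - g(0)\bigr) ,
\]
where the middle equality is the classical scalar fundamental theorem of calculus. Since $Y$ is Hausdorff and locally convex, the Hahn--Banach theorem ensures that continuous linear functionals separate points of $Y$, so $g(1) - g(0) = \int_0^1 g'(t) \, \d t$ as required.

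The main technical obstacle is the Riemann-sum convergence step: the Fr\'echet topology is specified by a countable family of seminorms, and one must verify a uniform Cauchy estimate simultaneously for each one. This reduces cleanly to uniform continuity of $g'$ on the compact interval $[0,1]$, but demands careful multi-seminorm bookkeeping. Everything else---the chain rule for G\^ateaux derivatives, the linearity and continuity of $\lambda$, and Hahn--Banach separation---is essentially formal in the locally convex setting.
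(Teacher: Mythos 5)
The paper does not actually prove this proposition: it is recalled verbatim from Michor's book (the bracketed citation \cite[\textsection 8.4]{Michor80} is the whole justification), followed only by a remark clarifying what ``Bochner integral'' means here. So there is no in-paper argument to compare against, and what you have written is a genuine proof where the paper offers none. Your argument is correct and is the standard one for calculus in locally convex spaces: restrict to the affine segment, construct the integral of the continuous curve $g'$ via Riemann sums (uniform continuity in each seminorm plus completeness of the Fr\'echet space), and then reduce the identity to the scalar fundamental theorem of calculus by applying continuous linear functionals and invoking Hahn--Banach separation of points. Two small points of hygiene: (i) you do not need a general ``chain rule for G\^ateaux derivatives'' (which is delicate in this setting) --- for the affine curve $\gamma(t)=u+t(v-u)$ the identity $g'(t)=D_{v-u}f(\gamma(t))$ is literally the definition of the directional derivative in Definition~\ref{FrechetSmoothFunction}, so you should say that instead; (ii) you should note explicitly that $g'$ is continuous on $[0,1]$, which is what licenses both the Riemann-sum construction and the scalar FTC for $\lambda\circ g$ --- this follows from the joint continuity of $Df$ built into Definition~\ref{FrechetSmoothFunction}. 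With those clarifications the proof is complete.
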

Usually the Bochner integral is defined on a measure space and is valued
in a Banach space. Since we are only considering integration over the
interval $[0,1]$ and only of continuous functions, there is no
difference between the Riemann and Lebesgue integrals. On a Banach space
the Bochner integral is defined by convergence with respect to a norm,
which is equivalent to convergence in the topology of the Banach space
(as opposed to to a weak topology, for example). Though Fr\'echet spaces
are not normed, we can still define the Bochner integral by convergence
in the topology of the Fr\'echet space.

\begin{definition}
 \label{FrechetManifoldsDef}
Let $\mathrm{FrMfd}$ be the category of topological spaces endowed with
atlases of charts onto open subsets of Fr\'echet vector spaces, with
continuous maps that are smooth in local charts in the above sense as
morphisms.
\end{definition}
There is the canonical inclusion
$$
  \mathrm{SmthMfd} \hookrightarrow \mathrm{FrMfd}
$$
of finite dimensional smooth manifolds
(definition~\ref{SmoothManifolds}) into Fr{\'e}chet manifolds
(definition~\ref{FrechetManifoldsDef}).

\begin{proposition}
  \label{SmoothSetsReceiveFrechetManifolds}
  Let $i : \mathrm{SmthMfd} \hookrightarrow \mathrm{FrMfd}$
  be the canonical inclusion of finite-dimensional smooth manifolds
  (definition~\ref{SmoothManifolds}) into Fr{\'e}chet manifolds
  (definition~\ref{FrechetManifoldsDef}).
  Then the functor
  %$$
  \begin{gather*}
    i_{\mathrm{Sh}}
    \;:\;
    \xymatrix{
    \mathrm{FrMfd}
      \ar[r]^-{y_{\mathrm{FrMfd}}}
      &
    \mathrm{PSh}(\mathrm{FrMfd})
      \ar[r]^-{i^\ast}
      &
    \mathrm{PSh}(\mathrm{SmthMfd})
      \ar[rr]^-{L_{\mathrm{SmthMfd}}}
      &&
    \mathrm{Sh}(\mathrm{SmthMfd})
    =
    \mathrm{SmoothSet}
    }
    \, , \\
  %$$
  %$$
    i_{\mathrm{Sh}}(X) : U \mapsto \mathrm{Hom}_{\mathrm{FrMfd}}(i(U),X)
    \, ,
  %$$
  \end{gather*}
	where $y_{(-)} \colon (-) \to \mathrm{PSh}(-)$ is the Yoneda embedding
	and $L_{(-)} \colon \mathrm{PSh}(-) \to \mathrm{Sh}(-)$ is the
	sheafification functor, is fully faithful, hence exhibits Fr{\' e}chet
	manifolds as a full subcategory of smooth sets
	(definition~\ref{SmoothManifolds}).
\end{proposition}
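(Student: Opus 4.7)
The plan is to verify full faithfulness by sampling Fr{\'e}chet-smooth maps along finite-dimensional smooth probes. As a preliminary, observe that the presheaf $U \mapsto \mathrm{Hom}_{\mathrm{FrMfd}}(i(U), X)$ on $\mathrm{SmthMfd}$ is already a sheaf with respect to open covers, because Fr{\'e}chet-smoothness in the sense of Definition \ref{FrechetSmoothFunction} is detected in local charts of the domain, hence is a local condition on $U$. Therefore the sheafification $L_{\mathrm{SmthMfd}}$ acts trivially, and morphisms of sheaves $i_{\mathrm{Sh}}(X) \to i_{\mathrm{Sh}}(Y)$ are precisely natural transformations between the corresponding hom-presheaves on $\mathrm{SmthMfd}^{\mathrm{op}}$.

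Faithfulness is then immediate: a smooth map $f \colon X \to Y$ in $\mathrm{FrMfd}$ is determined by its underlying map of sets, which one recovers from the induced natural transformation by evaluating at $U = \mathbb{R}^0$, using the identification of $\mathrm{Hom}_{\mathrm{FrMfd}}(\mathbb{R}^0, X)$ with the underlying point set of $X$. For fullness, given a natural transformation $\phi \colon i_{\mathrm{Sh}}(X) \to i_{\mathrm{Sh}}(Y)$, first define the underlying set map $f \colon X \to Y$ as the component $\phi_{\mathbb{R}^0}$. Naturality along each point inclusion $u_0 \colon \mathbb{R}^0 \to U$ shows that for any Fr{\'e}chet-smooth probe $\gamma \colon U \to X$ from a finite-dimensional smooth $U$, and any $u_0 \in U$, one has $\phi_U(\gamma)(u_0) = f(\gamma(u_0))$; that is, $\phi_U(\gamma) = f \circ \gamma$ as set maps. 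Since the left-hand side is Fr{\'e}chet-smooth by hypothesis, $f \circ \gamma$ is Fr{\'e}chet-smooth for every finite-dimensional smooth probe $\gamma$ into $X$.

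The remaining task is to promote this probe-wise smoothness to genuine Fr{\'e}chet-smoothness of $f$. Working in Fr{\'e}chet charts $\psi_X \colon V_X \hookrightarrow X$ and $\psi_Y \colon V_Y \hookrightarrow Y$, one reduces to showing that the local representative $\tilde f = \psi_Y^{-1} \circ f \circ \psi_X$ is smooth in the G{\^a}teaux sense of Definition \ref{FrechetSmoothFunction}. For any point $x \in V_X$ and directions $w_1, \ldots, w_k$, the affine map $(t_1, \ldots, t_k) \mapsto \psi_X(x + t_1 w_1 + \cdots + t_k w_k)$ is a Fr{\'e}chet-smooth map from a neighborhood of $0 \in \mathbb{R}^k$ into $X$; pulling it through $f$ gives, after applying $\psi_Y^{-1}$, a finite-dimensional smooth map whose mixed partial derivatives at the origin realize $D^k \tilde f(x; w_1, \ldots, w_k)$. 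Joint continuity of these iterated derivatives in $(x, w_1, \ldots, w_k)$ follows by running the same argument on multi-parameter families that vary the base point and the directions together, and Proposition \ref{FrechetFundamentalTheorem} then recovers continuity of $\tilde f$ on convex neighborhoods. Hence $f \in \mathrm{Hom}_{\mathrm{FrMfd}}(X, Y)$, and the identity $\phi_U(\gamma) = f \circ \gamma$ exhibits $\phi$ as the natural transformation induced by $f$.

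The main obstacle is the Boman-type step of passing from probe-wise smoothness to genuine Fr{\'e}chet-smoothness: extracting joint continuity of all iterated G{\^a}teaux derivatives from the Fr{\'e}chet-smoothness of compositions $f \circ \gamma$ along sufficiently many multi-parameter smooth probes. Everything else is bookkeeping with Yoneda, sheafification and naturality of $\phi$.
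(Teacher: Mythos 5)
Your overall architecture is sound and, in fact, isolates exactly where the real content of this proposition lies: everything reduces to showing that a set map $f\colon X \to Y$ whose composites $f\circ\gamma$ with all finite-dimensional smooth probes $\gamma\colon U \to X$ are Fr\'echet-smooth is itself Fr\'echet-smooth. The paper takes a different and much shorter route: it observes that $i_{\mathrm{Sh}}$ factors through the concrete sheaves (diffeological spaces) and then cites Losik's theorem that $\mathrm{FrMfd} \to \mathrm{DiffSp}$ is fully faithful --- i.e.\ it outsources precisely the Boman-type step that you are trying to prove by hand. Your preliminary observations (the hom-presheaf is already a sheaf, faithfulness via evaluation at $\mathbb{R}^0$, the identity $\phi_U(\gamma) = f\circ\gamma$ from naturality along point inclusions) are all correct and are the standard ``bookkeeping'' half of such an argument.

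However, the step you yourself flag as the main obstacle is genuinely not closed by your sketch. From smoothness of $f\circ\gamma$ along all finite-dimensional probes --- including multi-parameter families varying base point and directions together --- you obtain existence of all iterated directional derivatives $D^k\tilde f(x;w_1,\dots,w_k)$ and their smoothness along every finite-dimensional smooth family. But joint continuity of $D^k\tilde f$ as a function on an open subset of $X \times X^k$, with $X$ infinite-dimensional, does not follow from this: finite-dimensional probes only ever see finite-dimensional slices, and continuity on an infinite-dimensional Fr\'echet space is not detected by restriction to finite-dimensional affine (or even smooth) subfamilies without further input. The standard way to bridge this is the ``special curve lemma'' for metrizable locally convex spaces: any convergent sequence can be reparametrized to lie on a single smooth curve, so that curve-wise smoothness forces sequential (hence, by metrizability, topological) continuity. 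That argument uses the Fr\'echet hypothesis essentially --- for non-metrizable locally convex spaces the implication from probe-wise smoothness to continuity actually fails --- and your proposal never invokes metrizability anywhere, which is a sign the gap is real rather than cosmetic. There is also a secondary circularity: to define the local representative $\tilde f = \psi_Y^{-1}\circ f\circ\psi_X$ on an open set you already need $f$ to be continuous (so that the preimage of the target chart is open), yet your argument only recovers continuity at the very end via Proposition \ref{FrechetFundamentalTheorem}, whose hypotheses in turn presuppose joint continuity of $Df$. Either cite a Boman/convenient-calculus theorem for Fr\'echet spaces (as the paper effectively does via Losik), or supply the special curve lemma argument explicitly.
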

\begin{proof}
	The functor factors evidently through the full subcategory of concrete
	sheaves on smooth manifolds, which is the category of diffeological
	spaces from example \ref{DiffeologicalSpace}. It is hence sufficient
	to see that the factorization
  $$
    \mathrm{FrMfd} \longrightarrow \mathrm{DiffSp}
  $$
  is fully faithful. This is the content of \cite[theorem 3.1.1]{Losik94}.
\end{proof}
\begin{proposition}
	There exists a subcanonical (definition \ref{subcanonical})
	Grothendieck topology on $\mathrm{FrMfd}$ such that its sheaf topos
	agrees with that over $\mathrm{SmthMfd}$:
  $$
    \mathrm{Sh}(\mathrm{FrMfd}) \simeq \mathrm{Sh}(\mathrm{SmthMfd})
    \,.
  $$
\end{proposition}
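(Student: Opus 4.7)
The plan is to pull back the topology from $\mathrm{SmoothSet}$ along the embedding $i_{\mathrm{Sh}} : \mathrm{FrMfd} \hookrightarrow \mathrm{SmoothSet}$ of the previous proposition. Explicitly, declare a family $\{f_i : U_i \to X\}_{i \in I}$ in $\mathrm{FrMfd}$ to be covering precisely when the induced family $\{i_{\mathrm{Sh}}(f_i)\}_{i \in I}$ is jointly epimorphic in the topos $\mathrm{SmoothSet}$. Unwinding the definition of $i_{\mathrm{Sh}}$ and of epimorphisms in a category of sheaves, this is equivalent to the plot-theoretic condition that every smooth map $\phi \colon \mathbb{R}^n \to X$ admits a good open cover $\{V_\alpha \hookrightarrow \mathbb{R}^n\}_\alpha$ such that each restriction $\phi|_{V_\alpha}$ factors through some $f_i$.

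First I would verify that this is a Grothendieck topology: the axioms (isomorphism sieves cover, stability under pullback, transitivity) are immediate from the analogous properties of epimorphic families in the topos $\mathrm{SmoothSet}$ combined with the plot-based criterion. Next I would check subcanonicity. By full faithfulness of $i_{\mathrm{Sh}}$, the representable presheaf $h_X = \mathrm{Hom}_{\mathrm{FrMfd}}(-,X)$ coincides with $\mathrm{Hom}_{\mathrm{SmoothSet}}(i_{\mathrm{Sh}}(-), i_{\mathrm{Sh}}(X))$. Given a cover $\{U_i \to Y\}$ in our topology, the induced map $\sqcup_i i_{\mathrm{Sh}}(U_i) \twoheadrightarrow i_{\mathrm{Sh}}(Y)$ is an effective epimorphism in the topos $\mathrm{SmoothSet}$, and the sheaf condition for $h_X$ then reduces to applying $\mathrm{Hom}_{\mathrm{SmoothSet}}(-,i_{\mathrm{Sh}}(X))$ to this effective epimorphism.

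Then I would establish the two compatibility facts needed for the comparison lemma: (i) the restriction of the above topology to $\mathrm{SmthMfd}$ determines the same sheaves as the topology of good open covers, because a good open cover is automatically a cover in our sense while any cover in our sense of a finite-dimensional manifold can be refined by a good open cover via the plot-based description; and (ii) $\mathrm{SmthMfd}$ is dense in $\mathrm{FrMfd}$ in the sense that every $X \in \mathrm{FrMfd}$ is covered by the family of all its plots $\{\mathbb{R}^n \to X\}$ from finite-dimensional Cartesian spaces, since each plot tautologically factors through itself. Finally, I would invoke the Grothendieck comparison lemma: the fully faithful, continuous, and dense inclusion $\mathrm{SmthMfd} \hookrightarrow \mathrm{FrMfd}$ induces an equivalence on sheaf topoi, giving the claimed isomorphism $\mathrm{Sh}(\mathrm{FrMfd}) \simeq \mathrm{Sh}(\mathrm{SmthMfd})$.

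The main obstacle is ensuring that smoothness of morphisms between Fr\'echet manifolds is genuinely detected by plots from finite-dimensional Cartesian spaces, since this is what makes both subcanonicity and the density of the plot cover work. This is precisely the content of Losik's theorem already invoked in the proof of the preceding proposition (identifying Fr\'echet manifolds with a full subcategory of diffeological spaces), so the heavy lifting has already been done; what remains is the bookkeeping of translating that identification into a statement about a Grothendieck topology.
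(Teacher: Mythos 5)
Your proof is correct, but it takes a different route from the paper's. The paper disposes of this in one line by citing its general proposition \ref{AmbientCategoryGrothedieckTopology}: for any fully faithful inclusion $\mathcal{S}\hookrightarrow\mathcal{C}$ of the underlying category of a site, one composes the sheafification adjunction for $\mathcal{S}$ with the $(i^\ast\dashv i_\ast)$ adjunction to obtain a geometric embedding $\mathrm{Sh}(\mathcal{S})\hookrightarrow\mathrm{PSh}(\mathcal{C})$, and then invokes the correspondence between such embeddings and Grothendieck topologies (proposition \ref{IncludingSheavesIntoPresheaves}) to produce the topology abstractly; subcanonicity falls out of a computation with the sheafified representables, using exactly the full faithfulness of $L_{\mathcal{S}}\circ i^\ast\circ y_{\mathcal{C}} = i_{\mathrm{Sh}}$ from proposition \ref{SmoothSetsReceiveFrechetManifolds}. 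You instead make the topology explicit (a family covers iff it is jointly epimorphic after applying $i_{\mathrm{Sh}}$, i.e.\ iff every finite-dimensional plot locally factors through it) and conclude by the comparison lemma along the dense fully faithful inclusion $\mathrm{SmthMfd}\hookrightarrow\mathrm{FrMfd}$. Both arguments rest on the same key input, namely Losik's theorem giving full faithfulness of $\mathrm{FrMfd}\hookrightarrow\mathrm{DiffSp}$, so that plots from Cartesian spaces detect smoothness. What your version buys is a concrete description of the covering families, which the paper never supplies; what the paper's version buys is uniformity, since the same abstract lemma is reused verbatim later (e.g.\ in theorem \ref{PDESite}) where an explicit description of covers would be harder to come by. Two small points to tighten: pullback-stability of your covers is not quite ``immediate'' from stability of epimorphic families in the topos, since $\mathrm{FrMfd}$ lacks pullbacks --- one should phrase the topology in terms of sieves and check that the pullback sieve $g^\ast S$ is covering by restricting plots (which works, precisely because the restricted plots are themselves finite-dimensional and hence objects of $\mathrm{FrMfd}$); and for subcanonicity the reduction to the kernel-pair equalizer of an effective epimorphism should be matched against the sieve-theoretic sheaf condition, which again goes through but deserves a sentence.
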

\begin{proof}
	This is the result of applying
	proposition~\ref{SmoothSetsReceiveFrechetManifolds} in
	proposition~\ref{AmbientCategoryGrothedieckTopology}.
\end{proof}

\begin{definition}
  \label{RInfinity}
  Write
  $$
    \mathbb{R}^\infty \in \mathrm{FrMfd}
  $$
	for the Fr{\' e}chet manifold whose underlying topological vector
	space is the \emph{projective} limit over the sequence of the standard
	projections $\cdots \to \mathbb{R}^2 \to \mathbb{R}^1 \to
	\mathbb{R}^1$, equipped with the family of seminorms given for each $n
	\in \mathbb{N}$ by the composites
  $$
    \Vert - \Vert_n
      \;:\;
    \mathbb{R}^\infty
      \overset{p_n}{\longrightarrow}
    \mathbb{R}^n
      \overset{\Vert -\Vert}{\longrightarrow}
    \mathbb{R}
  $$
	of the standard projection to $\mathbb{R}^n$ followed by the standard
	norm on $\mathbb{R}^n$.
\end{definition}
\begin{remark}
  \label{RInfinityAsTopologicalProjectiveLimit}
	The definition \ref{RInfinity} of $\mathbb{R}^\infty$ says that for
	each $x \in \mathbb{R}^n$ the open balls
  $$
    B_\delta^n(x)
    :=
     \left\{
      y \in \mathbb{R^\infty}
         \;\vert\;
      \Vert y-x\Vert_n < \delta
     \right\}
     \;\;\;\;\;\;\;
     \mbox{for $\delta > 0, n \in \mathbb{N}$}
  $$
	form a base of neighborhoods of $x$ for the topology on
	$\mathbb{R}^\infty$. For fixed $n$ these are of course the preimages
	under $p_n : \mathbb{R}^\infty \to \mathbb{R}^n$ of the opens of the
	standard base of neighborhoods of $\mathbb{R}^n$, hence the open balls
	$B_\delta^n(x)$ induce the coarsest topology on $\mathbb{R}^\infty$
	such that all the $p_n$ are continuous functions. This exhibits the
	underlying topological space of $\mathbb{R}^\infty$ as the projective
	limit in topological spaces over the system of topological spaces
	$(\cdots \to \mathbb{R}^2 \to \mathbb{R}^1 \to \mathbb{R}^0)$.
\end{remark}
\begin{proposition}
  \label{ProjectiveLimitNatureOfRInfinity}
	The Fr{\'e}chet manifold $\mathbb{R}^\infty$ from
	definition~\ref{RInfinity} is the projective limit, formed in the
	category $\mathrm{FrMfd}$ of Fr{\' e}chet manifolds, of the projective
	system $(\cdots \to \mathbb{R}^2 \to \mathbb{R}^1 \to \mathbb{R}^0)$
	of finite dimensional manifolds, regarded as a system in
	$\mathrm{FrMfd}$.
\end{proposition}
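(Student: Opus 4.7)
My plan is to verify directly the universal property of a projective limit in $\mathrm{FrMfd}$. First I would observe that the standard projections $p_n : \mathbb{R}^\infty \to \mathbb{R}^n$ are smooth in the sense of Definition~\ref{FrechetSmoothFunction}, since continuous linear maps between Fréchet spaces have Gâteaux derivative equal to themselves at each point and hence all higher iterated derivatives vanish. Together with the compatibility $\mathbb{R}^{n+1} \to \mathbb{R}^n$ built into the definition, this exhibits $(\mathbb{R}^\infty,\{p_n\})$ as a cone in $\mathrm{FrMfd}$ over the given projective system.

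Next, given any competing cone $(Y,\{f_n : Y \to \mathbb{R}^n\})$ in $\mathrm{FrMfd}$, I need to produce a unique smooth map $f : Y \to \mathbb{R}^\infty$ with $p_n \circ f = f_n$. The underlying set-theoretic candidate is forced: $f(y) := (f_n(y))_{n\in\mathbb{N}}$. Continuity of $f$ as a map of topological spaces, together with its uniqueness at that level, is immediate from Remark~\ref{RInfinityAsTopologicalProjectiveLimit}, which identifies the underlying topological space of $\mathbb{R}^\infty$ with the projective limit of $(\cdots\to\mathbb{R}^2\to\mathbb{R}^1\to\mathbb{R}^0)$ in $\mathrm{Top}$.

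The main obstacle, and the heart of the proof, is checking that this $f$ is smooth in the sense of Definition~\ref{FrechetSmoothFunction}. I would proceed by induction on the order $k$ of the iterated Gâteaux derivative, reducing via charts to the case that $Y$ is an open subset $U$ of a Fréchet space $X$ and all $f_n$ are smooth functions on $U$. For $k=1$ and fixed $(u,w)\in U\times X$, the difference quotient $t\mapsto (f(u+tw)-f(u))/t$ is a curve in $\mathbb{R}^\infty$, and by the coarsest-topology characterization in Remark~\ref{RInfinityAsTopologicalProjectiveLimit} it converges as $t\to 0$ if and only if each composite with $p_n$ converges; since $p_n\circ f = f_n$ is smooth and $p_n$ is linear continuous, that composite converges to $Df_n(u,w)$. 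The limit thus exists in $\mathbb{R}^\infty$ and equals $(Df_n(u,w))_n$, so $Df$ exists and satisfies $p_n\circ Df = Df_n$. Joint continuity of $Df : U\times X\to \mathbb{R}^\infty$ then follows again from the initial-topology property: each $p_n\circ Df = Df_n$ is jointly continuous by smoothness of $f_n$, so $Df$ itself is jointly continuous.

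The inductive step is identical in form: assuming $D^kf$ exists and is jointly continuous with $p_n\circ D^kf = D^kf_n$, the same limit-and-continuity argument applied coordinate-wise shows that $D^{k+1}f$ exists, is jointly continuous, and satisfies $p_n\circ D^{k+1}f = D^{k+1}f_n$. Hence $f$ is smooth. Uniqueness of $f$ as a smooth map is inherited from its uniqueness as a continuous map, so the universal property is verified and $\mathbb{R}^\infty$ is indeed the projective limit of the system in $\mathrm{FrMfd}$.
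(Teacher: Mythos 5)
Your proposal is correct and follows essentially the same route as the paper: verify the universal property, obtain the unique continuous comparison map from the topological projective limit description in Remark~\ref{RInfinityAsTopologicalProjectiveLimit}, and then upgrade continuity to smoothness by applying the coordinate-wise convergence argument to the iterated G\^ateaux derivatives. The only difference is that you carry out the induction on the derivative order explicitly, whereas the paper sketches it in one sentence and defers the details to Saunders (Lemma~7.1.8); your filled-in argument is sound.
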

\begin{proof}
	We need to check that for any $X \in \mathrm{FrMfd}$ and a sequence of
	$\{ X \overset{f_n}{\to} \mathbb{R}^n \}$ compatible morphisms, there
	is a unique morphism $f\colon X \to \mathbb{R}^\oo$ that commutes with
	the $f_n$ and the projections $X\to \mathbb{R}^n$. By remark
	\ref{RInfinityAsTopologicalProjectiveLimit} we already know that the
	underlying topological space of $\mathbb{R}^\infty$ is the projective
	limit of the projective sequence of topological spaces spaces $(\cdots
	\to \mathbb{R}^2 \to \mathbb{R}^1 \to \mathbb{R}^0)$. So all of our
	conditions are already satisfied, though at this point we can only
	conclude that $f$ is  continuous. Hence what remains to be shown is
	that our $f$ is smooth precisely when the corresponding $f_n$ all are.
	This is easily done by repeating the above argument for the iterated
	derivatives $D^k f_n$. The details can be found in~\cite[Lemma
	7.1.8]{Saunders89}.
\end{proof}
\begin{example}
	It follows from proposition~\ref{ProjectiveLimitNatureOfRInfinity}
	that for $y(\mathbb{R})^n  \simeq  i_{\mathrm{Sh}}(\mathbb{R}^n) \in
	\mathrm{Sh}(\mathrm{SmthMfd})$ the Cartesian spaces regarded as smooth
	sets (via Yoneda embedding, hence equivalently via the embedding of
	proposition~\ref{SmoothSetsReceiveFrechetManifolds}) then their
	projective limit as smooth sets is represented (via
	proposition~\ref{SmoothSetsReceiveFrechetManifolds}) by their
	projective limit as Fr{\' e}chet manifolds
  $$
    \underset{\longleftarrow}{\lim}_n i_{\mathrm{Sh}}(\mathbb{R}^n)
    \simeq
    i_{\mathrm{Sh}}(\mathbb{R}^\infty)
    \,.
  $$
	Moreover, the full faithfulness of the embedding due to
	proposition~\ref{SmoothSetsReceiveFrechetManifolds} says that
	morphisms of smooth sets of the form
  $$
    \underset{\longleftarrow}{\lim}_n i_{\mathrm{Sh}}(\mathbb{R}^n)
    \simeq
    i_{\mathrm{Sh}}(\mathbb{R}^\infty)
    \longrightarrow
    i_{\mathrm{Sh}}(\mathbb{R})
  $$
  are in natural bijection with morphisms of Fr{\' e}chet manifolds of the form
  $$
    \mathbb{R}^\infty
      \longrightarrow
    \mathbb{R}
    \,.
  $$
\end{example}

\begin{definition}[{\cite{Takens79}}]
  \label{LocallyOfFiniteOrder}
	A function (of underlying sets) on $\mathbb{R}^\infty$
	(definition~\ref{RInfinity})
  $$
    f : \mathbb{R}^\infty \longrightarrow \mathbb{R}
  $$
	we call \emph{smooth and locally of finite order} if for every point
	$x \in \mathbb{R}^\infty$ there is an open neighbourhood $U_k \subset
	\mathbb{R}^k$ of $p_k(x)$ and a smooth function $f_k : U_k \to
	\mathbb{R}$ which determines the restriction of $f$ to the pre-image
	$U = p_k^{-1}(U)$:
  $$
    f|_{U} = f_k \circ p_k
    \,.
  $$
\end{definition}

\begin{proposition}[{\cite[\textsection 9.5.9]{Michor80}}]
  \label{SmoothFunctionsOnFrechetRInfinity}
  Morphisms of Fr{\' e}chet manifolds of the form
  $$
    f : \mathbb{R}^\infty \longrightarrow \mathbb{R}
  $$
  are precisely the functions of underlying sets that are \emph{smooth and locally of finite order}
  in the sense of definition~\ref{LocallyOfFiniteOrder}.
\end{proposition}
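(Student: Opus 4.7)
The plan is to prove the two inclusions separately. The easy direction is that a function $f$ which is smooth and locally of finite order in the sense of Definition~\ref{LocallyOfFiniteOrder} is automatically a Fréchet-smooth morphism. Indeed, each projection $p_k \colon \mathbb{R}^\infty \to \mathbb{R}^k$ is continuous and linear, hence in particular Gâteaux smooth in the sense of Definition~\ref{FrechetSmoothFunction} (its iterated derivatives are $D p_k(x,w) = p_k(w)$ and zero in higher order). Since composites and restrictions of Fréchet-smooth maps are Fréchet smooth, $f|_U = f_k \circ p_k$ is smooth on each neighborhood and so $f$ is smooth on $\mathbb{R}^\infty$.

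For the nontrivial direction, assume $f \colon \mathbb{R}^\infty \to \mathbb{R}$ is Fréchet smooth and fix $x_0 \in \mathbb{R}^\infty$. The central input is the topological-vector-space fact that the continuous dual of $\mathbb{R}^\infty$, equipped with the projective-limit locally convex topology from Remark~\ref{RInfinityAsTopologicalProjectiveLimit}, is $\bigoplus_n \mathbb{R}$: any continuous linear functional $\ell \colon \mathbb{R}^\infty \to \mathbb{R}$ is dominated by a single seminorm $\|\cdot\|_k$ and hence factors through $p_k$. Applied to the Gâteaux derivative, $D f(x, -) \colon \mathbb{R}^\infty \to \mathbb{R}$ is continuous and linear in its second argument, so it factors through some $p_{k(x)}$.

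Next I would upgrade this pointwise statement to a uniform one on a neighborhood of $x_0$. By Definition~\ref{FrechetSmoothFunction} the map $Df \colon U \times \mathbb{R}^\infty \to \mathbb{R}$ is jointly continuous, so at $(x_0, 0)$ there exist a neighborhood $V \ni x_0$, an index $k$, and $\delta, C > 0$ such that $|Df(x,w)| \le C$ whenever $x \in V$ and $\|w\|_k < \delta$. Scaling $w$ and using linearity in $w$ promotes this to $|Df(x,w)| \le (C/\delta)\|w\|_k$ for all $x \in V$, which forces $Df(x,w)$ to depend on $w$ only through $p_k(w)$ for every $x \in V$. Shrinking $V$ to a convex neighborhood and applying the fundamental theorem of calculus (Proposition~\ref{FrechetFundamentalTheorem}) gives
\[
   f(y) - f(x_0) = \int_0^1 D f\bigl(x_0 + t(y-x_0),\, y - x_0\bigr)\, \d t ,
\]
and since the integrand depends on $y$ only through $p_k(y)$, so does the left-hand side. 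Defining $f_k$ on the image of $V$ under $p_k$ (which is open because $p_k$ is an open map of Fréchet spaces) and invoking the iterated derivatives $D^j f$ to verify smoothness of $f_k$ at the finite level completes the argument; this is the content of~\cite[\textsection 9.5.9]{Michor80}.

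The main obstacle is the promotion of the pointwise factorization $Df(x,-) = \tilde\ell_x \circ p_{k(x)}$ to a \emph{uniform} factorization through one fixed $k$ on a neighborhood of $x_0$. Without joint continuity of $Df$, the order $k(x)$ could a priori blow up as $x \to x_0$, destroying any hope of a local finite-order representation; it is exactly the definition of Fréchet smoothness---joint continuity of all iterated Gâteaux derivatives---that rules this out, and recognizing and deploying this is the crux of the proof.
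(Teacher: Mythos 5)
Your overall strategy coincides with the paper's: the easy direction is identical, and for the hard direction you use exactly the paper's key mechanism --- joint continuity of $Df$ at $(x_0,0)$ produces a single order $k$ valid on a whole neighborhood, the scaling/linearity argument in the second slot forces $D_wf(x)=0$ for all $w\in W=\ker(\mathbb{R}^\infty\to\mathbb{R}^k)$, and the fundamental theorem of calculus (Proposition~\ref{FrechetFundamentalTheorem}) converts the vanishing of these derivatives into local finite order. Your closing remark correctly identifies the crux (uniformity of $k$ via joint continuity); the pointwise dual-space observation you open with is true but is subsumed by the uniform argument and is not actually needed.

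There is, however, one step that does not go through as written. In your application of the fundamental theorem of calculus you integrate along the segment from $x_0$ to $y$ and assert that the integrand $Df\bigl(x_0+t(y-x_0),\,y-x_0\bigr)$ depends on $y$ only through $p_k(y)$. What you have established is only that the \emph{second} argument of $Df$ can be replaced by its projection to $\mathbb{R}^k$; the \emph{first} argument $x_0+t(y-x_0)$ still carries the full dependence on $y$, including its component in $W$, and nothing proved so far says that $Df(\,\cdot\,,v)$ is itself locally of finite order in its base point (that would require running the same argument on $D^2f$, and so on inductively). The fix is the one the paper uses: choose the path \emph{along the fiber}. Writing $y=(y_0,w)$ with $y_0\in U_x^k$ and $w\in W$, integrate from $(y_0,0)$ to $(y_0,w)$, so that the direction of differentiation is $(0,w)\in W$ and the integrand $D_{(0,w)}f$ vanishes identically by the scaling argument; this gives $f(y_0,w)=f(y_0,0)$ directly, i.e.\ $f$ is constant on the fibers of $p_k$ over the neighborhood and hence factors through $p_k$. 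With that substitution your argument is complete and agrees with the paper's proof.
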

\begin{proof}
	If $f$ is smooth and locally of finite order, then it is
    straightforward to check that it is smooth also in the sense of
	definition~\ref{FrechetSmoothFunction}. On the other hand, if we
	already know that $f$ is locally of finite order, Fr\'echet smoothness
	directly implies the smoothness of each of its finite order
	restrictions. Thus, it remains only to show that a Fr\'echet smooth
	function is locally of finite order.

	This result eventually follows from the joint continuity of the
	derivative $(u,v) \mapsto D_vf(u)$ as a map $Df\colon U \times
	\mathbb{R}^\oo \to \mathbb{R}$, which turns out to be a rather strong
	requirement.

	For any point $x \in \mathbb{R}^\oo$, by linearity in the second
	argument, $D_0f(x) = 0$. By joint continuity, the preimage
	$(Df)^{-1}(-1,1)$ is open and contains every point $(x,0) \in
	\mathbb{R}^\oo \times \mathbb{R}^\oo$ with $x\in X$. The product
	topology implies the existence of a product neighborhood $U_x \times
	V_x \ni (x,0)$ for any $x\in \mathbb{R}^\infty$, such that $U_x \times
	V_x \subset D f^{-1}(-1,1)$. The projective limit topology implies
	that there exists a positive integer $k$ such that we can choose $U_x
	= U_x^k \times W$ and $V = V_x^k \times W$, where $U_x^k, V_x^k
	\subset \mathbb{R}^k$ are open and convex and $W = \ker
	(\mathbb{R}^\oo \to \mathbb{R}^k)$. The order $k$ could be different
	for $U_x$ and $V_x$, but we can just take the maximum of the two. And,
	of course, $k$ depends both on $x$ and on the choice of the product
	neighborhood $U_x \times V_x$.

	With $w\in W$, $tw \in W$ for any $t\in \mathbb{R}$. But then for any
	$u\in U_x$, by construction and linearity,
	\[
		|D_{tw}f(u)| = |t| |D_w f(u)| \le 1 .
	\]
	Since $|t|$ could be arbitrarily large, this means that $D_w f(u) = 0$
	for any $w \in W$. Thus, $k$ gives us a local uniform upper
	bound on the number of independent non-vanishing partial derivatives
	of $f$ in $U_x$. To conclude the proof, we use the fundamental theorem
	of calculus (Proposition~\ref{FrechetFundamentalTheorem}) to show that
	$k$ is also an upper bound on the number of coordinates on which $f$
	depends in a non-trivial way in $U_x$. Namely, for any $u \in U_x =
	U_x^k \times W$ has the form $u = (u_0,w)$, where $u_0 \in U_x^k$ and
	$w \in W$, so that
	\[
		f(u_0,w) - f(u_0,0) = \int_0^1\d{t} \, D_{w}f(u_0,tw) = 0 .
	\]
	Hence $f|_{U_x}$ is constant along the fibers of $U_x \to U_x^k$ and
	thus factors through that projection. Since $x$ was arbitrary, $f$ is
	everywhere locally of finite order, which completes the proof.
\end{proof}

We will be considering the following full subcategory of ``locally pro-manifolds''
inside Fr{\'e}chet manifolds (this is essentially the unnamed definition in
\cite[section 1]{Marvan86}):
\begin{definition}
  \label{LocallyProManifold}
  Write
  $$
    \mathrm{LocProMfd}
      \hookrightarrow
    \mathrm{FrMfd}
  $$
  for the full subcategory of that of Fr{\'e}chet manifolds (definition \ref{FrechetManifoldsDef})
  on those Fr{\'e}chet manifolds which are $n$-dimensional for $n \in \mathbb{N}$
  (hence ``$\mathbb{R}^n$-manifolds'') or
  which are $\mathbb{R}^\infty$-manifolds (definition \ref{RInfinity}).
\end{definition}
\begin{remark}
  \label{LocProMfdFullyIncludedInFrechetManifolds}
Hence by proposition \ref{SmoothSetsReceiveFrechetManifolds} there is a sequence of full inclusion
$$
  \mathrm{SmthMfd}
     \hookrightarrow
     \mathrm{LocProMfd} \hookrightarrow \mathrm{FrMfd} \hookrightarrow \mathrm{SmoothSet} \hookrightarrow \mathrm{FormalSmoothSet}
  \,.
$$
Since full inclusions reflect limits, it is still true that
$\mathbb{R}^\infty$ is the projective limit over the $\mathbb{R}^n$-s
when formed in $\mathrm{LocProMfd}$
\end{remark}
\begin{remark}
The terminology in definition \ref{LocallyProManifold} is motivated as
follows: A pro-object in finite dimensional manifolds (``pro-manifold''
for short) is a \emph{formal} projective limit of finite dimensional
manifolds. This implies that a smooth function out of a pro-manifold is
represented by a smooth function on a finite stage of the corresponding
projective system, hence is \emph{globally of finite order}. Now by
proposition \ref{ProjectiveLimitNatureOfRInfinity} also the objects in
$\mathrm{LocProMfd}$ are projective limits of finite dimensional
manifolds, but, by proposition \ref{SmoothFunctionsOnFrechetRInfinity}, smooth
functions on them are in general only locally of finite order. In this
sense objects of $\mathrm{LocProMfd}$ locally look like pro-manifolds,
while globally they are a little more flexible.
\end{remark}

We will need to combine the concept of locally pro-manifolds with that of formal thickening,
yielding the following concept of formal locally pro-manifolds:

\begin{definition}
  \label{FormalLocalProManifolds}
  Write
  $$
    \mathrm{FormalLocProMfd} \hookrightarrow \mathrm{FormalSmoothSet}
  $$
  for the full subcategory of the Cahiers topos (definition \ref{FormalSmoothSets})
  on objects $X$ such that
  \begin{enumerate}
    \item the reduction (definition \ref{ReImEtAdjunctionOnFormalSmoothSets}) of $X$ is a locally pro-manifold
    (definition \ref{LocallyProManifold})
    $$
      \Re X \in \mathrm{LocProMfd} \hookrightarrow \mathrm{FormalSmoothSet}
      \,,
    $$
    \item there exists an order $k$-infinitesimal disk $\mathbb{D} \in \mathrm{InfPoint} \in \mathrm{FormalSmoothSet}$
    (definition \ref{StandardInfinitesimalnDisk}, definition \ref{InfinitesimalDiskInManifold})
    or a formal disk $\mathbb{D} \in \mathbf{H}$ (proposition \ref{FormalDiskInSmoothManifolds})
    such that $X$ is locally the Cartesian product of a locally pro-manifold with $\mathbb{D}$,
    hence such that
     there exists an open cover $\{U_i \overset{\phi_i}{\longrightarrow} \Re X\}_{i \in I}$
     and for each $i \in I$ a commuting diagram of the form
     $$
       \begin{gathered}
       \xymatrix{
         U_i \ar[d]_{\mathrm{et}}^{\phi_i}
         \ar[rr]^{\epsilon_{U_i \times \mathbb{D}}}
           &&
         U_i \times \mathbb{D}
          \ar[d]^{\mathrm{et}}
         \\
         \Re X
           \ar[rr]_-{\epsilon_X}
         &&
         X
       }
       \end{gathered}
       \, ,
     $$
     where the horizontal morphisms are the units of the $\Re$-monad, and where in addition to the left
     vertical morphisms (which is formally {\'e}tale by proposition \ref{LocalDiffeoIsFormallyEtals})
     also the right morphisms is formally {\'e}tale (definition \ref{FormallyEtaleMorphism}).
  \end{enumerate}
  Following \cite{Kock80} we call these objects
  \emph{formal locally pro-manifolds}, or just \emph{formal manifolds}, for short.
\end{definition}

\begin{definition}[{\cite[Sec.1.2]{Marvan86}}]
  \label{SubmersionBetweenLocallyProManifolds}
	We say that a morphism $f\colon X \longrightarrow Y$ between locally
	pro-manifolds (definition \ref{LocallyProManifold}) is a
	\emph{submersion} if for every $x\in X$, there exist open
	neighborhoods $U$ of $x$ and $V$ of $y=f(x)$ of the form $U \simeq
	\mathbb{R}^m \times \mathbb{R}^n$ and $V \simeq \mathbb{R}^n$, with $n
	= \dim Y$ and $m$ either a finite non-negative integer or $\oo$, such
	that $U \stackrel{f}{\longrightarrow} V$ is equivalent to the
	projection $\mathbb{R}^m \times \mathbb{R}^n \to \mathbb{R}^n$ onto
	the second factor.
\end{definition}

%%%%%%%%%%%%%%%%%%%%%%%%%%%%%%%%%%%%%%%%%%%%%%%%%%%%%%%%%%%
\section{Jets and PDEs}
\label{JetsPDEs}
%%%%%%%%%%%%%%%%%%%%%%%%%%%%%%%%%%%%%%%%%%%%%%%%%%%%%%%%%%%

We discuss here how purely formal constructions induced by the presence
of the infinitesimal shape operation $\Im$ in a differentially cohesive topos $\mathbf{H}$ (definition \ref{DifferentialCohesion})
yields a synthetic theory of jet bundles, and of partial differential equations.

We frequently illustrate the general discussion with the archetypical
example the topos
$$
	\mathbf{H} = \mathrm{FormalSmoothSet}
$$
from definition~\ref{FormalSmoothSets} (Dubuc's ``Cahiers topos''),
which is differentially cohesive by
proposition~\ref{diffcohesionofFormalSmoothSet}, showing that and how it
reproduces and generalizes traditional constructions.

\subsection{$V$-Manifolds}

We consider now an axiomatization of manifolds in the generality where the local model space may be
any group object $V$ in a differentially cohesive topos such as that of formal smooth sets, or rather the formal neighbourhood of its neutral element
(see above definition~\ref{VManifold} below).
Ordinary $n$-dimensional manifolds constitute the special case where $V = \mathbb{R}^n \in \mathrm{FormalSmoothset}$, equipped with
its canonical translation group structure.

For the present purpose of studying PDEs, the main point of the $V$-manifolds
is that their formal disk bundles have good ``microlinear'' structure, as shown by proposition \ref{FormalDiskBundleLocallyTrivializableOverVManifold} below, highlighted by corollary \ref{ComponentVersionOfDiskBundleMonadProduct}.
This plays a key role in the analysis of generalized PDEs with free variables ranging in $V$-manifolds,
below in section \ref{PDEs}.

Beyond that, the full force of the abstract definition of $V$-manifolds
is obtained when passing to the formal smooth $\infty$-groupoids, in which case we obtain
a concept of {\'e}tale $\infty$-stacks locally modeled on some $\infty$-group $V$.
This we discuss in a followup.

Throughout, let $\mathbf{H}$ be a differentially cohesive topos (definition \ref{DifferentialCohesion})
such as Dubuc's Cahiers topos $\mathrm{FormalSmoothSet}$ (definition \ref{FormalSmoothSets}).

\begin{definition}\label{FormallyEtaleMorphism}
  For $f : X \longrightarrow Y$ a morphism in $\mathbf{H}$,
  we say that it is \emph{formally {\'e}tale} precisely if the
  naturality square of the unit of the $\Im$-monad (definition~\ref{ReImEtAdjunctionOnFormalSmoothSets}) is Cartesian (a pullback square):
  $$
    \begin{gathered}
    \xymatrix{
      X \ar[d]_f \ar[r]^{\eta_X}_-{\ }="s" & \Im X \ar[d]^{\Im f}
      \\
      Y \ar[r]_{\eta_Y}^{\ }="t" & \Im Y
      \ar@{}|{\mbox{\tiny (pb)}} "s"; "t"
    }
    \end{gathered}
    \,.
  $$
  Moreover, we say that the \emph{formal {\'e}talification} of $f$
  is the pullback $(\Im_Y X \stackrel{\Im_Y f}{\to} Y) := \eta_Y^\ast \Im f$ in
  $$
    \begin{gathered}
    \xymatrix{
      \Im_Y X  \ar[r] \ar[d]_{\Im_Y f} \ar@{}[dr]|{\mbox{\tiny (pb)}}
      & \Im X \ar[d]^{\Im f}
      \\
      Y \ar[r]_{\eta_Y} & \Im Y
    }
    \end{gathered}
    \,.
  $$
\end{definition}

\begin{proposition}
  \label{LocalDiffeoIsFormallyEtals}
  For $X,Y \in \mathrm{SmthMfd} \hookrightarrow \mathrm{SmoothSet} \overset{i_!}{\hookrightarrow} \mathrm{FormalSmoothSet}$
  two smooth manifolds,
  regarded as formal smooth sets, then a morphism $f : X \to Y$ between them
  is formally {\'e}tale in the sense of definition~\ref{FormallyEtaleMorphism}
  precisely if it is a local diffeomorphism in the traditional sense.
\end{proposition}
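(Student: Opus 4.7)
The plan is to unpack Definition~\ref{FormallyEtaleMorphism} concretely on representables and then argue each direction separately. Using Proposition~\ref{ReductionOnFormalManifolds} and the fact that pullbacks of sheaves are computed section-wise, the defining square is Cartesian precisely when, for every $\mathbb{R}^n \times \mathbb{D} \in \mathrm{FormalCartSp}$, the map
$$
  X(\mathbb{R}^n \times \mathbb{D})
    \longrightarrow
  Y(\mathbb{R}^n \times \mathbb{D}) \times_{Y(\mathbb{R}^n)} X(\mathbb{R}^n),
  \quad
  x \longmapsto (f\circ x,\, x \circ (\mathrm{id},0))
$$
is a bijection. Equivalently, whenever $y \colon \mathbb{R}^n \times \mathbb{D} \to Y$ and $x_0 \colon \mathbb{R}^n \to X$ satisfy $f \circ x_0 = y \circ (\mathrm{id},0)$, there must be a unique lift $x \colon \mathbb{R}^n \times \mathbb{D} \to X$ with $f\circ x = y$ and $x \circ (\mathrm{id},0) = x_0$. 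I will call this the \emph{unique infinitesimal lifting property}.

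For the ``only if'' direction, I test the lifting property on $\mathbb{R}^0 \times \mathbb{D}^1(1) = \mathbb{D}^1(1)$. Combining Example~\ref{VectorFieldsAsMapsFromSpecOfRingOfDualNumbers} with Proposition~\ref{InfinitesimalPointInSmoothManifoldFactorsThroughInfinitesimalDisk}, maps $\mathbb{D}^1(1) \to X$ for a smooth manifold $X$ correspond bijectively to pairs $(x,v)$ with $x \in X$ and $v \in T_x X$, and postcomposition with $f$ realizes the derivative $df_x \colon T_x X \to T_{f(x)} Y$. The unique infinitesimal lifting property at $\mathbb{D}^1(1)$ then says that for each $x \in X$ and each $v \in T_{f(x)} Y$ there is a unique $w \in T_x X$ with $df_x(w) = v$, i.e.\ $df_x$ is bijective at every point. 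The classical inverse function theorem then yields that $f$ is a local diffeomorphism.

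For the ``if'' direction, suppose $f$ is a local diffeomorphism and fix a pair $(y, x_0)$ as above. For each $p \in \mathbb{R}^n$ choose opens $U_p \ni x_0(p)$ in $X$ and $V_p \ni f(x_0(p))$ in $Y$ such that $f|_{U_p}\colon U_p \to V_p$ is a diffeomorphism, together with a ball $W_p \ni p$ in $\mathbb{R}^n$ on which $x_0$ takes values in $U_p$. By Proposition~\ref{InfinitesimalPointInSmoothManifoldFactorsThroughInfinitesimalDisk} I may assume $\mathbb{D}$ is a standard infinitesimal $d$-disk $\mathbb{D}^d(k)$, and since open inclusions in $\mathbf{H}$ are stable under infinitesimal thickening, the restriction $y|_{W_p \times \mathbb{D}}$ factors through $V_p \hookrightarrow Y$ (its reduction does by continuity of $x_0$). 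Set $x|_{W_p \times \mathbb{D}} := (f|_{U_p})^{-1} \circ y|_{W_p \times \mathbb{D}}$; injectivity of $f|_{U_p}$ gives uniqueness on each piece, so the local lifts glue to the required global $x$.

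The main obstacle is the locality argument in the ``if'' direction: translating the \emph{pointwise} local-diffeomorphism hypothesis into a statement about unique lifts out of the infinitesimally thickened base $\mathbb{R}^n \times \mathbb{D}$ requires some care, since $\mathbb{D}$ has no ordinary points whose images can be controlled by continuity alone. Proposition~\ref{InfinitesimalPointInSmoothManifoldFactorsThroughInfinitesimalDisk} is the key technical tool that makes this work: it reduces $\mathbb{D}$ to a finite-order disk $\mathbb{D}^d(k)$, and then the fact that the reduced map $y \circ (\mathrm{id},0) = f \circ x_0$ lands in the chart $V_p$ automatically propagates to the full infinitesimal thickening $W_p \times \mathbb{D}$.
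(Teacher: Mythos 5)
Your proof is correct, but it takes a genuinely different route from the paper's. The paper never constructs lifts: it characterizes local diffeomorphisms by the Cartesianness of the tangent-bundle naturality square, rewrites $T X$ as the internal hom $X^{\mathbb{D}^1(1)}$, and uses the $(\Re \dashv \Im)$-adjunction to identify that condition with the formally \'etale square tested against all $U \times \mathbb{D}^1(1)$; it then observes that formal \'etaleness is the a priori stronger condition tested against all $U \times \mathbb{D}$, and closes the gap by noting that every infinitesimal point factors through a small open on which a local diffeomorphism is an isomorphism. You instead unwind the sheaf-level pullback directly into a unique infinitesimal lifting property, extract bijectivity of $d f_x$ from lifting against $\mathbb{D}^1(1)$ together with the inverse function theorem, and prove the converse by an explicit chart-by-chart construction of the lift followed by gluing over a cover of $\mathbb{R}^n$. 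Both arguments ultimately rest on the same two facts --- first-order data detects local diffeomorphisms, and infinitesimal thickenings cannot escape an open neighbourhood --- but yours is more elementary and constructive, while the paper's is shorter because it delegates the hard work to the known tangent-bundle characterization.

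Two small points of hygiene in your ``if'' direction. First, Proposition~\ref{InfinitesimalPointInSmoothManifoldFactorsThroughInfinitesimalDisk} concerns factoring morphisms \emph{out of} $\mathbb{D}$ through a standard disk mapping to $X$; it does not license replacing the domain $\mathbb{R}^n \times \mathbb{D}$ of the lifting problem by $\mathbb{R}^n \times \mathbb{D}^d(k)$ (that would require extending $y$ along an inclusion of Weil algebras). Fortunately you do not need this reduction: the fact you actually use --- that a morphism $W_p \times \mathbb{D} \to Y$ whose reduction lands in an open $V_p$ itself factors through $V_p$ --- holds for arbitrary $\mathbb{D}$, dually because functions supported away from a neighbourhood of the image are sent to zero, as in Example~\ref{JetsOfFunctionsBetweenCartesianSpaces}. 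Second, for uniqueness you should note that the same open-stability fact, applied on the $X$ side, forces any competing lift to land in $U_p$ over $W_p \times \mathbb{D}$ before injectivity of $f|_{U_p}$ can be invoked. Neither issue affects the validity of the argument.
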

\begin{proof}
  A function $f$ between smooth manifolds is a local diffeomorphism in the traditional sense
  precisely if the induced square
  $$
    \xymatrix{
      T X \ar[d]_{d f} \ar[r]_{\ }="s" & X \ar[d]^f
      \\
      T Y \ar[r]^{\ }="t" & Y
      \ar@{}|{\mbox{\tiny (pb)}} "s"; "t"
    }
  $$
  of tangent bundles is Cartesian (is a pullback) in $\mathrm{SmthMfd}$.
  This equivalently means that for all $U \in \mathrm{SmthMfd}$ then the image
  $$
   \mathrm{Hom}_{\mathrm{SmthMfd}}
   \left(
     U,
    \begin{gathered}
    \xymatrix{
      T X \ar[d]_{d f} \ar[r]_{\ }="s" & X \ar[d]^f
      \\
      T Y \ar[r]^{\ }="t" & Y
      \ar@{}|{\mbox{\tiny (pb)}} "s"; "t"
    }
    \end{gathered}
    \right)
  $$
  of this square under
  $\mathrm{Hom}_{\mathrm{SmthMfd}}(U,-) : \mathrm{SmthMfd} \to \mathrm{Set}$ is a Cartesian square in sets.

  This we may equivalently rewrite as
  $$
    \mathrm{Hom}_{\mathrm{SmthMfd}}\left(
    U,
    \begin{gathered}
    \xymatrix{
      X^{\mathbb{D}^1(1)} \ar[d]_{f^{\mathbb{D}^1(1)}} \ar[rr]^-{X^{(\ast \to \mathbb{D}^1(1))}}_{\ }="s"
      && X \ar[d]^f
      \\
      Y^{\mathbb{D}^1(1)} \ar[rr]_-{Y^{(\ast \to \mathbb{D}^1(1))}}^{\ }="t"
      && Y
      \ar@{}|{\mbox{\tiny (pb)}} "s"; "t"
    }
    \end{gathered}
    \right)
    \,,
  $$
  where $(-)^{\mathbb{D}^1(1)}$ denotes the internal hom%
		\footnote{The \emph{internal hom} $A^B$ is an object
		satisfying the identity $\mathrm{Hom}(-,A^B) \simeq
		\mathrm{Hom}(-\times B, A)$ as a natural bijection of sets.} %
	out of the standard first order infinitesimal 1-disk
	(definition~\ref{StandardInfinitesimalnDisk}).
    (Hence $X^{(\ast \to \mathbb{D}^1(1))}$ denotes the morphism on internal homs into $X$ out of $\ast$ (which is $X$) and
    out of $\mathbb{D}^1(1)$, respectively, which is induced by the unique point inclusion $\ast \to \mathbb{D}^1(1)$.)
	By the $(\Re \dashv
	\Im)$-adjunction (definition~\ref{ReImEtAdjunctionOnFormalSmoothSets})
	and since $\Re (\mathbb{D}^1(1)) \simeq  \ast$
	(proposition~\ref{ReductionOnFormalManifolds}), this in turn is a pullback of sets
precisely if the following is:
  \begin{equation*}
    \label{star}
    \tag{$\star$}
    \mathrm{Hom}
    \left(
     U \times \mathbb{D}^1(1)
     ,\;
    \begin{gathered}
    \xymatrix{
      X \ar[d]_{f} \ar[r]^{\eta_X}_{\ }="s" & \Im X \ar[d]^{\Im f}
      \\
      X \ar[r]_{\eta_Y}^{\ }="t" & \Im Y
      %
      %\ar@{}|{\mbox{\tiny (pb)}} "s"; "t"
    }
    \end{gathered}
    \right)
    \,.
  \end{equation*}
  Now the square on the right is Cartesian precisely if the stronger condition holds that
  \begin{equation*}
    \label{star-star}
    \tag{$\star\star$}
    \mathrm{Hom}
    \left(
     U \times \mathbb{D}
     ,\;
    \begin{gathered}
    \xymatrix{
      X \ar[d]_{f} \ar[r]^{\eta_X}_{\ }="s" & \Im X \ar[d]^{\Im f}
      \\
      X \ar[r]_{\eta_Y}^{\ }="t" & \Im Y
      %
      %\ar@{}|{\mbox{\tiny (pb)}} "s"; "t"
    }
    \end{gathered}
    \right)
  \end{equation*}
  is a pullback, for all $\mathbb{D} \in \mathrm{InfPoint}$.

	So this shows that $f$ being formally {\'e}tale implies that it is a
	locally diffeomorphism. To see that there is no further condition from
	using more general $\mathbb{D}$ than $\mathbb{D}^1(1)$ observe that a
	local diffeomorphism between smooth manifolds is in fact an
	isomorphism on every sufficiently small open neighbourhood around
	every point $x \in X$. Since all infinitesimal points $\mathbb{D}$
	necessarily factor through any such open neighbourhood, this shows
	that~\eqref{star-star} is a pullback already if~\eqref{star} is.

\end{proof}

The following gives a generalized concept of manifolds locally modeled
on some model space $V$. In this article the only explicit examples we
consider are $V = \mathbb{R}^n$ for $n \in \mathbb{N} \cup \{\infty\}$
(example \ref{SmoothManifoldsAreVManifolds} below), but for later
development more generality is needed. The idea is that all we need of a
local model space $V$ is that it has some kind of differentiable
structure and that it has some kind of additive structure that allows to
translate along it, even if it is not commutative. The first point is
satisfied by taking $V$ to be an object of our category $\mathbf{H}$.
The second point means that it is a group object in $\mathbf{H}$  (definition
\ref{group}).

\begin{definition}
  \label{VManifold}
	Let $V \in \mathrm{Grp}(\mathbf{H})$ be a group object in
	$\mathbf{H}$ (definition \ref{group}). We say that an object $X \in \mathbf{H}$ is a
	\emph{$V$-manifold} if there exists $U \in \mathbf{H}$ and a diagram
	of the form
  $$
    \begin{gathered}
    \xymatrix{
      & U \ar@{->>}[dr]^{\mathrm{et}}
      \ar[dl]_{\mathrm{et}}
      \\
      V && X
    }
    \end{gathered}
    \,,
  $$
	i.e.,\ a span from $V$ to $X$ such that both legs are formally
	{\'e}tale according to definition~\ref{FormallyEtaleMorphism} and such
	that in addition the right leg is an epimorphism in $\mathbf{H}$. Any
	such $U$ we call a \emph{$V$-atlas} of $X$.
\end{definition}
\begin{example}
  \label{SmoothManifoldsAreVManifolds}
	An ordinary smooth manifold of dimension $n$ becomes a $V$-manifold in
	the sense of definition~\ref{VManifold} by taking $V = \mathbb{R}^n$
	equipped with its canonical additive group structure. A $V$-atlas is
	obtained from any ordinary atlas with charts $\{\mathbb{R}^n
	\overset{\phi_i}{\to} X\}_{i \in I}$ by setting $U := \bigsqcup_{i \in
	I} \mathbb{R}^n$ and taking the two morphisms to be the canonical ones
  $$
    \begin{gathered}
    \xymatrix{
      & \displaystyle U = \bigsqcup_{i \in I} \mathbb{R}^n
      \ar@{->>}[dr]^{(\phi_i)_{i \in I}}
      \ar[dl]_{(\mathrm{id})_{i \in I}}
      \\
      \mathbb{R}^n && X
    }
    \end{gathered}
    \,.
  $$
	Since $U\to X$ is an open cover, every germ of a function on an open
	$n$-ball $B^n_\delta \to X$ factors through $\mathbb{R}^n \simeq
	B^n_\delta \to U \to X$. Thinking of $U$ and $X$ as sheaves in the
	sheaf topos $\mathrm{SmoothTopos} = \mathrm{Sh}(\mathrm{CartSp})$,
	this means that $U\to X$ induces an epimorphism from stalks of $U$ to
	the stalks of $X$ over enough points of the topos $\mathrm{SmoothSet}$
	(proposition~\ref{StalksOnCartSp}), which in turn implies that $U \to
	X$ is itself an epimorphism
	(proposition~\ref{EpimorphismsInCategoriesOfSheaves}).
	Since both morphisms are local diffeomorphisms, by construction, they
	are formally {\'e}tale morphisms in $\mathrm{SmoothSet}$ by
	proposition~\ref{LocalDiffeoIsFormallyEtals}.

	Similarly, a locally pro-manifold in the sense of
	definition~\ref{LocallyProManifold} becomes a $V$-manifold
	in the sense of definition~\ref{VManifold}, with $V = \mathbb{R}^n$
	and $n \in \mathbb{N} \cup \{\oo\}$, under the embedding of
	Fr{\'e}chet manifolds
	(proposition~\ref{SmoothSetsReceiveFrechetManifolds}).
\end{example}

\subsection{Infinitesimal disks}
\label{FormalDisks}

Jets (discussed below in section~\ref{Jets}) are functions out of infinitesimal
and formal disks (balls). Therefore here we first discuss infinitesimal
and formal disks.

\begin{definition}
  \label{InfinitesimalDiskInManifold}
	For $X \in \mathrm{SmthMfd} \hookrightarrow \mathrm{SmoothSet}
	\overset{i_!}{\hookrightarrow} \mathrm{FormalSmoothSet}$ a smooth manifold of
	dimension $n$, for $x : \ast \to X$ a point and for $k \in
	\mathbb{N}$, then the \emph{infinitesimal disk of order $k$ around $x$
	in $X$} is the subobject
  $$
    \mathbb{D}_x(k)
      \hookrightarrow
    X
  $$
  in $\mathrm{FormalSmoothSet}$ given, up to isomorphism, as the composite
  $$
    \begin{gathered}
    \xymatrix{
      \mathbb{D}^n(k)
       \ar@{^{(}->}[r]
      &
      \mathbb{R}^n
        \ar[r]^-{\phi}
      &
      X
    }
    \end{gathered}
    \,,
  $$
	where the first inclusion is the defining one of the standard
	infinitesimal $d$-disk of order $k$ into $\mathbb{R}^d$
	(definition~\ref{StandardInfinitesimalnDisk}) and where $\phi$ is any
	smooth embedding sending the origin of $\mathbb{R}^n$ to $x$.
\end{definition}
More abstractly, we may speak of the formal disk around any point in any
object of a differentially cohesive topos $\mathbf{H}$ as follows.
\begin{definition}
  \label{FormalDiskAbstractly}
	Let $X \in \mathbf{H}$ and $x : \ast \to X$ a point. Then the
	\emph{formal disk} in $X$ around $x$ is the fiber product
  $$
    \mathbb{D}_x := X \times_{\Im X} \{x\}
    \,,
  $$
  i.e.,\ the object sitting in the pullback diagram of the form
  $$
    \xymatrix{
      \mathbb{D}_x \ar[r]_{\ }="s" \ar[d] & \ast \ar[d]^{x}
      \\
      X \ar[r]_-{\eta_X}^{\ }="t" & \Im X
      \ar@{}|{\mbox{\tiny (pb)}} "s"; "t"
    }
    \,,
  $$
	where $\Im$ is the monad from
	definition~\ref{ReImEtAdjunctionOnFormalSmoothSets}, and $\eta_X : X
	\to \Im X$ its unit at $X$.
\end{definition}
The following proposition unwinds this abstract definition for the case
that $X$ is a manifold.
\begin{proposition}
  \label{FormalDiskInSmoothManifolds}
	Let $X \in \mathrm{SmthMfd} \hookrightarrow \mathrm{FormalSmoothSet}$ be a smooth
	manifold and $x \colon \ast \to X$ a point. Then the formal disk
	$\mathbb{D}_x \in \mathbf{H} \simeq
	\mathrm{Sh}(\mathrm{FormalSmoothSet})$ according to
	definition~\ref{FormalDiskAbstractly} is given by the sheaf which is
	the filtered colimit over the infinitesimal disks $\mathbb{D}_x(n)
	\hookrightarrow X$ in $X$ at $x$ (from
	definition~\ref{InfinitesimalDiskInManifold})
  $$
    \mathbb{D}_x \simeq \varinjlim_k \mathbb{D}_x(k)
    \,.
  $$
\end{proposition}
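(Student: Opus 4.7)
The plan is to verify the isomorphism by evaluating both sides as sheaves on $\mathrm{FormalCartSp}$ and checking the resulting sets of sections agree naturally. Since $\mathbb{D}_x(k)\hookrightarrow X$ is a subobject (the composite of two monomorphisms), the filtered colimit of these subobjects is the union, so $(\varinjlim_k \mathbb{D}_x(k))(\mathbb{R}^m\times\mathbb{D})=\bigcup_k \mathbb{D}_x(k)(\mathbb{R}^m\times\mathbb{D})$ inside $X(\mathbb{R}^m\times\mathbb{D})$, and dually $\mathbb{D}_x(\mathbb{R}^m\times\mathbb{D})$ is also a subset of $X(\mathbb{R}^m\times\mathbb{D})$. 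It therefore suffices to check these two subsets coincide for every representable $\mathbb{R}^m\times\mathbb{D}\in\mathrm{FormalCartSp}$.

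Unwinding the pullback defining $\mathbb{D}_x$ and applying Proposition~\ref{ReductionOnFormalManifolds} to the unit $\eta_X$, I identify $\mathbb{D}_x(\mathbb{R}^m\times\mathbb{D})$ with the set of $f\colon \mathbb{R}^m\times\mathbb{D}\to X$ whose restriction $f\circ\epsilon_{\mathbb{R}^m\times\mathbb{D}}\colon \mathbb{R}^m\to X$ along the reduction counit is the constant map at $x$. The easy direction ($\supseteq$) is then immediate: if $f$ factors as $\mathbb{R}^m\times\mathbb{D}\to\mathbb{D}^n(k)\hookrightarrow\mathbb{R}^n\xrightarrow{\phi}X$ with $\phi(0)=x$, then the reduction of $\mathbb{D}^n(k)$ is a point, forcing the restriction to $\mathbb{R}^m\times\{0\}$ to be constant at $x$.

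For the hard direction ($\subseteq$), fix a smooth chart $\phi\colon \mathbb{R}^n\hookrightarrow X$ centered at $x$, which is a local diffeomorphism and hence formally \'etale by Proposition~\ref{LocalDiffeoIsFormallyEtals}. Given $f$ with constant reduction at $x$, the reduced map $\mathbb{R}^m\to X$ lifts uniquely to the constant map $\mathbb{R}^m\to\mathbb{R}^n$ at $0$, providing the compatibility data needed in the pullback defining formal \'etaleness. Consequently $f$ lifts to a unique $\tilde f\colon \mathbb{R}^m\times\mathbb{D}\to\mathbb{R}^n$ with $\phi\circ\tilde f=f$ and $\tilde f$ constant at $0$ on the reduced part. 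Dually, the algebra homomorphism $\tilde f^\ast\colon C^\infty(\mathbb{R}^n)\to C^\infty(\mathbb{R}^m)\otimes_\mathbb{R}(\mathbb{R}\oplus V)$ sends each coordinate function $x^i$ into the nilpotent ideal $C^\infty(\mathbb{R}^m)\otimes V$.

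Choosing $k$ so that $V^{k+1}=0$, I conclude that $\tilde f^\ast$ annihilates $(x^1,\dots,x^n)^{k+1}$, hence factors through the quotient $C^\infty(\mathbb{R}^n)/(x^1,\dots,x^n)^{k+1}$; geometrically this means $\tilde f$ factors through $\mathbb{D}^n(k)\hookrightarrow\mathbb{R}^n$, and therefore $f=\phi\circ\tilde f$ factors through $\mathbb{D}_x(k)$ as required. The main obstacle in this proof is obtaining the \emph{global} lift $\tilde f$ from the merely local diffeomorphism $\phi$; the resolution is precisely the formally \'etale lifting property of Definition~\ref{FormallyEtaleMorphism}, which uses the triviality of the reduced component of $f$ to supply the canonical compatibility data in $\Im\mathbb{R}^n$.
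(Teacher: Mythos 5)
Your proof is correct, but the key step is carried out by a genuinely different mechanism than in the paper. Both arguments agree on the setup: reduce to sections over representables $\mathbb{R}^m\times\mathbb{D}$, identify $\mathbb{D}_x$ with the maps $f\colon\mathbb{R}^m\times\mathbb{D}\to X$ whose reduction is constant at $x$, and note that the easy inclusion is immediate because $\Re\,\mathbb{D}^n(k)\simeq\ast$. For the hard inclusion, the paper restricts $f$ to the individual points $y\in\mathbb{R}^m$, invokes proposition~\ref{InfinitesimalPointInSmoothManifoldFactorsThroughInfinitesimalDisk} (which rests on an ambient embedding $X\hookrightarrow\mathbb{R}^N$ and Hadamard's lemma) to factor each restriction $\mathbb{D}\to X$ through $\mathbb{D}_x(k_{\mathbb{D}})$, and then reassembles these pointwise factorizations using the claim that $f$ is determined by its restrictions. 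You instead keep the whole probe $\mathbb{R}^m\times\mathbb{D}$ intact and lift $f$ through a chart $\phi\colon\mathbb{R}^n\to X$ centered at $x$, using the formally \'etale property of $\phi$ (proposition~\ref{LocalDiffeoIsFormallyEtals}, already available at this point, so there is no circularity): the constancy of the reduction supplies exactly the cone over $X\to\Im X\leftarrow\Im\mathbb{R}^n$ needed to produce $\tilde f$, after which the dual algebraic argument ($\tilde f^\ast(x^i)$ lands in $C^\infty(\mathbb{R}^m)\otimes V$, and $V^{k+1}=0$) kills the ideal $(x^1,\dots,x^n)^{k+1}$. Your route avoids the somewhat delicate ``determined by its restrictions'' reassembly step and the detour through an ambient Euclidean embedding, and it produces the uniform bound $k=k_{\mathbb{D}}$ in one stroke; the price is reliance on the formally \'etale lifting property rather than on purely pointwise algebra. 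One minor gloss: the assertion that the filtered colimit of the subobjects $\mathbb{D}_x(k)$ is computed objectwise as a union elides sheafification, but your argument does not actually depend on it — you show each side factors through the other as subobjects of $X$ over every representable, which suffices since representables generate and both comparison maps to $X$ are monomorphisms.
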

\begin{proof}
    First of all we observe that the diagram
    $$
      \xymatrix{
        \varinjlim_k \mathbb{D}_x(k)
        \ar[d]
        \ar[r]
        &
        X
        \ar[d]
        \\
        \ast \ar[r]_x & \Im X
      }
    $$
    indeed commutes, where the top morphism is given component-wise by the defining inclusions $\mathbb{D}_x(k) \hookrightarrow X$.
    By the $(\Re \dashv \Im)$ adjunction (definition \ref{ReImEtAdjunctionOnFormalSmoothSets}) this corresponds to the diagram
    $$
      \begin{gathered}
      \xymatrix{
        \Re(\varinjlim_k \mathbb{D}_x(k))
        \ar[d]
        \ar[r]
        &
        \Re X
        \ar[d]
        \\
        \Re \ast
        \ar[r]
        &
        X
      }
      \end{gathered}
      \,.
    $$
    Since $X$ is an ordinary manifold and hence already reduced by assumption,
    the morphism on the right is an isomorphism, and the bottom morphism is just the
    inclusion of the point $x$. Moreover, since $\Re$ is left adjoint it preserves the
    colimit in the top left, and since $\Re (\mathbb{D})\simeq \ast$ for all infinitesimally
    thickened points $\mathbb{D}$ (proposition \ref{ReductionOnFormalManifolds})
    also the top morphism is just the inclusion of the point $x$. Hence the diagram commutes.

    Observe that the top morphism in the first diagram is a monomorphism
    $$
      \varinjlim_k \mathbb{D}_x(k) \hookrightarrow X
      \,.
    $$
    (Since each $\mathbb{D}_x(k) \to X$ is a monomorphism and monomorphisms as well as
    colimits of sheaves are detected on stalks.)

	Therefore to check that the first square above is indeed Cartesian, it is sufficient to check
	that for every $\mathbb{R}^n \times \mathbb{D} \in
	\mathrm{FormalCartSp}$ (definition~\ref{FormalSmoothSets}) then
	morphisms $f : \mathbb{R}^n \times \mathbb{D} \longrightarrow X$ such
	that $\eta_X \circ f$ is constant on $x$ factor through
	$\varinjlim_k \mathbb{D}_x(k) \to X$.

    Let a given $\mathbb{D}$ be of infinitesimal order $k_{\mathbb{D}}$
    (meaning that in $C^\infty(\mathbb{D}) = \mathbb{R}\oplus V$ then $V^{k_{\mathbb{D}}+1}= 0$).
    We will show that then in fact we have factorization through
    $\mathbb{D}^n(k_{\mathbb{D}})\to \varinjlim_k \mathbb{D}_x(k) \to X$

 To that end, observe that for every point $y : \ast \to \mathbb{R}^n$ we have the restriction $f|_y$ of $f$ to that point
 $$
   \xymatrix{
     \mathbb{D}
     \ar[dr]^{f|_y}
     \ar[d]_{(y,\mathrm{id})}
     \\
     \mathbb{R}^n \times \mathbb{D}
     \ar[r]_-{f}
     &
     X
   }
   $$
   and the bottom morphism $f$ is fully determined once
   all its restrictions $f|_x$ are known (since, dually, smooth functions in the image of
   $C^\infty(X) \overset{f^\ast}{\longrightarrow} C^\infty(\mathbb{R}^n)\otimes_{\mathbb{R}}C^\infty(\mathbb{D})$
   are uniquely determined by their restrictions along
   $(-)|_y :  C^\infty(\mathbb{R}^n)\otimes_{\mathbb{R}}C^\infty(\mathbb{D}) \to C^\infty(\mathbb{D})$, for all $y \in \mathbb{R}^n$).

   Therefore it is sufficient to check the factorization property
	for morphisms of the form $\mathbb{D} \to X$, with $\mathbb{D} \in
	\mathrm{InfPoint}$ (definition~\ref{InfinitesimallyThickenedPoints}), and
through a fixed stage $k$ of the colimit: because if we have factorizations
  of the form shown by the dashed morphism here
  $$
    \xymatrix{
       \mathbb{D}
       \ar@{-->}@/_2.4pc/[ddrr]
       \ar|{(y,\mathrm{id})}[dr]
       \\
       & \mathbb{R}^n \times \mathbb{D}
       \ar@{..>}[dr]
       \ar@/_1pc/[ddr]
       \ar@/^1pc/[drr]
       \\
       &&
       \mathbb{D}_x(k_{\mathbb{D}})
       \ar[d]
       \ar@{^{(}->}[r]
       &
       X
       \ar[d]
       \\
       & & \ast \ar@{^{(}->}[r] & \Im X
    }
  $$
  for all $y \in \mathbb{R}^n$,
  then there there is an induced factorization as shown by the dotted arrow.

  Observe that the composite morphism $\mathbb{D} \longrightarrow X$
  in the above diagram must take the global point $\ast \to \mathbb{D}$ to
  $\ast \overset{x}{\to} X$, by the same kind of argument as for the commutativity of the
  square at the beginning of the proof.

	Now if $\mathbb{D}$ is of order $k$ (i.e.,\ if the maximal ideal $V$ of
	$C^\infty(\mathbb{D})$ satisfies $V^{k+1} = 0$), this means that
	morphisms $f : \mathbb{D} \to X$ are equivalently $\mathbb{R}$-algebra
	homomorphisms of the form $(\mathrm{ev}_x,\rho) :  C^\infty(X) \to
	\mathbb{R} \oplus V$ where $\rho$ is a function of the partial
	derivatives of $f$ at $x$ of order at most $k$. By
	proposition~\ref{InfinitesimalPointInSmoothManifoldFactorsThroughInfinitesimalDisk}
	such an algebra homomorphism factors through
	$C^\infty(\mathbb{D}_x(k))$.
 Hence we have a factorization
 $$
    f :
    \xymatrix{
      \mathbb{D}
         \ar[r]
         \ar@/_1pc/[rr]
      &
      \mathbb{D}_x(k)
         \ar[r]
         &
      \varinjlim_k \mathbb{D}_x(k)
        \ar[r]
        &
      X
    } \, .
  $$
\end{proof}

We collect all the infinitesimal disks
(definition~\ref{FormalDiskAbstractly}) of a formal smooth set as
follows:
\begin{definition}
  \label{InfinitesimalDiskBundle}
	Let $X \in \mathbf{H}$ be any object of a differentially cohesive topos. Then its \emph{formal
	disk bundle}
  $$
    \xymatrix{
      T^\infty X
      \ar[d]
      \\
      X
    }
  $$
  is the pullback in
  $$
    \begin{gathered}
    \xymatrix{
      T^\infty X \ar[r]^{\mathrm{ev}}_-{\ }="s" \ar[d]_\pi & X \ar[d]^{\eta_X}
      \\
      X \ar[r]_{\eta_X}^-{\ }="t" & \Im X
      \ar@{}|{\mbox{\tiny (pb)}} "s"; "t"
    }
    \end{gathered}
    \,.
  $$
	More generally, for $E \overset{p}{\to} X$ a morphism in $\mathbf{H}$
	(e.g., a bundle over $X$), then $T^\infty_X E$ is the
	left vertical composite $\pi \circ \mathrm{ev}^\ast p$ in the pasting
	diagram
  $$
    \begin{gathered}
    \xymatrix{
      T^\infty_X E \ar[r]^-{\mathrm{ev}_E}_{\ }="s2" \ar[d]_{\mathrm{ev}^\ast p} & E \ar[d]^{p}
      \\
      T^\infty X \ar[r]^{\mathrm{ev}}^-{\ }="t2"_-{\ }="s" \ar[d]_\pi & X \ar[d]^{\eta_X}
      \\
      X \ar[r]_{\eta_X}^-{\ }="t" & \Im X
      \ar@{}|{\mbox{\tiny (pb)}} "s2"; "t2"
      \ar@{}|{\mbox{\tiny (pb)}} "s"; "t"
    }
    \end{gathered}
    \,.
  $$
	(This definition is essentially that in~\cite{Kock80}, just preceding
	its Proposition~2.2).

	More succinctly, according to proposition~\ref{basechange},
	$T^\infty_X$ is the monad on the slice of $\mathbf{H}$ over $X$ given
	by left base change along the unit of the $\Im$-monad
	(definition~\ref{ReImEtAdjunctionOnFormalSmoothSets}):
  $$
    T^\infty_X
    \; := \;
    (\eta_X)^\ast \circ (\eta_X)_!
    \; : \;
    \mathbf{H}_{/X}
      \longrightarrow
    \mathbf{H}_{/X}
    \,.
  $$
\end{definition}
\begin{remark}
	The object $T^\infty X$ in definition~\ref{InfinitesimalDiskBundle} is
	also called the \emph{formal neighbourhood of the diagonal} (cf.\ for
	instance below Proposition~2.1 in~\cite{Kock80}). As such one wants to
	think of the two canonical maps out of it on a more symmetric footing,
	and write $\mathrm{pr}_1 := \mathrm{\pi}$, $\mathrm{pr}_2 =
	\mathrm{ev}$. By the universal property of the pullback, a generalized
	point in $T^\infty X$ is an ordinary point of $X$ together with two
	infinitesimal neighbours
  $$
    \begin{gathered}
    \xymatrix{
      & \circ_2 \ar@{~}[d]
      \\
      \bullet \ar@{~}[r] \ar@{~}[ur] & \circ_1
    }
    \end{gathered}
    \,.
  $$
	The projection $\mathrm{pr}_i$ takes such a triple to $\circ_i$.
	Definition \ref{InfinitesimalDiskBundle} above takes the non-symmetric
	perspective that $\mathrm{pr}_1 = \mathrm{\pi}$ is to be regarded as a
	bundle projection. Viewed this way then the above picture illustrates
	a point $\circ_2$ in the fiber of $T^\infty X$ over $\circ_1 \in X$.

	Notice that $\eta_X : X \longrightarrow \Im X$ is an epimorphism in
	the topos $\mathbf{H}$ (proposition \ref{UnitOfInfinitesimalShapeIsEpiInCahiersTopos}). All epimorphisms in a topos are regular
	epimorphisms, meaning that they are the coequalizers of their kernel
	pair. Here this means that the diagram
  $$
    \xymatrix{
      T^\infty X
       \ar@<-3pt>[rr]_{\mathrm{pr}_2 = \mathrm{ev}}
       \ar@<+3pt>[rr]^{\mathrm{pr}_1 = \pi}
      &&
      X
      \ar[rr]^{\eta_X}_{\mathrm{coeq}}
      &&
      \Im X
    }
  $$
	is a coequalizer diagram, manifestly exhibiting $\Im X$ as the
	quotient of the equivalence relation
  $$
    (x \sim y) \Leftrightarrow (\mbox{$x$ is an infinitesimal neighbour of $y$})
    \,.
  $$
\end{remark}
\begin{example}
  \label{FormalDiskBundleOfPoint}
	For $X \in \mathbf{H}$ any object and for $x : \ast \to X$
	a point, regarded as an object of $\mathbf{H}_{/X}$, then its formal
	disk bundle according to definition~\ref{InfinitesimalDiskBundle} is
	just the formal disk $\mathbb{D}_x$ in $X$ (according to
	definition~\ref{FormalDiskAbstractly}) at that point:
  $$
    T^\infty_X \{x\} \simeq \mathbb{D}_x
    \,.
  $$
\end{example}
\begin{proof}
  Def. \ref{InfinitesimalDiskBundle} gives that
  $$
    T^\infty_X \{x\} \simeq T^\infty X \times_X \{x\}
    \,.
  $$
	The statement thus follows with the pasting law
	(proposition~\ref{PastingLaw}) and
	definition~\ref{FormalDiskAbstractly}:
  $$
    \begin{gathered}
    \xymatrix{
      \mathbb{D}_x \ar[r]
      \ar[d]
      \ar@{}[dr]|{\mbox{\tiny (pb)}}
        &
        \ast
        \ar[d]^{s}
      \\
      T^\infty X
        \ar[r]|{\mathrm{ev}}
        \ar[d]_{\pi}
        \ar@{}[dr]|{\mbox{\tiny (pb)}}
        &
      X
      \ar[d]^{\eta_X}
      \\
      X
        \ar[r]_{\eta_X}
        &
      \Im X
    }
    \end{gathered}
    \,.
  $$
\end{proof}
\begin{proposition}
   \label{MonadOperationsOnInfinitesimalDiskBundles}
	For $X \in \mathbf{H}$, the formal disk bundle monad $T^\infty_X :
	\mathbf{H}_{/X} \to \mathbf{H}_{/X}$ from
	definition~\ref{InfinitesimalDiskBundle} has the following structure
	morphisms:
  \begin{enumerate}
    \item The monad unit at an object $[E \stackrel{p}{\to} X]$ is $\eta_p = (p,\mathrm{id}) : E \to X \times_{\Im X} E = T^\infty_X E$;
    \item the monad product is given by $\Delta_X = T^\infty_X \mathrm{ev}_E : T^\infty_X
    T^\infty_X E \to T^\infty_X E$,
  \end{enumerate}
	where in the second item we regard $\mathrm{ev}_E$
	as a morphism in $\mathbf{H}_{/X}$ via
    $$
      \begin{gathered}
      \xymatrix{
        T^\infty_X E
          \ar[rr]^{\mathrm{ev}_E}
          \ar[dr]
          &&
        E
        \ar[dl]
        \\
        & X
      }
      \end{gathered}
      \, .
    $$
\end{proposition}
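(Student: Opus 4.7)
The plan is to apply the general fact that for any adjunction $L \dashv R$ with unit $\eta^{\mathrm{adj}}$ and counit $\epsilon^{\mathrm{adj}}$, the induced monad $T := RL$ carries the unit $\eta^{\mathrm{adj}}$ itself and the multiplication $\mu = R \epsilon^{\mathrm{adj}} L$. Since by definition $T^\infty_X = \eta_X^\ast \circ (\eta_X)_!$ for the base change adjunction $(\eta_X)_! \dashv \eta_X^\ast$ between $\mathbf{H}_{/X}$ and $\mathbf{H}_{/\Im X}$ (proposition~\ref{basechange}), it suffices to identify each of these two adjunction data concretely at an object $[E \xrightarrow{p} X] \in \mathbf{H}_{/X}$.

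First I would identify the unit. Since $(\eta_X)_!$ is postcomposition, $(\eta_X)_! E$ is simply $E$ equipped with structure map $\eta_X \circ p \colon E \to \Im X$. The pullback $\eta_X^\ast (\eta_X)_! E$ is then precisely $X \times_{\Im X} E = T^\infty_X E$, with projections $\pi$ to $X$ and $\mathrm{ev}_E$ to $E$. The unit of the base change adjunction at $[E \xrightarrow{p} X]$ is the unique morphism from $E$ into this pullback induced by the cone $(p, \mathrm{id}_E)$, which commutes because $\eta_X \circ p = (\eta_X \circ p) \circ \mathrm{id}_E$. Hence $\eta_p = (p,\mathrm{id}_E)\colon E \to T^\infty_X E$, as claimed.

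Next I would identify the counit of $(\eta_X)_! \dashv \eta_X^\ast$ at an object $[F \xrightarrow{q} \Im X]$, which is the canonical morphism $(\eta_X)_! \eta_X^\ast F = X \times_{\Im X} F \to F$ in $\mathbf{H}_{/\Im X}$. By construction of the pullback this is simply the second projection. Specializing to $F = (\eta_X)_! E$, the second projection $X \times_{\Im X} E \to E$ coincides with $\mathrm{ev}_E$ (viewed in $\mathbf{H}_{/\Im X}$ via postcomposition with $\eta_X$).

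Finally, to get the multiplication I apply $\eta_X^\ast$ to $\epsilon_{(\eta_X)_! E} = \mathrm{ev}_E$. Since $(\eta_X)_!$ acts by postcomposition it is the identity on underlying morphisms in $\mathbf{H}$, so $(\eta_X)_! \mathrm{ev}_E = \mathrm{ev}_E$ in $\mathbf{H}_{/\Im X}$, and therefore
\[
  \mu_E \;=\; \eta_X^\ast\bigl(\epsilon_{(\eta_X)_! E}\bigr)
      \;=\; \eta_X^\ast\bigl((\eta_X)_! \mathrm{ev}_E\bigr)
      \;=\; T^\infty_X\mathrm{ev}_E.
\]
The main obstacle is purely bookkeeping: one has to keep straight in which slice category ($\mathbf{H}_{/X}$ versus $\mathbf{H}_{/\Im X}$) each morphism lives, and in particular verify that $(\eta_X)_!$ only relabels structure maps while leaving the morphism $\mathrm{ev}_E$ itself unchanged, so that the abstract formula $R\epsilon^{\mathrm{adj}} L$ translates into the stated $T^\infty_X \mathrm{ev}_E$. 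No deeper input is needed: both claims fall out from the universal properties of pullbacks and the explicit description of the base change adjunction.
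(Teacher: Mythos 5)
Your proposal is correct and follows essentially the same route as the paper: both identify the unit as the universal morphism $(p,\mathrm{id})$ into the pullback from the unit of the base change adjunction $(\eta_X)_! \dashv \eta_X^\ast$, and both obtain the multiplication by applying $\eta_X^\ast$ to the adjunction counit at $(\eta_X)_! E$, which is recognized as $\mathrm{ev}_E$ from the defining pullback rectangle. The bookkeeping point you flag — that $(\eta_X)_!$ merely relabels the structure map so the formula $R\epsilon L$ becomes $T^\infty_X \mathrm{ev}_E$ — is exactly the observation the paper's proof also relies on.
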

\begin{proof}
    This follows by direct unwinding of the definition of the monad structure
    induced by an adjunction (example \ref{AdjunctionGivesMonad}).
    For the first item
	observe that this is the unit of the adjunction $(\eta_X)_! \dashv (\eta_X)^\ast$:
    $$
      \begin{gathered}
      \xymatrix{
        E
        \ar@{-->}[dr]^{\eta_E}
        \ar@/_1.4pc/[dddr]_{p}
        \ar@/^1.4pc/[drr]^{\mathrm{id}}
        \\
        & T^\infty_X E
            \ar[r]^{\mathrm{ev}_E}
            \ar@{}[dr]|{\mbox{\tiny (pb)}}
            \ar[d]_{\pi_E}
        &
        E
          \ar[d]^{p}
        \\
        & T^\infty X
           \ar@{}[dr]|{\mbox{\tiny (pb)}}
           \ar[r]
           \ar[d]_\pi
        &
        X \ar[d]^{\eta_X}
        \\
        & X \ar[r]_{\eta_X} &  \Im X
      }
      \end{gathered}
      \, .
    $$
    For the second item, observe that the counit of the
	$(\eta_X)_! \dashv (\eta_X)^\ast$-adjunction on any $Q \overset{p}{\to} \Im X$ is the top horizontal
	morphism in
  $$
    \begin{gathered}
    \xymatrix{
      (\eta_X)^\ast Q \ar@{}[dr]|{\mbox{\tiny (pb)}} \ar[d]\ar[r] & Q \ar[d]^{p}
      \\
      X \ar[r]_{\eta_X} & \Im X
    }
    \end{gathered}
    \,,
  $$
	regarded as a morphism in $\mathbf{H}_{/\Im X}$.
For $(Q \to \Im X) = (\eta_X)_! E =  (E \to X \overset{\eta_X}{\to} \Im X)$ this square becomes the defining rectangle from definition \ref{InfinitesimalDiskBundle}:
  $$
    \begin{gathered}
    \xymatrix{
      T^\infty_X E
        \ar[r]^-{\mathrm{ev}_E}
        \ar[d]_{\pi_E}
        &
      E
      \ar[d]^p
      \\
      T^\infty X
        \ar[r]^-{\mathrm{ev}}
        \ar[d]_{\pi}
        &
      X
      \ar[d]^{\eta_X}
      \\
      X \ar[r]_-{\eta_X}
      &
      \Im X
    }
    \end{gathered}
    \, .
  $$
  Hence the monad product is the image of the top morphism $\mathrm{ev}_E$ here, regarded as
  a morphism over $\Im X$, under base change via pullback along $\eta_X : X \to \Im X$. This yields the claim.
\end{proof}
We now establish some basic properties inherited by $T^\infty X$ in the case that $X$ is a $V$-manifold.
\begin{proposition}
  \label{PullbackAlongFormallyEtalePreservesFormalDiskBundles}
	If a morphism $f : X \longrightarrow Y$ in $\mathbf{H}$ is formally
	{\'e}tale (definition~\ref{FormallyEtaleMorphism}), then pullback along
	$f$ preserves infinitesimal disk bundles
	(definition~\ref{InfinitesimalDiskBundle}), in that
  $$
    f^\ast (T^\infty Y) \simeq T^\infty X \;\;\; \in \mathbf{H}_{/X}
    \,.
  $$
\end{proposition}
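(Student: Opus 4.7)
The plan is to reduce the claim to two applications of the pasting law for pullbacks, with the formally {\'e}tale hypothesis invoked in the middle. First, recall that $T^\infty Y$ is by construction the pullback of $\eta_Y \colon Y \to \Im Y$ against itself. Pulling back the projection $\pi \colon T^\infty Y \to Y$ along $f$ produces a pullback square that I can paste onto the defining square of $T^\infty Y$; by the pasting law (proposition~\ref{PastingLaw}) the resulting outer rectangle exhibits $f^\ast T^\infty Y$ as the pullback of $\eta_Y$ along the composite $\eta_Y \circ f \colon X \to \Im Y$.

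Second, naturality of the unit $\eta$ yields $\eta_Y \circ f = \Im f \circ \eta_X$, so I can rewrite that bottom composite as $\Im f \circ \eta_X$ and apply the pasting law in reverse to factor the outer rectangle through an intermediate pullback $Q$ computed by the square whose bottom edge is $\Im f \colon \Im X \to \Im Y$ and whose right edge is $\eta_Y \colon Y \to \Im Y$. At this point the formally {\'e}tale hypothesis (definition~\ref{FormallyEtaleMorphism}) enters essentially: it asserts precisely that this square is a pullback with apex $X$, with projections $\eta_X$ and $f$ respectively. Hence $Q \simeq X$, and the map $Q \to \Im X$ inherited from the pullback is identified with $\eta_X$.

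Substituting this identification, $f^\ast T^\infty Y$ is the pullback of $\eta_X \colon X \to \Im X$ along itself, and this is by definition $T^\infty X$. The projection of $f^\ast T^\infty Y$ to $X$ arising from the pullback along $f$ is, under the iterated pasting, precisely the projection $\pi_X \colon T^\infty X \to X$, so the isomorphism is an isomorphism in the slice $\mathbf{H}_{/X}$ and not merely in $\mathbf{H}$.

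The main conceptual obstacle is not computational but organizational: one has to keep careful track of which legs of the various pullback squares are being pasted in which direction, so that the formally {\'e}tale condition lines up exactly as the middle square in a $3\times 2$ pasting diagram whose outer rectangle computes $f^\ast T^\infty Y$ and whose top-left $2 \times 2$ sub-rectangle computes $T^\infty X$. Once this bookkeeping is settled, the proof is purely formal and uses only the pasting law, naturality of $\eta$, and the pullback characterization of formal {\'e}taleness.
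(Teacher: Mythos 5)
Your proposal is correct and is essentially the paper's own argument: the paper organizes the same steps as two ways of pasting a commuting cube (using the pasting law, naturality of $\eta$, and the pullback square defining formal {\'e}taleness), whereas you run them as a sequential chain of pasting-law applications. The content and the point where the formally {\'e}tale hypothesis enters are identical.
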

\begin{proof}
  Observe that we have a commuting cube as follows
  $$
    \begin{gathered}
    \xymatrix{
      f^\ast T^\infty Y
       \ar[d]
       \ar[r]_{\ }="s1"
       &
      T^\infty Y
        \ar[d]
        \ar[r]_{\ }="s2"
      &
      Y
      \ar[d]^{\eta_Y}
      \\
      X
        \ar[r]_-f^{\ }="t1"
        &
      Y
       \ar[r]_{\eta_Y}^{\ }="t2"
       &
      \Im Y
      \ar@{}|{\mbox{\tiny (pb)}} "s1"; "t1"
      \ar@{}|{\mbox{\tiny (pb)}} "s2"; "t2"
    }
    \end{gathered}
    \;\;\;
     \simeq
    \;\;\;
    \begin{gathered}
    \xymatrix{
      T^\infty X
        \ar[d]
        \ar[r]_{\ }="s1"
        &
      X
       \ar[d]
       \ar[r]^f_{\ }="s2"
       &
      Y
       \ar[d]^{\eta_Y}
      \\
      X
        \ar[r]_{\eta_X}^{\ }="t1"
        &
      \Im X
        \ar[r]_{\Im f}^{\ }="t2"
        &
      \Im Y
      \ar@{}|{\mbox{\tiny (pb)}} "s1"; "t1"
      \ar@{}|{\mbox{\tiny (pb)}} "s2"; "t2"
    }
    \end{gathered}
    \,.
  $$
	Here the bottom composites agree by the naturality of $\eta$, and from
	this the squares are identified via the pasting law for pullbacks
	(proposition~\ref{PastingLaw}) and using the definition of the formal
	disk bundle (definition~\ref{InfinitesimalDiskBundle}) and of the
	property that $f$ is formally {\'e}tale
	(definition~\ref{FormallyEtaleMorphism}). The resulting isomorphism in
	the top left is the statement to be proven.
\end{proof}
\begin{proposition}
  \label{FormalDiskBundleLocallyTrivializableOverVManifold}
	Let $V \in \mathrm{Grp}(\mathbf{H})$ be a group object if a differentially cohesive topos.
	\begin{enumerate} \item The infinitesimal disk bundle $T^\infty V$
	over $V$ (definition~\ref{InfinitesimalDiskBundle}) trivializes, with
	typical fiber the formal disk $\mathbb{D}_e$
	(definition\ref{FormalDiskAbstractly}) of $V$ at the neutral element $e\in
	V$, i.e.,\ there is an equivalence of the form
  $$
    \begin{gathered}
    \xymatrix{
      T^\infty V
      \ar[dr]_{\pi}
      \ar[rr]^{\simeq }
      &&
      V \times \mathbb{D}_e
      \ar[dl]^{\mathrm{pr}_1}
      \\
      & V
    }
    \end{gathered}
    \,.
  $$
	Moreover, under this equivalence the morphism $\mathrm{ev} : T^\infty
	V \to V$ from definition~\ref{InfinitesimalDiskBundle} is identified
	with the operation of right translation of elements in $V$ by elements
	in $\mathbb{D}_e \overset{i}{\to} V$ in that the following diagram
	commutes
  $$
    \begin{gathered}
    \xymatrix{
      V \times \mathbb{D}_e
        \ar[d]_{\simeq} \ar[r]^{\mathrm{id} \times i}
      &
      V \times V
       \ar[r]^->>{(-) \cdot (-)^{-1}}
      &
      V
      \ar[d]^=
      \\
      T^\infty V
        \ar[rr]_{\mathrm{ev}}
      &&
      V
    }
    \end{gathered}
    \,.
  $$
  \item
	For $X$ a $V$-manifold (definition~\ref{VManifold}), its infinitesimal
	disk bundle $T^\infty X$ (definition~\ref{InfinitesimalDiskBundle}) is
	a locally trivial $\mathbb{D}_e$-fiber bundle. Specifically, for
	$\xymatrix{U \ar@{->>}[r]^{\mathrm{et}} & X}$ any $V$-atlas of $X$,
	there is a Cartesian diagram
  $$
    \begin{gathered}
    \xymatrix{
      U \times \mathbb{D}_e
      \ar[d]_{\mathrm{pr}_1}
      \ar[r]_{\ }="s"
      &
      T^\infty X
      \ar[d]
      \\
      U \ar@{->>}[r]_{\mathrm{et}}^{\ }="t" & X
      \ar@{}|{\mbox{\tiny (pb)}} "s"; "t"
    }
    \end{gathered}
    \, .
  $$
  \end{enumerate}
\end{proposition}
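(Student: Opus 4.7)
The proof of part (1) is the heart of the matter; part (2) then follows quickly by descent along the $V$-atlas.

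For part (1), the strategy is to exploit the group structure on $V$ to explicitly produce the trivialization, using that the infinitesimal shape monad $\Im$ is left exact. Since $\Im = i_* i^*$ with $i^*$ a (two-sided) adjoint, both $i_*$ and $i^*$ preserve finite limits, hence so does $\Im$; consequently $\Im V$ inherits a group structure and the unit $\eta_V \colon V \to \Im V$ is a homomorphism of group objects. The key consequence is that $\eta_V$ sends $v \cdot d$ to $\eta_V(v)$ whenever $d \in \mathbb{D}_e$, because $\mathbb{D}_e = V \times_{\Im V} \{e\}$ forces $\eta_V(d) = \eta_V(e) = e_{\Im V}$. I would then define the map
\[
    \phi \;\colon\; V \times \mathbb{D}_e \longrightarrow T^\infty V, \qquad
    (v,d) \;\longmapsto\; (v,\, v\cdot d^{-1})
\]
via the universal property of the pullback defining $T^\infty V$: the pair $(v, v\cdot d^{-1})$ satisfies $\eta_V(v) = \eta_V(v\cdot d^{-1})$ by the argument above, hence factors through $T^\infty V$. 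By construction $\pi \circ \phi = \mathrm{pr}_1$ (so $\phi$ is a morphism over $V$), and $\mathrm{ev} \circ \phi$ is the composite $\mathrm{id} \times i$ followed by $(-)\cdot(-)^{-1}$, which is exactly the commuting diagram claimed.

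To show $\phi$ is an isomorphism, I would construct the inverse $\psi \colon T^\infty V \to V \times \mathbb{D}_e$ by $\psi(v_1, v_2) = (v_1,\, v_2^{-1}\cdot v_1)$, verifying first that $v_2^{-1}\cdot v_1$ indeed lands in $\mathbb{D}_e$ using that $\eta_V$ is a homomorphism and $\eta_V(v_1) = \eta_V(v_2)$. Composing $\phi$ and $\psi$ in either order gives the identity by a short diagram chase using the group axioms. I expect the main subtlety (and the part worth treating with care) to be the justification that $\Im$ preserves the relevant finite products so that the group-theoretic manipulations above are internally valid in $\mathbf{H}$; once this is established, the rest is formal.

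For part (2), I would assemble the following chain of pullback identifications. The $V$-atlas $V \xleftarrow{\mathrm{et}} U \twoheadrightarrow X$ has both legs formally étale, so Proposition \ref{PullbackAlongFormallyEtalePreservesFormalDiskBundles} applies to each. Pulling $T^\infty V$ back along the left leg yields $T^\infty U$ in $\mathbf{H}_{/U}$, while pulling $T^\infty X$ back along the right leg also yields $T^\infty U$. Combining with part (1), the pullback of $T^\infty V \simeq V \times \mathbb{D}_e$ along $U \to V$ is visibly $U \times \mathbb{D}_e$ (products being stable under base change). Thus $U \times \mathbb{D}_e \simeq T^\infty U \simeq U \times_X T^\infty X$, and the resulting square is the required pullback. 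No further obstacle arises in part (2); all the work has been localized to the $V$-level group-theoretic construction.
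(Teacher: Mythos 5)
Your proof is correct, and part (2) coincides with the paper's argument: both pull back $T^\infty V$ and $T^\infty X$ to the atlas $U$ along the two formally \'etale legs using proposition~\ref{PullbackAlongFormallyEtalePreservesFormalDiskBundles} and compare. For part (1) you rely on the same two essential facts as the paper --- that $\Im$ preserves finite limits, so $\Im V$ is a group object and $\eta_V$ a homomorphism, and that $\eta_V\circ i$ is constant at $e$ by the defining pullback of $\mathbb{D}_e$ --- but you execute the trivialization differently. The paper proceeds purely diagrammatically: it invokes the nonabelian Mayer--Vietoris lemma (proposition~\ref{MayerVietoris}) to rewrite $T^\infty V = V\times_{\Im V} V$ as the pullback of $e\colon \ast\to\Im V$ along $(\eta_V)\cdot(\eta_V)^{-1}$, factors the bottom map through $(-)\cdot(-)^{-1}\colon V\times V\to V$ using that $\eta_V$ is a homomorphism, and then applies the pasting law to interpose $\mathbb{D}_e$; the isomorphism with $V\times\mathbb{D}_e$ then drops out of the universal property without ever exhibiting an inverse. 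You instead write down the mutually inverse maps $(v,d)\mapsto(v,\,v\cdot d^{-1})$ and $(v_1,v_2)\mapsto(v_1,\,v_2^{-1}\cdot v_1)$ on generalized elements, check via the group axioms that each is well defined (lands in the relevant pullback) and that they compose to the identity. Your route is more elementary and self-contained --- it needs no appeal to the Mayer--Vietoris lemma --- at the cost of an explicit inverse verification; the paper's route is slicker and keeps everything at the level of universal properties. Both yield the same identification of $\mathrm{ev}$ with right translation, so the ``moreover'' clause comes for free in either version.
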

\begin{proof}
	Since $\Im$ is a right adjoint (by
	proposition~\ref{diffcohesionofFormalSmoothSet},
	definition~\ref{ReImEtAdjunctionOnFormalSmoothSets}) it preserves
	limits and hence in particular it preserves group objects. Therefore
	the defining Cartesian square
  $$
    \begin{gathered}
    \xymatrix{
      T^\infty V \ar[r]^{\mathrm{ev}}_-{\ }="s" \ar[d]_\pi & V \ar[d]^{\eta_V}
      \\
      V \ar[r]_{\eta_V}^-{\ }="t" & \Im V
      \ar@{}|{\mbox{\tiny (pb)}} "s"; "t"
    }
    \end{gathered}
  $$
	(from definition~\ref{InfinitesimalDiskInManifold}) is a fiber product
	over a group object $\Im V$. Hence the ``nonabelian Mayer-Vietoris
	argument'' applies (proposition~\ref{MayerVietoris}) and shows that
	there is equivalently a Cartesian square of the form
  $$
    \begin{gathered}
    \xymatrix{
      T^\infty V
      \ar[d]_{(\pi, \mathrm{ev})}
      \ar[rr]_-{\ }="s"
      &&
      \ast
      \ar[d]^{e}
      \\
      V \times V
      \ar[rr]_-{(\eta_V)\cdot (\eta_V)^{-1}}^-{\ }="t"
      &&
      \Im V
      \ar@{}|{\mbox{\tiny (pb)}} "s"; "t"
    }
    \end{gathered}
    \,.
  $$
	Since $\eta_V$ is a group homomorphism, the bottom morphism here is
	equivalently $\eta_V \circ ((-)\cdot (-)^{-1})$. Hence by the pasting
	law (proposition~\ref{PastingLaw}) and using the definition of the
	formal disk $\mathbb{D}_e$ (definition~\ref{FormalDiskAbstractly}), we
	obtain a factorization of the above diagram as
  $$
    \begin{gathered}
    \xymatrix{
      T^\infty V
      \ar@{}[dr]|{\mbox{\tiny (pb)}}
      \ar[d]_{(\pi, \mathrm{ev})}
      \ar[r]_-{\ }="s"
      &
      \mathbb{D}_e
      \ar[d]^i
      \ar[r]_-{\ }="s2"
      &
      \ast
      \ar[d]^{e}
      \\
      V \times V
      \ar[r]_-{(-)\cdot (-)^{-1}}^-{\ }="t"
      &
      V
      \ar[r]_-{\eta_V}^-{\ }="t2"
      &
      \Im V
      \ar@{}|{\mbox{\tiny (pb)}} "s2"; "t2"
    }
    \end{gathered}
    \,.
  $$
	From the universal property of the pullback, it follows that the square
	on the left here is equivalently
  $$
    \begin{gathered}
    \xymatrix{
      V \times \mathbb{D}_e
      \ar[d]_{(\mathrm{pr}_1,\, (-)\cdot (i(-))^{-1})}
      \ar[r]^-{\mathrm{pr}_2}_-{\ }="s"
      &
      \mathbb{D}
      \ar[d]^i
      \\
      V \times V
      \ar[r]_-{(-) \cdot (-)^{-1}}^-{\ }="t"
      &
      V
      \ar@{}|{\mbox{\tiny (pb)}} "s"; "t"
    }
    \end{gathered}
    \,.
  $$
	The equivalence of this square with the left square above is the first
	statement to be shown.

	For the second statement, observe, by
	proposition~\ref{PullbackAlongFormallyEtalePreservesFormalDiskBundles},
	that pullback along the two legs of any $V$-atlas
	(definition~\ref{VManifold})
  $$
    \xymatrix{
      & U \ar@{->>}[dr]^{\mathrm{et}}
      \ar[dl]_{\mathrm{et}}
      \\
      V && X
    }
  $$
	preserves formal disk bundles. Hence the previous statement, given
	that $T^\infty V$ is trivial, implies that $T^\infty U \simeq U \times
	\mathbb{D}_e$ is also trivial, and that so is the pullback of
	$T^\infty X$ to the $V$-atlas $U$. Hence $T^\infty X$ is locally
	trivial.
\end{proof}
\begin{remark}
	The proof of
	proposition~\ref{FormalDiskBundleLocallyTrivializableOverVManifold}
	is ``purely formal,'' and works in every
    differentially cohesive topos (definition~\ref{ReImEtAdjunctionOnFormalSmoothSets}). As
	such it has been formalized in the formal language of modal homotopy
	type theory \cite{Wellen17}. This means that the proof also works in
	the $\infty$-topos version of $\mathbf{H}$. There we may furthermore
	formally conclude that for $X$ a $V$-manifold, then the formal disk
	bundle $T^\infty X$ is associated to an
	$\mathbf{Aut}(\mathbb{D})_e$-principal bundle. This is the \emph{jet
	frame bundle} of the $V$-manifold $X$. The frame bundle (at any jet
	order) is the starting point for Cartan geometry (i.e.,\ the theory of
	torsion-free $G$-structures on manifolds). This is a rich part of the
    theory, but here we will not further dwell on it.
\end{remark}
In order to highlight the content of proposition \ref{FormalDiskBundleLocallyTrivializableOverVManifold} it will be convenient to
introduce the following notation:
\begin{definition}
  \label{ComponentNotationForInfinitesimalDiskBundles}
  Let $X$ be a $V$-manifold (definition \ref{VManifold}). Denote the local evaluation action
  from item 1 of proposition \ref{FormalDiskBundleLocallyTrivializableOverVManifold} by
  $$
    + : V \times \mathbb{D}_e \longrightarrow V
    \,.
  $$
  For $E \in \mathbf{H}$ an object, we may regard any morphism $p : \cdots \to E$ into it as a generalized element of $E$.
  Say that a generalized element of the total space of a bundle $E \to X$ is \emph{local} if its
  projection $x : \cdots \to E \to X$ factors through some $V$-cover $U \to X$.
  By item 2 of proposition \ref{FormalDiskBundleLocallyTrivializableOverVManifold} a local generalized element of
  the infinitesimal disk bundle $T^\infty_X E$ is equivalently a pair $p = (x,a)$ consisting of a generalized base point $x : \cdots \to X$ and of a generalized element
  $a : \cdots \to \mathbb{D}_e \times E$, such that the evaluation morphism
  $$
    \mathrm{ev}_E : T^\infty_X E \longrightarrow E
  $$
  is given by
  $$
    (x,a) \mapsto x+a
    \,.
  $$
\end{definition}
\begin{corollary}
  \label{ComponentVersionOfDiskBundleMonadProduct}
  In the notation of definition \ref{ComponentNotationForInfinitesimalDiskBundles} then
  the structure morphism (definition \ref{monad}) of the $T^\infty_X$-monad from definition \ref{InfinitesimalDiskBundle}
  are given on local generalized elements as follows:
  \begin{enumerate}
  \item the unit morphism
  $$
    \eta_E \;:\; E \to T^\infty_X E
  $$
  is given by
  $$
    x \mapsto (x,0)
  $$
  (where $0 : \cdots \to \ast \to \mathbb{D}_e$ is the generalized element constant on the unique global base point of $\mathbb{D}_e$)
  \item
  the product morphism
  $$
    \nabla_E \;:\; T^\infty_X T^\infty_X E \longrightarrow T^\infty_X E
  $$
  is given by
  $$
    (b,a,b) \mapsto (b, a + b)
    \,.
  $$
  \end{enumerate}
\end{corollary}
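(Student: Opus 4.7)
The plan is to verify the corollary by translating the abstract description of the monad structure from Proposition \ref{MonadOperationsOnInfinitesimalDiskBundles} into the explicit local model supplied by Proposition \ref{FormalDiskBundleLocallyTrivializableOverVManifold}. Since the corollary is a statement about local generalized elements, it suffices to check the formulas after pulling everything back along a $V$-atlas $\xymatrix{U \ar@{->>}[r]^{\mathrm{et}} & X}$ as in Definition \ref{VManifold}. Proposition \ref{PullbackAlongFormallyEtalePreservesFormalDiskBundles} ensures that this pullback commutes with $T^\infty_X$, so the $T^\infty_X$-monad operations restrict compatibly to the trivialized picture over $U$, in which $T^\infty_X E \simeq E|_U \times \mathbb{D}_e$ and $\mathrm{ev}_E$ is the local right-translation $(x,a) \mapsto x+a$ of Definition \ref{ComponentNotationForInfinitesimalDiskBundles}.

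For item~1, I would simply read off Proposition \ref{MonadOperationsOnInfinitesimalDiskBundles}, which identifies $\eta_E$ with the universal morphism $(p,\mathrm{id})\colon E \to X \times_{\Im X} E = T^\infty_X E$ into the fiber product. Under the local trivialization, the assignment $x \mapsto (p(x),x)$ corresponds to the ``degenerate'' infinitesimal arrow at $p(x)$, i.e.\ to the neutral element of $\mathbb{D}_e$. Thus locally $\eta_E(x) = (x,0)$, as claimed.

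For item~2, I would unwind the identification $\nabla_E = T^\infty_X \mathrm{ev}_E$ from Proposition \ref{MonadOperationsOnInfinitesimalDiskBundles}. Applying $T^\infty_X$ once more to the trivialization gives $T^\infty_X T^\infty_X E|_U \simeq E|_U \times \mathbb{D}_e \times \mathbb{D}_e$, so a local generalized element has the form $(x,a,b)$. Functoriality of $T^\infty_X$ applied to the local formula $\mathrm{ev}_E(x,a) = x+a$ then operates in the outer infinitesimal slot, and the associativity of the group law on $V$ that controls the right-translation action identifies the combined displacement with $a+b$. This gives $\nabla_E(x,a,b) = (x, a+b)$.

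The main obstacle is the bookkeeping for the \emph{iterated} local trivialization: Proposition \ref{FormalDiskBundleLocallyTrivializableOverVManifold} only directly trivializes $T^\infty X$, so to get a trivialization of $T^\infty_X T^\infty_X E$ one must combine it with Proposition \ref{PullbackAlongFormallyEtalePreservesFormalDiskBundles} (to commute $T^\infty_X$ with the atlas pullback at the second level) and then carefully invoke the group structure of $V$ to identify the two successive infinitesimal translations with a single one. Once the iterated trivialization is nailed down, the formulas become purely computational in the local coordinates.
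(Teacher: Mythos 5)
Your proposal is correct and is essentially the paper's own argument: the paper proves this corollary in one line by saying it is Proposition \ref{MonadOperationsOnInfinitesimalDiskBundles} read through the local trivialization of Proposition \ref{FormalDiskBundleLocallyTrivializableOverVManifold}, and your write-up simply fills in that combination (including the use of Proposition \ref{PullbackAlongFormallyEtalePreservesFormalDiskBundles} to pass to a $V$-atlas, which the paper relegates to the proof of the trivialization result itself). The only cosmetic point is that the iterated-trivialization bookkeeping you flag as the main obstacle is carried out concretely in the paper's Example \ref{AdditiveStructureInInfinitesimalDiskBundle}, which you could cite to discharge it.
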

\begin{proof}
  In view of proposition \ref{FormalDiskBundleLocallyTrivializableOverVManifold}
  this is the statement of proposition \ref{MonadOperationsOnInfinitesimalDiskBundles}.
\end{proof}
The following example shows what the abstract phenomena of proposition \ref{FormalDiskBundleLocallyTrivializableOverVManifold}
look like when realized in a basic concrete special case.
\begin{example}
  \label{AdditiveStructureInInfinitesimalDiskBundle}
	Let $X = \mathbb{R} \in \mathrm{CartSp} \hookrightarrow \mathrm{FormalSmoothSet}$ be
	the real line and write
  $$
    \mathbb{D}_0 = \varinjlim_k \mathbb{D}_0(k) \hookrightarrow \mathbb{R}
  $$
	for the formal infinitesimal disk in $\mathbb{R}$ around the origin
	(definition \ref{InfinitesimalDiskInManifold}). Then the product
	morphism of the infinitesimal disk bundle monad according to proposition
	\ref{MonadOperationsOnInfinitesimalDiskBundles} is of the form
  $$
    \xymatrix{
      T^\infty_X T^\infty_X X \ar[d]_\simeq \ar[rr] && T^\infty_X X \ar[d]^\simeq
      \\
      \mathbb{R} \times \mathbb{D}_0 \times \mathbb{D}_0
        \ar[rr]
        &&
      \mathbb{R} \times \mathbb{D}_0
    }
  $$
  and is given at infinitesimal order $k \in \mathbb{N}$ by the morphism
  $$
    \mathbb{R} \times \mathbb{D}_0(k) \times \mathbb{D}_0(k)
      \longrightarrow
    \mathbb{R} \times \mathbb{D}_0(k)
  $$
  in $\mathrm{FormalCartSp} \hookrightarrow \mathrm{FormalSmoothSet}$
  which is dual to the algebra homomorphism
  $$
    C^\infty(\mathbb{R}) \otimes_{\mathbb{R}} \langle (\mathbb{R} \oplus \langle \varepsilon_1\rangle)/(\varepsilon_1^{k+1})
    \otimes_{\mathbb{R}} (\mathbb{R} \oplus \langle \varepsilon_2\rangle)/(\varepsilon_2^{k+1})
      \longleftarrow
    C^\infty(\mathbb{R})\otimes_{\mathbb{R} \oplus \langle \varepsilon\rangle }/(\varepsilon^{k+1})
  $$
  that takes
  $$
    f(x,\varepsilon) \mapsto f(x, \varepsilon_1 + \varepsilon_2)
    \,.
  $$
\end{example}
\begin{proof}
  By proposition \ref{FormalDiskBundleLocallyTrivializableOverVManifold} and proposition \ref{FormalDiskInSmoothManifolds} the morphism
  $$
    \mathrm{ev} \;:\; T^\infty_{\mathbb{R}} \mathbb{R} \longrightarrow \mathbb{R}
  $$
  is the composite
  $$
    \xymatrix{
      \mathbb{R} \times \mathbb{D}_0
        \ar@{^{(}->}[r]
        &
        \mathbb{R} \times \mathbb{R}
        \ar[rr]^{(-)+(-)}
        &&
       \mathbb{R}
    }
  $$
  in $\mathrm{FormalCartSp} \hookrightarrow \mathrm{FormalSmoothSet}$ (definition \ref{FormalSmoothSets}).
  Dually, at infinitesimal order $k \in \mathbb{N}$, this is the algebra homomorphism
  $$
    \xymatrix{
      C^\infty(\mathbb{R}) \otimes_{\mathbb{R}}(\mathbb{R} \oplus \langle \varepsilon_1\rangle)/(\varepsilon_1^{k+1})
      \ar@{<-}[rr]^-{}
     &&
      C^\infty(\mathbb{R})
    }
    :
    \exp(\varepsilon \partial_x)
  $$
  which takes
  $$
    f(x) \mapsto \exp(\varepsilon \partial_x)f(x) := \sum_{n_1=0}^k \tfrac{1}{n!}f^{(n_1)}(x)\varepsilon_1^{n_1}
    \,.
  $$
  By proposition \ref{MonadOperationsOnInfinitesimalDiskBundles} the morphism that we are after is the image
  $T^\infty_X \mathrm{ev}$ of this morphism under $T^\infty_X$, hence its base change, as a morphism
  over $\Im \mathbb{R}$, along $\eta_{\mathbb{R}} : \mathbb{R} \to \Im \mathbb{R}$.
  To analyse what this does on fibers, we check what it does over any point, say $0 \in \mathbb{R}$, by base changing further along
  $\{0\} \to \mathbb{R} \to \Im \mathbb{R}$. This is given by the front top morphism in the following
  diagram
  $$
    \begin{gathered}
    \xymatrix{
      & \mathbb{R} \times \mathbb{D}_0
        \ar[rr]^-{\mathrm{ev}}
        \ar[dd]|!{[dl];[dr]}{\hole}
        &&
      \mathbb{R}
      \ar[dd]
      \\
      \mathbb{D}_0 \times \mathbb{D}_0
      \ar[rr]
      \ar[dd]
      \ar[ur]
      &&
      \mathbb{D}_0
      \ar[dd]
      \ar[ur]
      \\
      & \mathbb{R}
        \ar[rr]|!{[ur];[dr]}{\hole}
      &&
      \Im \mathbb{R}
      \\
      \mathbb{D}_0 \ar[rr] \ar[ur] & & \ast \ar[ur]_0
    }
    \end{gathered}
    \, ,
  $$
  where all squares are Cartesian (are pullback squares).
  Consider the top square.
  Since colimits are universal in $\mathbf{H}$ (proposition \ref{UniversalColimits})
  we may analyze this at any finite infinitesimal order $\mathbb{D}_0(k) \hookrightarrow \mathbb{D}_0$,
  where it becomes a pullback in the site $\mathrm{FormalCartSp} \overset{y}{\hookrightarrow} \mathrm{FormalSmoothSet}$.
  Since the Yoneda embedding preserves limits, we may compute this pullback square
  in $\mathrm{FormalCartSp}$. Since by definition there is a full inclusion
  $\mathrm{FormalCartSp}^{\mathrm{op}} \hookrightarrow \mathrm{CAlg}_{\mathbb{R}}$,
  for this it is sufficient to find a pushout square
  in $\mathrm{CAlg}_{\mathbb{R}}$. As such this is the following:
  $$
    \xymatrix{
      C^\infty(\mathbb{R}) \otimes_{\mathbb{R}}(\mathbb{R} \oplus \langle \varepsilon_1\rangle)/(\varepsilon_1^{k+1})
      \ar@{<-}[rrr]^-{f(x) \mapsto \sum_{n_1=0}^k \tfrac{1}{n!}f^{(n_1)}(x)\varepsilon_1^{n_1}}
      \ar[dd]_-{
          \sum_{n_1 = 0}^k \tfrac{1}{n_1!} g_{n_1}(x) \varepsilon_1^{n_1}
          \hfill
          \atop
             \mapsto
           \sum_{n_1,n_2 = 0 }^k \tfrac{1}{n_1! n_2!} g_{n_1}^{(n_2)}(0) \varepsilon_1^{n_1} \varepsilon_2^{n_2} }
     &&&
      C^\infty(\mathbb{R})
      \ar[ddlll]|{f(x) \mapsto \sum_{n=0}^k \tfrac{1}{n!} f^{(n)}(\varepsilon_1 + \varepsilon_2)^n}
      \\
      \\
      (\mathbb{R} \oplus \langle \varepsilon_1, \varepsilon_2\rangle)/(\varepsilon_1^{k+1},\varepsilon_2^{k+1})
        \ar@{<-}[rrr]_{\varepsilon \mapsto (\varepsilon_1 + \varepsilon_2)}
       &&&
      (\mathbb{R} \oplus \langle \varepsilon \rangle)/(\varepsilon^{k+1})
       \ar@{<-}[uu]_{f(x) \mapsto \sum_{n=0}^k\tfrac{1}{n!}f^{(n)}(0) \varepsilon^{n} }
    }
  $$
  Here the top homomorphism is from the previous discussion, while the right morphism
  is the one that defines the inclusion $\mathbb{D}_0(k) \hookrightarrow \mathbb{R}$ by
  definition \ref{StandardInfinitesimalnDisk}.
  Hence the pushout of commutative $\mathbb{R}$-algebras simply identifies the tensor
  copy of $C^\infty(\mathbb{R})$ in the algebra at the top left with the algebra
  in the bottom right. This shows that the left morphism has to be as shown.
  To see that the composite of the top and the left morphism
  is the diagonal morphism as shown, use the exponential expression from before, in
  terms of which this identity is
  $$
    \exp(\varepsilon_2 \partial_x) \exp(\varepsilon_1 \partial_x) f(x)
    =
    \exp( ( \varepsilon_1 + \varepsilon_2) \partial_x ) f(x)
    \,.
  $$
  This implies that the bottom morphism is as shown, which is the claim to be proven.
\end{proof}

%%%%%%%%%%%%%%%%%%%%%%%%%%%%%%%%%%
\subsection{Jet bundles}
\label{Jets}
%%%%%%%%%%%%%%%%%%%%%%%%%%%%%%%%%%

We recall the traditional definition of jet bundles, and then show that
this is reproduced by abstract constructions in the Cahiers topos $\mathrm{FormalSmoothSet}$
and finally use this to give a general synthetic discussion of infinite-order jet bundles
in any differentially cohesive topos, via the abstract construction
right adjoint to that of formal disk bundles in the previous section
\ref{FormalDisks}.

A traditional arena for defining jets and jet
bundles is the following slight generalization of fiber bundles (this is
essentially the unnamed definition in \cite[section 1]{Marvan86}, see
also~\cite[sec.I.A]{Anderson-book}):
\begin{definition}[fibered manifold]
\label{FiberedManifold}
	We say a morphism $E \longrightarrow \Sigma$ of locally pro-manifolds
	(definition \ref{LocallyProManifold}) is a \emph{fibered manifold} if
	the morphism is a surjective submersion according to definition
	\ref{SubmersionBetweenLocallyProManifolds}. We write
  $$
    \mathrm{LocProMfd}_{\downarrow \Sigma}
      \hookrightarrow
    \mathrm{LocProMfd}_{/\Sigma}
  $$
  for the full subcategory of the slice category (definition \ref{SliceCategory})
  of locally pro-manifolds over $\Sigma$ on those objects which are fibered manifolds in this sense.
\end{definition}

\medskip

In generalization of example \ref{VectorFieldsAsMapsFromSpecOfRingOfDualNumbers} one says:
\begin{definition}[e.g. {\cite[\textsection\textsection1.2,1.12]{Michor80}}]
 \label{JetsOfBundlesOfSmoothManifolds}
	Let $E \overset{p}{\longrightarrow} \Sigma$ be a morphism in
	$\mathrm{FormalSmoothSet}$ (thought of as a bundle over $\Sigma$, example
	\ref{SectionsOfBundles}). For $s \colon \ast \to \Sigma$ a point, and
	$k \in \mathbb{N}$, then a \emph{$k$-jet} of $p$ at $s$ is a morphism
	$\phi : \mathbb{D}_s(k) \longrightarrow E$ out of the infinitesimal
	disk of order $k$ around that point
	(definition~\ref{InfinitesimalDiskInManifold}) making the following
	diagram commute
  $$
    \begin{gathered}
    \xymatrix{
      \mathbb{D}_s(k)
      \ar@{_{(}->}[dr]
      \ar[rr]^\phi
      &&
      E
       \ar[dl]^p
      \\
      & \Sigma
    }
    \end{gathered}
    \,,
  $$
	where on the left we have the defining inclusion (from
	definition~\ref{InfinitesimalDiskInManifold}).

	The collection of all $k$-jets of $E$ naturally forms a smooth
	manifold $J^k_\Sigma E$, called the $k$-th \emph{jet bundle} of $E$.
	When $E$ is a fibered manifold (definition~\ref{FiberedManifold}),
	then so is $J^k_\Sigma E$. As $k$-varies, these naturally form a
	projective system of smooth manifolds
  $$
    \cdots \longrightarrow J^2_\Sigma E \longrightarrow J^1_\Sigma E \longrightarrow J^0_\Sigma E = E
    \,.
  $$
\end{definition}

The following definition is equivalent to the standard definition in the
literature, see remark \ref{InfiniteJetBundleInTheLiterature} below.
\begin{definition}
  \label{InfiniteJetBundleOfManifolds}
	For $E \to \Sigma$ a fibered manifold (definition
	\ref{FiberedManifold}) then its \emph{infinite jet bundle}
	$J^\infty_\Sigma E$ is the limit over the projective system of finite
	jet bundles, according to
	definition~\ref{JetsOfBundlesOfSmoothManifolds}, taken in the category
	of locally pro-manifolds (definition~\ref{LocallyProManifold}, remark
	\ref{LocProMfdFullyIncludedInFrechetManifolds}):
  $$
    J^\infty_\Sigma E
    :=
    \varprojlim_k J^k_\Sigma E
    \;\;\;\;
    \in \mathrm{LocProMfd}
    \,.
  $$
\end{definition}
\begin{remark}
  \label{InfiniteJetBundleInTheLiterature}
	In \cite{Saunders89} the infinite jet bundle $J^\infty_\Sigma E$ is
	considered more explicitly as the set of infinite jets equipped with
	the Fr{\'e}chet manifold structure locally modeled on
	$\mathbb{R}^\infty$, with $\mathbb{R}^\infty$ regarded as a
	Fr{\'e}chet manifold via the evident seminorms $\|-\|_n :
	\mathbb{R}^\infty \overset{}{\to} \mathbb{R}^n \overset{\|-\|}{\to}
	\mathbb{R}$ (definition~\ref{RInfinity}). But by
	proposition~\ref{ProjectiveLimitNatureOfRInfinity} this is
	equivalently the projective limit of the finite dimensional
	$\mathbb{R}^n$s formed in Fr{\'e}chet manifolds, hence (by remark
	\ref{LocProMfdFullyIncludedInFrechetManifolds}) in locally
	pro-manifolds:
  $$
    \mathbb{R}^\infty \simeq \varprojlim_n \mathbb{R}^n
    \;\;
    \in \mathrm{LocProMfd} \hookrightarrow \mathrm{FrMfd}
    \,.
	$$
    Hence for the case that $E \to \Sigma$ is a fibered manifold (definition \ref{FiberedManifold}),
    then definition~\ref{InfiniteJetBundleOfManifolds} reduces to what is considered in \cite{Saunders89}.
\end{remark}
\begin{theorem}
  \label{ReproducingTraditionalJetBundle}
	Let $E \overset{p}{\to} \Sigma$ be a fibered manifold
	(definition~\ref{FiberedManifold}). Then under the embedding
	(proposition~\ref{SmoothSetsReceiveFrechetManifolds})
	$$
		\mathrm{LocProMfd} \hookrightarrow \mathbf{H}_\Re \hookrightarrow \mathbf{H}
		\, ,
	$$
	the infinite-jet bundle $J^\infty_\Sigma E$ in the sense of
	definition~\ref{InfiniteJetBundleOfManifolds} is isomorphic in
	$\mathbf{H}_{/\Sigma}$ to the right base change of $E$ (according to
	proposition~\ref{basechange}) along the unit $\eta_\Sigma : \Sigma \to
	\Im \Sigma$ of the $\Im$-monad
	(definition~\ref{ReImEtAdjunctionOnFormalSmoothSets}):
  $$
    J^\infty_\Sigma E
      \;\simeq\;
    (\eta_\Sigma)^\ast (\eta_\Sigma)_\ast E
    \;\;\;\;
    \in
    \mathrm{FormalSmoothSet}
    \,.
  $$
\end{theorem}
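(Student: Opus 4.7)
The plan is to verify the isomorphism by testing against representables $V \in \mathrm{FormalCartSp}$ and then invoking the Yoneda lemma. On the synthetic side, the base change adjunction $(\eta_\Sigma)_! \dashv (\eta_\Sigma)^\ast \dashv (\eta_\Sigma)_\ast$ (proposition~\ref{basechange}) gives, for any $[V \to \Sigma]$ in $\mathbf{H}_{/\Sigma}$, a natural bijection between morphisms $[V\to\Sigma] \to (\eta_\Sigma)^\ast(\eta_\Sigma)_\ast E$ in $\mathbf{H}_{/\Sigma}$ and morphisms $T^\infty_\Sigma V \to E$ in $\mathbf{H}_{/\Sigma}$, where $T^\infty_\Sigma V = V \times_{\Im \Sigma} \Sigma$ is the formal disk bundle of $V$ over $\Sigma$ (definition~\ref{InfinitesimalDiskBundle}).

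On the traditional side, since $J^\infty_\Sigma E := \varprojlim_k J^k_\Sigma E$ is a projective limit of finite-dimensional fibered manifolds in $\mathrm{LocProMfd}$ (definition~\ref{InfiniteJetBundleOfManifolds}, remark~\ref{InfiniteJetBundleInTheLiterature})---a limit that exists precisely because $E\to\Sigma$ is a fibered manifold (definition~\ref{FiberedManifold}) so that each $J^k_\Sigma E$ is again fibered---and since the inclusion $\mathrm{LocProMfd} \hookrightarrow \mathbf{H}$ is fully faithful and reflects such limits (remark~\ref{LocProMfdFullyIncludedInFrechetManifolds}), I would rewrite $\mathrm{Hom}_{\mathbf{H}_{/\Sigma}}([V\to\Sigma], J^\infty_\Sigma E)$ as the inverse limit over $k$ of $\mathrm{Hom}_{\mathbf{H}_{/\Sigma}}([V\to\Sigma], J^k_\Sigma E)$. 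By definition~\ref{JetsOfBundlesOfSmoothManifolds}, each term is naturally the set of morphisms $V \times_{\Im_k\Sigma} \Sigma \to E$ over $\Sigma$, i.e.\ morphisms out of the order-$k$ infinitesimal disk bundle of $V$ over $\Sigma$.

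To match the two sides, I would invoke proposition~\ref{FormalDiskInSmoothManifolds} to identify the full formal disk $\mathbb{D}_s$ at any $s\in\Sigma$ with the filtered colimit $\varinjlim_k \mathbb{D}_s(k)$ of its order-$k$ approximations, together with proposition~\ref{InfinitesimalPointInSmoothManifoldFactorsThroughInfinitesimalDisk} (every map from an infinitesimally thickened point into $\Sigma$ factors through some order-$k$ disk). These results exhibit $T^\infty_\Sigma V$ fiberwise as the filtered colimit of the order-$k$ disk bundles of $V$ over $\Sigma$; since $\mathrm{Hom}(-,E)$ sends this filtered colimit in the source to a cofiltered limit of hom-sets, the result of Step~2 matches that of Step~1 naturally in $V$.

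The main obstacle is reconciling the ``smooth and locally of finite order'' structure of the Fr\'echet manifold $J^\infty_\Sigma E$ (proposition~\ref{SmoothFunctionsOnFrechetRInfinity}) with the purely sheaf-theoretic right base change. For a representable $V = \mathbb{R}^n \times \mathbb{D}$, the infinitesimal factor $\mathbb{D}$ is of bounded finite order automatically (by proposition~\ref{InfinitesimalPointInSmoothManifoldFactorsThroughInfinitesimalDisk}), while on $\mathbb{R}^n$ one must show that a $V$-indexed compatible family $(V \to J^k_\Sigma E)_k$ assembles into a genuine morphism $V \to J^\infty_\Sigma E$ of locally pro-manifolds, and conversely. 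This is precisely the \emph{locally of finite order} condition of definition~\ref{LocallyOfFiniteOrder}, and the equivalence essentially reduces to Saunders' \cite[Lemma~7.1.8]{Saunders89} (which underpins proposition~\ref{ProjectiveLimitNatureOfRInfinity}) applied fiberwise to $E\to\Sigma$ along each chart of a $V$-atlas of $\Sigma$; the paracompactness of the representable source is what permits one to assemble local finite-order data into a global sheaf morphism.
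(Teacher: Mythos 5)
Your proposal is correct and follows essentially the same route as the paper's proof: testing against representables over the slice site, converting the right base change to maps out of the formal disk bundle $T^\infty_\Sigma V$ via the adjoint triple, decomposing that disk bundle as a filtered colimit of order-$k$ disks (proposition~\ref{FormalDiskInSmoothManifolds} plus universality of colimits), and matching the resulting cofiltered limit of hom-sets with $\varprojlim_k \mathrm{Hom}(V, J^k_\Sigma E) \simeq \mathrm{Hom}(V, J^\infty_\Sigma E)$. Your closing observation that the last step reduces to Saunders' Lemma~7.1.8 is exactly what the paper packages into proposition~\ref{ProjectiveLimitNatureOfRInfinity} and remark~\ref{LocProMfdFullyIncludedInFrechetManifolds}.
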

\begin{proof}
  By applying proposition~\ref{SliceSite} to definition~\ref{FormalSmoothSets}, we have an equivalence of categories
  $$
    \mathrm{FormalSmoothSet}_{/\Sigma}
      \simeq
    \mathrm{Sh}( \mathrm{FormalCartSp}/\Sigma )
    \,.
  $$
  Therefore it is sufficient to show that for every morphism $U \to \Sigma$
  out of a formal Cartesian space $U = \mathbb{R}^d \times \mathbb{D}$, there is a natural bijection
  $$
    \mathrm{Hom}_{\mathbf{H}_{/\Sigma}}\left(
      U
      ,\,
      (\eta_\Sigma)^\ast (\eta_\Sigma)_\ast E
    \right)
    \;\;
      \simeq
    \;\;
    \mathrm{Hom}_{\mathbf{H}_{/\Sigma}}\left(
      U
      \,,
      J^\infty_\Sigma E
    \right)
  $$
  (where we leave the maps to $\Sigma$ notationally implicit).
  Now by the base change adjoint triple $(\eta_\Sigma)_! \dashv (\eta_\Sigma)^\ast \dashv (\eta_\Sigma)_\ast$
  (proposition~\ref{basechange})
  the left hand side is equivalently
  $$
    \begin{aligned}
      \mathrm{Hom}_{\mathbf{H}_{/\Sigma}}\left(
        U
        ,\,
        (\eta_\Sigma)^\ast (\eta_\Sigma)_\ast E
      \right)
      &
      \simeq
      \mathrm{Hom}_{\mathbf{H}_{/\Sigma}}\left(
        (\eta_\Sigma)^\ast (\eta_\Sigma)_! U,\,
        E
      \right)
      \\
      & =
      \mathrm{Hom}_{\mathbf{H}_{/\Sigma}}\left(
        T^\infty_\Sigma U
        ,
        E
      \right)
    \end{aligned}
    \,,
  $$
  where we used definition~\ref{InfinitesimalDiskBundle} to identify the formal disk bundle
  $T^\infty_\Sigma U  = (\eta_\Sigma)^\ast (\eta_\Sigma)_! U = T^\infty \Sigma \times_\Sigma U$.

  Since the statement to be proven is local over $\Sigma$, we may assume without restriction
  of generality that $\Sigma \simeq \mathbb{R}^n$ (otherwise restrict to the charts of any atlas
  for the manifold $\Sigma$). By proposition~\ref{FormalDiskBundleLocallyTrivializableOverVManifold}
  then $T^\infty \Sigma \simeq \Sigma \times \mathbb{D}^n$ and hence (using definition~\ref{InfinitesimalDiskBundle})
  $$
    \begin{aligned}
      T^\infty_\Sigma U
        & \simeq
      U \times \mathbb{D}^n
        \\
      & \simeq
      U \times \varinjlim_k \mathbb{D}^n(k)
      \\
      & \simeq
      \varinjlim_k U \times \mathbb{D}^n(k)
    \end{aligned}
    \,,
  $$
  where in the second step we used proposition~\ref{FormalDiskInSmoothManifolds} and in the last step
  proposition~\ref{UniversalColimits}.
  Hence there is a natural bijection
  $$
    \begin{aligned}
      \mathrm{Hom}_{\mathbf{H}_{/\Sigma}}\left(
        U
        ,\,
        (\eta_\Sigma)^\ast (\eta_\Sigma)_\ast E
      \right)
      &
      \simeq
      \mathrm{Hom}_{\mathbf{H}_{/\Sigma}}\left(
        \varinjlim_k U \times \mathbb{D}^n(k)
        ,
        E
      \right)
      \\
      &\simeq
      \varprojlim_k
      \mathrm{Hom}_{\mathbf{H}_{/\Sigma}}\left(
         U \times \mathbb{D}^n(k)
        ,
        E
      \right)
      \\
      &\simeq
      \varprojlim_k
      \mathrm{Hom}_{\mathbf{H}_{/\Sigma}}\left(
         U
        ,
        J^k_\Sigma E
      \right)
    \end{aligned}
    \,,
  $$
	where in the last step we observe that a morphism $U \times
	\mathbb{D}^n(k)\to E$ over $\Sigma$ is precisely a smooth
	$U$-parameterized family of $k$-jets in $E$, according to
	definition~\ref{JetsOfBundlesOfSmoothManifolds}. Now with the fact
	that locally pro-manifolds embed fully faithfully $\mathrm{LocProMfd}
	\hookrightarrow \mathbf{H}_\Re \hookrightarrow \mathbf{H}$
	(proposition~\ref{SmoothSetsReceiveFrechetManifolds}, remark \ref{LocProMfdFullyIncludedInFrechetManifolds}) we conclude:
  \begin{align*}
      \varprojlim_k \mathrm{Hom}_{\mathbf{H}_{/\Sigma}}(U, J^k_\Sigma E)
      &
      \simeq
      \varprojlim_k \mathrm{Hom}_{\mathrm{LocProMfd}_{/\Sigma}}(U, J^k_\Sigma E)
      \\
      & \simeq
      \mathrm{Hom}_{\mathrm{LocProMfd}_{/\Sigma}}(U, \varprojlim_k J^k_\Sigma E)
      \\
      & =
      \mathrm{Hom}_{\mathrm{LocProMfd}_{/\Sigma}}(U, J^\infty_\Sigma E)
      \\
      & \simeq
      \mathrm{Hom}_{\mathbf{H}_{/\Sigma}}(U, J^\infty_\Sigma E)
    \,.
    \qedhere
  \end{align*}
\end{proof}
We now see some of the power of the synthetic formalism. Classically,
jets are defined for fibered manifolds over $\Sigma$
(definition~\ref{InfiniteJetBundleInTheLiterature}), where locally
adapted coordinate charts enter the definition in an essential way. Above,
we have found a synthetic formalization of that construction. But now,
this construction of $J^\infty_\Sigma$ can be applied to the entire
slice topos $\mathbf{H}_{/\Sigma}$, which admits objects $E \to \Sigma$
where $E$ is a much more general space that a locally pro-manifold and
the map to $\Sigma$ is no longer needs to be a surjective submersion.
That is, theorem~\ref{ReproducingTraditionalJetBundle} says that the
following abstract terminology makes good sense:
\begin{definition}
  \label{jetcomonad}
    For $\mathbf{H}$ any differentially cohesive topos,
	and for $\Sigma \in \mathbf{H}$, then we say that
	the base change comonad (from proposition~\ref{basechange}) along the
	unit $\eta_\Sigma : \mathbf{H}_{/\Sigma} \to \mathbf{H}_{/\Sigma}$ of
	the $\Im$-monad (definition~\ref{ReImEtAdjunctionOnFormalSmoothSets})
	is the \emph{infinite jet bundle} operation $J^\infty_\Sigma$:
  $$
    J^\infty_\Sigma
      :=
    (\eta_\Sigma)^\ast \circ (\eta_\Sigma)_\ast
    \,.
  $$
	We denote its coproduct and counit natural transformations,
	respectively, by $\Delta\colon J^\oo_\Sigma \to J^\oo_\Sigma
	J^\oo_\Sigma$ and $\epsilon\colon J^\oo_\Sigma \to \mathrm{id}$.
	In other words (by proposition~\ref{basechange}) this is the right
	adjoint to the construction of formal disk bundles (according to
	definition~\ref{InfinitesimalDiskBundle}):
  $$
    T^\infty_\Sigma
      \;\dashv\;
    J^\infty_\Sigma
    \;:\;
    \mathbf{H}_{/\Sigma}
      \longrightarrow
    \mathbf{H}_{/\Sigma}
    \,.
  $$
	If $\sigma \in \mathrm{Hom}(T^\oo_\Sigma X, Y)$ and $\tau \in \mathrm{Hom}(X,
	J^\oo_\Sigma Y)$ are adjunct morphisms, in the sequel we will use the
	notation $\tau = \widetilde{\sigma}$ and $\sigma = \overline{\tau}$.
\end{definition}
That forming jets is right adjoint to forming infinitesimal disk bundles
was observed before in \cite[Proposition~2.2]{Kock80} (at least for jets of finite order).

\begin{proposition}
  \label{JetBundleEndoFunctorOnSuitableManifolds}
	For $\Sigma \in \mathrm{SmthMfd}$, the jet bundle functor from
	proposition~\ref{jetcomonad} restricts to an  endofunctor on the
	category of fibered manifolds $\mathrm{LocProMfd}_{\downarrow \Sigma}$
    (definition \ref{FiberedManifold}):
  $$
    J^\infty_\Sigma
     \;:\;
    \mathrm{LocProMfd}_{\downarrow \Sigma}
      \longrightarrow
    \mathrm{LocProMfd}_{\downarrow \Sigma}
    \,.
  $$
\end{proposition}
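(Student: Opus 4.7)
The plan is to reduce to the classical theory of jet bundles via Theorem \ref{ReproducingTraditionalJetBundle}: once the synthetic $J^\infty_\Sigma E$ is identified with the classical $\varprojlim_k J^k_\Sigma E$ formed in $\mathrm{LocProMfd}$, the restriction property becomes a concrete statement about classical infinite jet bundles of fibered manifolds over the smooth manifold $\Sigma$.

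First, given $[E \to \Sigma] \in \mathrm{LocProMfd}_{\downarrow\Sigma}$, I would apply Theorem \ref{ReproducingTraditionalJetBundle} (whose hypothesis is precisely that $E\to\Sigma$ is a fibered manifold) to identify
$$
  J^\infty_\Sigma E \;=\; (\eta_\Sigma)^\ast(\eta_\Sigma)_\ast E \;\simeq\; \varprojlim_k J^k_\Sigma E \qquad \text{in }\mathbf{H}_{/\Sigma},
$$
where the projective limit on the right is taken in $\mathrm{LocProMfd}$ (cf.\ Remark \ref{InfiniteJetBundleInTheLiterature}). Functoriality on morphisms is automatic, since $J^\infty_\Sigma$ is already a functor on $\mathbf{H}_{/\Sigma}$ and the identification of Theorem \ref{ReproducingTraditionalJetBundle} is natural in $E$.

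Second, I would verify the fibered-manifold property locally over $\Sigma$. Using an atlas of $\Sigma$ by Cartesian spaces and, over each chart, a submersion trivialization of $E$ in the sense of Definition \ref{SubmersionBetweenLocallyProManifolds}, I reduce to the case $E = \mathbb{R}^n \times \mathbb{R}^m \to \mathbb{R}^n$ with $n = \dim\Sigma$ and $m \in \mathbb{N}\cup\{\infty\}$. On such a patch the classical finite $k$-jet bundle takes the explicit form $\mathbb{R}^n \times \mathbb{R}^m \times V_k$, where $V_k$ is the space of Taylor coefficients of orders $1$ through $k$ of maps $\mathbb{R}^n\to\mathbb{R}^m$, and the tower projections $V_{k+1}\to V_k$ are split linear surjections. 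By Proposition \ref{ProjectiveLimitNatureOfRInfinity} the projective limit $\varprojlim_k V_k$ exists in $\mathrm{LocProMfd}$ as an $\mathbb{R}^\infty$-manifold (or as a finite-dimensional $\mathbb{R}^N$-manifold in degenerate cases), so locally
$$
  J^\infty_\Sigma E \;\simeq\; \mathbb{R}^n \times \mathbb{R}^m \times \varprojlim_k V_k \;\longrightarrow\; \mathbb{R}^n
$$
is manifestly a submersion between locally pro-manifolds, and it is surjective because constant sections produce the zero jet over each point.

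The main obstacle to watch for is the global gluing: one must check that these local models assemble into a \emph{genuine} object of $\mathrm{LocProMfd}$, and not merely of the larger $\mathrm{FrMfd}$ into which it embeds. This is handled by observing that the transition functions between local submersion trivializations of $E$ are local diffeomorphisms of locally pro-manifolds, and that each functor $J^k_\Sigma$ sends local diffeomorphisms to local diffeomorphisms at every finite order; hence the induced transitions on $\varprojlim_k V_k$ are local diffeomorphisms of $\mathbb{R}^\infty$-manifolds, so Definition \ref{LocallyProManifold} applies. Combined with Remark \ref{LocProMfdFullyIncludedInFrechetManifolds} this yields $J^\infty_\Sigma E \in \mathrm{LocProMfd}_{\downarrow\Sigma}$, which is the claim.
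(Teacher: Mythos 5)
Your proposal is correct, but note that the paper itself offers \emph{no} proof of this proposition: it is stated bare, with the burden implicitly carried by theorem~\ref{ReproducingTraditionalJetBundle} together with definition~\ref{InfiniteJetBundleOfManifolds} and remark~\ref{InfiniteJetBundleInTheLiterature}, which in turn delegate to the classical literature the fact that $\varprojlim_k J^k_\Sigma E$ of a fibered manifold is again a fibered locally pro-manifold. What you have written is essentially the argument the paper chooses not to spell out, and it is sound: the reduction to the synthetic-equals-classical identification, the observation that fullness of $\mathrm{LocProMfd}_{/\Sigma} \hookrightarrow \mathbf{H}_{/\Sigma}$ makes the statement purely about objects, the local computation in a submersion chart, and the gluing via finite-order transition functions all go through. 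Two small points you could tighten: proposition~\ref{ProjectiveLimitNatureOfRInfinity} as stated concerns the specific tower $\cdots \to \mathbb{R}^2 \to \mathbb{R}^1 \to \mathbb{R}^0$, so you should remark that any countable tower of split linear surjections of Cartesian (or $\mathbb{R}^\infty$) vector spaces is isomorphic as a pro-object to that standard one, which is what licenses the identification $\varprojlim_k V_k \simeq \mathbb{R}^\infty$; and in the gluing step the real content is not that each finite-order transition is a local diffeomorphism (that is automatic from functoriality of $J^k_\Sigma$) but that the induced map on the limit is smooth in the sense of definition~\ref{LocallyOfFiniteOrder}, which holds because each order-$k$ block of the new jet coordinates depends only on the order-$\leq k$ blocks of the old ones, i.e.\ the transition is locally of finite order componentwise. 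With those two clarifications your argument is a complete proof of a statement the paper leaves to the reader.
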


\begin{remark}
\label{ObserveThatThereIsACoMonadStructureOnJ}
As a direct corollary, theorem~\ref{ReproducingTraditionalJetBundle}
implies that the infinite jet bundle functor $J^\infty_\Sigma$ of
definition~\ref{jetcomonad} naturally carries the structure of a
comonad on $\mathrm{FormalSmoothSet}_{/\Sigma}$, and by proposition
\ref{JetBundleEndoFunctorOnSuitableManifolds} this restricts to a
comonad structure on the traditional jet bundle construction
on (infinite-dimensional) smooth manifolds.
\end{remark}

That there is a comonad structure on the traditional infinite jet
bundle construction has been observed before in~\cite{Marvan86} (and,
apparently, only there). We now unwind the structure morphisms in the
abstract comonad structure on $J^\infty_\Sigma$ and show that,
restricted to locally pro-manifolds, they indeed coincide with the
traditional definitions.

\begin{example}
  \label{JetBundleOfSigma}
  For any $\Sigma \in \mathbf{H}$, regarded trivially as a bundle over itself
  $$
    [\Sigma \stackrel{\mathrm{id}}{\longrightarrow} \Sigma]
    \;\;
    \in
    \mathbf{H}_{/\Sigma}
    \,,
  $$
  then the corresponding infinite jet bundle (according to definition~\ref{jetcomonad}) coincides with
  $\Sigma$, in that the unique morphism
  $$
    J^\infty_\Sigma \Sigma \overset{\simeq}{\longrightarrow} \Sigma
  $$
  is an isomorphism.
\end{example}
\begin{proof}
	In full generality, this follows from the fact that $\Sigma \in
	\mathbf{H}_{/\Sigma}$ (i.e.,\ $\mathrm{id}_\Sigma$) is a terminal
	object, and the fact that $J^\infty_\Sigma$ is a right adjoint, by
	definition~\ref{jetcomonad}, hence preserves terminal objects.

	Explicitly, if $\mathbf{H} = \mathrm{FormalSmoothSet}$ and $\Sigma \in \mathrm{SmthMfd} \hookrightarrow
	\mathrm{FormalSmoothSet}$ happens to be a smooth manifold, then
	definition~\ref{JetsOfBundlesOfSmoothManifolds} says that the jets
	of $\mathrm{id}_\Sigma$ all have to be constant.
\end{proof}

\begin{definition}
  \label{jetprolongation}
	For $E \overset{p}{\longrightarrow} \Sigma$ any morphism in
	$\mathbf{H}$, regarded as an object of $\mathbf{H}_{/\Sigma}$, and
	with $J^\infty_\Sigma E \in \mathbf{H}_{/\Sigma}$ denoting the
	corresponding infinite jet bundle according to
	definition~\ref{jetcomonad}, then \emph{jet prolongation} (or
	\emph{jet extension}) is the function
  $$
    j^\infty \;:\; \Gamma_\Sigma(E) \longrightarrow \Gamma_\Sigma(J^\infty_\Sigma E)
  $$
	from sections of $E$ (definition \ref{SectionsOfBundles}) to sections of
	$J^\infty_\Sigma E$ which is given by
  $$
    (\Sigma \overset{\sigma}{\longrightarrow} E)
    \;\mapsto\;
    ( j^\oo\sigma\colon \Sigma
        \overset{\simeq}{\longrightarrow}
      J^\infty_\Sigma \Sigma
        \overset{J^\infty_\Sigma \sigma}{\longrightarrow}
      J^\infty_\Sigma E
    )
    \,,
  $$
  where the equivalence on the right is that of example \ref{JetBundleOfSigma}.
\end{definition}
Notice that by the $(T^\infty_\Sigma \dashv J^\infty_\Sigma)$-adjunction,
	{$E'$-parameterized sections} of a jet bundle $J^\infty_X E$
	(definition \ref{jetcomonad}), namely morphisms of the form
  $$
    \sigma : E' \longrightarrow J^\infty_X E
  $$
  in $\mathbf{H}_{/X}$ (see example \ref{SectionsOfBundles}) are in
	natural bijection with morphisms of the form
  $$
    \overline{\sigma} : T^\infty_X E' \longrightarrow E
  $$
  in $\mathbf{H}_{/X}$.
  Now:
  \begin{proposition}
  \label{ParameterizedSectionsOfJetBundles}
	Let $X \in \mathbf{H}$ be any object, and let $E', E \in
	\mathbf{H}_{/X}$ be two bundles over $X$. Then the double $(T^\infty_\Sigma \dashv J^\infty_\Sigma)$-adjunct
  of a morphism of the form
  $$
    E' \longrightarrow J^\infty_X E
			\overset{\Delta_E}{\longrightarrow} J^\infty_X J^\infty_X E
    \,,
  $$
  hence of the image of an $E'$-parameterized section of the jet bundle under the jet
	coproduct operation, is the image of $\overline{\sigma}$
	under the product $\nabla_{E'}$ of the infinitesimal disk bundle monad, namely the morphism
  $$
    T^\infty_X T^\infty_X E'
			\overset{\nabla_{E'}}{\longrightarrow} T^\infty_X E
			\overset{\overline{\sigma}}{\longrightarrow} E
    \,.
  $$
\end{proposition}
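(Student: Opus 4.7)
The plan is to directly compute the double $(T^\oo_X \dashv J^\oo_X)$-adjunct of $\Delta_E \circ \sigma$ and recognize it as $\overline{\sigma} \circ \nabla_{E'}$. The conceptual content is that, because both the monad structure on $T^\oo_X$ and the comonad structure on $J^\oo_X$ arise from the same adjoint triple $(\eta_X)_! \dashv (\eta_X)^\ast \dashv (\eta_X)_\ast$ (by definitions \ref{InfinitesimalDiskBundle} and \ref{jetcomonad}), the transformations $\nabla$ and $\Delta$ are \emph{mates} of each other under the composite adjunction $T^\oo_X \dashv J^\oo_X$; the claimed identity is exactly the defining formula of this mate relationship applied to a general $\sigma$.

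First I would factor $T^\oo_X \dashv J^\oo_X$ through $\mathbf{H}_{/\Im X}$ and denote the units and counits of the two component adjunctions by $(\iota_!, \pi_!)$ and $(\iota_\ast, \pi_\ast)$ respectively. Proposition~\ref{MonadOperationsOnInfinitesimalDiskBundles} and definition~\ref{jetcomonad} then read $\nabla_{E'} = (\eta_X)^\ast (\pi_!)_{(\eta_X)_! E'}$ and $\Delta_E = (\eta_X)^\ast (\iota_\ast)_{(\eta_X)_\ast E}$, while the counit of the composite adjunction factors as $c_Y = (\pi_\ast)_Y \circ (\eta_X)^\ast (\pi_!)_{(\eta_X)_\ast Y}$. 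Using the standard formula $\overline{\tau} = c_Y \circ T^\oo_X \tau$ for the adjunct of $\tau\colon A \to J^\oo_X Y$, I would expand $\overline{\Delta_E \circ \sigma}$, commute the $\Delta_E$-contribution past the inner occurrence of $\pi_!$ via naturality, and then apply the triangle identity $(\pi_\ast)_{(\eta_X)^\ast A} \circ (\eta_X)^\ast (\iota_\ast)_A = \mathrm{id}_{(\eta_X)^\ast A}$ of the $(\eta_X)^\ast \dashv (\eta_X)_\ast$-adjunction to absorb the coproduct.

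After this cancellation, the first adjunct reduces to $(\eta_X)^\ast \sigma^\flat$, where $\sigma^\flat \colon (\eta_X)_! E' \to (\eta_X)_\ast E$ is the intermediate $((\eta_X)_! \dashv (\eta_X)^\ast)$-adjunct of $\sigma$ living in $\mathbf{H}_{/\Im X}$. Applying the counit-composition formula once more and using naturality of $\pi_!$ with respect to $\sigma^\flat$, the resulting composite reorganizes as $\overline{\sigma} \circ (\eta_X)^\ast (\pi_!)_{(\eta_X)_! E'}$, which by the identification of $\nabla_{E'}$ from the first step is precisely $\overline{\sigma} \circ \nabla_{E'}$.

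The main obstacle is notational rather than conceptual: one has to manipulate several whisker products of natural transformations and isolate the precise place where the triangle identity produces the key cancellation. A string-diagram presentation in $\mathbf{H}_{/\Im X}$ makes the argument transparent and exhibits the statement as the $(T^\oo_X \dashv J^\oo_X)$-mate correspondence between the monad product $\nabla$ and the comonad coproduct $\Delta$.
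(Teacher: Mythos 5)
Your proposal is correct and is essentially the paper's own argument: the paper simply cites proposition \ref{PropertiesOfAdjointPairsFromAdjointTriples}, whose proof is exactly your computation --- decompose the composite adjunction $T^\infty_X \dashv J^\infty_X$ through the middle category, pass to the intermediate adjunct (your $\sigma^\flat$, the paper's $C\overline{g}$), and cancel the coproduct contribution via the triangle identity. The only difference is organizational: the paper abstracts the calculation into a general lemma about adjoint triples, while you carry it out in situ for the base change along $\eta_X$.
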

\begin{proof}
 By proposition \ref{PropertiesOfAdjointPairsFromAdjointTriples}.
\end{proof}
The following two propositions show that the abstract concept of jet
prolongation in definition \ref{jetprolongation} reduces to the
traditional concept of jet prolongation on jets of sections of smooth
manifolds.
\begin{proposition}
  \label{JetProlongationReproduced}
	For $[E \longrightarrow \Sigma] \in \mathbf{H}_{/\Sigma}$ any slice in $\mathbf{H}$ (definition \ref{DifferentialCohesion}),
	the operation of jet
	prolongation from definition~\ref{jetprolongation}
  $$
    j^\infty :\Gamma_\Sigma(E) \longrightarrow \Gamma_\Sigma(J^\infty_\Sigma E)
  $$
	takes a smooth section $\sigma : \Sigma \to E$ (definition \ref{SectionsOfBundles}) to the section of the
	infinite jet bundle whose value over any point $s \in \Sigma$ is the
	$(T^\oo_\Sigma \dashv J^\oo_\Sigma)$-adjunct infinite jet of $\sigma$ at that point (definition~\ref{JetsOfBundlesOfSmoothManifolds}):
  $$
    j^\infty\sigma(s)
      =
    \widetilde{\sigma|_{\mathbb{D}_s}}
    :
    \mathbb{D}_s \longrightarrow \Sigma \overset{\sigma}{\longrightarrow} E
    \,.
  $$
\end{proposition}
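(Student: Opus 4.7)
The plan is to unpack the definition of $j^\infty \sigma$ as the composite $\Sigma \xrightarrow{\sim} J^\infty_\Sigma \Sigma \xrightarrow{J^\infty_\Sigma \sigma} J^\infty_\Sigma E$, where the first map is the inverse of the canonical iso $J^\infty_\Sigma \Sigma \to \Sigma$ from example \ref{JetBundleOfSigma}. Precomposing with the point $s\colon \ast \to \Sigma$, regarded as the slice object $\{s\} \in \mathbf{H}_{/\Sigma}$, yields a morphism $\{s\} \to J^\infty_\Sigma E$ in $\mathbf{H}_{/\Sigma}$. The value $j^\infty\sigma(s)$ is, by the $(T^\infty_\Sigma \dashv J^\infty_\Sigma)$-adjunction of definition \ref{jetcomonad}, equivalently a morphism $T^\infty_\Sigma\{s\} \to E$ in $\mathbf{H}_{/\Sigma}$. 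By example \ref{FormalDiskBundleOfPoint} the source is identified with the formal disk $\mathbb{D}_s$, whose structure map over $\Sigma$ is the canonical inclusion $\mathbb{D}_s \hookrightarrow \Sigma$. Thus, to prove the proposition it suffices to check that this adjunct equals $\sigma \circ (\mathbb{D}_s \hookrightarrow \Sigma)$.

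To compute the adjunct, I would use the explicit formula $\overline{\tau} = \varepsilon_E \circ T^\infty_\Sigma(\tau)$, where $\varepsilon \colon T^\infty_\Sigma J^\infty_\Sigma \to \mathrm{id}$ denotes the counit of the adjunction $T^\infty_\Sigma \dashv J^\infty_\Sigma$ (distinct from the comonad counit $\epsilon$ of $J^\infty_\Sigma$). Applying this to $\tau = (j^\infty\sigma) \circ s = J^\infty_\Sigma(\sigma) \circ \epsilon_\Sigma^{-1} \circ s$ and using functoriality of $T^\infty_\Sigma$ followed by naturality of $\varepsilon$ in $\sigma$, the composite reduces to
\[
\mathbb{D}_s \xrightarrow{T^\infty_\Sigma(s)} T^\infty_\Sigma \Sigma \xrightarrow{g} \Sigma \xrightarrow{\sigma} E,
\]
where $g := \varepsilon_\Sigma \circ T^\infty_\Sigma(\epsilon_\Sigma^{-1}) \colon T^\infty_\Sigma \Sigma \to \Sigma$ is a morphism in $\mathbf{H}_{/\Sigma}$.

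Finally, $\Sigma = [\Sigma \xrightarrow{\mathrm{id}} \Sigma]$ is the terminal object of $\mathbf{H}_{/\Sigma}$, so $g$ is forced to coincide with the structure map $\pi \colon T^\infty_\Sigma \Sigma \to \Sigma$ of the formal disk bundle. Consequently $\pi \circ T^\infty_\Sigma(s) \colon \mathbb{D}_s \to \Sigma$ is the structure map of $T^\infty_\Sigma\{s\}$ in $\mathbf{H}_{/\Sigma}$, which via example \ref{FormalDiskBundleOfPoint} is precisely the canonical inclusion $\mathbb{D}_s \hookrightarrow \Sigma$; postcomposition with $\sigma$ gives the desired $\sigma|_{\mathbb{D}_s}$. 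The only real obstacle is bookkeeping: carefully distinguishing the several canonical natural transformations involved (the $\Im$-unit $\eta$, the comonad counit $\epsilon$ of $J^\infty_\Sigma$, and the adjunction counit $\varepsilon$ of $T^\infty_\Sigma \dashv J^\infty_\Sigma$), and consistently reading each map as a morphism in the correct slice category. Once this is done, the argument is essentially forced by the triangle identities together with the terminality of $\Sigma$ in $\mathbf{H}_{/\Sigma}$.
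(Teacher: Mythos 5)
Your proof is correct and follows essentially the same route as the paper's: unwind $j^\infty\sigma(s)$ as $\{s\}\to\Sigma\simeq J^\infty_\Sigma\Sigma\xrightarrow{J^\infty_\Sigma\sigma}J^\infty_\Sigma E$, pass to the $(T^\infty_\Sigma\dashv J^\infty_\Sigma)$-adjunct, and identify $T^\infty_\Sigma\{s\}\simeq\mathbb{D}_s$ via example \ref{FormalDiskBundleOfPoint}. You merely make explicit the naturality-of-adjuncts computation and the terminality argument that the paper leaves implicit in the phrase ``by the adjunction \ldots this corresponds to''.
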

\begin{proof}
  By definition~\ref{jetcomonad} the value of $j^\infty\sigma$ over $s$ is
  $$
    j^\infty\sigma(s)
    :
    \{s\}
      \overset{}{\longrightarrow}
    \Sigma
      \simeq
    J^\infty_\Sigma \Sigma
      \overset{J^\infty_\Sigma \sigma}{\longrightarrow}
    J^\infty_\Sigma E
    \,.
  $$
	By the adjunction $(T^\infty_\Sigma \dashv J^\infty_\Sigma)$ of
	definition~\ref{jetcomonad}, this corresponds to
  $$
    T^\infty_\Sigma \{s\}
      \longrightarrow
    \Sigma
      \overset{\sigma}{\longrightarrow}
    E
    \,.
  $$
  By example \ref{FormalDiskBundleOfPoint} this is
  $$
    \mathbb{D}_s
      \longrightarrow
    \Sigma
      \overset{\sigma}{\longrightarrow}
    E
  $$
  as claimed.
\end{proof}
But this statement holds generally:
\begin{proposition}
  \label{JetProlongationInTermsOfEv}
  Let $\Sigma \in \mathbf{H}$ and $[E \to \Sigma] \in \mathbf{H}_{/\Sigma}$ be any objects,
  and let $\sigma : \Sigma \to E$ be any section. Then
  the $(T^\infty_\Sigma \dashv J^\infty_\Sigma)$-adjunct $\overline{j^\infty \sigma}$ (definition \ref{AdjointFunctor})
  of the jet prolongation $j^\infty \sigma : \Sigma \to J^\infty_\Sigma E$
  of $\sigma$ (definition \ref{JetProlongationReproduced}) is the composite
  $$
    \overline {j^\infty \sigma}
    \;:\;
    T^\infty \Sigma
      \overset{\mathrm{ev}}{\longrightarrow}
    \Sigma
      \overset{\sigma}{\longrightarrow}
    E
    \,,
  $$
  where $\mathrm{ev}$ is as in definition \ref{FormalDiskAbstractly}.
\end{proposition}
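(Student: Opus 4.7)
The plan is to reduce the claim to the identification of a single canonical morphism, namely the adjunct of the terminal-object isomorphism $\iota\colon \Sigma \overset{\simeq}{\longrightarrow} J^\infty_\Sigma \Sigma$ of example~\ref{JetBundleOfSigma}. First, by definition~\ref{jetprolongation}, one has $j^\infty \sigma = J^\infty_\Sigma \sigma \circ \iota$. Naturality of the adjoint bijection $\mathrm{Hom}_{\mathbf{H}_{/\Sigma}}(T^\infty_\Sigma -, -) \simeq \mathrm{Hom}_{\mathbf{H}_{/\Sigma}}(-, J^\infty_\Sigma -)$ in its second argument, applied to $\sigma\colon \Sigma \to E$, yields the identity $\overline{J^\infty_\Sigma \sigma \circ \iota} = \sigma \circ \overline{\iota}$. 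Thus $\overline{j^\infty \sigma} = \sigma \circ \overline{\iota}$, and it suffices to show $\overline{\iota} = \mathrm{ev}$.

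Next, since $J^\infty_\Sigma$ is a right adjoint (definition~\ref{jetcomonad}) it preserves terminal objects; accordingly $\iota$ is the unique morphism in $\mathbf{H}_{/\Sigma}$ from $\Sigma$ to $J^\infty_\Sigma \Sigma$, both terminal. Its adjunct $\overline{\iota}\colon T^\infty_\Sigma \Sigma \to \Sigma$ must likewise be the unique morphism in $\mathbf{H}_{/\Sigma}$ from $T^\infty_\Sigma \Sigma$ to the terminal $\Sigma$---that is, it is the structure map of $T^\infty_\Sigma \Sigma$ viewed as an object of the slice. Unpacking the base-change definition $T^\infty_\Sigma = (\eta_\Sigma)^\ast (\eta_\Sigma)_!$ of definition~\ref{InfinitesimalDiskBundle} applied to the terminal object $[\Sigma \overset{\mathrm{id}}{\to} \Sigma]$, this structure map is then identified with the evaluation morphism $\mathrm{ev}\colon T^\infty \Sigma \to \Sigma$ of the defining pullback square of $T^\infty \Sigma$.

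The main obstacle is this last identification of the slice structure map of $T^\infty_\Sigma \Sigma$ with $\mathrm{ev}$ rather than with the other candidate $\pi$: both are projections of the symmetric pullback square defining $T^\infty \Sigma$, and one has to verify that the base-change construction in $\mathbf{H}_{/\Sigma}$ picks out $\mathrm{ev}$. This can be settled concretely by a direct test against proposition~\ref{JetProlongationReproduced}: at each point $s\colon \ast \to \Sigma$, the pointwise adjunct is $\widetilde{\sigma|_{\mathbb{D}_s}}\colon \mathbb{D}_s \hookrightarrow \Sigma \overset{\sigma}{\to} E$, and under the identification $T^\infty_\Sigma\{s\} \simeq \mathbb{D}_s$ of example~\ref{FormalDiskBundleOfPoint} (whose proof exhibits $\mathbb{D}_s$ inside $T^\infty \Sigma$ as the fiber of $\mathrm{ev}$ over $s$) these pointwise adjuncts glue into precisely the composite $\sigma \circ \mathrm{ev}$, confirming the identification and completing the proof.
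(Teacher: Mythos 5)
Your reduction is sound as far as it goes, and your second step is actually slicker than the paper's own argument: since $J^\infty_\Sigma \Sigma$ is terminal in $\mathbf{H}_{/\Sigma}$, the adjunct $\overline{\iota}$ is forced to be the unique slice morphism $T^\infty_\Sigma \Sigma \to \Sigma$, i.e.\ the structure map of $T^\infty_\Sigma\Sigma$ over $\Sigma$ --- no explicit unwinding of the composite counit through the adjoint triple (which is what the paper does) is needed. This cleanly reduces the whole proposition to a single bookkeeping question: which of the two projections of the symmetric pullback square $\Sigma\times_{\Im\Sigma}\Sigma$ is the structure map produced by the base change construction. You correctly identify this as the crux.

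But you do not actually resolve it; you assert that a ``direct test'' against proposition~\ref{JetProlongationReproduced} confirms the answer $\mathrm{ev}$, and if one carries that test out it does not deliver the confirmation. Example~\ref{FormalDiskBundleOfPoint} realizes $T^\infty_\Sigma\{s\}\simeq\mathbb{D}_s$ as the fiber of $\mathrm{ev}$ over $s$ --- but on that fiber $\mathrm{ev}$ is \emph{constant} at $s$, so the restriction of $\sigma\circ\mathrm{ev}$ to it is the constant map at $\sigma(s)$, which is not the restriction $\sigma|_{\mathbb{D}_s}$ that proposition~\ref{JetProlongationReproduced} requires; the inclusion $\mathbb{D}_s\hookrightarrow\Sigma$ appearing there is the restriction of the \emph{other} projection. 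So the fiberwise check, done carefully, is exactly as delicate as the original question and, read against definition~\ref{InfinitesimalDiskBundle} (where the slice structure map of $T^\infty_X E$ is declared to be $\pi\circ\mathrm{ev}^\ast p$), appears to point to the opposite projection. The paper settles the matter by exhibiting the relevant counit as the pullback of $\eta_\Sigma$ along itself and then checking which square commutes; something of that nature is what your proof is missing. A secondary gap: gluing adjuncts over global points $s\colon\ast\to\Sigma$ does not determine a morphism for arbitrary $\Sigma\in\mathbf{H}$ (the proposition is stated in that generality, and objects of a general topos need not have enough global points); you would need generalized elements or a purely diagrammatic identification, as in the paper's proof.
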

\begin{proof}
  We are to find the adjunct for the composite
  $$
    \xymatrix{
    \Sigma
      \ar[r]_\simeq^\phi
      &
    J^\infty_\Sigma \Sigma
      \ar[r]^{J^\infty_\Sigma \sigma}
      &
    J^\infty_\Sigma E
    }
    \,.
  $$
  By the naturality of forming adjuncts, this is
  $$
    T^\infty_\Sigma \Sigma
      \overset{\overline{\phi}}{\longrightarrow}
    \Sigma
      \overset{\sigma}{\longrightarrow}
    E
    \,,
  $$
  where $\overline{\phi}$ is the adjunct of $\phi$. By the formula for adjuncts (proposition \ref{AdjunctionInTermsOfUnitAndCounit}),
  the latter is
  $$
    \overline{\phi}
    :
    \xymatrix{
     T^\infty_\Sigma \Sigma
       \ar[rr]_{\simeq}^{T^\infty_\Sigma \phi}
       &&
     T^\infty_\Sigma J^\infty_\Sigma \Sigma
      \ar[rr]
     &&
     \Sigma
    }
    \,,
  $$
  where the first morphism is the canonical isomorphism (since $\phi$ is) and
  where the second morphism is the counit of the $(T^\infty_\Sigma \dashv J^\infty_\Sigma)$-adjunction, which is given by
  the units of the adjoint triple $((\eta_\Sigma)_! \dashv (\eta_\Sigma)^\ast \dashv (\eta_\Sigma)_\ast)$
  as the composite
  $$
    (\eta_\Sigma)^\ast (\eta_\Sigma)_! (\eta_\Sigma)^\ast (\eta_\Sigma)_\ast  \Sigma
      \overset{\tau}{\longrightarrow}
    (\eta_\Sigma)^\ast (\eta_\Sigma)_\ast  \Sigma
      \overset{\simeq}{\longrightarrow}
    \Sigma
    \,.
  $$
  The second morphism here is the isomorphism $J^\infty_\Sigma \Sigma \simeq \Sigma$ already used.
  So we are reduced to seeing that the first morphism here is $\tau = \mathrm{ev}$.

  To see that this is indeed the case, first notice that $(\eta_\Sigma)_\ast \Sigma \simeq \Im \Sigma$,
  since the right adjoint $(\eta_\Sigma)_\ast$ preserves terminal objects (which is also the origin of the
  very isomorphism just mentioned). Hence
  $(\eta_\Sigma)_! (\eta_\Sigma)^\ast (\eta_\Sigma)_\ast  \Sigma \to (\eta_\Sigma)_\ast \Sigma$ is the top morphism in
  the pullback square
  $$
    \xymatrix{
      \Sigma \ar[d]_\simeq  \ar@{}[dr]|{\mbox{\tiny (pb)}}  \ar[r]^{} & \Im \Sigma \ar[d]^{=}
      \\
      \Sigma \ar[r]_{\eta_\Sigma} & \Im \Sigma
    }
  $$
  which is again $\eta_\Sigma$. Hence the full morphism in question is the pullback of $\eta_\Sigma$
  along itself. This is either $\pi$ or $\mathrm{ev}$ (definition~\ref{InfinitesimalDiskBundle}), depending on the order of the arguments.
  Checking the commutativity of the pullback diagram, one finds that it is $\mathrm{ev}$.
\end{proof}

The following proposition shows that the abstractly induced comonad
structure on $J^\infty_\Sigma$
(remark~\ref{ObserveThatThereIsACoMonadStructureOnJ}) reduces on jets of
bundles of smooth manifolds to the comonad structure considered
in~\cite{Marvan86}.
\begin{proposition}
  \label{AbstractJetCoproductIsMichalsJetCoproduct}
	Consider $[E \longrightarrow \Sigma]$ in
	$\mathbf{H}_{/\Sigma}$. Then the comultiplication of the abstractly
	defined jet comonad $\Delta : J^\infty_\Sigma \to J^\infty_\Sigma
	J^\infty_\Sigma$ from definition~\ref{jetcomonad} takes a jet
	$j^\infty_\Sigma\sigma(s)$, written as a jet prolongation of some
	section $\sigma$, via definition~\ref{jetprolongation}, to the jet
	prolongation of that jet prolongation
  $$
    \Delta_E (j^\infty\sigma(s))
     =
    (j^\infty j^\infty\sigma)(s)
    \,.
  $$
	Hence if $\mathbf{H} = \mathrm{FormalSmoothSet}$ and $[E \overset{p}{\longrightarrow} \Sigma] \in
	\mathrm{LocProMfd}_{\downarrow \Sigma} \hookrightarrow
	\mathbf{H}_{/\Sigma}$ is a fibered manifold (definition
	\ref{FiberedManifold}) then this reduces to the jet coproduct
	considered in \cite[p.3]{Marvan86}.
\end{proposition}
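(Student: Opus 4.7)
The plan is to establish the equality of morphisms $\Delta_E \circ j^\infty \sigma = j^\infty(j^\infty \sigma)$ as sections $\Sigma \to J^\infty_\Sigma J^\infty_\Sigma E$ (from which the pointwise statement at each $s \in \Sigma$ follows by restriction), by comparing their iterated $(T^\infty_\Sigma \dashv J^\infty_\Sigma)$-adjuncts $T^\infty_\Sigma T^\infty_\Sigma \Sigma \to E$. Since the adjunction is a bijection on hom-sets, this is sufficient.

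For the left-hand side, Proposition~\ref{ParameterizedSectionsOfJetBundles}, applied with $E' = \Sigma$ and the morphism $j^\infty\sigma : \Sigma \to J^\infty_\Sigma E$ playing the role of ``$\sigma$'', directly identifies the double adjunct of $\Delta_E \circ j^\infty \sigma$ as the composite $\overline{j^\infty\sigma} \circ \nabla_\Sigma$. Proposition~\ref{JetProlongationInTermsOfEv} then identifies $\overline{j^\infty\sigma} = \sigma \circ \mathrm{ev}$, so the double adjunct of the LHS is the composite $\sigma \circ \mathrm{ev} \circ \nabla_\Sigma : T^\infty_\Sigma T^\infty_\Sigma \Sigma \to T^\infty_\Sigma \Sigma \to \Sigma \to E$.

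For the right-hand side, Proposition~\ref{JetProlongationInTermsOfEv} applied to the section $j^\infty\sigma$ of the bundle $J^\infty_\Sigma E \to \Sigma$ gives its first adjunct as $(j^\infty\sigma) \circ \mathrm{ev} : T^\infty_\Sigma \Sigma \to J^\infty_\Sigma E$. By naturality of adjuncts (the identity $\overline{g \circ f} = \overline{g} \circ T^\infty_\Sigma(f)$), the second adjunct is $\overline{j^\infty\sigma} \circ T^\infty_\Sigma(\mathrm{ev}) = \sigma \circ \mathrm{ev} \circ T^\infty_\Sigma(\mathrm{ev})$. Finally, Proposition~\ref{MonadOperationsOnInfinitesimalDiskBundles} identifies $T^\infty_\Sigma(\mathrm{ev}) = \nabla_\Sigma$, so the double adjunct of the RHS is again $\sigma \circ \mathrm{ev} \circ \nabla_\Sigma$. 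This proves the abstract identity. For the reduction to Marvan's setting, Theorem~\ref{ReproducingTraditionalJetBundle} identifies the synthetic $J^\infty_\Sigma$ on $\mathrm{LocProMfd}_{\downarrow\Sigma}$ with the traditional infinite jet bundle functor, under which the pointwise identity $\Delta_E(j^\infty\sigma(s)) = (j^\infty j^\infty \sigma)(s)$ is precisely the characterization of the jet coproduct in \cite[p.~3]{Marvan86}.

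The only real obstacle is conceptual: keeping straight the two distinct adjoint structures in play---the base-change adjunction $(\eta_\Sigma)^\ast \dashv (\eta_\Sigma)_\ast$ from which the comonad coproduct $\Delta_E$ arises, and the composed adjunction $T^\infty_\Sigma \dashv J^\infty_\Sigma$ underlying the adjunct correspondence $\sigma \mapsto \overline{\sigma}$. Because Propositions~\ref{ParameterizedSectionsOfJetBundles} and~\ref{MonadOperationsOnInfinitesimalDiskBundles} have already packaged the interplay between $\Delta_E$, the monad product $\nabla$, and the adjunct operation, the present proof reduces to a short naturality chase with no residual computation.
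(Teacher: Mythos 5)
Your proof is correct, and it takes a recognizably different route to the same conclusion. The paper argues pointwise: it represents $j^\infty\sigma(s)$ by its adjunct $\mathbb{D}_s \hookrightarrow \Sigma \overset{\sigma}{\to} E$, applies proposition~\ref{ParameterizedSectionsOfJetBundles} to see that $\Delta_E$ acts by precomposition with $\nabla_{\{s\}}$, and then, to identify the result with $(j^\infty j^\infty\sigma)(s)$, passes the adjunct back and invokes the fact that $\mathbb{D}_s$ is sub-terminal in $\mathbf{H}_{/\Sigma}$ (so the only candidate isomorphism is the identity). You instead prove the global identity $\Delta_E \circ j^\infty\sigma = j^\infty j^\infty\sigma$ of sections $\Sigma \to J^\infty_\Sigma J^\infty_\Sigma E$ by computing the double $(T^\infty_\Sigma \dashv J^\infty_\Sigma)$-adjunct of each side and showing both equal $\sigma \circ \mathrm{ev} \circ \nabla_\Sigma$: on the left via proposition~\ref{ParameterizedSectionsOfJetBundles} and proposition~\ref{JetProlongationInTermsOfEv}, on the right via proposition~\ref{JetProlongationInTermsOfEv} applied to the section $j^\infty\sigma$ of $J^\infty_\Sigma E$, naturality of adjuncts, and the identification $\nabla_\Sigma = T^\infty_\Sigma(\mathrm{ev})$ from proposition~\ref{MonadOperationsOnInfinitesimalDiskBundles}. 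Your version buys a cleaner closing step (a symmetric computation replacing the sub-terminality argument) and yields the stronger global equality directly, with the pointwise statement following by restriction along $\{s\} \to \Sigma$; the paper's version stays closer to the pointwise formulation of the statement and to Marvan's original presentation. One small point worth making explicit if you write this up: proposition~\ref{JetProlongationInTermsOfEv} is stated for an arbitrary object of $\mathbf{H}_{/\Sigma}$, so its application to the bundle $J^\infty_\Sigma E \to \Sigma$ and the section $j^\infty\sigma$ is legitimate, but you should say so, since that instance is the hinge of your right-hand computation.
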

\begin{proof}
  Under the adjunction $T^\infty_\Sigma
	\dashv J^\infty_\Sigma$ (definition~\ref{jetcomonad})
  $$
    j^\infty\sigma(s) \colon \{s\} \longrightarrow J^\infty_\Sigma E
  $$
  is represented by
  $$
    \overline{j^\infty\sigma}(s)
    \;:\;
    T^\infty_\Sigma \{s\}
      \simeq
    \mathbb{D}_s
      \hookrightarrow
    \Sigma
      \overset{\sigma}{\longrightarrow}
    E
    \,,
  $$
	where the isomorphism on the left is from example
	\ref{FormalDiskBundleOfPoint}. By proposition
	\ref{ParameterizedSectionsOfJetBundles}, its image
  $$
    \{s\} \longrightarrow J^\infty_\Sigma E \overset{\Delta_E}{\longrightarrow} J^\infty_\Sigma J^\infty_\Sigma E
  $$
	under the jet coproduct $\Delta_E$ corresponds dually to the
	pre-composition of this morphism with the product operation in the
	formal disk bundle monad (definition~\ref{InfinitesimalDiskBundle}):
  $$
    T^\infty_\Sigma T^\infty_\Sigma \{s\}
      \stackrel{\nabla_{\{s\}}}{\longrightarrow}
    T^\infty_\Sigma \{s\}
      \simeq
    \mathbb{D}_s
      \hookrightarrow
    \Sigma
      \overset{\sigma}{\longrightarrow}
    E
    \,.
  $$
	The $(T^\infty_\Sigma \dashv J^\infty_\Sigma)$-adjunct of this
	morphism is (by proposition \ref{AdjunctsInTermsOfUnitAndCounit}) the
	composite
  $$
    \xymatrix{
      \mathbb{D}_s
      \simeq
      T^\infty_\Sigma\{s\}
      \ar@/_1pc/[rr]_{\simeq}
       \ar[r]
       &
      J^\infty_\Sigma T^\infty_\Sigma T^\infty_\Sigma \{s\}
       \ar[r]
       &
      J^\infty_\Sigma T^\infty_\Sigma \{s\}
        \ar[rr]^-{J^\infty_\Sigma( \overline{j^\infty\sigma(s)} )}
        &&
      J^\infty_\Sigma E
    }
    \,,
  $$
	where the left composition has to be an isomorphism, as shown, because
	$J^\infty_\Sigma T^\infty_\Sigma \{s\} \simeq J^\infty_\Sigma
	\mathbb{D}_s \simeq \mathbb{D}_s \simeq T^\infty_\Sigma \{s\}$ (by
	example \ref{FormalDiskBundleOfPoint} and example
	\ref{JetBundleOfSigma}) and since in $\mathbf{H}_{/\Sigma}$ the object
	$\mathbb{D}_s$ is sub-terminal so that its only endomorphism is the
	identity. Hence this morphism is identified with $(j^\infty
	j^\infty\sigma)(s)$ by example \ref{jetprolongation}.
\end{proof}

%%%%%%%%%%%%%%%%%%%%%%%%%%%%%%%%%%%%%%%%%%%%%%%%%
\subsection{Differential operators}
\label{DifferentialOperators}
%%%%%%%%%%%%%%%%%%%%%%%%%%%%%%%%%%%%%%%%%%%%%%%%%%%

We recall the traditional concept of differential operators, formulated
in terms of jet bundles. Then we show that the category whose objects
are bundles over some $\Sigma$ and whose morphisms are differential
operators between their sections is equivalently the ``Kleisli
category'' of the jet comonad $J^\infty_\Sigma$ from definition
\ref{jetcomonad}. (This is a special case of the more general
identification of PDEs with coalgebras over the jet comonad, that we
turn to below in sections \ref{PDEs} and \ref{FormallyIntegrablePDEs}.)

Notice that throughout by ``differential operator'' we mean in general
non-linear differential operators.

\medskip

\begin{definition}[e.g. {\cite[definition 6.2.22]{Saunders89}}]
	\label{DifferentialOperatorsAsMapsOutOfJetBundle}
	Consider two objects $E\to \Sigma$ and $F\to \Sigma$ in
	$\mathbf{H}_{/\Sigma}$.

	(a) A morphism $D \colon J^\infty_\Sigma E \to F$ induces, by
	composition with jet prolongation $j^\infty$ (definition
	\ref{jetprolongation}), a map $\hat D(\sigma) :=  D\circ
	j^\infty\sigma$ between spaces of sections (definition
	\ref{SectionsOfBundles}),
	$$
		\hat D \colon \Gamma_\Sigma(E) \to \Gamma_\Sigma(F) ,
	$$
	whose effect is illustrated by the following commutative diagram:
  $$
    \begin{gathered}
    \xymatrix{
      & E \ar[d]
      \\
      \Sigma \ar@{=}[r] \ar[ur]^{\sigma} & \Sigma
    }
    \end{gathered}
    \;\;\;\;\;
    \mapsto
    \;\;\;\;\;
    \begin{gathered}
    \xymatrix{
      & &
      J^\infty_\Sigma E
				\ar[d] \ar[r]^D &
			F
				\ar[dl]
				\ar[d]
      \\
      \Sigma \ar[r]^-{\simeq} &
      J^\infty_\Sigma \Sigma
				\ar@{=}[r]
				\ar[ur]^{j^\infty\sigma \simeq J^\infty_\Sigma \sigma} &
			J^\infty_\Sigma \Sigma &
			\Sigma
				\ar[l]_-{\simeq}
				\ar@/_1pc/[u]_{\hat{D}(\sigma)}
		}
    \end{gathered}
    \, .
  $$
	(The equivalences on the right are those from example
	\ref{JetBundleOfSigma}.)

	(b) Conversely, a map between spaces of sections $B \colon
	\Gamma_\Sigma(E) \to \Gamma_\Sigma(F)$ is called a \emph{differential
	operator} if it factors through the jet bundle this way. That is $B =
	\hat{D}$ for some morphism $D\colon J^\oo_\Sigma E \to F$. Naturally,
	the morphism $D$ is also referred to as a differential operator, or as
	a \emph{formal differential operator} when more precision is needed.

	The formal differential operator $D$ corresponding to a differential
	operator $B = \hat D$ is clearly unique, which we will denote by $D =
	\underline{B}$.
\end{definition}
\begin{remark}
Definition~\ref{DifferentialOperatorsAsMapsOutOfJetBundle} is the
standard formalization of the informal notion of a differential operator
that usually goes along with the notation $\hat{D}[\sigma](x) =
D(x,\sigma(x), \partial\sigma(x), \partial\partial \sigma(x), \ldots )$.
\end{remark}
\begin{definition}
	\label{InfiniteProlongation}
	Given a formal differential operator (definition
	\ref{DifferentialOperatorsAsMapsOutOfJetBundle})
   $$
     D : J^\infty_\Sigma E \longrightarrow F
     \, .
   $$
   the composite
   $$
     p^\infty D
       :=
     J^\infty_\Sigma E
       \stackrel{\Delta_E}{\longrightarrow}
     J^\infty_\Sigma J^\infty_\Sigma E
       \overset{J^\infty_\Sigma D}{\longrightarrow}
     J^\infty_\Sigma F
   $$
	is called its \emph{infinite prolongation}. It is characterized by the
	relation
  $$
    \widehat{p^\oo D}[\sigma] = j^\oo \hat{D}[\sigma]
  $$
  for any section $\sigma\in \Gamma_\Sigma(E)$.
\end{definition}
\begin{proposition}
  \label{FormalDifferentialOperatorsComposition}
	For $D_1 \colon J^\infty_\Sigma E_1 \longrightarrow E_2$ and $D_2
	\colon J^\infty_\Sigma E_2 \longrightarrow E_3$ two formal
	differential operators according to definition
	\ref{DifferentialOperatorsAsMapsOutOfJetBundle}, then the formal
	differential operator corresponding to the direct composite of the
	corresponding maps on spaces of sections (definition
	\ref{SectionsOfBundles})
  $$
    \hat D_2 \circ \hat D_1
       \colon ~
     \Gamma_\Sigma(E_1)
       \overset{\hat D_1}{\longrightarrow}
     \Gamma_\Sigma(E_2)
       \overset{\hat D_2}{\longrightarrow}
     \Gamma_\Sigma(E_3)
  $$
  is the composite
  \begin{align*}
  	\underline{\hat{D}_2 \circ \hat{D}_1} =
    D_2 \circ p^\infty D_1
    &\colon ~
    J^\infty_\Sigma E_1
      \overset{p^\infty D_1}{\longrightarrow}
    J^\infty_\Sigma E_2
      \overset{D_2}{\longrightarrow}
    E_3 \,,
  \end{align*}
	(with $p^\infty D_1$ the infinite prolongation of $D_1$ from
	definition \ref{InfiniteProlongation}) of $D_2$ with the image under
	$J^\infty_\Sigma$ of $D_1$ and with the coproduct $\Delta$ of the jet
	comonad.

  In terms of infinite prolongations this is
  $$
    p^\infty
    \left(
      \underline{\hat{D}_2 \circ \hat{D}_1}
    \right)
    =
    (p^\infty D_2) \circ (p^\infty D_1)
  $$
\end{proposition}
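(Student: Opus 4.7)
The plan is to reduce everything to the characterising identity
\begin{equation*}
	j^\infty(\hat D[\sigma]) \;=\; (p^\infty D)\circ j^\infty \sigma ,
\end{equation*}
asserted (without proof) in Definition~\ref{InfiniteProlongation}. My first step is to verify it: expanding $p^\infty D = J^\infty_\Sigma D\circ \Delta_E$ and invoking Proposition~\ref{AbstractJetCoproductIsMichalsJetCoproduct}, which says $\Delta_E\circ j^\infty\sigma = j^\infty j^\infty \sigma$, turns the right-hand side into $J^\infty_\Sigma D \circ j^\infty j^\infty \sigma$. Unfolding $j^\infty j^\infty \sigma = J^\infty_\Sigma(j^\infty\sigma)\circ \phi_\Sigma$ via Definition~\ref{jetprolongation} (with $\phi_\Sigma \colon \Sigma \xrightarrow{\simeq} J^\infty_\Sigma \Sigma$ the canonical isomorphism of Example~\ref{JetBundleOfSigma}) and using functoriality of $J^\infty_\Sigma$ rewrites this further as $J^\infty_\Sigma(D\circ j^\infty\sigma)\circ \phi_\Sigma = J^\infty_\Sigma \hat D[\sigma]\circ \phi_\Sigma = j^\infty \hat D[\sigma]$, as required.

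With this identity in hand, the first claim is immediate: for any $\sigma\in \Gamma_\Sigma(E_1)$,
\begin{equation*}
	(\hat D_2\circ \hat D_1)(\sigma)
		\;=\; D_2\circ j^\infty(\hat D_1[\sigma])
		\;=\; D_2 \circ (p^\infty D_1)\circ j^\infty \sigma
		\;=\; \widehat{D_2\circ p^\infty D_1}[\sigma] ,
\end{equation*}
so the uniqueness clause of Definition~\ref{DifferentialOperatorsAsMapsOutOfJetBundle} forces $\underline{\hat D_2\circ \hat D_1} = D_2\circ p^\infty D_1$.

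For the last equation, $p^\infty(D_2\circ p^\infty D_1) = (p^\infty D_2)\circ (p^\infty D_1)$, I would chase the definition $p^\infty(-) = J^\infty_\Sigma(-)\circ \Delta_{(-)}$ in the comonad $J^\infty_\Sigma$, applying, in order, functoriality of $J^\infty_\Sigma$ (distributing across the composite $D_2 \circ J^\infty_\Sigma D_1 \circ \Delta_{E_1}$), coassociativity of $\Delta$ in the form $J^\infty_\Sigma\Delta_{E_1}\circ \Delta_{E_1} = \Delta_{J^\infty_\Sigma E_1}\circ \Delta_{E_1}$, and finally naturality of $\Delta$ at the arrow $D_1\colon J^\infty_\Sigma E_1\to E_2$ to push $\Delta_{J^\infty_\Sigma E_1}$ past $J^\infty_\Sigma J^\infty_\Sigma D_1$, replacing it by $\Delta_{E_2}\circ J^\infty_\Sigma D_1$. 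The resulting expression is manifestly $(p^\infty D_2)\circ (p^\infty D_1)$. I do not foresee any substantive obstacle; the only mildly delicate point is the careful bookkeeping of $\phi_\Sigma$ in the verification of the characterising identity, and Proposition~\ref{AbstractJetCoproductIsMichalsJetCoproduct} handles precisely that.
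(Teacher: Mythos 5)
Your proof is correct and follows essentially the same route as the paper: both parts rest on naturality of the coproduct $\Delta$ at $\sigma$ and at $D_1$ together with coassociativity, with the paper packaging these as commuting diagrams and you writing them as equational rewrites. Your only reorganization is to first verify the characterizing identity $j^\infty(\hat D[\sigma]) = (p^\infty D)\circ j^\infty\sigma$ from Definition \ref{InfiniteProlongation} and then quote it, whereas the paper inlines that same naturality argument directly into the diagram for the first claim; the content is identical.
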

\begin{proof}
	Regarding the first statement: Applying the formula in definition
	\ref{DifferentialOperatorsAsMapsOutOfJetBundle} twice gives that for
	$\sigma \in \Gamma_\Sigma(E_1)$ any section, then $(\hat D_2 \circ
	\hat D_1)(\sigma)$ is the section given by the total top horizontal
	composite in the following diagram:
  $$
    \begin{gathered}
    \xymatrix{
       \Sigma \ar[r]^-{ \simeq}
       \ar[dr]_-\simeq
       & J^\infty_\Sigma J^\infty_\Sigma \Sigma
       \ar[rr]^-{J^\infty_\Sigma J^\infty_\Sigma \sigma}
       &&
       J^\infty_\Sigma J^\infty_\Sigma E_1
       \ar[rr]^-{J^\infty_\Sigma D_1}
       &&
       J^\infty_\Sigma E_2
       \ar[rr]^-{D_2}
       &&
       E_3
       \\
       & J^\infty_\Sigma \Sigma
       \ar[rr]_-{J^\infty_\Sigma \sigma}
       \ar[u]_{\Delta_\Sigma}^\simeq
       &&
       J^\infty_\Sigma E_1
       \ar[u]^{\Delta_{J^\oo_\Sigma E_1}}
       \ar@/_1pc/[rrrru]_{D_2 \circ p^\infty D_1}
       \ar[rru]|{p^\infty D_1}
    }
    \end{gathered}
    \, .
  $$
	Here the square commutes by the naturality of the coproduct $\Delta$
	of the jet comonad, and the equivalences on the left are the unique
	ones from example \ref{JetBundleOfSigma}. Hence the total top
	composite is equivalent to the total bottom composite, which is the
	formula to be proven.

  Regarding the second statement, consider the diagram
  $$
    \begin{gathered}
    \xymatrix{
      J^\infty_\Sigma E_1
       \ar[rr]^-{\Delta_{E_1}}
       \ar@/^1.7pc/[rrrr]^{p^\infty D_1}
       \ar[d]_{\Delta_{E_1}}
       &&
      J^\infty_\Sigma J^\infty_\Sigma E_1
       \ar[rr]^{J^\infty D_1}
       \ar[d]|{\Delta_{J^\infty_\Sigma E_1}}
       &&
      J^\infty_\Sigma E_2
        \ar[d]|{\Delta_{E_2}}
       \ar@/^1.7pc/[rrd]^{p^\infty D_2}
       \\
       J^\infty_\Sigma J^\infty_\Sigma E_1
       \ar[rr]_{J^\infty_\Sigma (\Delta_{E_1})}
       \ar@/_1.7pc/[rrrr]|{J^\infty_\Sigma p^\infty D_1}
       \ar@/_2.7pc/[rrrrrr]_{J^\infty_\Sigma (\underline{\hat{D}_2 \circ \hat{D}_1})}
       &&
       J^\infty_\Sigma J^\infty_\Sigma J^\infty_\Sigma E_1
       \ar[rr]_{J^\infty_\Sigma J^\infty_\Sigma D_1}
        &&
      J^\infty_\Sigma J^\infty_\Sigma E_2
        \ar[rr]_{J^\infty_\Sigma D_2}
        &&
      J^\infty E_3
    }
    \end{gathered}
    \, .
  $$
	Here both squares are naturality squares of the coproduct $\Delta$.
	The total top composite is $p^\infty D_2 \circ p^\infty D_1$, while
	the total left and bottom composite is $p^\infty(\underline{\hat{D}_2
	\circ \hat{D}_1})$.
\end{proof}
\begin{definition}
  \label{DifferentialOperatorsCategory}
	For $\Sigma$ a smooth manifold, we write
	$\mathrm{DiffOp}_{\downarrow\Sigma}(\mathrm{LocProMfd})$ for the
	category whose objects are fibered manifolds over $\Sigma$ (definition
	\ref{FiberedManifold}) and whose morphisms are differential operators
	between their spaces of sections, according to definition
	\ref{DifferentialOperatorsAsMapsOutOfJetBundle}.
\end{definition}

\begin{proposition}
	\label{BundlesDiffOpsAndPDEs}
	There is an equivalence of categories
   $$
     \mathrm{DiffOp}_{\downarrow\Sigma}(\mathrm{LocProMfd})
     \simeq
		 \mathrm{Kl}(J^\infty_\Sigma|_{\mathrm{LocProMfd}_{\downarrow \Sigma}})
   $$
	 between the category of differential operators over $\Sigma$
	 (definition \ref{DifferentialOperatorsCategory}) and the co-Kleisli
	 category $\mathrm{Kl} (J^\infty_\Sigma)$ of the jet comonad
	 $J^\infty_\Sigma$ restricted to fibered manifolds (according to
	 proposition \ref{JetBundleEndoFunctorOnSuitableManifolds}).
\end{proposition}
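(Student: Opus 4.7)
The plan is to construct an identity-on-objects functor
$$
\Phi \;:\; \mathrm{Kl}(J^\infty_\Sigma|_{\mathrm{LocProMfd}_{\downarrow\Sigma}}) \longrightarrow \mathrm{DiffOp}_{\downarrow\Sigma}(\mathrm{LocProMfd})
$$
and then to verify that it is fully faithful. On objects $\Phi$ is the identity (both categories have fibered manifolds over $\Sigma$ as objects, by Definitions~\ref{FiberedManifold} and \ref{DifferentialOperatorsCategory}). A morphism $E \to F$ in the co-Kleisli category is by definition (Definition~\ref{coKleisli}) a morphism $D\colon J^\infty_\Sigma E \to F$ in $\mathrm{LocProMfd}_{\downarrow\Sigma}$; $\Phi$ sends it to the differential operator $\hat D = D \circ j^\infty(-)\colon \Gamma_\Sigma(E) \to \Gamma_\Sigma(F)$ of Definition~\ref{DifferentialOperatorsAsMapsOutOfJetBundle}.

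Next I would check that $\Phi$ is a functor. For \emph{identities}: the identity on $E$ in the co-Kleisli category is the counit $\epsilon_E\colon J^\infty_\Sigma E \to E$ of the jet comonad (Definition~\ref{jetcomonad}), and I must show $\widehat{\epsilon_E} = \mathrm{id}_{\Gamma_\Sigma(E)}$. By definition $\widehat{\epsilon_E}(\sigma) = \epsilon_E \circ j^\infty \sigma = \epsilon_E \circ J^\infty_\Sigma\sigma \circ \phi$, where $\phi\colon \Sigma \xrightarrow{\simeq} J^\infty_\Sigma\Sigma$ is the canonical isomorphism of Example~\ref{JetBundleOfSigma}. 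Naturality of $\epsilon$ rewrites this as $\sigma \circ \epsilon_\Sigma \circ \phi$; since $\epsilon_\Sigma$ is the unique morphism $J^\infty_\Sigma \Sigma \to \Sigma$ over $\Sigma$, it must be inverse to $\phi$, giving $\widehat{\epsilon_E}(\sigma)=\sigma$. For \emph{composition}: given Kleisli morphisms $D_1\colon J^\infty_\Sigma E_1 \to E_2$ and $D_2\colon J^\infty_\Sigma E_2 \to E_3$, the Kleisli composite is $D_2 \circ J^\infty_\Sigma D_1 \circ \Delta_{E_1} = D_2 \circ p^\infty D_1$ (Definition~\ref{InfiniteProlongation}), and this is precisely the formal operator underlying $\hat D_2 \circ \hat D_1$ by Proposition~\ref{FormalDifferentialOperatorsComposition}. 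Hence $\Phi(D_2 \circ_{\mathrm{Kl}} D_1) = \Phi(D_2)\circ \Phi(D_1)$, so the whole heavy lifting for functoriality is done by the prior Proposition~\ref{FormalDifferentialOperatorsComposition}.

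Finally I would check that $\Phi$ is an equivalence. Essential surjectivity is immediate since $\Phi$ is the identity on objects. For full faithfulness, $\Phi$ is \emph{surjective} on hom-sets by Definition~\ref{DifferentialOperatorsAsMapsOutOfJetBundle}(b): every morphism in $\mathrm{DiffOp}_{\downarrow\Sigma}(\mathrm{LocProMfd})$ is by definition of the form $\hat D$ for some formal operator $D$. It is \emph{injective} on hom-sets by the uniqueness clause of the same definition, which says that the formal operator $D = \underline{\hat D}$ is uniquely determined by $\hat D$. I expect the only non-trivial point to be this uniqueness, but it has already been invoked in Definition~\ref{DifferentialOperatorsAsMapsOutOfJetBundle} (and it holds because, via the $(T^\infty_\Sigma \dashv J^\infty_\Sigma)$-adjunction of Definition~\ref{jetcomonad}, a morphism $D\colon J^\infty_\Sigma E \to F$ is determined by its adjunct $\overline D\colon T^\infty_\Sigma E \to F$, and by Proposition~\ref{JetProlongationInTermsOfEv} the values of $\hat D$ on sections recover $\overline D$ on all local generalized elements of $T^\infty_\Sigma E$, which suffice to determine $\overline D$). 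Combining these three steps yields the desired equivalence of categories.
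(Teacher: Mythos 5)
Your proposal is correct and follows essentially the same route as the paper: the paper's proof is a one-line citation of Definition~\ref{coKleisli}, Remark~\ref{compositionalaKleisli}, Definition~\ref{DifferentialOperatorsAsMapsOutOfJetBundle} and Proposition~\ref{FormalDifferentialOperatorsComposition}, which is exactly the decomposition you spell out (identity-on-objects functor, identities via the counit, composition via Proposition~\ref{FormalDifferentialOperatorsComposition}, full faithfulness via the definition and uniqueness clause of Definition~\ref{DifferentialOperatorsAsMapsOutOfJetBundle}). Your added sketch of why the uniqueness clause holds is extra detail the paper simply asserts as part of that definition, so nothing is missing.
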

\begin{proof}
This is a direct consequence of definition~\ref{coKleisli} and
remark~\ref{compositionalaKleisli}, together with
definition~\ref{DifferentialOperatorsAsMapsOutOfJetBundle} and
proposition \ref{FormalDifferentialOperatorsComposition}.
\end{proof}
\begin{remark}
	\label{DifferentialOperatorGeneralized}
	Proposition \ref{BundlesDiffOpsAndPDEs} implies that it makes sense to
	regard the full Kleisli category of the $J^\oo_\Sigma$ comonad on all
	of $\mathbf{H}_{/\Sigma}$ as the category of differential operators on
	sections of all objects of $\mathbf{H}_{/\Sigma}$, which we will
	denote by $\mathrm{DiffOp}_{/\Sigma}(\mathbf{H}) :=
	\mathrm{Kl}(J^\infty_\Sigma)$. Hence a general morphism of the form
	$$
		D : J^\infty_\Sigma E \longrightarrow F
	$$
	in $\mathbf{H}_{/\Sigma}$ may be thought of as a \emph{generalized
	differential operator} (the generalization being that $E$ and $F$ may
	both be far from having the structure of fibered manifolds).
\end{remark}

%%%%%%%%%%%%%%%%%%%%%%%%%%%%%%%%%%%%%%%%%
\subsection{Partial differential equations}
\label{PDEs}
%%%%%%%%%%%%%%%%%%%%%%%%%%%%%%%%%%%%%%%%%

We now give a general synthetic definition of partial differential
equations (PDEs) in a differentially cohesive topos $\mathbf{H}$
(definition \ref{DifferentialCohesion} below). Throughout this section
we make detailed comments on how to connect this synthetic definition to
traditional concepts.

We are concerned with the geometric perspective on partial differential
equations, from the point of view of jet bundles, sometimes known as the
\emph{formal theory of PDEs}. This has a long history going back to the
works of Riquier, E.~Cartan, Janet and many others, with the influential
ideas of Spencer~\cite{Spencer} and Vinogradov~\cite{VinKras}
(eventually developed by them and many others) being essential for its
modern formulation. The idea here is that, just as with ordinary
(algebraic) equations, which we can put in correspondence with the loci
of solutions that they carve out inside the domain of definition of
their variables, so we may think of partial differential equations as
being embodied by the sub-loci which they carve out inside spaces of
partial derivatives, hence inside jet bundles:
\begin{definition}[generalized PDEs]
  \label{generalpdes}
	Let $\Sigma \in \mathbf{H}$ be a $V$-manifold
	(definition~\ref{VManifold}), and let $Y \in \mathbf{H}_{/\Sigma}$ be
	an object in the slice over $\Sigma$ (thought of as a bundle, example
	\ref{SectionsOfBundles}). Then a \emph{generalized partial
	differential equation} (PDE) on the space of sections of $Y$ is an
	object $\mathcal{E} \in \mathbf{H}_{/\Sigma}$ together with a
	monomorphism into the infinite jet bundle of $Y$
	(definition~\ref{jetcomonad}),
	$$
		\mathcal{E}
		  \hookrightarrow
		J^\infty_\Sigma Y
       \,.
	$$
    Note that we are not excluding the possibility
	that $Y = \mathcal{E}$.
	We omit the \emph{generalized} attribute when $\Sigma$ is a smooth
	finite-dimensional manifold and $\mathcal{E} \in
	\mathrm{LocProMfd}_{\downarrow\Sigma} \hookrightarrow
	\mathrm{FormalSmoothSet}_{/\Sigma} = \mathbf{H}_{/\Sigma}$ is a
	fibered manifold (definition~\ref{FiberedManifold}).

	We say that a section $\sigma \in \Gamma_{\Sigma}(Y)$ (definition
	\ref{SectionsOfBundles}) is a \emph{solution} to this partial
	differential equation if its jet prolongation $j^\infty_\Sigma \sigma$
	(definition \ref{jetprolongation}) factors through $\mathcal{E}$,
	i.e.,\ if there exists the dashed morphism $s$ making the following
	diagram commute:
	$$
    \begin{gathered}
		\xymatrix{
			\mathcal{E}
				\ar@{^{(}->}[r]
			&
			J^\infty_\Sigma Y
			\ar[d]
			\\
			\Sigma \ar@{-->}[u]^{s}
				\ar@{=}[r]
				\ar[ur]|{j^\infty\sigma}  & \Sigma
		}
    \end{gathered}
    \, .
	$$
	We write
	$$
		\mathrm{Sol}_{\Sigma}(\mathcal{E})
			\hookrightarrow
		\Gamma_\Sigma(Y)
	$$
	for the subset of solutions to the PDE, inside the set of sections
	(example \ref{SectionsOfBundles}).
\end{definition}

Next we pass attention to \emph{formal theory of PDEs}.
The reason that it is referred to as \emph{formal} is that in it center
stage is taken not by true solutions (in the sense of
definition~\ref{generalpdes} above), but \emph{formal solutions} (definition \ref{FamilyOfFormal} below),
essentially formal Taylor series that satisfy the PDE order by order.
See remark
\ref{InterpretationOfHolonomicSections} below for how to interpret the
abstract definition \ref{FamilyOfFormal}.
In order to make this work in full generality it invokes
parameterization of families of formal solutions by infinitesimal disk
bundles as above in proposition \ref{ParameterizedSectionsOfJetBundles}.
Below in example~\ref{RecoveringTraditionalFormalSolutions} we show that
this general definition reduces pointwise to the expected one.

The concept of formal solutions is natural since jets do not see past the
``formal power series horizon.''
While formal solutions have been used as a guiding heuristic in
the traditional literature on PDEs, their treatment has
traditionally been only informal (meaning heuristic). What here allows us to
work with formal solutions directly is the flexibility of synthetic differential geometry
in the guise of differential cohesion, which makes formal infinitesimal neighbourhoods
in a manifold a reality.
As the central role of families of formal solutions becomes apparent below, the
ability to work with them directly clearly becomes an advantage of the
synthetic formalization of PDEs.

\begin{definition}[formal solutions of PDEs]
	\label{FamilyOfFormal}
	Let $\Sigma$ be a $V$-manifold (definition
	\ref{ParameterizedSectionsOfJetBundles}). Consider a generalized PDE
	$\mathcal{E} \hookrightarrow J^\oo_\Sigma Y$ over $\Sigma$ according
	to definition \ref{generalpdes} and a further parameter object $E \in
	\mathbf{H}_{/\Sigma}$. Then:

	(a)
	An \emph{($E$-parametrized) family of formally holonomic sections} of
	$J^\oo_\Sigma Y$ is a morphism
	$$
		\sigma_E : T^\oo_\Sigma E \longrightarrow J^\oo_\Sigma Y
	$$
	from the infinitesimal disk bundle of $E$ (definition
	\ref{InfinitesimalDiskBundle}) such that its $(T^\oo_\Sigma \dashv
	J^\oo_\Sigma)$-adjunct $\widetilde{\sigma_E} : E \to J^\infty_\Sigma
	J^\infty_\Sigma E$ (definition \ref{AdjointFunctor}) makes the
	following diagram commute:
	$$
    \begin{gathered}
		\xymatrix{
			T^\oo_\Sigma E
              \ar[r]^-{\sigma_E}_-{\ }="s"
            &
            J^\oo_\Sigma Y
               \ar[d]^{\Delta_Y}
            \\
			E \ar[u]^{\eta_E}
              \ar[r]_-{\widetilde{\sigma_E}}^-{\ }="t"
            &
            J^\oo_\Sigma J^\oo_\Sigma Y
		}
    \end{gathered}
		\, ,
	$$
	or equivalently that its $(T^\infty_\Sigma \dashv
	J^\infty_\Sigma)$-adjunct ``to the other side'' $\overline{\sigma_E} :
	T^\infty_\Sigma T^\infty_\Sigma E \to E$ makes the following diagram
	commute:
	$$
    \begin{gathered}
		\xymatrix{
			T^\oo_\Sigma T^\oo_\Sigma E
               \ar[d]_{\nabla_E}
               \ar[r]^-{\overline{\sigma_E}}_{\ }="t" & Y\\
			T^\oo_\Sigma E
               \ar[r]_{\sigma_E}^{\ }="s"
            &
            J^\oo_\Sigma Y \ar[u]_{\epsilon_Y}
		}
    \end{gathered}
		\, .
	$$

	(b)
	An \emph{($E$-parametrized) family of formal solutions} of
	$\mathcal{E} \hookrightarrow J^\oo_\Sigma Y$ is a morphism
	$$
		s_E : T^\oo_\Sigma E \longrightarrow \mathcal{E}
	$$
	such that the composite
	$$
		\sigma_E : T^\oo_\Sigma E \overset{s_E}{\longrightarrow} \mathcal{E} \hookrightarrow J^\oo_\Sigma Y
	$$
	is an $E$-parametrized family of formally holonomic sections of
	$J^\oo_\Sigma Y$.
\end{definition}
\begin{remark}
  \label{InterpretationOfHolonomicSections}
	By corollary \ref{ComponentVersionOfDiskBundleMonadProduct} the two
	conditions in definition \ref{FamilyOfFormal} for a morphism
	${\sigma_E}$ to be a formally holonomic section $\sigma_E$ mean in
	terms of local generalized elements (definition
	\ref{ComponentNotationForInfinitesimalDiskBundles}) equivalently that
	$\overline{\sigma_E}(x,a,b)$ depends ``symmetrically'' on its
	infinitesimal arguments $a$ and $b$: The first condition says that
  $$
    \widetilde{\sigma}_E(x)(a,b) = \sigma_E(x,0)(a+b)
  $$
  hence equivalently
  $$
    \sigma_E(x,a)(b) = \sigma_E(x,0)(a+b)
    \,.
  $$
  while the second similarly states that
  $$
    \overline{\sigma_E}(x,a,b) = \sigma_E(x,a+b)(0)
  $$
  hence equivalently
  $$
    \sigma_E(x,a)(b) = \sigma_E(x,a+b)(0)
    \,.
  $$
  This also shows that the two conditions are indeed equivalent:
  The first condition implies the second by replacing $a \mapsto a + b$ and $b \mapsto 0$,
  while the second implies the first by replacing $a \mapsto 0$ and $b \mapsto a + b$.
\end{remark}

\begin{example}[traditional formal solutions]
	\label{RecoveringTraditionalFormalSolutions}
	Let $\mathbf{H} = \mathrm{FormalSmoothSet}$ be the Cahiers topos (definition \ref{FormalSmoothSet})
	and let $\Sigma$ be an ordinary smooth manifold, according to example \ref{SmoothManifoldsAreVManifolds}.
	Let $E = {*}$ be the abstract point, with the slice morphism $x\colon {*} \to \Sigma$
	picking some point in $\Sigma$. Recall, by example
	\ref{FormalDiskBundleOfPoint}, that its formal neighborhood bundle
	$T^\oo_\Sigma \{*\} \to \Sigma$ is the inclusion $\mathbb{D}_x \to
	\Sigma$ of the formal neighborhood of the point $x$.

	Then, a ${*}$-parametrized family of formal solutions of $\mathcal{E}
	\hookrightarrow J^\oo_\Sigma Y$, in the general sense of definition
	\ref{FamilyOfFormal}, is precisely what is usually known as a
	\emph{formal solution at $x\in \Sigma$}, namely a formal power series
	$T^\oo_\Sigma \{*\} \simeq \mathbb{D}_x \stackrel{\sigma}{\to} Y$ at
	$x \in \Sigma$ valued in $Y$, such that its jet extension
	$T^\oo_\Sigma \{*\} \simeq \mathbb{D}_x
	\stackrel{j^\oo\sigma}{\hookrightarrow} J^\oo_\Sigma Y$ factors
	through the solution locus $\mathcal{E} \hookrightarrow J^\oo_\Sigma
	Y$.
\end{example}
\begin{proof}
	Consider a ${*}$-parametrized family of formal solutions $T^\oo_\Sigma
	\{*\} \stackrel{\sigma^\oo}{\longrightarrow} \mathcal{E}$ and the two
	commutative diagrams equivalently exhibiting this property
	(definition~\ref{FamilyOfFormal}):
	$$
    \begin{gathered}
		\xymatrix{
			& \mathcal{E} \ar@{^{(}->}[d] \\
			T^\oo_\Sigma \{*\} \simeq \mathbb{D}_x
				\ar[r]^-{j^\oo\sigma}
				\ar[ur]^{\sigma^\oo} &
			J^\oo_\Sigma Y \ar[d]^{\Delta_Y}
			&
			T^\oo_\Sigma T^\oo_\Sigma \{*\} \ar[d]_{\nabla_\ast}
				\ar[r]^{\overline{j^\oo\sigma}} &
			Y \\
			{*}
               \ar[u]^{\eta_\ast}
               \ar[ur]^-{\tilde \sigma}
               \ar[r]_-{\widetilde{j^\oo\sigma}} &
			J^\oo_\Sigma J^\oo_\Sigma Y
			&
			T^\oo_\Sigma \{*\} \simeq \mathbb{D}_x
				\ar[r]_-{j^\infty \sigma}
                \ar[dr]_{\sigma^\oo}
				\ar[ur]^-{\sigma} &
			J^\oo_\Sigma Y \ar[u]_{\epsilon_Y} \\
			& & & \mathcal{E} \ar@{_{(}->}[u]
		}
    \end{gathered}
		\, .
	$$
	By remark~\ref{InterpretationOfHolonomicSections}, we can recover the
	formal solution $\sigma^\oo$ from the knowledge of the diagonal
	morphism in the diagram on the right, which we have denoted by
	$\sigma$. By proposition \ref{JetProlongationReproduced} the top
	horizontal morphism on the right is the $(T^\infty_\Sigma \dashv
	J^\infty_\Sigma)$-adjunct $\overline{j^\infty \sigma}$ of the jet
	prolongation $j^\infty \sigma $ (definition~\ref{jetprolongation}).
	This identifies the bottom horizontal morphism on the right and hence
	the top horizontal morphism on the left with the jet prolongation
	$j^\infty \sigma$ itself, as shown. Now the commutativity of the
	triangle diagrams shows that $j^\oo\sigma$ factors as $\mathbb{D}_x
	\stackrel{\sigma^\oo}{\longrightarrow} \mathcal{E} \to J^\oo_\Sigma
	Y$, which concludes the proof in one direction.

	In the other direction, suppose that we have a section $\sigma \colon
	T^\oo_\Sigma\{*\} \to Y$, such that $j^\oo\sigma$ factors through
	$\mathcal{E}$. Then, by
	remark~\ref{InterpretationOfHolonomicSections}, we recover a
	commutative diagram like the one above on the right and hence a
	$*$-parametrized family of formal solutions.
\end{proof}

\begin{example}
	\label{SolFactorsThroughFormalSol}
	Given a section $\sigma \colon \Sigma \to Y$, it is a true solution in
	the sense of definition~\ref{generalpdes}, precisely if the
    $(T^\infty_\Sigma \dashv J^\infty_\Sigma)$-adjunct
    $\sigma^\infty : T^\infty \Sigma \to J^\infty_\Sigma Y$
    (definition \ref{AdjointFunctor})
    of its
	double jet prolongation $j^\oo j^\oo\sigma \colon \Sigma \to
	J^\oo_\Sigma J^\oo_\Sigma Y$ (definition \ref{jetprolongation}), is a
	$\Sigma$-parametrized family of formal solutions
    according to definition \ref{FamilyOfFormal}.
\end{example}
\begin{proof}
	By proposition \ref{JetProlongationInTermsOfEv}, an equivalent formula
	for $\sigma^\oo$ is $\sigma^\oo\colon T^\oo_\Sigma \Sigma
	\stackrel{\mathrm{ev}}{\longrightarrow} \Sigma
	\stackrel{j^\oo\sigma}{\longrightarrow} J^\oo_\Sigma Y$.

	Then, consider the diagram
	$$
    \begin{gathered}
		\xymatrix{
			& \mathcal{E} \ar@{^{(}->}[d] \\
			T^\oo_\Sigma \Sigma
				\ar[r]^{\sigma^\oo}
				\ar@{-->}[ur]
				&
			J^\oo_\Sigma Y
				\ar[d]^{\Delta_Y}
			\\
			\Sigma
				\ar[u]^{\eta_\Sigma}
				\ar[ur]^-{j^\oo \sigma}
				\ar[r]_-{j^\oo j^\oo \sigma} &
			J^\oo_\Sigma J^\oo_\Sigma Y
		}
    \end{gathered}
		\, ,
	$$
	where we do not a priori know whether the dashed morphism exists. By
	the definition of $\sigma^\oo$ and the identity $j^\oo j^\oo \sigma =
	\Delta_Y \circ j^\oo \sigma$
	(proposition~\ref{AbstractJetCoproductIsMichalsJetCoproduct}), the solid
	arrows are known to commute. Hence, $\sigma^\oo$ is a
	$\Sigma$-parametrized family of formally holonomic sections. If
	$\sigma^\oo$ happens to be a family of formal solutions, then the
	dashed arrow exists and the whole diagram commutes, implying that
	$j^\oo\sigma$ factors through $\mathcal{E} \hookrightarrow
	J^\oo_\Sigma Y$, hence a true solution. On the other hand, if $\sigma$
	is a true solution, then $j^\oo\sigma$ factors through $\mathcal{E}
	\hookrightarrow J^\oo_\Sigma Y$. Precomposing this factorization with
	$T^\oo_\Sigma \Sigma \to \Sigma$ then shows that $\sigma^\oo$ also
	factors through $\mathcal{E} \hookrightarrow J^\oo_\Sigma Y$, meaning
	that the dashed morphism exists and commutes with the rest of the
	diagram, or in other words that $\sigma^\oo$ is a family of formal
	solutions.
\end{proof}

We close this section with remarks on how to connect the
above synthetic formalization to traditional concepts.

\begin{remark}
	\label{GeneralizedVsTraditionalPDE}
	In the case that $\mathbf{H} = \mathrm{FormalSmoothSet}$ is the Cahiers topos (definition \ref{FormalSmoothSet})
    and when  $\mathcal{E}$ and solutions $\sigma\colon \Sigma \to \mathcal{E}$
	belong to $\mathrm{LocProMfd}_{\downarrow \Sigma} \hookrightarrow
	\mathbf{H}_{/\Sigma}$ (definition~\ref{FiberedManifold}), the above
	definitions coincide with the one given by
	Vinogradov~\cite{VinKras,Marvan86}. A slightly more traditional
	definition~\cite{Goldschmidt,GuPf13} restricts Vinogradov's notion of
	a PDE on the sections of $Y\to \Sigma$ by the extra requirement that
	image of $\mathcal{E} \hookrightarrow J^\oo_\Sigma Y$ is a closed
	submanifold; sometimes the restriction that the composition with the
	natural projection $\mathcal{E} \hookrightarrow J^\oo_\Sigma Y \to Y$
	is surjective is also invoked. It should be noted that, by ignoring
	the latter requirements, Vinogradov's notion of PDE admits also
	examples that have been called \emph{partial differential
	relations}~\cite{Gromov}, which may be specified by inequalities
	(rather than equalities) between differential operators.
\end{remark}

\begin{remark}
  \label{PDEsRegardedAsSubobjectsOfJetBundles}
	In a category of sheaves such as the slice topos
	$\mathbf{H}_{/\Sigma}$, the difference between equations and
	inequalities gets blurred. There, every monomorphism is a
	\emph{regular monomorphism}, which means that every subobject is
	characterized by an equation, in that every subobject inclusion like
	$\mathcal{E} \hookrightarrow J^\infty_\Sigma Y$ is part of an
	equalizer diagram of the form
	$$
		\xymatrix{
			\mathcal{E}
				\ar@{^{(}->}[rr]
			&&
			J^\infty_\Sigma Y
				\ar@<+3pt>[rr]^{D_1}
				\ar@<-3pt>[rr]_{D_2}
			&&
			Z
		}
	$$
	for some object $Y$ and some morphisms $D_1, D_2$ in
	$\mathbf{H}_{/\Sigma}$. Since by remark
	\ref{DifferentialOperatorGeneralized} we may view the morphisms $D_1$
	and $D_2$ above as generalized differential operators, this exhibits
	every PDE in the general sense of definition \ref{generalpdes} as the
	locus carved out by an equation between differential operators.
	However, even if the inclusion $\mathcal{E} \hookrightarrow
	J^\oo_\Sigma Y$ lives in $\mathrm{LocProMfd}_{\downarrow \Sigma}
	\hookrightarrow \mathbf{H}_{/\Sigma}$, the target object $Z$ of the
	corresponding morphisms $D_1$, $D_2$ may be quite far from a standard
	smooth manifold. For instance, the inclusion $(0,1) \hookrightarrow
	\mathbb{R}$ is the equalizer of two maps $\mathbb{R} \mapsto
	\mathbb{R} \sqcup_{(0,1)} \mathbb{R}$ into the non-Hausdorff space
	constructed by glueing two copies of $\mathbb{R}$ along the $(0,1)$
	subinterval.
\end{remark}

\begin{remark}
It is worth dwelling a bit on the condition that a generalized PDE
$\mathcal{E} \hookrightarrow J^\oo_\Sigma Y$ be included in a jet bundle
by a monomorphism. In our definition, this is a monomorphism in the slice
category $\mathbf{H}_{/\Sigma}$. As will be seen below, no further
conditions on this morphism will be necessary. On the other hand,
although we have a full inclusion of the sub-category fibered manifolds,
$\mathrm{LocProMfd}_{\downarrow \Sigma} \hookrightarrow
\mathbf{H}_{/\Sigma}$, the notion of an
$\mathbf{H}_{/\Sigma}$-monomorphism is strictly stronger than a
monomorphism in fibered manifolds. Since Marvan was working in fibered
manifolds~\cite{Marvan86,Marvan89}, he had to require an additional
condition%
	\footnote{This condition was originally and erroneously omitted from
	Proposition~1.4 of~\cite{Marvan86}, which was later corrected in
	Theorem~1.3 of~\cite{Marvan89}. The condition itself can be seen as a
	strengthened version of being an \emph{immersion} or as a weakened
	version of being \emph{transversal}.}%
: the inclusion monomorphism needed to remain a monomorphism under the
$V$-functor (vertical tangent bundle functor). The main consequence of
this extra hypothesis is that such monomorphisms are then also preserved
by the $J^\oo_\Sigma$ functor. In our case,
$\mathbf{H}_{/\Sigma}$-monomorphisms (and even more generally all
limits) are preserved by $J^\oo_\Sigma$ because it is defined as a right
adjoint (definition~\ref{jetcomonad}, proposition~\ref{rightadjointpreserveslimits}).
\end{remark}

\subsection{Formally integrable PDEs}
\label{FormallyIntegrablePDEs}
A PDE is to be called \emph{integrable} if given a solution jet at one
point, it may be extended to a jet prolongation of a local section on an
open neighbourhood of that point. Accordingly, a PDE is to be called
\emph{formally integrable} if given a solution jet at one point, it may
be extended to a jet prolongation of a section defined at least on a
\emph{formal infinitesimal neighbourhood} of that point. We now give an
abstract synthetic definition (not actually referring to points) of this
concept of formally integrable PDEs, this is definition
\ref{FormallyIntegrable} below.

Then we prove in this generality that the category formally integrable
PDEs in a differentially cohesive topos is equivalent to the category of
coalgebras over the jet-comonad. This is theorem \ref{PDEIsEM} below. For
the special case that $\mathbf{H} = \mathrm{FormalSmoothSet}$ is Dubuc's
Cahiers topos (definition \ref{FormalSmoothSets}), and that $\mathcal{C}
= \mathrm{LocProMfd}_{\downarrow \Sigma}$ is the category of locally
pro-manifolds fibered over an ordinary manifold $\Sigma$ (definition
\ref{FiberedManifold}) we recover $\mathrm{PDE}(\mathcal{C})
\hookrightarrow \mathrm{PDE}_{/\Sigma}(\mathbf{H})$ as the image of the
embedding of Vinogradov's category of PDEs over $\Sigma$, since by the
main result of~\cite{Marvan86,Marvan89} they are both equivalent to the
category of colagebras over the jet-comonad in fibered manifolds
(corollary~\ref{GeneralizedPDEVsVinogradovCorollary}).

Finally we use this equivalence in order to discuss finite limits in the
category of formally integrable PDEs (theorem~\ref{PDEHasKernels} and
corollary~\ref{PDEHasFiniteLimits} below). This will be crucial (in
future work) for the discussion of variational (Euler-Lagrange) PDEs,
which characterize the vanishing locus (hence a certain kernel) of
certain PDE morphisms, more precisely given by variational derivatives
of Lagrangian densities.

\begin{definition}[formally integrable PDEs]
	\label{FormallyIntegrable}
	Consider $\mathcal{E}, Y \in \mathbf{H}_{/\Sigma}$ and a generalized
	PDE $e_Y\colon \mathcal{E} \hookrightarrow J^\oo_\Sigma Y$ (definition
	\ref{generalpdes}).

	(a)
  Its \emph{(infinite) prolongation} is the generalized PDE denoted by
  the $\mathbf{H}_{/\Sigma}$-monomorphism
  $$
    e^\oo_Y\colon \mathcal{E}^\infty \hookrightarrow J^\infty_\Sigma Y
  $$
  and defined by the pullback square%
		\footnote{Note that, because $e^\oo_Y$ and $\Delta_Y$ are
		monomorphisms, the rest of the morphisms in the diagram are also
		monomorphisms, since $J^\oo_\Sigma$ preserves monomorphisms
		(proposition~\ref{rightadjointpreserveslimits}) and pullbacks of
		monomorphisms are monomorphisms.} %
	$$
    \begin{gathered}
		\xymatrix{
			\mathcal{E}^\oo
				\ar[rr]^{e^\oo_Y}
				\ar[d]^{\rho^\oo_\mathcal{E}}
                \ar@{}[drr]|{\mbox{\tiny (pb)}}
                &&
			J^\oo_\Sigma Y
				\ar[d]_{\Delta_Y}
			\\
			J^\oo_\Sigma \mathcal{E}
				\ar[rr]_{J^\oo_\Sigma e_Y}^{\ }="t"
            &&
			J^\oo_\Sigma J^\oo_\Sigma Y
		}
    \end{gathered}
		\,.
	$$
	We call the $\mathbf{H}_{/\Sigma}$-monomorphism $e^\oo_\mathcal{E} =
	\rho^\oo_\mathcal{E}\circ \epsilon_\mathcal{E}\colon \mathcal{E}^\oo
	\hookrightarrow \mathcal{E}$ the \emph{canonical inclusion} of the
	prolongation in the original PDE.

	(b) If the canonical inclusion is in fact an isomorphism,
	$e^\oo_\mathcal{E}\colon \mathcal{E}^\oo
	\stackrel{\simeq}{\longrightarrow} \mathcal{E}$, we say that the
	generalized PDE $\mathcal{E} \hookrightarrow J^\oo_\Sigma Y$ is
	\emph{formally integrable}.

	(c) If $e'_Y\colon \mathcal{E}' \hookrightarrow J^\oo_\Sigma Y'$ is
	another generalized PDE, then an $\mathbf{H}_{/\Sigma}$-morphism
	$\phi\colon \mathcal{E} \to \mathcal{E}'$ is said to \emph{preserve
	formal solutions} if for any parametrized family of formal solutions
	$\sigma^\oo\colon T^\oo_\Sigma E \to \mathcal{E}$, the composition
	$\phi\circ \sigma^\oo \colon T^\oo_\Sigma E \to \mathcal{E}'$ is still
	a parametrized family of formal solutions.

	(d) Denote by $\mathrm{PDE}_{/\Sigma}(\mathbf{H})$ the category whose
	objects are formally integrable generalized PDEs $\mathcal{E}
	\hookrightarrow J^\oo_\Sigma Y$, with $\mathcal{E}, Y \in
	\mathbf{H}_{/\Sigma}$, and whose morphisms are
	$\mathbf{H}_{/\Sigma}$-morphisms $\phi\colon \mathcal{E} \to
	\mathcal{E}'$ that preserve formal solutions.
\end{definition}

\begin{remark}
	(a)
	Definition~\ref{FormallyIntegrable}(a) says that the prolongation of a
	PDE locus $\mathcal{E} \hookrightarrow J^\oo_\Sigma Y$
	(definition~\ref{generalpdes}) yields another PDE locus
	$\mathcal{E}^\infty \hookrightarrow \mathcal{E} \hookrightarrow
	J^\oo_\Sigma Y$ that is \emph{smaller or equal} to the original one.
	In the sense of remark \ref{PDEsRegardedAsSubobjectsOfJetBundles} this
	means that there are in general \emph{more}/\emph{stronger} equations
	characterizing $\mathcal{E}^\infty$ than there are equations
	characterizing $\mathcal{E}$, and it is in this sense that the
	differential equation is \emph{prolonged} as we pass to
	$\mathcal{E}^\infty$.

	(b) Combining the definition of the prolongation $\mathcal{E}^\oo$ of
	a PDE with example~\ref{SolFactorsThroughFormalSol} shows that both
	$\mathcal{E}$ and its prolongation have the same true solutions,
	because by definition they have the same formal solutions. But it is
	important to note that dealing with formal solutions is indispensable
	in this context. There are examples of PDEs that have lots of formal
	solutions, but none that can be extended even to local
	sections~\cite{Lewy} (see~\cite{Zworski} for an insightful
	exposition). Thus, if we were to define the prolongation
	$\mathcal{E}^\oo$ of $\mathcal{E}$ as the largest sub-object that
	admits the same true solutions, it would be empty in those cases.
	However, under such a definition the prolongation would be much more
	difficult to compute, because jets do not see past the ``horizon'' of
	formal solutions.

	(c) Finally, a comment on the term \emph{formally integrable}. One way
	construct a formal solution $\sigma^\oo \colon T^\oo_\Sigma(*) \simeq
	\mathbb{D}_* \to \mathcal{E}$ is as a limit $\sigma^\oo =
	\varprojlim_k \sigma^k$, where $\mathbb{D}_* = \varinjlim_k
	\mathbb{D}_*(k)$ and $\sigma^k \colon \mathbb{D}_*(k) \to \mathcal{E}$
	are formal solutions of finite orders. The construction would proceed
	inductively, by starting with a $\sigma^0\colon {*} \to \mathcal{E}$
	and successively constructing higher orders. However, if the
	$\sigma^0$ does not factor through the canonical inclusion
	$\mathcal{E}^\oo \hookrightarrow \mathcal{E}$, this inductive
	procedure will be obstructed at some higher order and it is said that
	such a finite order formal solution is \emph{not integrable}. Thus,
	another way to interpret $\mathcal{E}^\oo$ is as the largest
	sub-object of $\mathcal{E}$ that consists of $0$-th order formal
	solutions that are integrable
	(lemma~\ref{ProlongationIsUniversalFamily} makes this more precise by
	proving that $\mathcal{E}^\oo$ parametrizes a universal family of
	formal solutions). These ideas have lead to a finite order version of
	the notion of formally integrable (for instance, as expressed in
	Definition~7.1 of~\cite{Goldschmidt}), which when extended to infinite
	order coincides with our definition~\ref{FormallyIntegrable}.
\end{remark}

\begin{remark}
	\label{RemarksOnPDECategory}
	(a) Consider any full subcategory $\mathcal{C} \hookrightarrow
	\mathbf{H}_{/\Sigma}$ such that we can restrict $J^\oo_\Sigma \colon
	\mathcal{C} \to \mathcal{C}$. One example is the
	subcategory $\mathcal{C} = \mathrm{LocProMfd}_{\downarrow
	\Sigma}$ of locally pro-manifolds fibered over $\Sigma$
	(definition~\ref{FiberedManifold}).

	(b) We can then denote by $\mathrm{PDE}(\mathcal{C})
	\hookrightarrow \mathrm{PDE}_{/\Sigma}(\mathbf{H}) =:
	\mathrm{PDE}(\mathbf{H}_{/\Sigma})$ the full subcategory
	where $\mathcal{E} \hookrightarrow J^\oo_\Sigma Y$ and $\phi\colon
	\mathcal{E} \to \mathcal{E}'$ are all morphisms in
	$\mathcal{C}$. It is important to note that the definition of
	$\mathrm{PDE}(\mathcal{C})$ still refers to the larger category
	$\mathbf{H}_{/\Sigma}$, which is needed to check the
	$\mathbf{H}_{/\Sigma}$-monomorphism property of $\mathcal{E}
	\hookrightarrow J^\oo_\Sigma$ as well as to introduce families of
	formal solutions (for instance, $\mathcal{C}$ itself might not
	admit any manifolds with infinitesimal dimensions), to verify that
	they are preserved under morphisms and to check formal integrability.

	(c) The example of $\mathrm{PDE}_{\downarrow
	\Sigma}(\mathrm{LocProMfd}) :=
	\mathrm{PDE}(\mathrm{LocProMfd}_{\downarrow \Sigma})$ was explicitly
	considered in~\cite{Marvan86,Marvan89} where it was identified with a
	subcategory of Vinogradov's category of
	PDEs~\cite{Vinogradov80,VinKras}. Strictly speaking, morphisms in
	Vinogradov's category are defined differently: instead of preserving
	formal solutions they preserve an alternative geometric structure (the
	\emph{Cartan distribution}). On the other hand, informal comments in
	the literature (such as those at the top of~\cite[\textsection
	8.6]{Vinogradov80}) indicate that our definition is the intuitively
	preferred one. However, since infinitesimals are not strictly part of
	classical differential geometry, the difficulty of describing formal
	solutions in a precise and concise way has made the Cartan
	distribution a favored proxy in the existing literature. We will not
	go into the details here of how the Cartan distribution relates to
	formal solutions. It suffices to note that our definition of the PDE
	category ultimately coincides with Vinogradov's
	(remark~\ref{GeneralizedPDEVsVinogradov} and
	corollary~\ref{GeneralizedPDEVsVinogradovCorollary}), at least in the
	$\mathrm{PDE}_{\downarrow \Sigma}(\mathrm{LocProMfd})$ case. Later, in
	remark~\ref{CoAlgebraMorphismsAsSolutions}, we will comment on how
	identifying the coalgebra structure underlying a PDE could be
	interpreted as putting the PDE in ``canonical form.''
\end{remark}

Next we prove some lemmas containing the main technical results needed
to establish our main theorem~\ref{PDEIsEM} below on the structure of
the category of generalized PDEs.

\begin{lemma}
	\label{CompositionOfFormalFamilies}
	Consider $E, E', \mathcal{E}, \in \mathbf{H}_{/\Sigma}$, a generalized
	PDE $e_Y\colon \mathcal{E} \hookrightarrow J^\oo_\Sigma Y$ (definition
	\ref{generalpdes}) and an $E$-parametrized family of formal solutions
	$s_E\colon T^\oo_\Sigma E \to \mathcal{E}$. Then, for any morphism
	$\phi\colon E' \to E$, the composite $s_E \circ T^\oo_\Sigma \phi
	\colon T^\oo_\Sigma E' \to \mathcal{E}$ is also an $E'$-parametrized
	family of formal solutions.
\end{lemma}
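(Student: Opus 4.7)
The plan is to reduce the statement to the formally holonomic condition and then exploit naturality of the $T^\infty_\Sigma$ monad structure together with naturality of the $(T^\infty_\Sigma \dashv J^\infty_\Sigma)$ adjunction. Concretely, writing $\sigma_E := e_Y \circ s_E \colon T^\infty_\Sigma E \to J^\infty_\Sigma Y$, the assumption that $s_E$ is a formal solution of $e_Y\colon \mathcal{E} \hookrightarrow J^\infty_\Sigma Y$ amounts (by definition~\ref{FamilyOfFormal}) to the statement that $\sigma_E$ is a family of formally holonomic sections of $J^\infty_\Sigma Y$. Since the composite $e_Y \circ (s_E \circ T^\infty_\Sigma \phi)$ equals $\sigma_E \circ T^\infty_\Sigma \phi$, it suffices to show that precomposition with $T^\infty_\Sigma \phi$ preserves the formally holonomic property.

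First I would take the formulation of formal holonomy using $\nabla_E$ and $\epsilon_Y$, so that the condition for $\sigma_E$ reads
\[
  \epsilon_Y \circ \sigma_E \circ \nabla_E \;=\; \overline{\sigma_E} \;\colon\; T^\infty_\Sigma T^\infty_\Sigma E \longrightarrow Y.
\]
Setting $\sigma_{E'} := \sigma_E \circ T^\infty_\Sigma \phi$, I would compute its adjunct via the unit/counit formula $\overline{(-)} = \epsilon_Y \circ T^\infty_\Sigma(-)$ to obtain $\overline{\sigma_{E'}} = \overline{\sigma_E} \circ T^\infty_\Sigma T^\infty_\Sigma \phi$. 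On the other side, naturality of the monad multiplication $\nabla$ gives $T^\infty_\Sigma \phi \circ \nabla_{E'} = \nabla_E \circ T^\infty_\Sigma T^\infty_\Sigma \phi$, so that
\[
  \epsilon_Y \circ \sigma_{E'} \circ \nabla_{E'}
    \;=\; \epsilon_Y \circ \sigma_E \circ \nabla_E \circ T^\infty_\Sigma T^\infty_\Sigma \phi
    \;=\; \overline{\sigma_E} \circ T^\infty_\Sigma T^\infty_\Sigma \phi
    \;=\; \overline{\sigma_{E'}},
\]
where the middle equality is precisely the hypothesis on $\sigma_E$. This proves that $\sigma_{E'}$ satisfies the formally holonomic diagram, and hence that $s_E \circ T^\infty_\Sigma \phi$ is an $E'$-parametrized family of formal solutions.

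There is really no main obstacle here: the argument is a two-line naturality computation, and the only point requiring care is bookkeeping of the two distinct ``$\eta$'' symbols (the unit of the $T^\infty_\Sigma$ monad versus the unit of the $(T^\infty_\Sigma \dashv J^\infty_\Sigma)$ adjunction), which is why I would prefer the $\nabla_E$/$\overline{\sigma_E}$ formulation where only the monad product appears explicitly. Alternatively, one can give a completely dual argument using naturality of $\eta_{(-)}$ (the monad unit) together with the naturality identity $\widetilde{\sigma_E \circ T^\infty_\Sigma \phi} = \widetilde{\sigma_E} \circ \phi$ of the adjunction isomorphism, which is essentially the same proof written in the Kleisli direction.
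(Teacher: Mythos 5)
Your proof is correct and is essentially the paper's argument in dual form: the paper verifies the unit/coproduct formulation of formal holonomy (naturality of the monad unit $\eta$ together with naturality of the adjunction hom-isomorphism, $\widetilde{\sigma_E \circ T^\infty_\Sigma\phi} = \widetilde{\sigma_E}\circ\phi$), while you verify the equivalent product/counit formulation via naturality of $\nabla$, and definition~\ref{FamilyOfFormal} makes the two interchangeable. The only blemish is notational: in your aside $\overline{(-)}=\epsilon_Y\circ T^\infty_\Sigma(-)$ the $\epsilon_Y$ should be the counit of the $(T^\infty_\Sigma\dashv J^\infty_\Sigma)$-adjunction rather than the comonad counit $J^\infty_\Sigma Y\to Y$ used in your final chain, but the identity you actually need, $\overline{\sigma_E\circ T^\infty_\Sigma\phi}=\overline{\sigma_E}\circ T^\infty_\Sigma T^\infty_\Sigma\phi$, holds by naturality of forming adjuncts in either case.
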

\begin{proof}
	The only property that we need to check is that the composition $e_Y
	\circ s_E \circ T^\oo_\Sigma \phi \colon T^\oo_\Sigma E'
	\to J^\oo_\Sigma Y$ is an $E'$-parametrized family of formally
	holonomic sections. This property is precisely captured by the
	commutativity of the outer part of the following diagram:
	$$
    \begin{gathered}
		\xymatrix{
			T^\oo_\Sigma E'
				\ar@/^2pc/@{-->}[rrrr]^-{\ }
				\ar[rr]^{T^\oo_\Sigma \phi} &&
			T^\oo_\Sigma E
				\ar[rr]^{e_Y\circ s_E}="s" &&
			J^\oo_\Sigma Y
				\ar[d]^{\Delta_Y}
			\\
			E'
				\ar@/_2pc/@{-->}[rrrr]_{\widetilde{(e_Y\circ s_E \circ T^\oo_\Sigma \phi)}}
				\ar[u]^{\eta_{E'}}
				\ar[rr]_\phi &&
			E
				\ar[u]^{\eta_E}
				\ar[rr]_-{ \widetilde{e_Y\circ s_E} }="t" &&
			J^\oo_\Sigma J^\oo_\Sigma Y
		}
    \end{gathered}
		\, .
	$$
	Here the square on the right commutes by hypothesis; the left square
	commutes by naturality of the unit of the monad $T^\oo_\Sigma$; while
	the top triangle commutes by definition. Finally, the bottom triangle
	commutes by the natural hom-isomorphism of the $(T^\oo_\Sigma \dashv
	J^\oo_\Sigma)$-adjunction, which  provides us with a bijection between
	the following kinds of commutative triangles:
	$$
    \begin{gathered}
		\xymatrix{
			E'
				\ar[d]_{\phi}
				\ar[dr] &
			{} \ar@{}[d]^{\ }="s"
			&&
			T^\oo_\Sigma E'
				\ar[d]_{T^\oo_\Sigma \phi}="t"
				\ar[dr] &
			\\
			E
				\ar[r] &
			J^\oo_\Sigma Z
			&&
			T^\oo_\Sigma E
				\ar[r] &
			Z
			\ar@{}|{\leftrightarrow} "s"; "t"
		}
    \end{gathered}
    \, .
	$$
\end{proof}

\begin{proposition}
  \label{InputForPDEisEM}
	Consider a formally integrable generalized PDE $e_Y \colon
	\mathcal{E} \hookrightarrow J^\oo_\Sigma Y$ (definition \ref{FormallyIntegrable}).

	(a) Using the canonical inclusion isomorphism, $e^\oo_\mathcal{E}
	\colon \mathcal{E}^\oo \stackrel{\simeq}{\longrightarrow}
	\mathcal{E}$, a formally integrable PDE parametrizes its own universal
	family of formal solutions, $\overline{\rho_\mathcal{E}}\colon
	T^\oo_\Sigma\mathcal{E} \to \mathcal{E}$, where $\rho_\mathcal{E} =
	\rho^\oo_\mathcal{E} \circ e^\oo_\mathcal{E}$
	(cf.~lemma~\ref{ProlongationIsUniversalFamily}).

	(b) The adjunct morphism $\rho_\mathcal{E} \colon \mathcal{E}
	\hookrightarrow J^\oo_\Sigma \mathcal{E}$ to
	$\overline{\rho_\mathcal{E}}$ defines a coalgebra structure
	(definition \ref{EMOverComonad}) over the jet comonad $J^\oo_\Sigma$.

	(c) Conversely, any coalgebra structure $\rho_\mathcal{E}\colon \mathcal{E}
	\hookrightarrow J^\oo_\Sigma \mathcal{E}$ is also a formally
	integrable generalized PDE.

	(d) For $e'_Y\colon \mathcal{E}' \hookrightarrow J^\oo Y'$ another
	formally integrable generalized PDE, an
	$\mathbf{H}_{/\Sigma}$-morphism $\phi \colon \mathcal{E} \to
	\mathcal{E}'$ preserves formal solutions iff it is a morphism of
	corresponding coalgebras, meaning that it fits into the commutative
	diagram
	$$
    \begin{gathered}
		\xymatrix{
			\mathcal{E} \ar[r]^{\phi} \ar[d]_{\rho_\mathcal{E}} &
			\mathcal{E}' \ar[d]^{\rho_{\mathcal{E}'}} \\
			J^\oo_\Sigma \mathcal{E} \ar[r]_{J^\oo_\Sigma \phi} &
			J^\oo_\Sigma \mathcal{E}'
		}
    \end{gathered}
		\, .
	$$
\end{proposition}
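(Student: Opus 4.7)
The plan is to derive all four parts from the defining pullback of the prolongation in Definition~\ref{FormallyIntegrable}(a), transported back and forth across the $(T^\infty_\Sigma \dashv J^\infty_\Sigma)$-adjunction. For part (a), I would cite Lemma~\ref{ProlongationIsUniversalFamily} to the effect that $\rho^\infty_\mathcal{E}\colon \mathcal{E}^\infty \to J^\infty_\Sigma \mathcal{E}$ exhibits $\mathcal{E}^\infty$ as the base of the universal parametrized family of formal solutions of $\mathcal{E} \hookrightarrow J^\infty_\Sigma Y$. Since formal integrability means $e^\infty_\mathcal{E}$ is an isomorphism, this universal family is transported to $\overline{\rho_\mathcal{E}}\colon T^\infty_\Sigma \mathcal{E} \to \mathcal{E}$, where $\rho_\mathcal{E} := \rho^\infty_\mathcal{E} \circ (e^\infty_\mathcal{E})^{-1}$.

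For part (b), I would verify the two coalgebra axioms by moving them across the adjunction. The counit law $\epsilon_\mathcal{E} \circ \rho_\mathcal{E} = \mathrm{id}_\mathcal{E}$ translates, using the formula for adjuncts from Proposition~\ref{AdjunctionInTermsOfUnitAndCounit}, to $\overline{\rho_\mathcal{E}} \circ \eta_\mathcal{E} = \mathrm{id}_\mathcal{E}$, which says the universal family returns the original section when restricted along the zero-disk inclusion and follows from the construction of $\rho^\infty_\mathcal{E}$ as the pullback projection. The coassociativity law $\Delta_\mathcal{E} \circ \rho_\mathcal{E} = J^\infty_\Sigma \rho_\mathcal{E} \circ \rho_\mathcal{E}$ translates to $\overline{\rho_\mathcal{E}} \circ \nabla_\mathcal{E} = \overline{\rho_\mathcal{E}} \circ T^\infty_\Sigma \overline{\rho_\mathcal{E}}$, which is precisely the formally-holonomic condition of Definition~\ref{FamilyOfFormal} applied to the universal family. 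I expect the careful pasting of the defining pullback square with the naturality squares of $\Delta$ and $\eta$ to be the main technical step here.

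For part (c), given a $J^\infty_\Sigma$-coalgebra $\rho_\mathcal{E}\colon \mathcal{E} \to J^\infty_\Sigma \mathcal{E}$, the counit axiom exhibits $\rho_\mathcal{E}$ as a split monomorphism, so it defines a generalized PDE on $Y := \mathcal{E}$ in the sense of Definition~\ref{generalpdes}. To verify formal integrability, I would instantiate the defining pullback of $\mathcal{E}^\infty$ with $e_Y = \rho_\mathcal{E}$: it exhibits $\mathcal{E}^\infty$ as the pullback of $\Delta_\mathcal{E}$ along $J^\infty_\Sigma \rho_\mathcal{E}$. The coassociativity axiom says that $\rho_\mathcal{E}$ itself factors as a cone over this pullback, inducing $\mathcal{E} \to \mathcal{E}^\infty$; composing with the canonical $e^\infty_\mathcal{E}\colon \mathcal{E}^\infty \to \mathcal{E}$ and using the splitting $\epsilon_\mathcal{E}$ on both sides yields the required isomorphism $\mathcal{E}^\infty \simeq \mathcal{E}$.

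For part (d), the forward direction follows by the same kind of pasting as in Lemma~\ref{CompositionOfFormalFamilies}: if $\phi$ is a coalgebra morphism and $s_E\colon T^\infty_\Sigma E \to \mathcal{E}$ is a family of formal solutions, the coalgebra square $J^\infty_\Sigma \phi \circ \rho_\mathcal{E} = \rho_{\mathcal{E}'} \circ \phi$ pastes with the holonomicity data of $s_E$ to show that $\phi \circ s_E$ is holonomic for $\mathcal{E}'$. Conversely, if $\phi$ preserves formal solutions, then applying $\phi$ to the universal family $\overline{\rho_\mathcal{E}}$ of part (a) gives a family of formal solutions of $\mathcal{E}'$, which by the universal property of $\overline{\rho_{\mathcal{E}'}}$ (Lemma~\ref{ProlongationIsUniversalFamily} applied to $\mathcal{E}'$) factors uniquely through it; taking adjuncts, this factorization is precisely the coalgebra-morphism square. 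The principal anticipated obstacle throughout is ensuring that Lemma~\ref{ProlongationIsUniversalFamily} delivers both existence \emph{and} uniqueness of the factorizations invoked in parts (c) and (d), and that the pasting-law bookkeeping in part (b) is carried out cleanly enough to recognize the adjunct coassociativity as the holonomicity square.
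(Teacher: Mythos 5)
Your overall architecture matches the paper's: (a) is a citation of Lemma~\ref{ProlongationIsUniversalFamily}, (c) runs the cone-over-the-pullback argument plus the counit splitting to produce mutually inverse monomorphisms $\mathcal{E}\leftrightarrow\mathcal{E}^\infty$ (the paper additionally invokes the Beck equalizer, which is not really essential), and (d) is the same two-way argument via the universal family and Lemma~\ref{CompositionOfFormalFamilies}. The counitality half of (b) is also fine: $\overline{\rho_\mathcal{E}}\circ\eta_\mathcal{E}=\epsilon_\mathcal{E}\circ\rho_\mathcal{E}=e^\infty_\mathcal{E}\circ(e^\infty_\mathcal{E})^{-1}=\mathrm{id}$ falls straight out of the definition of $e^\infty_\mathcal{E}$.

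The gap is in your coassociativity argument. The transposition to $\overline{\rho_\mathcal{E}}\circ\nabla_\mathcal{E}=\overline{\rho_\mathcal{E}}\circ T^\infty_\Sigma\overline{\rho_\mathcal{E}}$ is correct, but this is \emph{not} the formally holonomic condition of Definition~\ref{FamilyOfFormal} applied to the universal family. Holonomicity of $\overline{\rho_\mathcal{E}}$ as a family of formal solutions of $e_Y\colon\mathcal{E}\hookrightarrow J^\infty_\Sigma Y$ is, in its adjunct form, the equation $\overline{e_Y\circ\overline{\rho_\mathcal{E}}}=\epsilon_Y\circ e_Y\circ\overline{\rho_\mathcal{E}}\circ\nabla_\mathcal{E}$ between morphisms $T^\infty_\Sigma T^\infty_\Sigma\mathcal{E}\to Y$; equivalently (Lemma~\ref{FamilyOfFormalAdjunctSquare}) it only yields the square $\Delta_Y\circ e_Y=J^\infty_\Sigma e_Y\circ\rho_\mathcal{E}$, i.e.\ the defining pullback square, living over $Y$. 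The coaction identity $\Delta_\mathcal{E}\circ\rho_\mathcal{E}=J^\infty_\Sigma\rho_\mathcal{E}\circ\rho_\mathcal{E}$ is an equation into $J^\infty_\Sigma J^\infty_\Sigma\mathcal{E}$, and to pass from the former to the latter you must cancel $e_Y$. (If you instead read ``holonomic'' with respect to the self-inclusion $\rho_\mathcal{E}\colon\mathcal{E}\hookrightarrow J^\infty_\Sigma\mathcal{E}$, the identification does hold verbatim --- but that presupposes $\rho_\mathcal{E}$ is a coalgebra structure, which is what you are proving, so the argument is circular.) The missing idea is the one the paper uses: since $J^\infty_\Sigma$ is a right adjoint it preserves the defining pullback, so $J^\infty_\Sigma\mathcal{E}$ is the pullback of $J^\infty_\Sigma J^\infty_\Sigma e_Y$ along $J^\infty_\Sigma\Delta_Y$; the naturality squares of $\Delta$ exhibit $(\Delta_\mathcal{E}\circ\rho_\mathcal{E},\,\Delta_Y\circ e_Y)$ as a cone over that cospan, hence a unique induced $e\colon\mathcal{E}\to J^\infty_\Sigma\mathcal{E}$, which is then identified with $\rho_\mathcal{E}$ by post-composing with $J^\infty_\Sigma\epsilon_\mathcal{E}$ and using the counit law already established. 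Without this pullback-preservation step your ``careful pasting'' has nothing to paste against, so as written part (b) does not close.
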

\begin{proof}
	(a)
	This is just a restatement (placed here for convenience) of the result
	of lemma~\ref{ProlongationIsUniversalFamily} in light of the
	definition of formal integrability
	(definition~\ref{FormallyIntegrable}(b)).

	(b)
	First to see the counitality condition, let us identify
	$\mathcal{E}^\oo \simeq \mathcal{E}$ and $e_Y \simeq e_Y^\infty$ by
	formal integrability, and consider the diagram
	$$
  \begin{gathered}
	\xymatrix{
		\mathcal{E}
			\ar@{^{(}->}[r]^{e_Y}="s"
			\ar[d]_{\rho_\mathcal{E}}
			\ar@{}[dr]|{\mbox{\tiny (pb)}}
			&
		J^\infty_\Sigma Y \ar[d]|{\Delta_Y}
	  	\ar@/^2pc/[dd]^{\mathrm{id}}
	  \\
	  J^\infty_\Sigma \mathcal{E}
	  	\ar@{^{(}->}[r]_-{J^\infty_\Sigma e_Y}
	  	\ar[d]_{\epsilon_\mathcal{E}}
	  	&
		J^\infty_\Sigma J^\infty_\Sigma Y
	  	\ar[d]|{\epsilon_{J^\infty_\Sigma Y}}
	  	\\
		\mathcal{E}
			\ar@{^{(}->}[r]_{e_Y}="s"
			&
		J^\infty_\Sigma Y
	}
  \end{gathered}
  \, .
	$$
	Here the top square is the defining pullback square of the formally
	integrable PDE, while the bottom square is the naturality square for
	the counit of the jet comonad. The right vertical composite is the
	identity by counitality of the coproduct $\Delta$. A consequence of
	the commutativity of the whole diagram is the factorization identity
	$e_Y \circ (\epsilon_\mathcal{E} \circ \rho_\mathcal{E}) = e_Y$. But,
	since $e_Y$ is a monomorphism and $e_Y \circ \mathrm{id} = e_Y$ is
	another factorization, the two factorizing morphisms must be the same,
  $$
    \epsilon_\mathcal{E}\circ \rho_\mathcal{E}
      =
    \mathrm{id}
    \,,
  $$
	which is the counitality condition for a coalgebra structure
	$\rho_\mathcal{E}$. It also means that $\rho_\mathcal{E}$ is a split
	monomorphism with retraction morphism $\epsilon_\mathcal{E}$.

  Now, to see the coaction property, consider the diagram
  $$
    \xymatrix{
      J^\infty_\Sigma J^\infty_\Sigma \mathcal{E}
      \ar[rr]^-{J^\infty_\Sigma J^\infty_\Sigma e_Y}
      &&
      J^\infty_\Sigma J^\infty_\Sigma J^\infty_\Sigma Y
      &&
      J^\infty_\Sigma J^\infty_\Sigma Y
      \ar[ll]_-{J^\oo_\Sigma\Delta_Y}
      \\
      J^\infty_\Sigma \mathcal{E}
      \ar[rr]_-{J^\infty_\Sigma e_Y}
      \ar[u]_{\Delta_\mathcal{E}}
      &&
      J^\infty_\Sigma J^\infty_\Sigma Y
      \ar[u]|{\Delta_{J^\infty_\Sigma Y}}
      &&
      J^\infty_\Sigma Y
      \ar[u]_{\Delta_Y}
      \ar[ll]^-{\Delta_Y}
    }
  $$
	made from naturality squares of the coproduct $\Delta$ (it consists
	of two faces of the ``commuting cube'' diagram
	in~\cite[\textsection\textsection2.3--4]{Marvan86}). Since
	$J^\infty_\Sigma$ is a right adjoint it preserves fiber products. By
	definition of a formally integrable PDE, the fiber product over the
	bottom arrows is $\mathcal{E}$, and since $J^\oo_\Sigma$ preserves
	limits the fiber product over the top morphisms is $J^\infty_\Sigma
	\mathcal{E}$. By the universal property of the latter fiber product,
	there is hence a unique dashed morphism $e$ making the two rectangles
	in the following diagram commute:
	$$
    \begin{gathered}
		\xymatrix{
			J^\infty_\Sigma \mathcal{E} &&
			J^\infty_\Sigma J^\infty_\Sigma \mathcal{E}
				\ar[ll]|-{J^\oo_\Sigma \epsilon_\mathcal{E}} &
			J^\infty_\Sigma \mathcal{E}
				\ar[l]^-{J^\infty_\Sigma \rho_\mathcal{E}}
				\ar[r]^-{J^\infty_\Sigma e_Y}
				\ar@/_1.5pc/[lll]_{J^\oo_\Sigma \mathrm{id} = \mathrm{id}}
		            &
			J^\infty_\Sigma J^\infty_\Sigma Y
			\\
			&& J^\infty_\Sigma \mathcal{E}
				\ar[ull]^{\mathrm{id}}
				\ar[u]_{\Delta_\mathcal{E}} &
			\mathcal{E}
				\ar[l]^{\rho_\mathcal{E}}
				\ar@{-->}[u]^{e} \ar[r]_{e_Y} &
			J^\infty_\Sigma Y
				\ar[u]_{\Delta_Y}
		}
    \end{gathered}
		\,,
	$$
	where we have also added the triangle on the left, which commutes by
	the counitality of $\Delta$, and have also noted that the two
	left-most horizontal morphisms compose to the identity, by the
	counitality of $\rho_\mathcal{E}$ just established above. Hence, from
	commutativity of the left part of this diagram, we can conclude that
	$$
	   e = \rho_\mathcal{E}
	   \,.
	$$
	With this identification, the commutativity of the left square is
	exactly the coaction property of $\rho_\mathcal{E}$.

	(c)
	Suppose that $\rho_\mathcal{E} \colon \mathcal{E} \to J^\oo_\Sigma
	\mathcal{E}$ gives $\mathcal{E}$ a coalgebra structure. Then, by the
	counitality condition, $\epsilon_\mathcal{E} \circ \rho_\mathcal{E} =
	\mathrm{id}$, meaning that $\rho_\mathcal{E}$ is a split monomorphism
	(in $\mathbf{H}_{/\Sigma}$ in either case), in the same way as we
	concluded in part (a). Now, the Beck equalizer theorem
	(proposition~\ref{BeckEqualizer}) says that $\rho_\mathcal{E}$ is an
	equalizer of the diagram
	$$
		\xymatrix{
			\mathcal{E}
				\ar@{^{(}->}[rr]^-{\rho_\mathcal{E}}
				&&
			J^\oo_\Sigma \mathcal{E}
        \ar@<+3pt>[rr]^-{\Delta_\mathcal{E}}
        \ar@<-3pt>[rr]_-{J^\oo_\Sigma \rho_\mathcal{E}}
        &&
			J^\oo_\Sigma J^\oo_\Sigma \mathcal{E}
		} \, .
	$$
	Recall the canonical the canonical inclusion $e^\oo_\mathcal{E} \colon \mathcal{E}^\oo
	\hookrightarrow \mathcal{E}$ (definition~\ref{FormallyIntegrable}) of
	the prolongation $\mathcal{E}^\oo$ of $\mathcal{E}$. The pullback
	diagram defining $\mathcal{E}^\oo$
	(lemma~\ref{ProlongationIsUniversalFamily}) can be combined with
	the Beck equalizer diagram to give the following commutative diagram:
	$$
    \begin{gathered}
		\xymatrix{
			\mathcal{E}
				\ar@/^1pc/@{^{(}->}[drrr]^{\rho_\mathcal{E}}
				\ar@/_1pc/@{_{(}->}[ddr]_{\rho_\mathcal{E}}
				\ar@<-3pt>@{^{(}-->}[dr]
				\ar@<+4pt>@{<-^{)}}[dr]^{e^\oo_\mathcal{E}}
			\\
		  & \mathcal{E}^\oo
		    \ar@{^{(}->}[rr]
		    \ar[d]
				\ar@{}[drr]|{\mbox{\tiny (pb)}}
		  &&
		  J^\infty_\Sigma \mathcal{E}
		  	\ar[d]^{\Delta_\mathcal{E}}
		  \\
		  & J^\infty_\Sigma \mathcal{E}
		    \ar@{^{(}->}[rr]_{J^\infty_\Sigma \rho_{\mathcal{E}}}
		  &&
		  J^\infty_\Sigma J^\infty_\Sigma \mathcal{E}
		}
    \end{gathered}
		\, ,
	$$
	where we have illustrated by a dashed arrow the unique monomorphism
	that factors $\rho_\mathcal{E}$ through $\mathcal{E}^\oo$ (note that
	monomorphisms always factor through monomorphisms), which exists by
	the pullback property. Composing the morphisms $\mathcal{E}^\oo
	\hookrightarrow \mathcal{E} \hookrightarrow \mathcal{E}^\oo$ gives a
	factorization of the cone with vertex $\mathcal{E}^\oo$ through
	itself. By the pullback property, there is only one way for such a
	factorization to be done, which hence must coincide with the identity
	$\mathcal{E}^\oo \stackrel{\mathrm{id}}{\longrightarrow}
	\mathcal{E}^\oo$. If the composition of two monomorphisms is the
	identity, then they both had to be isomorphisms to begin with. Thus,
	$\mathcal{E} \simeq \mathcal{E}^\oo$ and we have shown that
	$\rho_\mathcal{E} \colon \mathcal{E} \hookrightarrow J^\oo_\Sigma
	\mathcal{E}$ is a generalized formally integrable PDE.

	(d)
	In one direction, suppose that $\phi\colon \mathcal{E} \to
	\mathcal{E}'$ preserves formal solutions. Then, using formal
	integrability and lemma~\ref{ProlongationIsUniversalFamily}, we
	get the commutative diagram
	$$
    \begin{gathered}
		\xymatrix{
			T^\oo_\Sigma \mathcal{E}
				\ar[r]^{T^\oo_\Sigma \phi}
				\ar[d]_{\overline{\rho_\mathcal{E}}} &
			T^\oo_\Sigma \mathcal{E}'
				\ar[d]^{\overline{\rho_{\mathcal{E}'}}}
			\\
			\mathcal{E}
				\ar[r]_{\phi} &
			\mathcal{E'}
		}
    \end{gathered}
		\, ,
	$$
	which expresses the fact that the composition of $\phi$ with the
	universal family of formal solutions of $\mathcal{E}$ factors through
	the universal family of solutions of $\mathcal{E}'$. By the
	$T^\oo_\Sigma \dashv J^\oo_\Sigma$ adjunction, we get precisely the
	commutative diagram showing that $\phi$ is a morphism of
	corresponding coalgebras.

	In the other direction, reversing the preceding argument, applying the
	adjunction to the commutative square showing that $\phi$ is a morphism
	of coalgebras, we get the commutative square with the $T^\oo_\Sigma
	\phi$ morphism as illustrated above, implying the identity $\phi \circ
	\overline{\rho_\mathcal{E}} = \overline{\rho_{\mathcal{E}'}} \circ
	T^\oo_\Sigma \phi$. By lemma~\ref{CompositionOfFormalFamilies}, this
	means that $\phi$ preserves the universal family of formal solutions
	$\overline{\rho_\mathcal{E}}$ and hence preserves all families of
	formal solutions.
\end{proof}

Now the main theorem:
\begin{theorem}
	\label{PDEIsEM}
	Let $\mathbf{H}$ be a differentially cohesive topos (definition \ref{DifferentialCohesion})
	and let $\Sigma$ be a $V$-manifold in $\mathbf{H}$ (definition \ref{VManifold}).
	Then there is an equivalence of categories
	$$
		\mathrm{PDE}_{/\Sigma}(\mathbf{H}) \simeq \mathrm{EM}(J^\oo_\Sigma)
	$$
	between the category of formally integrable generalized PDEs in $\mathbf{H}$
	(definition~\ref{FormallyIntegrable}(d)) and the Eilenberg-Moore category
	of coalgebras (definition~\ref{EMOverComonad}) over the jet comonad $J^\oo_\Sigma\colon \mathbf{H}_{/\Sigma} \to
	\mathbf{H}_{/\Sigma}$  (definition \ref{jetcomonad}), all over $\Sigma$. Moreover,
	for any full subcategory $\mathcal{C} \hookrightarrow
	\mathbf{H}_{/\Sigma}$, the equivalence restricts to the corresponding
	subcategories $\mathrm{PDE}(\mathcal{C}) \simeq
	\mathrm{EM}(J^\oo_\Sigma|_{\mathcal{C}})$, meaning that the
	following diagram commutes:
  $$
    \begin{gathered}
  	\xymatrix{
  		\mathrm{PDE}(\mathcal{C})
  			\ar@{^{(}->}[d]
  			\ar[r]^{\simeq}
  		&
			\mathrm{EM}(J^\infty_\Sigma|_{\mathcal{C}})
  			\ar@{^{(}->}[d]
  		\\
  		\mathrm{PDE}_{/\Sigma}(\mathbf{H})
  			\ar[r]^{\simeq}
  		&
			\mathrm{EM}(J^\infty_\Sigma)
    }
    \end{gathered}
    \, .
  $$
\end{theorem}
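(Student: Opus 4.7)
The plan is to assemble Proposition~\ref{InputForPDEisEM} into a pair of functors in opposite directions and show they are mutually inverse. First I would define $F\colon \mathrm{PDE}_{/\Sigma}(\mathbf{H}) \to \mathrm{EM}(J^\infty_\Sigma)$ sending a formally integrable PDE $e_Y\colon \mathcal{E} \hookrightarrow J^\infty_\Sigma Y$ to the coalgebra $(\mathcal{E}, \rho_\mathcal{E})$ produced by Proposition~\ref{InputForPDEisEM}(b), and sending a morphism $\phi$ to itself (a coalgebra morphism by part (d)). In the opposite direction I would define $G\colon \mathrm{EM}(J^\infty_\Sigma) \to \mathrm{PDE}_{/\Sigma}(\mathbf{H})$ sending a coalgebra $(\mathcal{E}, \rho_\mathcal{E})$ to the PDE $\rho_\mathcal{E}\colon \mathcal{E} \hookrightarrow J^\infty_\Sigma \mathcal{E}$ (with $Y = \mathcal{E}$), which is formally integrable by part (c) since $\rho_\mathcal{E}$ is a split monomorphism, and sending a coalgebra morphism to itself (a PDE morphism by part (d) reversed). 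Functoriality in both cases is immediate, since both $F$ and $G$ act as the identity on underlying $\mathbf{H}_{/\Sigma}$-morphisms.

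Next I would verify $F \circ G \simeq \mathrm{id}_{\mathrm{EM}(J^\infty_\Sigma)}$. Starting from $(\mathcal{E}, \rho_\mathcal{E})$ and forming the PDE $\rho_\mathcal{E}\colon \mathcal{E} \hookrightarrow J^\infty_\Sigma \mathcal{E}$, the resulting new coalgebra structure $\rho'_\mathcal{E}$ is obtained from the pullback of $\Delta_\mathcal{E}$ against $J^\infty_\Sigma \rho_\mathcal{E}$ as in the definition of $\mathcal{E}^\infty$. The coaction axiom $\Delta_\mathcal{E}\circ \rho_\mathcal{E} = J^\infty_\Sigma \rho_\mathcal{E} \circ \rho_\mathcal{E}$ shows that $(\mathcal{E}, \rho_\mathcal{E})$ already sits as a cone over this pullback; the Beck equalizer argument used in the proof of Proposition~\ref{InputForPDEisEM}(c) then forces the induced comparison to coincide with the canonical isomorphism $\mathcal{E}^\infty \simeq \mathcal{E}$, so that $\rho'_\mathcal{E} = \rho_\mathcal{E}$ on the nose.

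Then I would verify $G \circ F \simeq \mathrm{id}_{\mathrm{PDE}_{/\Sigma}(\mathbf{H})}$ by exhibiting, for any formally integrable PDE $e_Y\colon \mathcal{E} \hookrightarrow J^\infty_\Sigma Y$, a natural isomorphism $\mathrm{id}_\mathcal{E}\colon \mathcal{E} \to \mathcal{E}$ from $e_Y$ to $\rho_\mathcal{E}\colon \mathcal{E} \hookrightarrow J^\infty_\Sigma \mathcal{E}$ in $\mathrm{PDE}_{/\Sigma}(\mathbf{H})$. Since morphisms in $\mathrm{PDE}_{/\Sigma}(\mathbf{H})$ are just $\mathbf{H}_{/\Sigma}$-morphisms preserving formal solutions, this reduces to showing that $e_Y$ and $\rho_\mathcal{E}$ have the same families of formal solutions. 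By Proposition~\ref{InputForPDEisEM}(a), the universal family $\overline{\rho_\mathcal{E}}\colon T^\infty_\Sigma \mathcal{E} \to \mathcal{E}$ is determined by the coalgebra structure alone, and by Lemma~\ref{CompositionOfFormalFamilies} every $E$-parametrized family of formal solutions arises as $\overline{\rho_\mathcal{E}} \circ T^\infty_\Sigma(\sigma)$ for some $\sigma\colon E \to \mathcal{E}$; hence the formal-solution functor of either PDE is represented by $(\mathcal{E}, \rho_\mathcal{E})$ and the two classes of formal solutions agree. Naturality of this isomorphism in $\mathcal{E}$ follows because each $F$ and $G$ fixes underlying morphisms.

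The subcategory assertion is then automatic: $F$ and $G$ neither change the underlying object nor enlarge the morphism sets, so if $\mathcal{C} \hookrightarrow \mathbf{H}_{/\Sigma}$ is full and closed under $J^\infty_\Sigma$, the restrictions give $\mathrm{PDE}(\mathcal{C}) \simeq \mathrm{EM}(J^\infty_\Sigma|_\mathcal{C})$. The main obstacle I expect is the fourth step, where one has to articulate cleanly that the notion of \emph{formal solution} of a formally integrable PDE depends only on the induced coalgebra structure on $\mathcal{E}$ and not on the choice of ambient jet bundle $J^\infty_\Sigma Y$; the proof of this independence passes through the $T^\infty_\Sigma \dashv J^\infty_\Sigma$ adjunction together with the universal property established in Lemma~\ref{ProlongationIsUniversalFamily}, and is where the real content of the equivalence lives.
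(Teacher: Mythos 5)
Your proposal is correct and follows essentially the same route as the paper: both directions of the equivalence are assembled from Proposition~\ref{InputForPDEisEM}(b)--(d), the composite on the coalgebra side is the identity by part (c), the composite on the PDE side is witnessed by $\mathrm{id}_{\mathcal{E}}$ (which preserves formal solutions since the universal family of Lemma~\ref{ProlongationIsUniversalFamily} depends only on $\rho_{\mathcal{E}}$), and the restriction to a full subcategory $\mathcal{C}$ closed under $J^\infty_\Sigma$ is automatic from fullness. The only nitpick is that the factorization of an arbitrary family of formal solutions through the universal one is Lemma~\ref{ProlongationIsUniversalFamily}, not Lemma~\ref{CompositionOfFormalFamilies}; your argument otherwise fills in the details the paper leaves as ``obvious.''
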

\begin{proof}
	Proposition \ref{InputForPDEisEM} shows that there are functors in both directions
	between $\mathrm{PDE}_{/\Sigma}(\mathbf{H})$ and
	$\mathrm{EM}(J^\oo_\Sigma)$. It remains only to check that they
	compose to identity, up to natural isomorphism, in both directions. It
	is obvious from part (c) that the coalgebra $\rho_\mathcal{E}\colon
	J^\oo_\Sigma \hookrightarrow \mathcal{E}$ gets sent back to itself
	under composition in one direction. In the other direction, a formally
	integrable PDE $e_Y \colon \mathcal{E} \hookrightarrow J^\oo_\Sigma Y$
	gets sent back to $\rho_\mathcal{E} \colon \mathcal{E} \hookrightarrow
	J^\oo_\Sigma \mathcal{E}$. However, the natural morphism
	$\mathrm{id}_\mathcal{E}\colon \mathcal{E} \to \mathcal{E}$ clearly
	preserves formal solutions and is an isomorphism. Hence, we have the
	desired equivalence of categories.

	Finally, consider a full subcategory $\mathcal{C}
	\hookrightarrow \mathbf{H}_{/\Sigma}$ that is closed under $J^\oo_\Sigma$.
	If $\mathcal{E} \in \mathcal{C}$, then all the morphisms needed
	to define a PDE or coalgebra structure on it are also
	$\mathcal{C}$, because it is a full subcategory. Similarly, any
	morphism in $\mathrm{PDE}_{/\Sigma}(\mathbf{H})$ or
	$\mathrm{EM}(J^\oo_\Sigma)$ breaks down to morphisms in
	$\mathcal{C}$ if its source and target have underlying objects
	in $\mathcal{C}$. Hence, clearly,
	$\mathrm{PDE}(\mathcal{C}) \hookrightarrow
	\mathrm{PDE}_{/\Sigma}(\mathbf{H})$ and
	$\mathrm{EM}(J^\oo_\Sigma|_{\mathcal{C}}) \hookrightarrow
	\mathrm{EM}(J^\oo_\Sigma)$ are full subcategories that are respected
	by the equivalence.
\end{proof}

\begin{remark}
\label{CoalgebraIsCanonicalForm}
We introduced the notion of a PDE (definition~\ref{generalpdes}) by
relying on the extrinsic structure of its inclusion $\mathcal{E}
\hookrightarrow J^\oo_\Sigma Y$ in a jet bundle, which also plays a
crucial role in the corresponding notion of a solution. While this
extrinsic structure seems natural from the traditional point of view on
PDEs, a complete equivalence of the category of PDEs based on this
definition (definition~\ref{FormallyIntegrable}(d)) with coalgebras
over the $J^\oo_\Sigma$ comonad shows that this extrinsic information
is not actually necessary and the only inclusion that counts is that of
a (formally integrable) PDE $\rho_\mathcal{E} \colon \mathcal{E}
\hookrightarrow J^\oo_\Sigma \mathcal{E}$ in its own jet bundle, with
$\rho_\mathcal{E}$ precisely giving it the structure of a coalgebra.
This completely intrinsic structure can be still seen as a traditional
PDE when written out in the following way:
$$
	j^\oo \phi = \rho_\mathcal{E}(\phi) \, ,
$$
where $\phi\colon \Sigma \to \mathcal{E}$ is a section. In a sense, this
form can be seen as the canonical form of a PDE achieved by ``solving
for the highest derivatives,'' in analogy with how it is commonly done
for ordinary differential equations. Of course, any such equation may
have integrability conditions, following from the differential
consequences of the equation as written above. From this point of view,
the coaction identity (definition~\ref{EMOverComonad}) satisfied by
$\rho_\mathcal{E}$ is a necessary and sufficient condition for the
absence of non-trivial integrability conditions. That is, plugging the
above equation into the universal integrability condition $j^\oo j^\oo
\phi = \Delta_\mathcal{E} (j^\oo \phi)$ implies the identity
$p^\oo\rho_\mathcal{E} = \Delta_\mathcal{E} \circ \rho_\mathcal{E}$, but
it does not yield any new conditions on the section $\phi$, because
$p^\oo \rho_\mathcal{E} = J^\oo_\Sigma \rho_\mathcal{E} \circ
\Delta_\mathcal{E}$ by definition~\ref{InfiniteProlongation} and
$\rho_\mathcal{E}$ already satisfies the identity $J^\oo_\Sigma
\rho_\mathcal{E} \circ \Delta_\mathcal{E} = \Delta_\mathcal{E} \circ
\rho_\mathcal{E}$ by virtue of being defining a coalgebra structure on
$\mathcal{E}$.
\end{remark}

\begin{remark}
	\label{GeneralizedPDEVsVinogradov}
	As mentioned earlier in remark~\ref{RemarksOnPDECategory}(c), while
	our definition for the category of generalized PDEs shares the same
	heuristic foundations with the one in Vinogradov's
	approach~\cite{Vinogradov80,VinKras}, they are a priori distinct,
	though ours could be said to be intuitively closer to these
	heuristics. Both have an intrinsic description, without requiring an
	extrinsic embedding into a jet bundle, ours in terms of jet coalgebra
	structures and Vinogradov's in terms of the Cartan distribution. We
	are finally at a point where we can state a precise relation between
	these two categories. Instead of relating the two definitions
	directly, which would require a detailed synthetic formalization of
	the notion of the Cartan distribution, we will simply rely on Marvan's
	original and insightful observation~\cite{Marvan86,Marvan89} of the
	equivalence of Vinogradov's category with the Eilenberg-Moore
	category of coalgebras over the jet comonad in fibered manifolds
	(definition~\ref{FiberedManifold}), which by our main
	theorem~\ref{PDEIsEM} embeds fully and faithfully in our category of
	generalized PDEs in the Cahiers topos $\mathbf{H} =
	\mathrm{FormalSmoothSet}$.
\end{remark}

\begin{corollary}
	\label{GeneralizedPDEVsVinogradovCorollary}
	Vinogradov's category of PDEs over an ordinary manifold $\Sigma$
	embeds fully and faithfully in
	$\mathrm{PDE}_{/\Sigma}(\mathrm{FormalSmoothSet})$, with image
	$\mathrm{EM}(J^\oo_\Sigma|_{\mathrm{LocProMfd}_{\downarrow \Sigma}})$.
\end{corollary}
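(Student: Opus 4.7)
The plan is to assemble this corollary directly from three ingredients already available in the paper: our main Theorem~\ref{PDEIsEM} (restricted to a full subcategory), Proposition~\ref{JetBundleEndoFunctorOnSuitableManifolds} (so that the restriction actually makes sense), and Marvan's identification cited in Remark~\ref{GeneralizedPDEVsVinogradov} between Vinogradov's category and coalgebras over the jet comonad on fibered manifolds. No new computation should be required; the work is really just to check that the subcategory $\mathrm{LocProMfd}_{\downarrow\Sigma}$ meets the hypothesis of Theorem~\ref{PDEIsEM} and to line up the three equivalences.

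First I would take $\mathcal{C} := \mathrm{LocProMfd}_{\downarrow\Sigma}$, which by Remark~\ref{LocProMfdFullyIncludedInFrechetManifolds} sits as a full subcategory $\mathcal{C} \hookrightarrow \mathrm{FormalSmoothSet}_{/\Sigma} = \mathbf{H}_{/\Sigma}$. Proposition~\ref{JetBundleEndoFunctorOnSuitableManifolds} tells us that $J^\infty_\Sigma$ preserves this subcategory, so the ``restricted'' hypothesis of Theorem~\ref{PDEIsEM} is satisfied. Applying that theorem then yields an equivalence
$$
\mathrm{PDE}(\mathrm{LocProMfd}_{\downarrow\Sigma})
\;\simeq\;
\mathrm{EM}\bigl(J^\infty_\Sigma|_{\mathrm{LocProMfd}_{\downarrow\Sigma}}\bigr),
$$
together with a commuting square exhibiting $\mathrm{PDE}(\mathrm{LocProMfd}_{\downarrow\Sigma}) \hookrightarrow \mathrm{PDE}_{/\Sigma}(\mathrm{FormalSmoothSet})$ as a full subcategory whose image under the equivalence is the full subcategory $\mathrm{EM}(J^\infty_\Sigma|_{\mathrm{LocProMfd}_{\downarrow\Sigma}}) \hookrightarrow \mathrm{EM}(J^\infty_\Sigma)$. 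By construction, this inclusion is fully faithful.

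Next I would invoke Marvan's theorem, as recorded in Remark~\ref{GeneralizedPDEVsVinogradov}: Vinogradov's category of (formally integrable) PDEs over the ordinary manifold $\Sigma$ is equivalent to $\mathrm{EM}(J^\infty_\Sigma|_{\mathrm{LocProMfd}_{\downarrow\Sigma}})$, using the identification of Marvan's jet comonad on fibered manifolds with our synthetic one supplied by Theorem~\ref{ReproducingTraditionalJetBundle} and Proposition~\ref{AbstractJetCoproductIsMichalsJetCoproduct}. Composing Marvan's equivalence with the full-faithful inclusion of the previous paragraph gives the desired fully faithful embedding of Vinogradov's category into $\mathrm{PDE}_{/\Sigma}(\mathrm{FormalSmoothSet})$ with image precisely $\mathrm{EM}(J^\infty_\Sigma|_{\mathrm{LocProMfd}_{\downarrow\Sigma}})$.

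The only real subtlety, and the step I would be most careful about, is checking that the identifications of comonad structures match up on the nose: Theorem~\ref{ReproducingTraditionalJetBundle} matches the underlying functors on fibered manifolds, Proposition~\ref{AbstractJetCoproductIsMichalsJetCoproduct} matches the coproducts, and one should note that the counits automatically agree since they come from the same terminal-preserving adjunction. Once the comonads are identified, Marvan's equivalence of Eilenberg--Moore categories transports verbatim, and nothing further needs to be verified about morphisms (since both sides characterize them in terms of the same coalgebra-homomorphism condition, which in Vinogradov's picture corresponds to preservation of the Cartan distribution). This is the content of the cited results of \cite{Marvan86, Marvan89}, so no additional argument is needed from our side.
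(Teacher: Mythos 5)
Your proposal is correct and follows essentially the same route as the paper: the corollary is obtained by combining Marvan's identification of Vinogradov's category with $\mathrm{EM}(J^\infty_\Sigma|_{\mathrm{LocProMfd}_{\downarrow\Sigma}})$ (as recorded in remark~\ref{GeneralizedPDEVsVinogradov}, with the comonad structures matched via theorem~\ref{ReproducingTraditionalJetBundle} and proposition~\ref{AbstractJetCoproductIsMichalsJetCoproduct}) with the fully faithful inclusion $\mathrm{EM}(J^\infty_\Sigma|_{\mathcal{C}}) \hookrightarrow \mathrm{EM}(J^\infty_\Sigma) \simeq \mathrm{PDE}_{/\Sigma}(\mathbf{H})$ supplied by theorem~\ref{PDEIsEM} and proposition~\ref{JetBundleEndoFunctorOnSuitableManifolds}. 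Your extra care in checking that the counits and coproducts of the two comonad structures agree is a reasonable elaboration of what the paper treats as already established.
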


\begin{example}
  \label{PDEDiffOp}
  There are several immediate sources of examples of morphisms in the PDE
  category.

  (a) Recall that the co-Kleisli category of cofree coalgebras over a
  comonad (definition~\ref{coKleisli}) embeds as a full subcategory of
  the Eilenberg-Moore category of coalgebras over a comonad
  (definition~\ref{EMOverComonad}) and, in
  particular, we have
  $\mathrm{Kl}(J^\oo_\Sigma) \hookrightarrow \mathrm{EM}(J^\oo_\Sigma)$.
  Putting together the notation from
  definition~\ref{InfiniteProlongation} and the equivalences
  $\mathrm{DiffOp}_{/\Sigma}(\mathbf{H}) \simeq \mathrm{Kl}(J^\oo_\Sigma)$
  (proposition~\ref{BundlesDiffOpsAndPDEs}) and
  $\mathrm{PDE}_{/\Sigma}(\mathbf{H}) \simeq \mathrm{EM}(J^\oo_\Sigma)$, we
  have the full embedding
  $$
  	\xymatrix{
			\mathrm{DiffOp}_{/\Sigma}(\mathbf{H})
				\ar@{^{(}->}[rr]^-{(J^\oo_\Sigma,p^\oo)} &&
			\mathrm{PDE}_{/\Sigma}(\mathbf{H})
		} \, ,
  $$
	where the notation means that objects are embedded as \emph{cofree
	PDEs}, $E \mapsto (J^\oo_\Sigma E \stackrel{\Delta_E}{\longrightarrow}
	J^\oo_\Sigma J^\oo_\Sigma E)$, and morphisms as infinitely prolonged
	differential operators, $(J^\oo_\Sigma E_1
	\stackrel{\phi}{\longrightarrow} E_2) \mapsto (J^\oo_\Sigma E_1
	\stackrel{p^\oo\phi}{\longrightarrow} J^\oo_\Sigma E_2)$. Since the
	embedding is full (remark~\ref{compositionalaKleisli}), it means that
	the only allowed morphisms between cofree PDEs are prolonged
	differential operators.

	(b) The trivial bundle $[\Sigma
	\stackrel{\mathrm{id}}{\longrightarrow} \Sigma] \in
	\mathbf{H}_{/\Sigma}$ embeds in $\mathrm{DiffOp}_{/\Sigma}(\mathbf{H})$
	and hence as a cofree object $\Sigma \in
	\mathrm{PDE}_{/\Sigma}(\mathbf{H})$. Recall that $\Sigma \simeq
	J^\oo_\Sigma \Sigma$ (example~\ref{JetBundleOfSigma}). Hence, every
	PDE morphism $\sigma^\oo\colon \Sigma \to J^\oo_\Sigma Y$ into a
	cofree object must be of the form $\sigma^\oo = p^\oo \sigma = j^\oo
	\sigma$, for some section $\sigma \colon \Sigma \to Y$.

	(c) The inclusion morphism of a formally integrable generalized PDE in
	a jet bundle, $e_Y \colon \mathcal{E} \hookrightarrow J^\oo_\Sigma Y$
	is another example of a morphism in $\mathrm{PDE}_{/\Sigma}(\mathbf{H})$.
	Keeping in mind the isomorphism $\mathcal{E}^\oo \simeq \mathcal{E}$,
	this can be seen directly from the top square of the commutative
	diagram in lemma~\ref{ProlongationIsUniversalFamily}, which
	precisely shows that $e_Y$ is a morphism of coalgebras.

	(d) For any full subcategory $\mathcal{C} \hookrightarrow
	\mathbf{H}_{/\Sigma}$, such as $\mathcal{C} =
	\mathrm{LocProMfd}_{\downarrow \Sigma}$, to which we can restrict the
	$J^\oo_\Sigma$ comonad, all of the following embeddings of categories
	are compatible:
  $$
    \begin{gathered}
  	\xymatrix{
			\mathrm{DiffOp}(\mathcal{C})
				\ar@{^{(}->}[d]
				\ar@{^{(}->}[rr]^{(J^\oo_\Sigma,p^\oo)}
			&&
			\mathrm{PDE}(\mathcal{C})
				\ar@{^{(}->}[d]
			\\
			\mathrm{DiffOp}_{/\Sigma}(\mathbf{H})
				\ar@{^{(}->}[rr]_{(J^\oo_\Sigma,p^\oo)}
			&&
			\mathrm{PDE}_{/\Sigma}(\mathbf{H})
    }
    \end{gathered}
    \, .
  $$
\end{example}

\begin{theorem}[Products and equalizers]
	\label{PDEHasKernels}
	Consider two formally integrable PDEs $\mathcal{E}, \mathcal{F} \in
	\mathrm{PDE}_{/\Sigma}(\mathbf{H})$, presented as coalgebras over
	$J^\oo_\Sigma$ with coaction morphisms $\rho_\mathcal{E} \colon
	\mathcal{E} \to J^\oo_\Sigma\mathcal{E}$ and $\rho_\mathcal{F} \colon
	\mathcal{E} \to J^\oo_\Sigma\mathcal{F}$.

	(a) The $\mathbf{H}_{/\Sigma}$-product%
		\footnote{Since $\Sigma \in \mathbf{H}_{/\Sigma}$ is the terminal
		object, the product in $\mathbf{H}_{/\Sigma}$ must be computed as
		the $\mathcal{E} \times_\Sigma \mathcal{F}$ product in $\mathbf{H}$.} %
	$\mathcal{E}\times \mathcal{F}$ has the coalgebra structure
	$$
		\rho_{\mathcal{E}\times \mathcal{F}}
			= (\rho_\mathcal{E}, \rho_\mathcal{F}) \colon
			\mathcal{E} \times \mathcal{F}
				\to J^\oo_\Sigma \mathcal{E} \times J^\oo_\Sigma \mathcal{F}
			\simeq J^\oo_\Sigma (\mathcal{E} \times \mathcal{F})
		\, ,
	$$
	which makes it into the $\mathrm{PDE}_{/\Sigma}(\mathbf{H})$-product of
	$\mathcal{E}$ and $\mathcal{F}$.

	(b)
	For any pair of coalgebra morphisms $f,g\colon \mathcal{E} \to
	\mathcal{F}$, the $\mathbf{H}_{/\Sigma}$-equalizer
	$$
		\xymatrix{
			\mathcal{K} \ar@{^{(}->}[r]^{e} &
			\mathcal{E} \ar@<+3pt>[r]^{f} \ar@<-3pt>[r]_{g} &
			\mathcal{F}
		}
	$$
	has a unique coalgebra structure, $\rho_\mathcal{K} \colon \mathcal{K}
	\to J^\oo_\Sigma \mathcal{K}$, that makes $e\colon \mathcal{K} \to
	\mathcal{E}$ into a morphism of coalgebras and moreover the
	$\mathrm{PDE}_{/\Sigma}(\mathbf{H})$-equalizer of $f$ and $g$.

	(c)
	When, $\mathcal{E} = J^\oo_\Sigma Y$, $\mathcal{F} = J^\oo_\Sigma Z$
	and $f = p^\oo f_0$, $g = p^\oo g_0$ for some differential operators
	$f,g\colon J^\oo_\Sigma Y \to Z$. The
	$\mathrm{PDE}_{/\Sigma}(\mathbf{H})$-equalizer $\mathcal{K}$ of $p^\oo
	f_0$ and $p^\oo g_0$ coincides with the prolongation of the
	$\mathbf{H}_{/\Sigma}$-equalizer
	$$
		\xymatrix{
			\mathcal{K}_0 \ar@{^{(}->}[r]^{e_0} &
			J^\oo_\Sigma Y \ar@<+3pt>[r]^{f_0} \ar@<-3pt>[r]_{g_0} &
			Z
		} \, .
	$$
	That is, $\mathcal{K} \simeq (\mathcal{K}_0)^\oo$
	(lemma~\ref{ProlongationIsUniversalFamily}), as
	formally integrable generalized PDEs.
\end{theorem}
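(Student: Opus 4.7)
The plan is to route everything through the equivalence $\mathrm{PDE}_{/\Sigma}(\mathbf{H}) \simeq \mathrm{EM}(J^\oo_\Sigma)$ of theorem~\ref{PDEIsEM} and then exploit the general categorical fact that the forgetful functor $\mathrm{EM}(J^\oo_\Sigma) \to \mathbf{H}_{/\Sigma}$ creates any limits that are preserved by $J^\oo_\Sigma$. Since $J^\oo_\Sigma$ is a right adjoint (definition~\ref{jetcomonad}), it preserves all limits, in particular binary products and equalizers. So for parts (a) and (b) the candidate coaction and universal properties are in each case forced.

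For part (a), one checks that the composite
$(\rho_\mathcal{E},\rho_\mathcal{F}) \colon \mathcal{E}\times\mathcal{F} \to J^\oo_\Sigma\mathcal{E} \times J^\oo_\Sigma\mathcal{F} \simeq J^\oo_\Sigma(\mathcal{E}\times\mathcal{F})$
is counital and coassociative by applying these two properties to each factor separately, and that the two $\mathbf{H}_{/\Sigma}$-projections $\mathcal{E}\times\mathcal{F}\to \mathcal{E},\mathcal{F}$ commute with the coactions by construction. The universal property in $\mathrm{PDE}_{/\Sigma}(\mathbf{H})$ then follows because a pair of coalgebra morphisms $\mathcal{G}\to \mathcal{E}, \mathcal{F}$ induces by the $\mathbf{H}_{/\Sigma}$-product a unique map $\mathcal{G}\to \mathcal{E}\times\mathcal{F}$, and the identity $J^\oo_\Sigma(-,-)\circ \rho_\mathcal{G} = (\rho_\mathcal{E},\rho_\mathcal{F})\circ (-,-)$ follows componentwise. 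Part (b) is the dual calculation: since $f,g$ are coalgebra morphisms, $J^\oo_\Sigma e$ equalizes $J^\oo_\Sigma f$ and $J^\oo_\Sigma g$ after precomposition with $\rho_\mathcal{E}\circ e$, so by the universal property of $J^\oo_\Sigma e\colon J^\oo_\Sigma \mathcal{K} \hookrightarrow J^\oo_\Sigma \mathcal{E}$ as an equalizer (preservation of limits), there is a unique $\rho_\mathcal{K}$ factoring $\rho_\mathcal{E}\circ e$. Counitality and coassociativity of $\rho_\mathcal{K}$ then follow from those of $\rho_\mathcal{E}$ after postcomposing with the monomorphism $J^\oo_\Sigma e$ (respectively with $J^\oo_\Sigma J^\oo_\Sigma e$); universality in $\mathrm{PDE}_{/\Sigma}(\mathbf{H})$ is obtained the same way from universality of the $\mathbf{H}_{/\Sigma}$-equalizer.

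For part (c), the plan is to compare two universal properties directly. Applying (b) to $\mathcal{E} = J^\oo_\Sigma Y$, $\mathcal{F} = J^\oo_\Sigma Z$, $f = p^\oo f_0 = J^\oo_\Sigma f_0 \circ \Delta_Y$ and $g = p^\oo g_0 = J^\oo_\Sigma g_0 \circ \Delta_Y$, the equalizer $\mathcal{K}\hookrightarrow J^\oo_\Sigma Y$ in $\mathbf{H}_{/\Sigma}$ sits in the diagram obtained by pulling back the $\mathbf{H}_{/\Sigma}$-equalizer
$$
\xymatrix{
J^\oo_\Sigma \mathcal{K}_0 \ar@{^{(}->}[rr]^-{J^\oo_\Sigma e_0} && J^\oo_\Sigma J^\oo_\Sigma Y \ar@<+3pt>[rr]^-{J^\oo_\Sigma f_0} \ar@<-3pt>[rr]_-{J^\oo_\Sigma g_0} && J^\oo_\Sigma Z
}
$$
(which is an equalizer because $J^\oo_\Sigma$ preserves equalizers) along $\Delta_Y\colon J^\oo_\Sigma Y \to J^\oo_\Sigma J^\oo_\Sigma Y$. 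But this pullback is by definition~\ref{FormallyIntegrable}(a) precisely the prolongation $(\mathcal{K}_0)^\oo$. Hence $\mathcal{K}\simeq (\mathcal{K}_0)^\oo$ as subobjects of $J^\oo_\Sigma Y$, and the coalgebra structure constructed in (b) agrees with the one that $(\mathcal{K}_0)^\oo$ carries as a formally integrable PDE (theorem~\ref{PDEIsEM} and proposition~\ref{InputForPDEisEM}(b)) because both are characterized by the same factorization property through $J^\oo_\Sigma e_0$.

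The main obstacle I anticipate is purely notational bookkeeping: keeping the coalgebra structures distinct from the underlying morphisms and systematically using that $J^\oo_\Sigma$ preserves the limits in question. In particular, to carry out (b) one must either check coassociativity by a somewhat lengthy diagram chase exploiting the monomorphism $J^\oo_\Sigma J^\oo_\Sigma e$, or invoke the general theorem that limits in $\mathrm{EM}$ are created by the forgetful functor when the comonad preserves them; the latter shortcuts the verification but requires citing the appropriate categorical result. Everything else is essentially forced by the adjunction $T^\oo_\Sigma \dashv J^\oo_\Sigma$ and the definitions already in place.
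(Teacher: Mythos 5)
Your proposal is correct, and for parts (a) and (b) it follows the paper's construction closely: the coaction $\rho_{\mathcal{K}}$ is obtained, exactly as in the paper, as the unique factorization of $\rho_{\mathcal{E}} \circ e$ through the monomorphism $J^\oo_\Sigma e$ (which is the equalizer of $J^\oo_\Sigma f$ and $J^\oo_\Sigma g$ because $J^\oo_\Sigma$ preserves limits), and universality is lifted from the underlying $\mathbf{H}_{/\Sigma}$-equalizer in the same way. Where you diverge is in how the coalgebra axioms for $\rho_{\mathcal{K}}$ are certified: you verify counitality and coassociativity by a direct chase, cancelling the monomorphisms $J^\oo_\Sigma e$ and $J^\oo_\Sigma J^\oo_\Sigma e$ (or, alternatively, by citing the general fact that the forgetful functor $\mathrm{EM}(J)\to\mathcal{C}$ creates those limits preserved by the comonad), whereas the paper avoids the chase entirely by pasting the factorization square onto the coaction pullback square of $\rho_{\mathcal{E}}$, concluding that $\rho_{\mathcal{E}}\circ e \colon \mathcal{K}\hookrightarrow J^\oo_\Sigma\mathcal{E}$ is a formally integrable PDE, and then importing the coalgebra structure from proposition~\ref{InputForPDEisEM}(b); both are valid, yours being more self-contained and the paper's reusing machinery already built. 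For part (c) your route is genuinely different and cleaner: you invoke the elementary fact that the equalizer of a pair precomposed with $h$ is the pullback along $h$ of the equalizer of the original pair, which identifies $\mathcal{K} = \mathrm{eq}(J^\oo_\Sigma f_0\circ\Delta_Y,\, J^\oo_\Sigma g_0\circ\Delta_Y)$ with $\Delta_Y^\ast(J^\oo_\Sigma\mathcal{K}_0) = (\mathcal{K}_0)^\oo$ in a single step, whereas the paper constructs two mutually inverse factorizations $\mathcal{K}\hookrightarrow(\mathcal{K}_0)^\oo$ and $(\mathcal{K}_0)^\oo\hookrightarrow\mathcal{K}$ and argues their composite is the identity. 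The only point worth making explicit in your write-up of (c) is that formal integrability of $(\mathcal{K}_0)^\oo$ and the agreement of the two coalgebra structures are consequences of the isomorphism with $\mathcal{K}$ together with the uniqueness clause of part (b), rather than facts known about $(\mathcal{K}_0)^\oo$ in advance (a prolongation is not formally integrable a priori); your closing remark gestures at this correctly, so there is no gap.
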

\begin{proof}
(a)
This part follows directly from the observation that $J^\oo_\Sigma$
preserves $\mathbf{H}_{/\Sigma}$-products.

(b)
Given that $e = \ker_{\mathbf{H}_{/\Sigma}} (f,g) \colon \mathcal{K} \to
\mathcal{E}$, it is a monomorphism. Since equalizers are preserved by
$J^\oo_\Sigma$ (from definition~\ref{jetcomonad} it is right adjoint and
hence preserves all limits), $J^\oo_\Sigma e\colon J^\oo_\Sigma
\mathcal{K} \to J^\oo_\Sigma \mathcal{E}$ is also a monomorphism. By the
properties of morphisms of coalgebras, the solid arrows in the following
diagram in $\mathbf{H}_{/\Sigma}$ commute:
$$
  \begin{gathered}
	\xymatrix{
		\mathcal{K}
			\ar@{^{(}-->}[d]_{\rho_\mathcal{K}}
			\ar@{^{(}->}[r]^{e} &
		\mathcal{E}
			\ar@{^{(}->}[d]_{\rho_\mathcal{E}}
			\ar@<+3pt>[r]^{f}
			\ar@<-3pt>[r]_{g} &
		\mathcal{F}
			\ar@{^{(}->}[d]_{\rho_\mathcal{F}}
		\\
		J^\oo_\Sigma \mathcal{K}
			\ar@{^{(}->}[r]^{J^\oo_\Sigma e} &
		J^\oo_\Sigma \mathcal{E}
			\ar@<+3pt>[r]^{J^\oo_\Sigma f}
			\ar@<-3pt>[r]_{J^\oo_\Sigma g} &
		J^\oo_\Sigma \mathcal{F}
	}
  \end{gathered}
	\, .
$$
And, because $J^\oo_\Sigma e$ is a monomorphism, there exists the dashed
arrow $\rho_{\mathcal{K}}$ that is unique in making the whole diagram
commute. It follows that the left square in
the diagram is a pullback square for $\mathcal{K}$. Pasting it together
with the coaction square for $\rho_\mathcal{E}$ (which is also a
pullback square, by Proposition \ref{PDEIsEM}(c)), we get the diagram
$$
  \begin{gathered}
	\xymatrix{
		\mathcal{K}
			\ar@{}[dr]|{\mbox{\tiny (pb)}}
			\ar[d]_{\rho_\mathcal{K}}
			\ar[r]^{e} &
		\mathcal{E}
			\ar@{}[dr]|{\mbox{\tiny (pb)}}
			\ar[d]_{\rho_\mathcal{E}}
			\ar[r]^{\mathcal{E}} &
		J^\oo_\Sigma \mathcal{E}
			\ar[d]^{\Delta_\mathcal{E}}
		\\
		J^\oo_\Sigma \mathcal{K}
			\ar[r]_{J^\oo_\Sigma e} &
		J^\oo_\Sigma \mathcal{E}
			\ar[r]_{J^\oo_\Sigma \rho_\mathcal{E}} &
		J^\oo_\Sigma J^\oo_\Sigma \mathcal{E}
	}
  \end{gathered}
  \, ,
$$
whose outer square is hence also a pullback square for $\mathcal{K}$, by the
pasting law (Proposition \ref{PastingLaw}). In
other words $\rho_\mathcal{E} \circ e \colon \mathcal{K} \to
J^\oo_\Sigma \mathcal{E}$ is a formally integrable PDE and hence, by
proposition~\ref{PDEIsEM}(b), the morphism
$\rho_\mathcal{K}\colon \mathcal{K} \to J^\oo_\Sigma \mathcal{K}$ endows
$\mathcal{K}$ it with the structure of a coalgebra. The commutativity of
the left square of the above diagram then means that $e\colon
\mathcal{K} \to \mathcal{E}$ is a morphism of coalgebras, with
$\rho_\mathcal{K}$ the unique coalgebra structure making it true.

It remains to show that any other other coalgebra morphism $h\colon
\mathcal{H} \to \mathcal{E}$ that equalizes $f$ and $g$ must uniquely
factor through $\mathcal{K}$. Since both $e$ and $J^\oo_\Sigma e$ are
equalizers in $\mathbf{H}_{/\Sigma}$, there exists dashed arrow $u$ that
is unique in making the top loop in the following diagram commute, with
the bottom loop also commuting since it is the $J^\oo_\Sigma$ image of
the top one:
$$
  \begin{gathered}
	\xymatrix{
		\mathcal{H}
			\ar[d]_{\rho_\mathcal{H}}
			\ar@{-->}[r]_{u}
			\ar@/^1pc/[rr]^{h}
			&
		\mathcal{K}
			\ar[d]_{\rho_\mathcal{K}}
			\ar[r]_{e}
			&
		\mathcal{E}
			\ar[d]_{\rho_\mathcal{E}}
		\\
		J^\oo_\Sigma \mathcal{H}
			\ar@{-->}[r]^{J^\oo_\Sigma u}
			\ar@/_1pc/[rr]_{J^\oo_\Sigma h}
			&
		J^\oo_\Sigma \mathcal{K}
			\ar[r]^{J^\oo_\Sigma e}
			&
		J^\oo_\Sigma \mathcal{E}
	}
  \end{gathered}
	\, .
$$
We already know that the right square and the outer square commute.
Hence, we can conclude that
$$
	J^\oo_\Sigma e\circ (\rho_\mathcal{K} \circ u)
	=
	J^\oo_\Sigma e \circ (J^\oo_\Sigma u \circ \rho_\mathcal{H})
	\, .
$$
But, since $J^\oo_\Sigma e$ is a monomorphism, it must be true that
$\rho_\mathcal{K} \circ u = J^\oo_\Sigma u \circ \rho_\mathcal{K}$ and
therefore that the left square commutes as well. This shows that
$u\colon \mathcal{H} \to \mathcal{K}$ is a morphism of coalgebras.
Therefore, we have shown that any $h$ as above must uniquely factor
through $e$ in $\mathrm{PDE}_{/\Sigma}(\mathbf{H})$, we can conclude that
$e = \ker_{\mathrm{PDE}_{/\Sigma}(\mathbf{H})} (f,g)$.

(c)
Consider the following diagram:
$$
  \begin{gathered}
	\xymatrix{
		\mathcal{K}
			\ar@<-4pt>@{^{(}-->}[dr]
			\ar@/^1pc/@{^{(}->}[drr]^e
			\ar@/_1pc/@{_{(}.>}[ddr]
		\\
		&
		(\mathcal{K}_0)^\oo
			\ar@<-3pt>@{_{(}-->}[ul]
			\ar[d]
			\ar[r]^{(e_0)^\oo}
			\ar@{}[dr]|{\mbox{\tiny{(pb)}}}
			&
		J^\oo_\Sigma Y
			\ar[d]|{\Delta_Y}
		% 	&
		% Z
		\\
		&
		J^\oo_\Sigma \mathcal{K}_0
			\ar@{^{(}->}[r]_{J^\oo_\Sigma e_0} &
		J^\oo_\Sigma J^\oo_\Sigma Y
			\ar@<+3pt>[r]^-{J^\oo_\Sigma f_0}
			\ar@<-3pt>[r]_-{J^\oo_\Sigma g_0}
			&
		J^\oo_\Sigma Z
	}
  \end{gathered}
	\, .
$$
The bottom line is an equalizer, since $J^\oo_\Sigma$ preserves
equalizers, while the inner square is the defining pullback square of
the prolongation $(\mathcal{K}_0)^\oo$. Since $e$ is the equalizer of
$p^\oo f_0 = J^\oo_\Sigma f_0 \circ \Delta_Y$ and $p^\oo g_0 =
J^\oo_\Sigma g_0 \circ \Delta_Y$, $\Delta_Y\circ e$ equalizes
$J^\oo_\Sigma f_0$ and $J^\oo_\Sigma g_0$. Hence $\Delta_Y \circ e$
uniquely factors through $J^\oo_\Sigma \mathcal{K}_0$, illustrated by
the dotted monomorphism above (recall that monomorphisms always factor
through monomorphisms), which hence commutes with all the solid arrows.
This commutativity and the pullback property of $(\mathcal{K}_0)^\oo$
implies the unique commuting factorization $\mathcal{K} \hookrightarrow
(\mathcal{K}_0)^\oo$, illustrated by one of the dashed morphisms above.
On the other hand, the commutativity of the pullback square implies that
$(e_0)^\oo$ equalizes $p^\oo f_0$ and $p^\oo g_0$, which implies the
unique commuting factorization $(\mathcal{K}_0)^\oo \hookrightarrow
\mathcal{K}$, illustrated by the other dashed morphism above.

Now, composing the two dashed morphisms as $(\mathcal{K}_0)^\oo
\hookrightarrow \mathcal{K} \hookrightarrow (\mathcal{K}_0)^\oo$ gives a
commuting factorization of $(\mathcal{K}_0)^\oo$ through itself, which
by uniqueness of commuting factorizations through a pullback must be
equal to the identity $(\mathcal{K}_0)^\oo
\stackrel{\mathrm{id}}{\longrightarrow} (\mathcal{K}_0)^\oo$. But if the
composition of two monomorphisms is the identity, they must have both
been isomorphisms to begin with. Thus, $e \simeq (e_0)^\oo$ and
$\mathcal{K}, (\mathcal{K}_0)^\oo \hookrightarrow J^\oo_\Sigma Y$ are
equivalent subobjects. Since each of them can be uniquely endowed with
the structure of a coalgebra such that the monomorphisms $e$ and
$(e_0)^\oo$ are morphisms of coalgebras, $\mathcal{K}$ and
$(\mathcal{K}_0)^\oo$ are also isomorphic as objects in
$\mathrm{PDE}_{/\Sigma}(\mathbf{H})$.
\end{proof}

If all pairwise products and equalizers exist, then all finite products
and equalizers exist. Moreover, it is well-known that any category with
a terminal object, finite products and finite equalizers has all finite
limits~\cite[cor.V.2.1]{MacLane}. Hence, from parts (a) and (b) of the
preceding theorem~\ref{PDEHasKernels} we have the
\begin{corollary}
	\label{PDEHasFiniteLimits}
	The category $\mathrm{PDE}_{/\Sigma}(\mathbf{H})$ has all finite limits
	and they can be computed as $\mathbf{H}_{/\Sigma}$ limits.
\end{corollary}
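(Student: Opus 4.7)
The plan is to apply the standard result (cited in the excerpt as \cite[cor.V.2.1]{MacLane}) that a category with a terminal object, all finite products, and all equalizers of pairs has all finite limits. Theorem~\ref{PDEHasKernels} already provides binary products and equalizers, together with the information that they are inherited from $\mathbf{H}_{/\Sigma}$, so what remains is to verify the terminal object and then assemble the ingredients.

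First I would identify the terminal object. In $\mathbf{H}_{/\Sigma}$ the terminal object is $[\Sigma \overset{\mathrm{id}}{\longrightarrow} \Sigma]$. By example~\ref{JetBundleOfSigma} there is a canonical isomorphism $J^\oo_\Sigma \Sigma \overset{\simeq}{\longrightarrow} \Sigma$, whose inverse $\rho_\Sigma \colon \Sigma \overset{\simeq}{\longrightarrow} J^\oo_\Sigma \Sigma$ automatically satisfies the counit and coaction axioms (both reduce to identities between isomorphisms induced from the uniqueness of morphisms into terminal objects, since $J^\oo_\Sigma$ preserves terminal objects as a right adjoint by definition~\ref{jetcomonad}). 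Hence $\Sigma$ carries a canonical coalgebra structure, and via the equivalence $\mathrm{PDE}_{/\Sigma}(\mathbf{H}) \simeq \mathrm{EM}(J^\oo_\Sigma)$ (theorem~\ref{PDEIsEM}) it is the terminal object of $\mathrm{PDE}_{/\Sigma}(\mathbf{H})$, agreeing with the terminal object of $\mathbf{H}_{/\Sigma}$.

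Next I would combine the pieces. By theorem~\ref{PDEHasKernels}(a), binary products exist in $\mathrm{PDE}_{/\Sigma}(\mathbf{H})$ and are computed as $\mathbf{H}_{/\Sigma}$-products; iterating yields all non-empty finite products. The empty product is the terminal object just identified, again inherited from $\mathbf{H}_{/\Sigma}$. By theorem~\ref{PDEHasKernels}(b), equalizers of parallel pairs exist and are computed in $\mathbf{H}_{/\Sigma}$. Applying \cite[cor.V.2.1]{MacLane}, all finite limits exist; moreover, since the standard construction of a general finite limit uses only finite products and a single equalizer, and each of these ingredients is inherited from $\mathbf{H}_{/\Sigma}$, the resulting limit cone is also computed as the corresponding $\mathbf{H}_{/\Sigma}$-limit equipped with its induced coalgebra structure.

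The only potential obstacle is verifying that the construction of a finite limit from products and equalizers goes through \emph{coherently} in $\mathrm{PDE}_{/\Sigma}(\mathbf{H})$ --- i.e.\ that the universal property it acquires in $\mathbf{H}_{/\Sigma}$ upgrades to the universal property in $\mathrm{PDE}_{/\Sigma}(\mathbf{H})$. This is straightforward: theorem~\ref{PDEHasKernels} not only constructs products and equalizers but shows their forgetful image in $\mathbf{H}_{/\Sigma}$ is a product or equalizer there, so the MacLane construction performed in $\mathbf{H}_{/\Sigma}$ lifts stage by stage to $\mathrm{PDE}_{/\Sigma}(\mathbf{H})$ with the unique coalgebra structure given by proposition~\ref{InputForPDEisEM}, which is the content of the second clause of the corollary.
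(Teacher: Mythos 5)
Your proposal is correct and follows essentially the same route as the paper, which likewise derives the corollary from parts (a) and (b) of theorem~\ref{PDEHasKernels} together with \cite[cor.V.2.1]{MacLane}. The only addition is your explicit verification that $\Sigma$ (with the coalgebra structure from example~\ref{JetBundleOfSigma}, cf.\ example~\ref{PDEDiffOp}(b)) is the terminal object of $\mathrm{PDE}_{/\Sigma}(\mathbf{H})$ --- a detail the paper leaves implicit but which is a harmless and correct supplement.
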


We conclude this section by underscoring one of the central advantages
of the equivalence $\mathrm{PDE}_{/\Sigma}(\mathbf{H}) \simeq \mathrm{EM}(J^\oo_\Sigma)$
that we have established in theorem \ref{PDEIsEM}. Namely with  this equivalence at hand, we
may precisely characterize solutions of a PDE using only its intrinsic
coalgebra structure, this is proposition~\ref{CoAlgebraMorphismsAsSolutions}
below.

\begin{proposition}
  \label{CoAlgebraMorphismsAsSolutions}
	Given a formally integrable generalized PDE $e_Y\colon \mathcal{E}
	\hookrightarrow J^\oo_\Sigma Y$ over $\Sigma$
	(definition~\ref{FormallyIntegrable}), the
	induced $J^\infty_\Sigma$-coalgebra $\rho_{\mathcal{E}} \colon
	\mathcal{E} \hookrightarrow J^\oo_\Sigma \mathcal{E}$
	(proposition~\ref{PDEIsEM}) has the property that
	coalgebra morphisms from $\Sigma$ (regarded as a jet coalgebra via
	example \ref{PDEDiffOp}(b)) into it are in natural bijection
	with solutions to the PDE according to definition \ref{generalpdes}:
  $$
    \mathrm{Sol}_\Sigma(\mathcal{E})
      \simeq
    \mathrm{Hom}_{\mathrm{EM}(J^\infty_\Sigma)}(\Sigma, \mathcal{E})
    \,.
  $$
\end{proposition}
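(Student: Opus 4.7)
The plan is to construct an explicit bijection between the two sets by exploiting the equivalence established in Theorem~\ref{PDEIsEM}, the pullback characterization of the induced coalgebra structure $\rho_{\mathcal{E}}$ from Definition~\ref{FormallyIntegrable}(a) and Proposition~\ref{InputForPDEisEM}(b), together with the identity $\Delta_Y \circ j^\infty \sigma = j^\infty j^\infty \sigma$ from Proposition~\ref{AbstractJetCoproductIsMichalsJetCoproduct}.

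First I would unwind the coalgebra morphism condition. Since the cofree coalgebra structure on $\Sigma$ is transported along the isomorphism $\phi \colon \Sigma \overset{\simeq}{\to} J^\infty_\Sigma \Sigma$ of Example~\ref{JetBundleOfSigma}, the coalgebra structure map is just $\phi$, so a morphism $s \colon \Sigma \to \mathcal{E}$ is a coalgebra morphism precisely when $\rho_{\mathcal{E}} \circ s = J^\infty_\Sigma s \circ \phi$, and by Definition~\ref{jetprolongation} the right hand side is exactly the jet prolongation $j^\infty s$. Thus the coalgebra condition reads
$$
\rho_{\mathcal{E}} \circ s \;=\; j^\infty s.
$$

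Next I would define the two candidate maps. From a solution $\sigma \colon \Sigma \to Y$ with factorization $s \colon \Sigma \to \mathcal{E}$ such that $e_Y \circ s = j^\infty \sigma$ (unique because $e_Y$ is a monomorphism), send it to $s$. Conversely, from a coalgebra morphism $s \colon \Sigma \to \mathcal{E}$, send it to the section $\sigma := \epsilon_Y \circ e_Y \circ s \colon \Sigma \to Y$, where $\epsilon_Y \colon J^\infty_\Sigma Y \to Y$ is the counit of the jet comonad.

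To verify that $s$ obtained from a solution is indeed a coalgebra morphism, I would post-compose the desired identity $\rho_{\mathcal{E}} \circ s = j^\infty s$ with the monomorphism $J^\infty_\Sigma e_Y$. On the left the pullback square defining $\rho_{\mathcal{E}}$ (Definition~\ref{FormallyIntegrable}(a), using formal integrability) gives $J^\infty_\Sigma e_Y \circ \rho_{\mathcal{E}} \circ s = \Delta_Y \circ e_Y \circ s = \Delta_Y \circ j^\infty \sigma$, which by Proposition~\ref{AbstractJetCoproductIsMichalsJetCoproduct} equals $j^\infty j^\infty \sigma$. On the right, using $j^\infty s = J^\infty_\Sigma s \circ \phi$, one computes $J^\infty_\Sigma e_Y \circ j^\infty s = j^\infty(e_Y \circ s) = j^\infty(j^\infty \sigma) = j^\infty j^\infty \sigma$. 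Since $J^\infty_\Sigma e_Y$ is a monomorphism (as $J^\infty_\Sigma$ preserves limits), the two sides agree.

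Finally, to check the two constructions are mutually inverse, the key computations are: for a coalgebra morphism $s$, applying $j^\infty$ to $\sigma = \epsilon_Y \circ e_Y \circ s$ gives $j^\infty \sigma = J^\infty_\Sigma \epsilon_Y \circ J^\infty_\Sigma e_Y \circ j^\infty s = J^\infty_\Sigma \epsilon_Y \circ J^\infty_\Sigma e_Y \circ \rho_{\mathcal{E}} \circ s = J^\infty_\Sigma \epsilon_Y \circ \Delta_Y \circ e_Y \circ s = e_Y \circ s$, by counitality of the comonad, which identifies $s$ as the factorization of $j^\infty \sigma$. In the other direction, starting from a solution $\sigma$, the composite $\epsilon_Y \circ e_Y \circ s = \epsilon_Y \circ j^\infty \sigma = \epsilon_Y \circ J^\infty_\Sigma \sigma \circ \phi = \sigma \circ \epsilon_\Sigma \circ \phi = \sigma$ recovers the original section, since $\epsilon_\Sigma \circ \phi = \mathrm{id}_\Sigma$ by uniqueness of the iso $\Sigma \simeq J^\infty_\Sigma \Sigma$. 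The only subtle bookkeeping, and hence the main place to be careful, is keeping the two instances of $j^\infty$ straight in the doubled prolongation identity, which is where Proposition~\ref{AbstractJetCoproductIsMichalsJetCoproduct} is doing the essential work.
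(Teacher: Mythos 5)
Your proof is correct, but it takes a genuinely different route from the paper's. The paper argues structurally in both directions: for ``coalgebra morphism $\Rightarrow$ solution'' it composes with $e_Y$ (which is itself a coalgebra morphism by example~\ref{PDEDiffOp}(c)) and then invokes example~\ref{PDEDiffOp}(b), i.e.\ the fullness of the co-Kleisli embedding, to conclude that any coalgebra morphism $\Sigma \to J^\oo_\Sigma Y$ into a cofree object is a jet prolongation $j^\oo\sigma_Y$; for the converse it observes that the square with $\mathrm{id}_\Sigma$, $\sigma^\oo_{\mathcal{E}}$, $e_Y$, $j^\oo\sigma_Y$ is a pullback in $\mathbf{H}_{/\Sigma}$ and then uses corollary~\ref{PDEHasFiniteLimits} to promote it to a pullback in $\mathrm{EM}(J^\oo_\Sigma)$, so that $\sigma^\oo_{\mathcal{E}}$ is automatically a coalgebra morphism. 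You instead unwind the coalgebra condition to the single equation $\rho_{\mathcal{E}} \circ s = j^\oo s$ and verify it directly: postcomposing with the monomorphism $J^\oo_\Sigma e_Y$ and using the commutativity $J^\oo_\Sigma e_Y \circ \rho_{\mathcal{E}} = \Delta_Y \circ e_Y$ of the defining pullback square together with $\Delta_Y \circ j^\oo\sigma = j^\oo j^\oo\sigma$; the inverse map $s \mapsto \epsilon_Y \circ e_Y \circ s$ is handled by counitality and the terminality of $\Sigma$ in the slice. Your argument is more elementary and self-contained --- it needs neither theorem~\ref{PDEHasKernels} nor the fullness statement of remark~\ref{compositionalaKleisli} --- at the cost of an explicit diagram chase; the paper's version is shorter because it reuses machinery already established, and it makes visible \emph{why} the statement holds (solutions are exactly the pullbacks of coalgebra inclusions along prolongations, computed in either category). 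Both are valid; all the identities you rely on ($J^\oo_\Sigma e_Y \circ \rho_{\mathcal{E}} = \Delta_Y \circ e_Y$ from formal integrability, $J^\oo_\Sigma \epsilon_Y \circ \Delta_Y = \mathrm{id}$, $\epsilon_\Sigma \circ \phi = \mathrm{id}_\Sigma$) check out against the definitions in the paper.
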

\begin{proof}
	Suppose that $\sigma^\oo_\mathcal{E} \colon \Sigma \to \mathcal{E}$ is
	a morphism in $\mathrm{PDE}_{/\Sigma}(\mathbf{H})$. By
	example~\ref{PDEDiffOp}(c), so is $e_Y\colon \mathcal{E} \to
	J^\oo_\Sigma Y$. Hence, so is $\sigma^\oo_Y = e_Y\circ
	\sigma^\oo_\mathcal{E} \colon \Sigma \to J^\oo_\Sigma Y$. But now, by
	example~\ref{PDEDiffOp}(b), any such morphism must be of the form
	$\sigma^\oo_Y = j^\oo \sigma_Y$ for some section $\sigma_Y \colon
	\Sigma \to Y$. Noticing now that $j^\oo\sigma_Y = e_Y \circ
	\sigma^\oo_\mathcal{E}$ immediately implies that $\sigma_Y \in
	\mathrm{Sol}_\Sigma(\mathcal{E})$.

	In the other direction, suppose that $\sigma_Y\colon \Sigma \to Y$ has
	a jet extension that factors through $\mathcal{E}$, that is $j^\oo
	\sigma_Y = e_Y \circ \sigma^\oo_\mathcal{E}$ for some
	$\sigma^\oo_\mathcal{E} \colon \Sigma \to \mathcal{E}$. Hence, it is
	easy to see that the following is a pullback diagram in
	$\mathbf{H}_{/\Sigma}$:
	$$
    \begin{gathered}
		\xymatrix{
			\Sigma \ar[r]^{\sigma^\oo_\mathcal{E}}="s" \ar[d]_{\mathrm{id}}
			&
			\mathcal{E} \ar[d]^{e_Y}
			\\
			\Sigma \ar[r]_{j^\oo\sigma_Y}="t"
			&
			J^\oo_\Sigma Y
      \ar@{}|{\mbox{\tiny (pb)}} "s"; "t"
		}
    \end{gathered}
		\, .
	$$
	By corollary~\ref{PDEHasFiniteLimits}, the same diagram is also a
	pullback diagram in $\mathrm{PDE}_{/\Sigma}(\mathbf{H}) \simeq
	\mathrm{EM}(J^\oo_\Sigma)$, meaning that $\sigma^\oo_\mathcal{E} \in
	\mathrm{Hom}_{\mathrm{EM}(J^\infty_\Sigma)}(\Sigma, \mathcal{E})$.
\end{proof}

%%%%%%%%%%%%%%%%%%%%%%%%%%%%%%%%%%%%%%%%%%%%%%%%%%%%%%%%%%%%%%
\subsection{The topos of synthetic PDEs}
\label{TheToposOfSyntheticPDEs}
%%%%%%%%%%%%%%%%%%%%%%%%%%%%%%%%%%%%%%%%%%%%%%%%%%%%%%%%%%%%%%

We have seen above in section \ref{PDEs} (theorem \ref{PDEIsEM}) that for $\Sigma$ a $V$-manifold, the Eilenberg-Moore
category (definition \ref{EMOverComonad})
of coalgebras for the jet comonad (definition \ref{jetcomonad}), acting on
any object of a differentially cohesive topos $\mathbf{H}$ (definition \ref{DifferentialCohesion})
is equivalently the category of formally integrable generalized PDEs,
generalized in the sense that both their
underlying bundles as well as their solution loci may again be arbitrary objects of $\mathbf{H}$:
$$
  \mathrm{PDE}_{/\Sigma}(\mathbf{H}) \simeq \mathrm{EM}(J^\infty_\Sigma)
  \,.
$$
It is a general fact that for $J$ a comonad, such that
\begin{enumerate}
  \item $J$ acts on an elementary topos,
  \item $J$ is a right adjoint,
\end{enumerate}
the Eilenberg-Moore category $\mathrm{EM}(J)$ is itself an
elementary topos \cite[V8, cor. 7]{MacLaneMoerdijk92}. Here we show
(theorem \ref{PDESite} below) that at least for $\mathbf{H} = \mathrm{FormalSmoothSet}$
then  $\mathrm{PDE}_\Sigma(\mathbf{H})$
is in fact a category of sheaves (definition \ref{sheaf}) over the category of ``ordinary'' PDEs, i.e.,\ of PDEs
in the category of (infinitesimally thickened) fibered manifolds.

This result shows that the generalized formally integrable PDEs of definition \ref{FormallyIntegrable}
are related to ordinary PDEs in the same way that the (formal) smooth sets of definition \ref{SmoothManifolds}
and definition \ref{FormalSmoothSets} are related to (formal) smooth manifolds.
In particular this implies that every PDE in the generalized sense is but a colimit of PDEs in the
ordinary sense. In a followup we will show that this may be used in order to identify
a synthetic formulation for the variational bicomplex~\cite{Anderson-book} construction
(which lies at the foundation of variational calculus and Lagrangian
field theory) inside $\mathrm{PDE}_{/\Sigma}(\mathbf{H})$.

\medskip

Let throughout this section $\mathbf{H} := \mathrm{FormalSmoothSet} := \mathrm{Sh}(\mathrm{FormalCartSp})$ be the
Cahiers topos (definition \ref{FormalSmoothSets}) with $\Im \colon
\mathbf{H} \to \mathbf{H}$ denoting the infinitesimal shape monad from
proposition \ref{ReImEtAdjunctionOnFormalSmoothSets}, and the natural
transformation $\eta \colon \mathrm{id} \to \Im$ its unit.

\begin{theorem}
  \label{generalPDEsIsSliceOverInfinitesimalShape}
  Let $\Sigma \in \mathrm{FormalSmoothSet}$ be any object.
  Then the functor
  $$
    (\eta_\Sigma)^\ast
      \;\colon\;
    \mathrm{FormalSmoothSet}_{/\Im \Sigma}
      \stackrel{\simeq}{\longrightarrow}
    \mathrm{EM}(J^\infty_\Sigma)
      \simeq
    \mathrm{PDE}_{/\Sigma}(\mathrm{FormalSmoothSet})
  $$
	(from the slice over $\Im\Sigma$ to the Eilenberg-Moore category of
	theorem~\ref{PDEIsEM}) which equips the pullback $(\eta_\Sigma)^\ast E$
	with the coalgebra structure given by
  $$
    \xymatrix{
      (\eta_\Sigma)^\ast E
        \ar[rr]^-{(\eta_\Sigma)(\epsilon_E)}
        &&
      (\eta_\Sigma)^\ast (\eta_\Sigma)_\ast (\eta_\Sigma)^\ast E
      =
      J^\infty_\Sigma \left((\eta_\Sigma)^* E\right)
    }
  $$
	(where $\epsilon \colon \mathrm{id} \to (\eta_\Sigma)_\ast
	(\eta_\Sigma)^\ast$ is the unit of the adjunction $(\eta_\Sigma)^\ast
	\dashv (\eta_\Sigma)_\ast$) is an equivalence of categories.
\end{theorem}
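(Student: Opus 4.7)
The plan is to exhibit the displayed functor as the comparison functor of the base-change adjunction $(\eta_\Sigma)^\ast \dashv (\eta_\Sigma)_\ast$, and then to show that this adjunction is comonadic. By Definition \ref{jetcomonad} the comonad $J^\infty_\Sigma = (\eta_\Sigma)^\ast(\eta_\Sigma)_\ast$ on $\mathbf{H}_{/\Sigma}$ is precisely the one induced by this adjunction, and the standard adjunction-theoretic construction produces a comparison functor $K \colon \mathbf{H}_{/\Im\Sigma} \to \mathrm{EM}(J^\infty_\Sigma)$ sending $[E \to \Im \Sigma]$ to the pullback $(\eta_\Sigma)^\ast E$ equipped with the coalgebra structure $(\eta_\Sigma)^\ast(\epsilon_E)$, where $\epsilon$ is the unit of $(\eta_\Sigma)^\ast \dashv (\eta_\Sigma)_\ast$. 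Unwinding the notation shows that $K$ agrees with the functor displayed in the theorem, so the claim reduces to comonadicity of the adjunction $(\eta_\Sigma)^\ast \dashv (\eta_\Sigma)_\ast$.

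To establish comonadicity I would invoke the comonadic form of Beck's theorem: the left adjoint $(\eta_\Sigma)^\ast$ induces an equivalence with the category of coalgebras provided it is conservative and creates equalizers of $(\eta_\Sigma)^\ast$-split pairs. Since the ambient category is a Grothendieck topos, pullback along $\eta_\Sigma$ has both a left adjoint $(\eta_\Sigma)_!$ (postcomposition) and a right adjoint $(\eta_\Sigma)_\ast$ (local cartesian closedness), and hence preserves and reflects all limits and colimits; in particular it creates any split equalizers that exist. The remaining condition, conservativity of $(\eta_\Sigma)^\ast$, is equivalent in a topos to $\eta_\Sigma$ being an epimorphism, which is exactly the content of Proposition \ref{UnitOfInfinitesimalShapeIsEpiInCahiersTopos}. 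Put otherwise, every epimorphism in a Grothendieck topos is effective (the coequalizer of its kernel pair), and pullback along an effective epimorphism is of effective descent---this is the classical descent theorem, which is the specialization of Beck's theorem to base change in a topos.

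The only real obstacle is the bookkeeping needed to confirm that the coalgebra structure written in the theorem agrees on the nose with the one produced by the abstract comparison functor, so that it automatically satisfies the coalgebra axioms with respect to the counit $\epsilon_E$ and the comultiplication $\Delta_E$ of $J^\infty_\Sigma$ from Definition \ref{jetcomonad}. This is a routine diagram chase: one uses the triangle identities for $(\eta_\Sigma)^\ast \dashv (\eta_\Sigma)_\ast$ to express the counit and comultiplication of the induced comonad entirely in terms of the unit $\epsilon$, and then reads off that $(\eta_\Sigma)^\ast(\epsilon_E)$ is indeed the coalgebra structure assigned by $K$. Once this identification is in place and comonadicity is established, the functor $(\eta_\Sigma)^\ast$ of the statement is just $K$, hence an equivalence; composing with the equivalence of Theorem \ref{PDEIsEM} gives the second identification with $\mathrm{PDE}_{/\Sigma}(\mathrm{FormalSmoothSet})$.
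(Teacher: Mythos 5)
Your proposal is correct and follows essentially the same route as the paper: the paper's proof simply invokes its Proposition \ref{comonadicdescent} (comonadic descent for base change along an epimorphism, itself proved via Beck's theorem using conservativity of $(\eta_\Sigma)^\ast$ and its preservation of limits as a right adjoint of $(\eta_\Sigma)_!$) together with Proposition \ref{UnitOfInfinitesimalShapeIsEpiInCahiersTopos}, which is exactly the argument you spell out inline.
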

\begin{proof}
	By proposition \ref{comonadicdescent} it is sufficient that
	$(\eta_\Sigma) \colon \Sigma \longrightarrow \Im \Sigma$ is an
	epimorphism. This is the case by proposition
	\ref{UnitOfInfinitesimalShapeIsEpiInCahiersTopos}.
\end{proof}
\begin{proposition}
  \label{SliceSiteForHPDE}
	For $\Sigma \in \mathrm{FormalSmoothSet}$ any object, there is an equivalence of categories
  $$
    \mathrm{PDE}_{/\Sigma}(\mathrm{FormalSmoothSet})
      \simeq
    \mathrm{Sh}(\mathrm{FormalCartSp}/\Im\Sigma)
  $$
	which identifies the category of generalized PDEs
	(definition~\ref{FormallyIntegrable}(d)) with the
	category of sheaves (definition \ref{sheaf}) over the slice site
	(proposition \ref{SliceSite}) of that of formal Cartesian spaces
	(definition \ref{FormalSmoothSets}) over $\Sigma$.
\end{proposition}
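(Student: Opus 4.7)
The plan is to obtain the desired equivalence by composing two equivalences that have already been set up earlier in the paper. The statement factors cleanly through the intermediate slice topos $\mathrm{FormalSmoothSet}_{/\Im\Sigma}$.

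First, I would invoke Theorem \ref{generalPDEsIsSliceOverInfinitesimalShape}, which (via the pullback functor $(\eta_\Sigma)^\ast$ equipped with its canonical coalgebra structure) gives the equivalence
$$
\mathrm{PDE}_{/\Sigma}(\mathrm{FormalSmoothSet})
 \;\simeq\;
\mathrm{FormalSmoothSet}_{/\Im\Sigma}.
$$
This step is already done; it rests on the fact that $\eta_\Sigma \colon \Sigma \to \Im\Sigma$ is epimorphic in $\mathrm{FormalSmoothSet}$ (proposition \ref{UnitOfInfinitesimalShapeIsEpiInCahiersTopos}), which triggers the comonadic descent criterion.

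Second, I would apply the general slice-site equivalence (proposition \ref{SliceSite}), which states that for a site $\mathcal{C}$ and an object $X \in \mathrm{Sh}(\mathcal{C})$, the slice topos $\mathrm{Sh}(\mathcal{C})_{/X}$ is equivalent to sheaves on the slice site $\mathcal{C}/X$ (equipped with the induced Grothendieck topology, as fixed in the notation list as $\mathcal{C}/X$). Specializing this to $\mathcal{C} = \mathrm{FormalCartSp}$ and $X = \Im\Sigma$, and recalling that $\mathrm{FormalSmoothSet} = \mathrm{Sh}(\mathrm{FormalCartSp})$ by definition \ref{FormalSmoothSets}, yields
$$
\mathrm{FormalSmoothSet}_{/\Im\Sigma}
 \;\simeq\;
\mathrm{Sh}(\mathrm{FormalCartSp}/\Im\Sigma).
$$

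Composing the two equivalences gives the claim. There is essentially no obstacle here beyond correctly citing the two prior results; the only point that merits a brief word is that the composite equivalence is indeed natural and sends the coalgebra structure on $(\eta_\Sigma)^\ast E$ described in theorem \ref{generalPDEsIsSliceOverInfinitesimalShape} to the sheaf on $\mathrm{FormalCartSp}/\Im\Sigma$ obtained by restricting along the Yoneda embedding, but this is automatic from the functoriality of the two equivalences being composed.
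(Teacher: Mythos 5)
Your proposal is correct and coincides with the paper's own proof, which likewise just composes the equivalence of theorem \ref{generalPDEsIsSliceOverInfinitesimalShape} with the slice-site equivalence of proposition \ref{SliceSite}. Nothing further is needed.
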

\begin{proof}
  This follows via theorem \ref{generalPDEsIsSliceOverInfinitesimalShape}
  by proposition \ref{SliceSite}.
\end{proof}
By theorem \ref{generalPDEsIsSliceOverInfinitesimalShape} we may
identify the objects in the slice site $\mathrm{FormalCartSp}/\Im\Sigma$ of proposition
\ref{SliceSiteForHPDE} as generalized PDEs. But in fact these turn out
to be just very mildly generalized, in that their solution locus is a
locally pro-manifold that may admit a formal infinitesimal thickening:
\begin{proposition}
  \label{SliceOfLocProMfdLandsInFormallyThickenedPDEs}
	For $\Sigma \in \mathrm{SmthMfd} \hookrightarrow \mathbf{H} := \mathrm{FormalSmoothSet}$ a smooth
	manifold, the equivalence of theorem
	\ref{generalPDEsIsSliceOverInfinitesimalShape} restricted to the
	objects in the image under the Yoneda embedding of the slice site of
	proposition \ref{SliceSiteForHPDE} factors through jet coalgebra
	structures (definition \ref{EMOverComonad}) in formal locally
	pro-manifolds (definition \ref{FormalLocalProManifolds}). That is, there
	exists a dashed arrow that makes the following diagram commute:
  $$
    \begin{gathered}
    \xymatrix{
      \mathrm{FormalCartSp}_{/\Im\Sigma}
      \ar@{^{(}-->}[rr]
      \ar[d]_y
      &&
      \mathrm{EM}(J^\infty_\Sigma|_{\mathrm{FormalLocProMfd}_{\downarrow \Sigma}})
      \ar@{^{(}->}[d]
      \ar[r]^\simeq
      &
      \mathrm{PDE}_{\downarrow\Sigma}(\mathrm{FormalLocProMfd})
      \ar@{^{(}->}[d]
      \\
      \mathbf{H}_{/\Im \Sigma}
        \ar[rr]_-\simeq
        &&
      \mathrm{EM}(J^\infty_\Sigma)
      \ar[r]_\simeq
      &
      \mathrm{PDE}_{/\Sigma}(\mathbf{H})
    }
    \end{gathered}
    \, .
  $$
\end{proposition}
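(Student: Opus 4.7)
The plan is as follows. Since $\mathrm{FormalLocProMfd}_{\downarrow\Sigma} \hookrightarrow \mathbf{H}_{/\Sigma}$ is a full subcategory and Theorem \ref{PDEIsEM} provides the restricted equivalence $\mathrm{PDE}_{\downarrow\Sigma}(\mathrm{FormalLocProMfd}) \simeq \mathrm{EM}(J^\infty_\Sigma|_{\mathrm{FormalLocProMfd}_{\downarrow\Sigma}})$ and its compatibility with the full equivalence on the right of the diagram, it suffices to show that for every object $[g\colon \mathbb{R}^n \times \mathbb{D} \to \Im\Sigma]$ of the slice site $\mathrm{FormalCartSp}/\Im\Sigma$, the image $P := (\eta_\Sigma)^\ast(\mathbb{R}^n \times \mathbb{D}) \to \Sigma$ lies in $\mathrm{FormalLocProMfd}_{\downarrow\Sigma}$; morphism data then restricts automatically by full faithfulness.

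First I would compute $P$ explicitly. By the $(\Re \dashv \Im)$-adjunction (Definition \ref{ReImEtAdjunctionOnFormalSmoothSets}) together with Proposition \ref{ReductionOnFormalManifolds}, the morphism $g$ corresponds uniquely to a smooth map $f \colon \Re(\mathbb{R}^n\times\mathbb{D}) \simeq \mathbb{R}^n \to \Sigma$, and explicitly $g = \eta_\Sigma \circ f \circ \mathrm{pr}_{\mathbb{R}^n}$. Applying the pasting law for pullbacks (Proposition \ref{PastingLaw}) twice — once to the factor $\eta_\Sigma \circ f$, invoking the defining pullback for $T^\infty\Sigma$ (Definition \ref{InfinitesimalDiskBundle}), and once to the projection $\mathrm{pr}_{\mathbb{R}^n}$, for which the pullback is the trivial product with $\mathbb{D}$ — identifies
$$
  P \;\simeq\; f^\ast T^\infty \Sigma \times \mathbb{D}
  \;=\; T^\infty_\Sigma \mathbb{R}^n \times \mathbb{D}\,,
$$
where $\mathbb{R}^n$ is viewed as a bundle over $\Sigma$ via $f$.

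The key step is then to verify that $P$ is a formal locally pro-manifold (Definition \ref{FormalLocalProManifolds}) fibered over $\Sigma$. Since $\Sigma$ is a smooth manifold, Example \ref{SmoothManifoldsAreVManifolds} makes it an $\mathbb{R}^{\dim\Sigma}$-manifold, so Proposition \ref{FormalDiskBundleLocallyTrivializableOverVManifold} provides, for any $V$-atlas $U \twoheadrightarrow \Sigma$, a Cartesian square exhibiting $T^\infty\Sigma$ as locally trivial with fiber $\mathbb{D}_e$ via formally étale structure maps. Pulling such an atlas back along $f$ (using Proposition \ref{PullbackAlongFormallyEtalePreservesFormalDiskBundles} to conclude that formal étaleness is preserved) gives an open cover $\{V_j\hookrightarrow\mathbb{R}^n\}$ on which $f^\ast T^\infty\Sigma\times\mathbb{D}$ is locally of the product form $V_j\times\mathbb{D}_e\times\mathbb{D}$, connected to $P$ by formally étale maps. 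This is precisely the data required by Definition \ref{FormalLocalProManifolds}. The reduction $\Re P \simeq \mathbb{R}^n$ follows because the unit $\eta$ is epi (Proposition \ref{UnitOfInfinitesimalShapeIsEpiInCahiersTopos}) so $\Re$ preserves the open cover and sends each $\mathbb{D}_e \times \mathbb{D}$-fiber to the point by Proposition \ref{ReductionOnFormalManifolds}. In these local charts the slice map $P \to \Sigma$ factors as $V_j \times \mathbb{D}_e \times \mathbb{D} \twoheadrightarrow V_j \xrightarrow{f|_{V_j}} \Sigma$, which after a further refinement making $f|_{V_j}$ a local diffeomorphism onto an open of $\Sigma$ exhibits $P \to \Sigma$ as a surjective submersion in the sense of Definition \ref{SubmersionBetweenLocallyProManifolds}.

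The main obstacle will be the bookkeeping needed to verify Definition \ref{FormalLocalProManifolds} rigorously: one must check that the formally étale local trivializations of $T^\infty\Sigma$ survive pullback along the arbitrary smooth map $f$, and that the resulting cover of $P$ globalizes correctly in spite of the fact that $\Re$ does not preserve pullbacks in general. Both are handled by combining Proposition \ref{PullbackAlongFormallyEtalePreservesFormalDiskBundles}, the stability of formally étale morphisms under pullback, and the left-adjointness of $\Re$ (so that it preserves the open-cover colimit). This is the step where the $V$-manifold hypothesis on $\Sigma$ is doing its essential work; everything else follows formally from the equivalence of Theorem \ref{PDEIsEM} restricted to the subcategory $\mathrm{FormalLocProMfd}_{\downarrow\Sigma}$.
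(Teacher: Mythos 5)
Your proposal follows essentially the same route as the paper's proof: reduce to objects by fullness of the subcategory, factor the classifying map $g\colon \mathbb{R}^n\times\mathbb{D}\to\Im\Sigma$ as $\eta_\Sigma\circ f\circ\mathrm{pr}_{\mathbb{R}^n}$ using the description of $\Im$ on representables, identify the pullback along $\eta_\Sigma$ with the infinitesimal disk bundle $T^\infty_\Sigma(\mathbb{R}^n\times\mathbb{D})$ via the pasting law, and use the local trivialization of $T^\infty\Sigma$ over the $V$-manifold $\Sigma$ to see that the result is locally a product of an open piece of $\mathbb{R}^n$ with $\mathbb{D}_e\times\mathbb{D}$, hence a formal locally pro-manifold. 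Two caveats on the details. First, proposition \ref{PullbackAlongFormallyEtalePreservesFormalDiskBundles} does not apply to the arbitrary smooth map $f\colon\mathbb{R}^n\to\Sigma$, which need not be formally étale; what you actually need is only that local triviality of $T^\infty\Sigma$ survives an arbitrary pullback (immediate), together with stability of formally étale morphisms under pullback (true because $\Im$ preserves pullbacks, but not the content of the cited proposition). Second, your verification of the surjective-submersion condition is flawed: the structure map $P\to\Sigma$ is $\pi\circ\mathrm{ev}^\ast p$, given on local generalized elements by $(x,d)\mapsto f(x)+d$, and it does not factor through $V_j\xrightarrow{f|_{V_j}}\Sigma$; moreover no refinement of the cover can turn an arbitrary smooth $f$ (for instance a constant map, corresponding to $P\simeq\mathbb{D}_x\times\mathbb{D}$) into a local diffeomorphism. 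The paper's own proof does not verify any submersion condition at this point --- it only checks membership in $\mathrm{FormalLocProMfd}$ per definition \ref{FormalLocalProManifolds} --- so this last step of yours should be dropped rather than repaired; as written it asserts something false about general $f$.
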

\begin{proof}
  By theorem~\ref{generalPDEsIsSliceOverInfinitesimalShape} the bottom left equivalence in the
  above diagram sends an object
  $$
    X = \mathbb{R}^k \times \mathbb{D} \in \mathrm{FormalCartSp}
  $$
  in the slice over $\Im \Sigma$
  $$
    [X \stackrel{f}{\longrightarrow} \Im \Sigma]
    \in \mathrm{FormalCartSp}_{/\Im \Sigma}
  $$
  to its pullback along $\Sigma \overset{\eta_\Sigma}{\to} \Im \Sigma$ into the slice over $\Sigma$
  $$
    [\eta_\Sigma^* X \longrightarrow \Sigma]
    \in
    \mathrm{FormalSmoothSet}_{/\Sigma}
  $$
  and equipped there with some coalgebra structure. This coalgebra structure is
  of no concern for the proof, all we need to check is that this pullback actually lands in the
  image of the inclusion
  $$
    \mathrm{FormalLocProMfd}_{\downarrow \Sigma} \hookrightarrow \mathrm{FormalSmoothSet}_{/\Sigma}
    \,.
  $$
  To this end, we  claim that:

  {\bf Claim:} Every morphism of the form
  $$
    f \colon X \longrightarrow \Im \Sigma
  $$
	for $X = \mathbb{R}^k \times \mathbb{D} \in \mathrm{FormalCartSp}$
	factors as
  $$
    X \overset{f'}{\longrightarrow} \Sigma \stackrel{\eta_\Sigma}{\longrightarrow} \Im \Sigma
    \,.
  $$

  With this claim the desired statement follows:
  By the pasting law (proposition \ref{PastingLaw}) the pullback of any
  $f$ that is so factored is given by
  the pasting of two pullbacks as in the following diagram
  $$
    \begin{gathered}
    \xymatrix{
      T^\infty_\Sigma X
        \ar[r]
        \ar[d]
        \ar@{}[dr]|{\mbox{\tiny (pb)}}
        &
      X
      \ar[d]
      \ar@/^1pc/[dd]^f
      \\
      T^\infty \Sigma
      \ar[d]_{\pi}
      \ar[r]|{\mathrm{ev}}
      \ar@{}[dr]|{\mbox{\tiny (pb)}}
      &
      \Sigma
      \ar[d]|{\eta_\Sigma}
      \\
      \Sigma \ar[r]_{\eta_\Sigma} & \Im \Sigma
    }
    \end{gathered}
    \, .
  $$
  This identifies the pullback with the infinitesimal disk bundle from definition
  \ref{InfinitesimalDiskBundle}
  $$
    \eta_{\Sigma}^\ast X \simeq T_\Sigma^\infty X
  $$
  as shown. Moreover, by example \ref{SmoothManifoldsAreVManifolds} and proposition \ref{FormalDiskBundleLocallyTrivializableOverVManifold},
	the object $T^\infty \Sigma$ is locally a Cartesian product of a chart
    $$
      \mathbb{R}^n \hookrightarrow \Sigma
    $$
    with the formal neighbourhood $\mathbb{D}^n$ of any point in $\mathbb{R}^n$, and according to corollary \ref{ComponentVersionOfDiskBundleMonadProduct}
    the morphism $\mathrm{ev}$ is given on generalized elements
   (definition \ref{ComponentNotationForInfinitesimalDiskBundles})
	by $(s,d) \mapsto s-d$. Therefore $T^\infty_\Sigma X$ is locally
    the pullback in the following diagram:
    $$
      \xymatrix{
        {X|_{\mathbb{R}^n}} \times \mathbb{D}^n
        \ar[rr]^{(x,d) \mapsto x}
        \ar[d]_{(x,d) \mapsto  f'(x)+d}
        \ar@{}[drr]|{\mbox{\tiny (pb)}}
        &&
        X|_{\mathbb{R}^n}
        \ar[d]^{x \mapsto  f'(x)}
        \\
        \mathbb{R}^n \times \mathbb{D}^n
        \ar[rr]_{(s,d) \mapsto s-d}
        &&
        \mathbb{R}^n
      }
    $$
    Now since ${X|_{\mathbb{R}^n}} \times \mathbb{D}^n$ is the Cartesian product of
    a formal manifold with an infinitesimally thickened point, it is itself a formal manifold
    and so in particular a formal locally pro-manifold
    according to definition~\ref{FormalLocalProManifolds}. This is the desired conclusion.

	It remains to prove the above claim that every $f$ factors through $f'$ in this
	way. To that end, recall the adjoint quadruple
    $$
      i_! \dashv i^\ast \dashv i_\ast \dashv i^!
        \;:\;
      \mathrm{SmoothSet} \leftrightarrow \mathrm{FormalSmoothSet}
    $$
	from proposition~\ref{diffcohesionofFormalSmoothSet} and the
	identification $\Im = i_\ast i^\ast$ from definition
	\ref{ReImEtAdjunctionOnFormalSmoothSets}. Using this and forming adjuncts,
    (definition \ref{AdjointFunctor}) the morphisms in $\mathrm{FormalSmoothSet}$ of the form $f : X \to
	\Im \Sigma$ are in natural bijection with morphisms in $\mathrm{SmoothSet}$ of the form
  $$
    \phi : i^\ast X \longrightarrow i^\ast \Sigma
    \,.
  $$
	Here $i^\ast X = \mathbb{R}^k$ is just the ordinary Cartesian space
	underlying the formal Cartesian space $X = \mathbb{R}^k \times
	\mathbb{D}$, and $i^\ast \Sigma$ is just the smooth manifold $\Sigma$ itself, regarded in
	$\mathrm{SmoothSet}$. Now if we denote by $\tilde f$ the composite
   $$
     \tilde f : X \to \Re X = i_! i^\ast X \overset{i_! \phi}{\longrightarrow} i_! i^\ast \Sigma = \Sigma
   $$
   then
   $$
     \phi = i^\ast \tilde f
   $$
   by the fact that $i_!$ is fully faithful, and using proposition \ref{AdjointsOfFullEmbeddings}.

  Now by the formula for adjuncts (proposition \ref{AdjunctsInTermsOfUnitAndCounit})
  the fact that $\phi$ is, by definition, the $(i^\ast \dashv i_\ast)$-adjunct of $f$ means that the
  left triangle in the following diagram commutes
    $$
    \begin{gathered}
    \xymatrix{
      X \ar[dr]|f \ar[r]^{\tilde f} \ar[d]_{\eta_X}
        &
      \Sigma \ar[d]^{\eta_\Sigma}
      \\
      i_\ast i^\ast X
      \ar[r]_{{i_\ast \phi} \atop {= \atop {i_\ast i^\ast \tilde f}} }
      &
      i_\ast i^\ast \Sigma
    }
    \end{gathered}
    \,.
  $$
  But the equality which we just established, shown on the bottom edge of the diagram, now exhibits
  this triangle as being one half of the naturality square of the unit $\eta : \mathrm{id} \to i_\ast i^\ast$
  on $\tilde f$. The other half is the right triangle shown in the above diagram. Therefore this
  right triangle also commutes, and this is the factorization to be shown.
\end{proof}
This now allows us to use the category of ordinary PDEs as a site that
presents the category of generalized PDEs:
\begin{theorem}
  \label{PDESite}
	For $\Sigma \in \mathrm{SmthMfd} \hookrightarrow \mathrm{FormalSmoothSet}$
    a smooth manifold, then there
	is a subcanonical coverage (definition \ref{subcanonical}) on the category $\mathrm{PDE}_{\downarrow
	\Sigma}(\mathrm{FormalLocProMfd})$ of formally integrable PDEs in
	formal manifolds (definition \ref{FormalLocalProManifolds}), making it
	a site (definition \ref{site}), such that there is an equivalence of
	categories
  $$
    \mathrm{PDE}_{/\Sigma}(\mathrm{FormalSmoothSet})
      \simeq
    \mathrm{Sh}(\mathrm{PDE}_{/\Sigma}(\mathrm{FormalLocProMfd}))
  $$
	which identifies the category of generalized formally integrable PDEs $\mathrm{PDE}_{/\Sigma}(\mathbf{H}) \simeq
	\mathrm{EM}(J^\infty_\Sigma)$ from theorem~\ref{PDEIsEM} with the
	category of sheaves (definition \ref{sheaf}) over the category of PDEs in the ordinary sense.
\end{theorem}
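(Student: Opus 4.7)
The plan is to exhibit $\mathrm{PDE}_{\downarrow\Sigma}(\mathrm{FormalLocProMfd})$ as a dense subsite of $\mathrm{PDE}_{/\Sigma}(\mathrm{FormalSmoothSet})$ (with the canonical coverage induced by epimorphic families in the ambient topos), and then invoke the Comparison Lemma to conclude. Concretely, by proposition \ref{SliceSiteForHPDE} we already have the equivalence
$$
\mathrm{PDE}_{/\Sigma}(\mathrm{FormalSmoothSet})
\;\simeq\;
\mathrm{Sh}(\mathrm{FormalCartSp}/\Im\Sigma)
\,,
$$
and by proposition \ref{SliceOfLocProMfdLandsInFormallyThickenedPDEs} the Yoneda-style comparison factors through a fully faithful functor
$$
F \;\colon\; \mathrm{FormalCartSp}/\Im\Sigma
\;\hookrightarrow\;
\mathrm{PDE}_{\downarrow\Sigma}(\mathrm{FormalLocProMfd})
\,.
$$
Thus it suffices to equip the target with a subcanonical coverage refining that of $\mathrm{FormalCartSp}/\Im\Sigma$, and to verify that $F$ is a dense morphism of sites.

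First I would define the coverage on $\mathrm{PDE}_{\downarrow\Sigma}(\mathrm{FormalLocProMfd})$ by declaring a family $\{\mathcal{E}_i \to \mathcal{E}\}_{i\in I}$ of PDE-morphisms to cover precisely when the underlying morphisms of formal smooth sets form a jointly epimorphic family in $\mathrm{FormalSmoothSet}$ each of whose members is formally {\'e}tale over $\mathcal{E}$. Subcanonicity then follows because the representables $\mathrm{Hom}_{\mathrm{PDE}}(-,\mathcal{E})$ coincide, via theorem \ref{PDEIsEM} and proposition \ref{CoAlgebraMorphismsAsSolutions}, with the restriction to $\mathrm{PDE}_{\downarrow\Sigma}(\mathrm{FormalLocProMfd})$ of the representable sheaves $\mathrm{Hom}_{\mathrm{FormalSmoothSet}_{/\Im\Sigma}}(-,\mathcal{E})$ under the equivalence of theorem \ref{generalPDEsIsSliceOverInfinitesimalShape}, which are already sheaves by proposition \ref{SliceSiteForHPDE}. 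With this coverage the functor $F$ is a morphism of sites: it is fully faithful by proposition \ref{SliceOfLocProMfdLandsInFormallyThickenedPDEs}, and it preserves and reflects covers because the generating covers on $\mathrm{FormalCartSp}/\Im\Sigma$ are, by construction (definition \ref{FormalSmoothSets} and proposition \ref{SliceSiteForHPDE}), the images of good open covers of the underlying Cartesian reductions.

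The main obstacle, and the technical heart of the argument, is \emph{density}: one must show that every object of $\mathrm{PDE}_{\downarrow\Sigma}(\mathrm{FormalLocProMfd})$ admits a cover by objects in the image of $F$. Given a formally integrable PDE $\mathcal{E} \in \mathrm{PDE}_{\downarrow\Sigma}(\mathrm{FormalLocProMfd})$, I would use definition \ref{FormalLocalProManifolds} to produce, locally over $\Re \mathcal{E}$, an {\'e}tale chart of the form $U_i \times \mathbb{D} \hookrightarrow \mathcal{E}$ with $U_i \in \mathrm{LocProMfd}$; combining this with proposition \ref{LocalDiffeoIsFormallyEtals} and proposition \ref{PullbackAlongFormallyEtalePreservesFormalDiskBundles}, the jet-coalgebra structure on $\mathcal{E}$ (theorem \ref{PDEIsEM}) pulls back to one on each chart, exhibiting each $U_i \times \mathbb{D}$ as the image under $F$ of a morphism into $\Im\Sigma$ (the existence of which is precisely the factorization claim established in the proof of proposition \ref{SliceOfLocProMfdLandsInFormallyThickenedPDEs}). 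The resulting family jointly covers $\mathcal{E}$ in our coverage by construction.

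Having verified that $F$ is a fully faithful, dense morphism of sites, the Comparison Lemma gives the equivalence
$$
\mathrm{Sh}(\mathrm{PDE}_{\downarrow\Sigma}(\mathrm{FormalLocProMfd}))
\;\simeq\;
\mathrm{Sh}(\mathrm{FormalCartSp}/\Im\Sigma)
\,,
$$
which, composed with the equivalence of proposition \ref{SliceSiteForHPDE} and theorem \ref{PDEIsEM}, yields the claim. I expect the delicate point to be the precise bookkeeping that a good open cover of $\Re\mathcal{E}$ lifts through the formal thickening and through the coalgebra structure simultaneously; this is precisely where the monadic description of theorem \ref{PDEIsEM} and the local triviality of formal disk bundles over $V$-manifolds (proposition \ref{FormalDiskBundleLocallyTrivializableOverVManifold}) are needed to ensure compatibility.
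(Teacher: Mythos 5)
Your high-level strategy (a comparison of sites between $\mathrm{FormalCartSp}/\Im\Sigma$ and $\mathrm{PDE}_{\downarrow\Sigma}(\mathrm{FormalLocProMfd})$) is in the right spirit, but the concrete coverage you propose breaks the argument at the density step. You declare a family $\{\mathcal{E}_i \to \mathcal{E}\}$ to be covering only if each member is formally {\'e}tale over $\mathcal{E}$. The objects in the image of $F$ all have finite-dimensional reduction: by proposition \ref{SliceOfLocProMfdLandsInFormallyThickenedPDEs} they are locally of the form $X|_{\mathbb{R}^n}\times \mathbb{D}^n$ with $X=\mathbb{R}^k\times\mathbb{D}$ a formal Cartesian space, so $\Re$ of such an object is a finite-dimensional Cartesian space. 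But a typical object of $\mathrm{PDE}_{\downarrow\Sigma}(\mathrm{FormalLocProMfd})$ --- e.g.\ the cofree PDE $J^\infty_\Sigma E$ for a fibered manifold $E\to\Sigma$ with positive-dimensional fibers --- is an $\mathbb{R}^\infty$-manifold. A formally {\'e}tale morphism out of an object with finite-dimensional reduction is, on reductions, essentially a local diffeomorphism onto its image (cf.\ proposition \ref{LocalDiffeoIsFormallyEtals}), so no family of such morphisms can be jointly epimorphic onto a space locally modeled on $\mathbb{R}^\infty$; hence such objects admit \emph{no} covers by the image of $F$ in your coverage and density fails. Relatedly, your density paragraph asserts that the charts $U_i\times\mathbb{D}$ provided by definition \ref{FormalLocalProManifolds} lie in the image of $F$; this is false when $U_i$ is an $\mathbb{R}^\infty$-manifold, and the factorization claim inside the proof of proposition \ref{SliceOfLocProMfdLandsInFormallyThickenedPDEs} only concerns morphisms out of (finite-dimensional) formal Cartesian spaces.

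The fix, and the route the paper actually takes, is not to impose {\'e}taleness and indeed not to construct the coverage by hand at all: proposition \ref{AmbientCategoryGrothedieckTopology} produces the Grothendieck topology on the ambient category abstractly from the geometric embedding of $\mathrm{Sh}(\mathrm{FormalCartSp}/\Im\Sigma)$ into presheaves on $\mathrm{PDE}_{\downarrow\Sigma}(\mathrm{FormalLocProMfd})$, with no density verification required; the only inputs beyond the full inclusion of proposition \ref{SliceOfLocProMfdLandsInFormallyThickenedPDEs} are proposition \ref{SliceSiteForHPDE} and the fact that the restricted-Yoneda functor $\mathrm{PDE}_{\downarrow\Sigma}(\mathrm{FormalLocProMfd})\to \mathrm{Sh}(\mathrm{FormalCartSp}/\Im\Sigma)$ is fully faithful (ordinary PDEs are faithfully tested on formal Cartesian probes), which is exactly the hypothesis ensuring subcanonicity. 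If you do want an explicit comparison-lemma argument, the covering families must be taken to be those that are jointly epimorphic as morphisms of sheaves over $\Im\Sigma$ (arbitrary plots, with no {\'e}taleness condition); with that coverage every object, including $J^\infty_\Sigma E$, is canonically covered by representables from $\mathrm{FormalCartSp}/\Im\Sigma$ because every sheaf is a colimit of representables, and your subcanonicity argument via theorem \ref{generalPDEsIsSliceOverInfinitesimalShape} then goes through.
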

\begin{proof}
	By proposition~\ref{SliceSiteForHPDE} there is an equivalence of
	categories
  $$
    \mathrm{PDE}_{/\Sigma}(\mathrm{FormalSmoothSet})
      \simeq
    \mathrm{Sh}(\mathrm{FormalCartSp}/\Im\Sigma)
  $$
	with sheaves on the slice site of formal Cartesian spaces over $\Im
	\Sigma$. By
	proposition~\ref{SliceOfLocProMfdLandsInFormallyThickenedPDEs} there
	is a full inclusion
  $$
    \mathrm{FormalCartSp}/\Im\Sigma
      \hookrightarrow
    \mathrm{PDE}_{\downarrow \Sigma}(\mathrm{FormalLocProMfd})
  $$
	of the slice site into the category of ordinary PDEs. Moreover, since
	$\mathrm{FormalCartSp}/\Im\Sigma$ is a site of definition for PDEs,
	the objects in $\mathrm{PDE}_{\downarrow
	\Sigma}(\mathrm{FormalLocProMfd})$ are faithfully tested on
	$\mathrm{FormalCartSp}/\Im\Sigma$. Thus
	proposition~\ref{AmbientCategoryGrothedieckTopology} implies the
	claim.
\end{proof}

\appendix

\section{Formal solutions of PDEs}

We state and prove here two technical lemmas regarding the general
concept of formal solutions of PDEs (definition \ref{FamilyOfFormal}).

In order to prevent an explosion of notation, we will use the
following device in the sequel.  Since the $T^\oo_\Sigma \dashv
J^\oo_\Sigma$ adjunction provides us with a natural bijection
$\mathrm{Hom}(T^\oo_\Sigma (-), -) \simeq \mathrm{Hom}(-,J^\oo_\Sigma
(-))$ (definition \ref{AdjointFunctor}), we may freely use one kind of morphism to label the other kind.
For instance, recalling the notation for adjunct morphisms introduced in
definition~\ref{jetcomonad}, we might use $\overline{\alpha}$ to refer
to an arbitrary morphism $T^\oo_\Sigma E \to Y$, which is labelled by
the corresponding morphism $\alpha\colon E \to J^\oo_\Sigma Y$.

\begin{lemma}
	\label{FamilyOfFormalAdjunctSquare}
	Consider $E, \mathcal{E}, Y \in \mathbf{H}_{/\Sigma}$, together with a
	monomorphism $e_Y \colon \mathcal{E} \hookrightarrow J^\oo_\Sigma Y$.

	(a) For an $E$-parametrized family of formal solutions
	$\overline{\rho^E}\colon T^\oo_\Sigma E \to \mathcal{E}$, the morphism
	$\rho^E \colon E \to J^\oo_\Sigma \mathcal{E}$ fits into a commutative
	diagram of the following form
	$$
    \begin{gathered}
		\xymatrix{
			E
				\ar[rr]^{e^E_Y}
				\ar[d]^{\rho^E}
				\ar@/_2pc/[dd]_{e^E_\mathcal{E}} &&
			J^\oo_\Sigma Y
				\ar[d]_{\Delta_Y}
				\ar@/^2pc/[dd]^{\mathrm{id}}
			\\
			J^\oo_\Sigma \mathcal{E}
				\ar[rr]_-{J^\oo_\Sigma e_Y}
				\ar[d]^{\epsilon_\mathcal{E}} &&
			J^\oo_\Sigma J^\oo_\Sigma Y
				\ar[d]_{\epsilon_{J^\oo_\Sigma Y}}
			\\
			\mathcal{E}
				\ar[rr]_{e_Y} &&
			J^\oo_\Sigma Y
		}
    \end{gathered}
		\, ,
	$$
	where
    \begin{enumerate}
      \item $e^E_\mathcal{E} := \epsilon_\mathcal{E}\circ \rho^E$ as shown;
      \item $e^E_Y := e_Y \circ \overline{\rho^E} \circ \eta_E$.
    \end{enumerate}

	(b) For a pair of morphisms $\rho^E \colon E \to J^\oo_\Sigma
	\mathcal{E}$ and $e^E_Y \colon E\to J^\oo_\Sigma Y$ that fit into a
	commutative diagram of the form
	$$
    \begin{gathered}
		\xymatrix{
			E
				\ar[rr]^{e^E_Y}
				\ar[d]_{\rho^E} &&
			J^\oo_\Sigma Y
				\ar[d]^{\Delta_Y}
			\\
			J^\oo_\Sigma \mathcal{E}
				\ar[rr]_{J^\oo_\Sigma e_Y} &&
			J^\oo_\Sigma J^\oo_\Sigma Y
		}
    \end{gathered}
		\, ,
	$$
	the adjunct morphism $\overline{\rho^E}\colon T^\oo_\Sigma E \to
	\mathcal{E}$ is an $E$-parametrized family of formal solutions and the
	above commutative square coincides with the top square of the
	commutative diagram associated to $\overline{\rho^E}$ in part (a).
\end{lemma}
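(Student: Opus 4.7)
Both parts will be handled by systematically unpacking the $(T^\oo_\Sigma \dashv J^\oo_\Sigma)$-adjunction, and the entire argument will rest on a single auxiliary identity: for any $\rho^E \colon E \to J^\oo_\Sigma \mathcal{E}$,
\[
  \epsilon_\mathcal{E} \circ \rho^E \;=\; \overline{\rho^E} \circ \eta_E \,.
\]
Here $\eta_E$ is the unit of the monad $T^\oo_\Sigma$ (originating from $(\eta_\Sigma)_! \dashv (\eta_\Sigma)^*$) while $\epsilon_\mathcal{E}$ is the counit of the comonad $J^\oo_\Sigma$ (originating from $(\eta_\Sigma)^* \dashv (\eta_\Sigma)_*$). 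To prove this, I will write $\rho^E = J^\oo_\Sigma \overline{\rho^E} \circ \eta^{\mathrm{adj}}_E$ using the formula for adjuncts, where $\eta^{\mathrm{adj}}$ is the unit of the composite adjunction $T^\oo_\Sigma \dashv J^\oo_\Sigma$; then use naturality of $\epsilon$ to pull $\overline{\rho^E}$ through; and finally simplify $\epsilon_{T^\oo_\Sigma E} \circ \eta^{\mathrm{adj}}_E = \eta_E$ via the triangle identity for the middle adjunction in the adjoint triple, this being essentially the content of proposition \ref{PropertiesOfAdjointPairsFromAdjointTriples}.

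For part (a), I will first rewrite the hypothesis that $\overline{\rho^E}$ is a family of formal solutions as the defining condition of definition \ref{FamilyOfFormal} for $\sigma_E := e_Y \circ \overline{\rho^E}$, namely $\widetilde{\sigma_E} = \Delta_Y \circ \sigma_E \circ \eta_E$. By naturality of the adjunction the left hand side is $J^\oo_\Sigma e_Y \circ \rho^E$, and with $e^E_Y := e_Y \circ \overline{\rho^E} \circ \eta_E$ as defined, the right hand side is $\Delta_Y \circ e^E_Y$; this is precisely the top square. The bottom square is the naturality of $\epsilon$ applied to $e_Y$, and the right column composes to the identity by the counit-coproduct axiom for the comonad. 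The commutativity of the outer rectangle then reduces, via the key identity, to $e_Y \circ e^E_\mathcal{E} = e^E_Y$, which holds by definition of $e^E_\mathcal{E}$.

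For part (b), the plan is to use the given commutative square together with the key identity to recover both claims at once. Post-composing the hypothesis $J^\oo_\Sigma e_Y \circ \rho^E = \Delta_Y \circ e^E_Y$ with $\epsilon_{J^\oo_\Sigma Y}$, the right hand side collapses to $e^E_Y$ by the counit-coproduct axiom, while the left hand side becomes $e_Y \circ \epsilon_\mathcal{E} \circ \rho^E$ by naturality of $\epsilon$ and then $e_Y \circ \overline{\rho^E} \circ \eta_E$ by the key identity. This will yield the equation $e^E_Y = e_Y \circ \overline{\rho^E} \circ \eta_E$, identifying the given square with the top square of part (a), and substituting back into the hypothesis gives the formally holonomic condition $\widetilde{e_Y \circ \overline{\rho^E}} = \Delta_Y \circ (e_Y \circ \overline{\rho^E}) \circ \eta_E$ of definition \ref{FamilyOfFormal}, which since $\overline{\rho^E}$ already factors through $\mathcal{E}$ makes it a family of formal solutions.

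The hard part will be the auxiliary identity $\epsilon_\mathcal{E} \circ \rho^E = \overline{\rho^E} \circ \eta_E$: the two sides involve unit/counit data coming from different adjunctions in the underlying adjoint triple and are a priori unrelated; bridging them via the composite adjunction $T^\oo_\Sigma \dashv J^\oo_\Sigma$ is where the adjoint triple structure becomes essential. Once this identity is in hand, both parts reduce to short diagram chases.
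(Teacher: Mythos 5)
Your proof is correct, and for part (b) it takes a genuinely different and arguably cleaner route than the paper's. The key identity $\epsilon_\mathcal{E} \circ \rho^E = \overline{\rho^E} \circ \eta_E$ does hold and your sketch of it goes through: write $\rho^E = J^\oo_\Sigma \overline{\rho^E} \circ \eta^{\mathrm{adj}}_E$, pull $\overline{\rho^E}$ out by naturality of the comonad counit, and reduce $\epsilon_{T^\oo_\Sigma E} \circ \eta^{\mathrm{adj}}_E$ to $\eta_E$ using the triangle identity for the middle adjunction of the triple $(\eta_\Sigma)_! \dashv (\eta_\Sigma)^\ast \dashv (\eta_\Sigma)_\ast$. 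With this in hand, your part (b) stays entirely within the first (that is, the $\Delta_Y$-and-$\eta_E$) form of the formally holonomic condition and is a purely formal diagram chase. The paper instead splits the hypothesis square into two triangles through a common morphism $\tau$, passes to adjuncts (via proposition \ref{PropertiesOfAdjointPairsFromAdjointTriples} for the double adjunct), and then verifies the second ($\nabla_E$-and-$\epsilon_Y$) form of the condition by a computation with local generalized elements, which leans on the $V$-manifold structure of $\Sigma$ and the local trivialization of the formal disk bundle. Your argument avoids generalized elements altogether, so for this lemma it is independent of that geometric input. For part (a) the two proofs essentially coincide: derive the top square from the holonomic condition by naturality of adjuncts, then paste the counit naturality square, the counit--coproduct triangle, and the definition of $e^E_\mathcal{E}$; your extra check of the outer rectangle via the key identity is redundant given the pasting, though harmless. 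Note also that in part (a) the key identity could instead be read off a posteriori from the outer rectangle because $e_Y$ is a monomorphism; it is only in part (b), where one does not yet know that $\overline{\rho^E}$ is a family of formal solutions, that the unconditional form you prove is genuinely needed.
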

\begin{proof}
	(a)
	Let us start with the defining property of $\overline{\rho^E}$ as a
	family of formal solutions. Namely, $\overline{\rho^E}$ and its adjunct
	$\rho^E$ together fit into the following commutative diagram:
	$$
    \begin{gathered}
		\xymatrix{
			&& \mathcal{E} \ar[d]^{e_Y} \\
			T^\oo_\Sigma E
				\ar[urr]^{\overline{\rho^E}}
				\ar[rr] &&
			J^\oo_\Sigma Y \ar[d]^{\Delta_Y} \\
			E
				\ar[u]^{\eta_E}
				\ar[rr]
				\ar@{-->}[urr]^{e^E_Y}
				\ar[drr]_{\rho^E} &&
			J^\oo_\Sigma J^\oo_\Sigma Y \\
			&& J^\oo_\Sigma \mathcal{E} \ar[u]_{J^\oo_\Sigma e_Y}
		}
    \end{gathered}
		\, .
	$$
	The middle square commutes because by hypothesis the composition $e_Y
	\circ \overline{\rho^E}$ is a family of formally holonomic sections of
	$J^\oo_\Sigma Y$, which factors through $e_Y\colon \mathcal{E}
	\hookrightarrow J^\oo_\Sigma Y$ by virtue of consisting of formal
	solutions. The top and bottom triangle are related by the
	$T^\oo_\Sigma \dashv J^\oo_\Sigma$ adjunction, which explains the
	appearance of the morphism $J^\oo_\Sigma e_Y$. The dashed diagonal
	arrow, which we have chosen to denote by $e^E_Y$, is the unique one
	that commutes with the rest of the diagram, namely $e^E_Y = e_Y \circ
	\overline{\rho^E} \circ \eta_E$.

	This shows that the subdiagram consisting of the morphisms $\rho^E$,
	$e^E_Y$, $J^\oo_\Sigma e_Y$ and $\Delta_Y$ commutes and is
	identical to the top square of the desired diagram in part (a) of the
	theorem. We can now conclude that this square commutes.

    The rest of
	the desired diagram is constructed by pasting to it the naturality
	square of the $\epsilon$ counit (definition~\ref{comonad}) of
	$J^\oo_\Sigma$ (bottom), the counit-coproduct commutative triangle
	(definition~\ref{EMOverComonad}.2) of $J^\oo_\Sigma$ (right) and the
	composite morphism $e^E_\mathcal{E} = \epsilon_\mathcal{E} \circ
	\rho^E$ (left). Since all the pasted subdiagrams commute, the whole
	diagram commutes as well, as was desired.

	(b)
	Let us split the commutative square from the hypothesis in part
	(b) of the theorem into two triangles, with $\tau$ denoting the
	common morphism between them:
	$$
    \begin{gathered}
		\xymatrix{
			E \ar[d]_{\rho^E} \ar[drr]^{\tau} &&
			&
			E \ar[rr]^{e^E_Y} \ar[drr]_{\tau} && J^\oo_\Sigma Y \ar[d]^{\Delta_Y}
			\\
			J^\oo_\Sigma \mathcal{E} \ar[rr]_{J^\oo_\Sigma e_Y} && J^\oo_\Sigma J^\oo_\Sigma Y
			&
			&& J^\oo_\Sigma J^\oo_\Sigma Y
		}
    \end{gathered}
		\, .
	$$
	Now observe that the  $(T^\oo_\Sigma \dashv J^\oo_\Sigma)$-adjuncts of
	these diagrams are the following triangles shown with solid arrows
	(for the left triangle this is just the naturality of forming
	adjuncts, for the right triangle this is by proposition
	\ref{PropertiesOfAdjointPairsFromAdjointTriples}):
	$$
    \begin{gathered}
		\xymatrix{
			&&
			\mathcal{E}
				\ar[d]^{e_Y}
			&
			T^\oo_\Sigma T^\oo_\Sigma E
				\ar[rr]^{\overline{\overline{\tau}}}
				\ar[d]_{\nabla_E} &&
			Y
			\\
			T^\oo_\Sigma E
				\ar[urr]^{\overline{\rho^E}}
				\ar[rr]_{\overline{\tau}} &&
			J^\oo_\Sigma Y
			&
			T^\oo_\Sigma E
				\ar[urr]^{\overline{e^E_Y}}
				\ar@{-->}[rr]^{\overline{\tau}} &&
			J^\oo_\Sigma Y
				\ar@{-->}[u]_{\epsilon_Y}
				\ar@{-->}[d]^{\Delta_Y}
			\\
			&& &
			E
				\ar@{-->}[u]^{\eta_E}
				\ar@{-->}[rr]^{\tau} &&
			J^\oo_\Sigma J^\oo_\Sigma Y
		}
    \end{gathered}
		\, .
	$$

	Now we argue with the method of local generalized elements as in
	remark \ref{InterpretationOfHolonomicSections} that the diagrams shown
	with dashed morphisms exist and commute: Since $\tau$ is the image
	of composing with the jet coproduct over a $V$-manifold, it depends on
	its infinitesimal arguments symmetrically, $\tau(x)(a,b) =
	e^E_Y(x)(a+b)$. Hence,
	\begin{align*}
		\epsilon_Y \circ \overline{\tau} \circ \nabla_E (x,a,b)
		&= \epsilon_Y \circ \overline{\tau} (x,a+b)
		= \overline{\tau}(x,a+b)(0)
		= \tau(x)(a+b,0) \\
		&= e^E_Y(x)(a+b)
		= \tau(x)(a,b)
		= \overline{\overline{\tau}}(x,a,b)
		\, .
	\end{align*}
	This shows that the square on the top right commutes. On the other
	hand, the commutative triangle on the left shows that
	$\overline{\tau}$ factors through $\overline{\rho^E} \colon
	T^\oo_\Sigma E \to \mathcal{E}$. In other words, as was desired, we
	have shown that $\overline{\rho^E}$ is a family of formal solutions.

	Finally, it is obvious that applying to $\overline{\rho^E}$ the
	argument of part (a) we recover the same commutative square that we
	have started with.
\end{proof}

\begin{lemma}
	\label{ProlongationIsUniversalFamily}
	Consider $\mathcal{E}, Y \in \mathbf{H}_{/\Sigma}$ and a generalized
	PDE $e_Y \colon \mathcal{E} \hookrightarrow J^\oo_\Sigma Y$.

	Every parametrized family of formal solutions of this PDE factors
	uniquely through a universal one,
	$\overline{\rho^\oo_\mathcal{E}}\colon T^\oo_\Sigma \mathcal{E}^\oo
	\to \mathcal{E}$, which is defined by the pullback square in the
	following commutative diagram:
	$$
    \begin{gathered}
		\xymatrix{
			\mathcal{E}^\oo
				\ar[rr]^{e^\oo_Y}_{\ }="s"
				\ar[d]^{\rho^\oo_\mathcal{E}}
				\ar@/_2pc/[dd]_{e^\oo_\mathcal{E}} &&
			J^\oo_\Sigma Y
				\ar[d]_{\Delta_Y}
				\ar@/^2pc/[dd]^{\mathrm{id}}
			\\
			J^\oo_\Sigma \mathcal{E}
				\ar[rr]_{J^\oo_\Sigma e_Y}^{\ }="t"
				\ar[d]^{\epsilon_\mathcal{E}} &&
			J^\oo_\Sigma J^\oo_\Sigma Y
				\ar[d]_{\epsilon_{J^\oo_\Sigma Y}}
      \ar@{}|{\mbox{\tiny (pb)}} "s"; "t"
      \\
      \mathcal{E}
      	\ar[rr]_{e_Y} &&
      J^\oo_\Sigma Y
		}
    \end{gathered}
		\, ,
	$$
	which defines the $e^\oo_\mathcal{E}\colon \mathcal{E}^\oo \to
	\mathcal{E}$ morphism and where each of $e^\oo_Y$,
	$\rho^\oo_\mathcal{E}$ and $e^\oo_\mathcal{E}$ is a monomorphism.

	If $\overline{\rho^E}\colon T^\oo_\Sigma E \to \mathcal{E}$, with $E \in
	\mathbf{H}_{/\Sigma}$ is a parametrized family of formal solutions,
	then it factors through the universal family
	$\overline{\rho^\oo_\mathcal{E}}$ as illustrated in the commutative
	diagram
	$$
		\xymatrix{
			T^\oo_\Sigma E
				\ar@{-->}[r]_-{T^\oo_\Sigma \phi}
				\ar@/^1.2pc/[rr]^-{\overline{\rho^E}}
				&
			T^\oo_\Sigma \mathcal{E}^\oo
				\ar[r]_-{\overline{\rho^\oo_\mathcal{E}}}
				&
			\mathcal{E}
		} \, ,
	$$
	where $\phi\colon E \to \mathcal{E}^\oo$ is the unique morphism such
	that the above diagram is commutative.
\end{lemma}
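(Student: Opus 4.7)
The plan is to set up the pullback square as a definition, verify the three monomorphism claims, then invoke Lemma~\ref{FamilyOfFormalAdjunctSquare} twice: once to see that $\overline{\rho^\oo_\mathcal{E}}$ is itself a family of formal solutions, and once to extract from an arbitrary family $\overline{\rho^E}$ the data that feeds the universal property of the pullback.

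First, take the inner square in the displayed diagram as the definition of $\mathcal{E}^\oo$, $\rho^\oo_\mathcal{E}$, and $e^\oo_Y$; the bottom rectangle then commutes because $\epsilon_{J^\oo_\Sigma Y}\circ\Delta_Y=\mathrm{id}$ by counitality of the jet comonad, together with naturality of $\epsilon$, forcing $e_Y\circ e^\oo_\mathcal{E}=e^\oo_Y$ where $e^\oo_\mathcal{E}:=\epsilon_\mathcal{E}\circ\rho^\oo_\mathcal{E}$. For the monomorphism claims, $e_Y$ is mono by hypothesis and $J^\oo_\Sigma$ preserves monos (it is a right adjoint, proposition~\ref{rightadjointpreserveslimits}), so $J^\oo_\Sigma e_Y$ is mono; moreover $\Delta_Y$ is split mono with retraction $\epsilon_{J^\oo_\Sigma Y}$, hence mono. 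Since pullbacks of monomorphisms are monomorphisms, both $\rho^\oo_\mathcal{E}$ and $e^\oo_Y$ are monos, and the identity $e_Y\circ e^\oo_\mathcal{E}=e^\oo_Y$ with $e^\oo_Y$ mono forces $e^\oo_\mathcal{E}$ to be mono as well.

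Next, the top square of the diagram is precisely a commutative square of the shape required by Lemma~\ref{FamilyOfFormalAdjunctSquare}(b) (with $E=\mathcal{E}^\oo$, $\rho^E=\rho^\oo_\mathcal{E}$, $e^E_Y=e^\oo_Y$), so its $(T^\oo_\Sigma\dashv J^\oo_\Sigma)$-adjunct $\overline{\rho^\oo_\mathcal{E}}\colon T^\oo_\Sigma\mathcal{E}^\oo\to\mathcal{E}$ is a family of formal solutions. For universality, consider any $\overline{\rho^E}\colon T^\oo_\Sigma E\to\mathcal{E}$ which is a family of formal solutions. Lemma~\ref{FamilyOfFormalAdjunctSquare}(a) supplies morphisms $\rho^E\colon E\to J^\oo_\Sigma\mathcal{E}$ and $e^E_Y\colon E\to J^\oo_\Sigma Y$ satisfying $J^\oo_\Sigma e_Y\circ\rho^E=\Delta_Y\circ e^E_Y$, i.e. a cone over the defining pullback of $\mathcal{E}^\oo$. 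The universal property then produces a unique $\phi\colon E\to\mathcal{E}^\oo$ with $\rho^\oo_\mathcal{E}\circ\phi=\rho^E$ and $e^\oo_Y\circ\phi=e^E_Y$.

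Finally, I would pass from the $\rho$-level factorization back to the $\overline{\rho}$-level using naturality of the $(T^\oo_\Sigma\dashv J^\oo_\Sigma)$-adjunction in the argument $E$: the identity $\rho^E=\rho^\oo_\mathcal{E}\circ\phi$ transports to $\overline{\rho^E}=\overline{\rho^\oo_\mathcal{E}}\circ T^\oo_\Sigma\phi$, and uniqueness of $\phi$ is inherited from the pullback (or alternatively from $\rho^\oo_\mathcal{E}$ being a monomorphism, since distinct factorizations would yield distinct $\rho^E$'s under the adjunction bijection). I do not anticipate any genuine obstacle: the argument is a systematic diagram chase, and the one place to be careful is the bookkeeping of adjuncts in Lemma~\ref{FamilyOfFormalAdjunctSquare}, so that the two directions of that lemma line up with the pullback square here on the nose.
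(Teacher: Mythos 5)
Your proposal is correct and follows essentially the same route as the paper's proof: both directions of Lemma~\ref{FamilyOfFormalAdjunctSquare} are used exactly as you describe (part (b) to see that $\overline{\rho^\oo_\mathcal{E}}$ is itself a family of formal solutions, part (a) to turn an arbitrary family into a cone over the defining pullback), the unique $\phi$ comes from the universal property of that pullback, the factorization is transported to the $T^\oo_\Sigma$-level by naturality of the adjunction, and the monomorphism claims follow from $\Delta_Y$ and $J^\oo_\Sigma e_Y$ being monos whose pullbacks are monos, with $e^\oo_\mathcal{E}$ mono forced by $e_Y\circ e^\oo_\mathcal{E}=e^\oo_Y$. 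Your added justification that $\Delta_Y$ is split mono via counitality is a correct detail the paper leaves implicit.
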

\begin{proof}
	As demonstrated in lemma~\ref{FamilyOfFormalAdjunctSquare}, an
	equivalent way of presenting an $E$-parametrized family of formal
	solutions $\overline{\rho^E} \colon T^\oo_\Sigma E \to J^\oo_\Sigma Y$
	is by the pair of morphisms $\rho^\oo_\mathcal{E}$ and $e^E_Y =
	\epsilon_Y\circ (J^\oo_\Sigma e_Y) \circ \rho^E$ that fit into the
	commutative diagram illustrated in part (b) of that proposition. But
	by the definition of $\mathcal{E}^\oo$ through the pullback diagram
	above and by the universality property of pullbacks, there exists a
	unique morphism $\phi$ (dashed below) that makes the following diagram
	commute:
	$$
    \begin{gathered}
		\xymatrix{
			E
				\ar@/^1pc/[drr]^{e^E_Y}
				\ar@/_1pc/[ddr]_{\rho^E}
				\ar@{-->}[dr]^\phi
			\\
			&
			\mathcal{E}^\oo
				\ar[r]^{e^\oo_Y}
				\ar[d]_{\rho^\oo_\mathcal{E}} &
			J^\oo_\Sigma Y
				\ar[d]^{\Delta_Y}
			\\
			&
			J^\oo_\Sigma \mathcal{E}
				\ar[r]_{J^\oo_\Sigma e_Y} &
			J^\oo_\Sigma J^\oo_\Sigma Y
		}
    \end{gathered}
		\, .
	$$
	By the $T^\oo_\Sigma \dashv J^\oo_\Sigma$ adjunction, it then follows
	that the morphism $T^\oo_\Sigma \phi$ (dashed below) making the
	following diagram commute:
	$$
		\xymatrix{
			T^\oo_\Sigma E
				\ar@{-->}[r]_-{T^\oo_\Sigma \phi}
				\ar@/^1.2pc/[rr]^-{\overline{\rho^E}}
				&
			T^\oo_\Sigma \mathcal{E}^\oo
				\ar[r]_-{\overline{\rho^\oo_\mathcal{E}}}
				&
			\mathcal{E}
		} \, .
	$$
	Finally, the commutativity of the pullback square defining
	$\mathcal{E}^\oo$ and lemma~\ref{FamilyOfFormalAdjunctSquare}(b)
	imply that $\overline{\rho^\oo_\mathcal{E}} \colon T^\oo_\Sigma
	\mathcal{E}^\oo \to \mathcal{E}$ is itself a parametrized family of
	formal solutions. In other words, $\overline{\rho^\oo_\mathcal{E}}$ is
	the desired universal family of formal solutions through which every
	other one factors uniquely, in the manner indicated above.

	Now we conclude by checking the monomorphism conditions. The
	commutative diagram in the hypothesis is obtained by taking the
	pullback square defining $\mathcal{E}^\oo$ and pasting to it the
	naturality square of the $\epsilon$ counit (definition~\ref{comonad})
	of $J^\oo_\Sigma$ (bottom), the counit-coproduct commutative triangle
	(definition~\ref{EMOverComonad}.2) of $J^\oo_\Sigma$ (right) and the
	composite morphism $e^\oo_\mathcal{E} = \epsilon_\mathcal{E} \circ
	\rho^\oo_\mathcal{E}$ (left). Since all the pasted subdiagrams
	commute, the whole diagram commutes as well, as was desired. Recall
	that both $\Delta_Y$ and $J^\oo_\Sigma e_Y$ are monomorphisms, hence
	their pullbacks $\rho^\oo_\mathcal{E}$ and $e^\oo_Y$ are also
	monomorphisms. The commutativity of the diagram in the hypothesis
	implies the identity $e^\oo_Y = e_Y \circ e^\oo_\mathcal{E}$. Hence,
	since $e^\oo_Y$ is a monomorphism, its factorization map through
	$e_Y$, that is $e^\oo_\mathcal{E}$, must also be a monomorphism.
\end{proof}

%%%%%%%%%%%%%%%%%%%%%%%%%%%%%%%%%%%%%%%%%%%%%%%%%%%%%%%
\section{Category theoretic background}
%%%%%%%%%%%%%%%%%%%%%%%%%%%%%%%%%%%%%%%%%%%%%%%%%%%

For reference, we collect here some standard facts from category theory
that are referred to in the main text. Unless indicated otherwise, proof
of these facts may be found in \cite{MacLane, Borceux}.

\subsection{Universal constructions}

We list some basic statements about (co-)limits and Kan extensions, see for instance \cite[vol 1, section 2]{Borceux}

\begin{proposition}[pasting law, e.g. {\cite[vol 1, prop. 2.5.9]{Borceux}}]
  \label{PastingLaw}
  Consider in any category a commuting diagram of the from
  $$
    \xymatrix{
      A \ar[r] \ar[d] & B \ar[r]_{\ }="s" \ar[d] & C \ar[d]
      \\
      D \ar[r] & E \ar[r]^{\ }="t" & F
      \ar@{}|{\mbox{\tiny (pb)}} "s"; "t"
    }
  $$
  such that the right square is Cartesian (is a pullback square). Then the left
  square is Cartesian precisely if the total rectangle is.
\end{proposition}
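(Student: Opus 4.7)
The plan is to verify the universal property of the pullback twice, once for each implication, working with an arbitrary test cone. Label the horizontal arrows as $f\colon A\to B$, $g\colon B\to C$, $f'\colon D\to E$, $g'\colon E\to F$ and the vertical ones as $a\colon A\to D$, $b\colon B\to E$, $c\colon C\to F$, with commutativity $b\circ f = f'\circ a$ and $c\circ g = g'\circ b$ by hypothesis. Throughout, I assume the right square is a pullback.

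For the ``only if'' direction, suppose the left square is a pullback and let $t_D\colon T\to D$, $t_C\colon T\to C$ be compatible over $F$, i.e.\ $c\circ t_C = g'\circ f'\circ t_D$. The right pullback applied to the compatible pair $(f'\circ t_D,\,t_C)$ produces a unique $t_B\colon T\to B$ with $b\circ t_B = f'\circ t_D$ and $g\circ t_B = t_C$. The pair $(t_D,t_B)$ is then compatible over $E$, so the left pullback yields a unique $t_A\colon T\to A$ factoring both, which supplies the required factorization for the total rectangle. For the ``if'' direction, suppose the total rectangle is a pullback and take a compatible pair $(t_D,t_B)$ over $E$; I would set $t_C := g\circ t_B$, observe that $(t_D,t_C)$ is then compatible over $F$ using commutativity of the right square, and extract $t_A\colon T\to A$ from the total-rectangle pullback. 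The one step that requires attention is verifying $f\circ t_A = t_B$; I would do this by appealing to the uniqueness clause of the right pullback, since both $f\circ t_A$ and $t_B$ satisfy $g\circ(-) = t_C$ and $b\circ(-) = f'\circ t_D$. Uniqueness of $t_A$ itself in each direction follows immediately from the uniqueness clause of whichever pullback is being invoked.

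The proof is purely formal and requires nothing beyond the definition of a pullback, so it works in any category whatsoever. There is no genuine obstacle: the only real subtlety is keeping track of which compatibility condition and which uniqueness clause are in play at each step, which is a matter of diligent bookkeeping rather than mathematical content. I would therefore present it as a short lemma about universal properties, without any reference to the topos-theoretic setting of the main text.
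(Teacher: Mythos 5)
Your proof is correct and is the standard universal-property argument; the paper itself gives no proof of this proposition, simply citing Borceux, and your diagram chase is essentially the textbook proof being cited. One very small point: in the ``only if'' direction the uniqueness of $t_A$ is not quite immediate from a single uniqueness clause --- given a competitor $t_A'$ with $a\circ t_A'=t_D$ and $g\circ f\circ t_A'=t_C$, you must first use the uniqueness clause of the \emph{right} pullback to conclude $f\circ t_A'=t_B$, and only then invoke the left pullback's uniqueness; this is the mirror image of the step you correctly flagged in the other direction, so it is a bookkeeping remark rather than a gap.
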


\begin{definition}[e.g.{\cite[III.6]{MacLane}}]
  \label{group}
  For $\mathcal{C}$ a category with finite products
  (hence with a terminal object $\ast$ and with binary Cartesian products $(-)\times (-)$),
  then a \emph{monoid object}
  in $\mathcal{C}$ is an object $G \in \mathcal{C}$ equipped with
  \begin{enumerate}
    \item (unit) a morphism $e : \ast \longrightarrow G$
    \item (binary product) a morphism $(-)\cdot (-) : G \times G \longrightarrow G$;
  \end{enumerate}
  such that
  \begin{itemize}
    \item (associativity) the following diagram commutes:
    $$
      \xymatrix{
         G \times G \times G
         \ar[rr]^{(\mathrm{id},(-)\cdot (-))}
         \ar[d]_{((-)\cdot(-),\mathrm{id})}
         &&
         G \times G
         \ar[d]^{(-)\cdot (-)}
         \\
         G\times G
         \ar[rr]_{(-)\cdot (-)}
         &&
         G
      }
    $$
    \item
     (unitality) the following diagram commutes:
     $$
       \xymatrix{
         \ast \times G
         \ar[dr]_{\simeq}
         \ar[rr]^{e \times \mathrm{id}}
         &&
         G \times G
         \ar[dl]^{(-)\cdot (-)}
         \\
         & G
       }
     $$
  \end{itemize}
  Such a monoid object is called \emph{commutative} if
  \begin{itemize}
    \item the following diagram commutes:
    $$
      \xymatrix{
        G \times G
          \ar[dr]_{(-)\cdot (-)}
          \ar[rr]^{\tau}
          &&
        G \times G
        \ar[dl]^{(-)\cdot (-)}
        \\
        & G
      }
    $$
    (where the top morphism exchanges the two factors $(g_1,g_2) \mapsto (g_2,g_1)$).
  \end{itemize}
  A monoid object is a \emph{group object} if
  \begin{itemize}
    \item (inverses) there exists a morphism $(-)^{-1} : G \longrightarrow G$
  \end{itemize}
  such that
  \begin{itemize}
    \item
      (invertibility) the following diagram commutes
      $$
        \xymatrix{
          G
            \ar[rr]^{\mathrm{id} \times \mathrm{id}}
            \ar@{=}[d]
            &&
          G \times G
          \ar[d]^{\mathrm{id} \times (-)^{-1}}
          \\
          G
           \ar@{<-}[rr]_{(-) \cdot (-)}
           &&
          G \times G
        }
      $$
  \end{itemize}
  A monoid object that is both commutative as well as a group object is also called
  an \emph{abelian group object}.
\end{definition}

\begin{proposition}[nonabelian Mayer-Vietoris lemma]
  \label{MayerVietoris}
	Let $\mathcal{C}$ be a category with finite products, and let $G \in
	\mathcal{C}$ be equipped with the structure of a group object (definition
	\ref{group}). Then for $f : X \to G$ and $g : Y \to G$ two morphisms
	in $\mathcal{C}$, their fiber product also makes the following square
	Cartesian:
  $$
    \begin{gathered}
    \xymatrix{
      X \times_G Y
      \ar[d]_{(\mathrm{pr}_1, \mathrm{pr}_2)}
      \ar[rr]_-{\ }="s"
      &&
      \ast
      \ar[d]^{e}
      \\
      X \times Y
      \ar[r]_-{f \times g}
      &
      G \times G
      \ar[r]_-{(-)\cdot (-)^{-1}}
      &
      G
      \ar@{}|{\mbox{\tiny (pb)}} "s"; "s"+(0,-10)
    }
    \end{gathered}
    \, .
  $$
\end{proposition}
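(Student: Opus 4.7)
The plan is to exhibit the square as the horizontal paste of two Cartesian squares and then invoke the pasting law (Proposition \ref{PastingLaw}).

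First I would establish the auxiliary Cartesian square
$$
  \xymatrix{
    G \ar[r] \ar[d]_{\Delta} & \ast \ar[d]^{e}
    \\
    G \times G \ar[r]_-{(-) \cdot (-)^{-1}} & G
  }
$$
where $\Delta = (\mathrm{id},\mathrm{id})$ is the diagonal. Commutativity is the invertibility axiom of Definition \ref{group}: $g \cdot g^{-1} = e$. For the universal property, any morphism $p = (p_1,p_2) \colon T \to G \times G$ whose composite with $(-)\cdot(-)^{-1}$ factors through $e$ satisfies $p_1 \cdot p_2^{-1} = e$; multiplying on the right by $p_2$ (using the group structure on generalized elements of $G$) yields $p_1 = p_2$, so $p$ factors uniquely through $\Delta$. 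This factorization is the only place where the \emph{group} (rather than merely \emph{monoid}) hypothesis is used.

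Next I would observe that $X \times_G Y$ can equivalently be presented as the pullback of $\Delta \colon G \to G \times G$ along $f \times g \colon X \times Y \to G \times G$, so that the square
$$
  \xymatrix{
    X \times_G Y \ar[r] \ar[d]_{(\mathrm{pr}_1,\mathrm{pr}_2)} & G \ar[d]^{\Delta}
    \\
    X \times Y \ar[r]_-{f \times g} & G \times G
  }
$$
is Cartesian. This is a standard reformulation of the fiber product: a morphism $T \to X \times Y$ factors through $X \times_G Y$ iff its two composites $T \to X \xrightarrow{f} G$ and $T \to Y \xrightarrow{g} G$ agree, iff the induced morphism $T \to G \times G$ factors through $\Delta$.

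Finally, pasting these two Cartesian squares horizontally along their common edge $G \times G$ and invoking the pasting law (Proposition \ref{PastingLaw}) yields the claimed Cartesian rectangle, whose outer top-right corner is $\ast$ and whose outer bottom edge is $(-) \cdot (-)^{-1} \circ (f \times g)$. The only substantive step is the first; the rest is diagram chasing, and the argument is essentially a categorification of the elementary set-theoretic equivalence $ab^{-1} = e \iff a = b$.
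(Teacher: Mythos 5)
Your proof is correct and complete. The paper states this proposition among the standard category-theoretic facts in its appendix without supplying a proof of its own (deferring to the cited references), and your argument --- presenting $X \times_G Y$ as the pullback of the diagonal $\Delta \colon G \to G \times G$ along $f \times g$, exhibiting $\Delta$ in turn as the pullback of $e$ along $(-)\cdot(-)^{-1}$ via the generalized-element computation $p_1 \cdot p_2^{-1} = e \iff p_1 = p_2$ (which indeed needs the full group, not just monoid, structure, since one uses that right inverses are two-sided), and then pasting via proposition \ref{PastingLaw} --- is exactly the standard one.
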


\begin{definition}[e.g. {\cite[vol 1, section 3]{Borceux}}]
  \label{AdjointFunctor}
  A pair of \emph{adjoint functors}
  denoted $(L \dashv R)$ (or an \emph{adjunction} between two functors), is a pair of functors of the form
  $$
    \xymatrix{
      \mathcal{C}
      \ar@{<-}@<+4pt>[rr]^{L}
      \ar@<-4pt>[rr]_{R}
      &&
      \mathcal{D}
    }
  $$
  equipped with a natural isomorphism (``forming adjuncts'') between their hom-functors of the form
  $$
    \mathrm{Hom}_{\mathcal{C}}(L(-),-)
    \simeq
    \mathrm{Hom}_{\mathcal{D}}(-,R(-))
    \,.
  $$
  Here $L$ is called \emph{left adjoint to $R$} and $R$ is called \emph{right adjoint to $L$}.
  The image $\eta_d$ of $\mathrm{id}_{L d}$ under this isomorphism is called the \emph{unit} of the adjunction at $d \in \mathcal{D}$
  $$
    \eta_d : d \longrightarrow R(L(d))
    \,,
  $$
  while, conversely, the image $\epsilon_d$ of $\mathrm{id}_{R c}$ is called the \emph{counit}
  $$
    \epsilon_d : L(R(d)) \longrightarrow d
    \,.
  $$
  (Unit and counit are themselves natural transformations $\eta : \mathrm{id}_{\mathcal{D}} \longrightarrow R\circ L $
  and $\epsilon : L \circ R \longrightarrow \mathrm{id}_\mathcal{C}$.)
\end{definition}
One also writes horizontal lines for indicating these bijections between sets of adjunct morphisms:
\begin{center}
\begin{tabular}{rcl}
  $d$ & $\stackrel{}{\longrightarrow}$ & $R c$
  \\
  \hline
  $L d$ & $\stackrel{}{\longrightarrow}$ & $c$
\end{tabular}
\end{center}
There are various equivalent definitions of adjoint functors:
\begin{proposition}
  \label{AdjunctionInTermsOfUnitAndCounit}
  An adjunction $L \dashv R$ between two functors (definition \ref{AdjointFunctor})
  is equivalent to two natural transformations, the unit
  $$
    \eta : \mathrm{id} \longrightarrow R \circ L
  $$
  and the counit
  $$
    \epsilon : L \circ R \longrightarrow \mathrm{id},
  $$
  such that the following ``zig-zag''-identities hold:
  $$
    \xymatrix{
      L
        \ar[rr]^-{L (\eta)}
        \ar@/_2pc/[rrrr]_-{\mathrm{id}}
      &&
      L \circ R \circ L
        \ar[rr]^-{\epsilon_{L}}
      &&
      L
    }
  $$
  and
  $$
    \xymatrix{
      R \ar@/_2pc/[rrrr]_-{\mathrm{id}} \ar[rr]^{\eta_R} && R \circ L \circ R \ar[rr]^{R (\epsilon)} && R
    }
    \,.
  $$
\end{proposition}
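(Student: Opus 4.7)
The plan is to prove the two directions of the equivalence separately and then check they are mutually inverse constructions.

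First, in the forward direction, starting from an adjunction in the sense of Definition \ref{AdjointFunctor}, I would define the unit and counit as already indicated in that definition: $\eta_d : d \to R L d$ is the image of $\mathrm{id}_{Ld}$ under the hom-isomorphism $\mathrm{Hom}_{\mathcal{C}}(Ld, Ld) \simeq \mathrm{Hom}_{\mathcal{D}}(d, RLd)$, and $\epsilon_c : L R c \to c$ is the image of $\mathrm{id}_{Rc}$ under $\mathrm{Hom}_{\mathcal{D}}(Rc, Rc) \simeq \mathrm{Hom}_{\mathcal{C}}(LRc, c)$. Naturality of $\eta$ and $\epsilon$ follows from naturality of the hom-isomorphism in both arguments. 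To verify the first zig-zag identity, I would apply the hom-isomorphism to the identity $\mathrm{id}_{Ld}$ in two ways: once directly, and once by factoring through $L\eta_d$ followed by $\epsilon_{Ld}$, using naturality of the hom-bijection with respect to the morphism $L\eta_d$ in one slot. The second zig-zag identity is dual.

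In the reverse direction, given $\eta$ and $\epsilon$ satisfying the zig-zag identities, I would define the candidate bijection by the formulas
\begin{align*}
\Phi : \mathrm{Hom}_{\mathcal{C}}(Ld, c) &\longrightarrow \mathrm{Hom}_{\mathcal{D}}(d, Rc), \qquad f \longmapsto R(f) \circ \eta_d, \\
\Psi : \mathrm{Hom}_{\mathcal{D}}(d, Rc) &\longrightarrow \mathrm{Hom}_{\mathcal{C}}(Ld, c), \qquad g \longmapsto \epsilon_c \circ L(g).
\end{align*}
Naturality in $d$ and $c$ is immediate from functoriality of $L, R$ together with naturality of $\eta$ and $\epsilon$. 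The key calculation is that $\Phi$ and $\Psi$ are mutually inverse: for $f : Ld \to c$,
\[
\Psi(\Phi(f)) = \epsilon_c \circ L(R(f) \circ \eta_d) = \epsilon_c \circ LR(f) \circ L\eta_d = f \circ \epsilon_{Ld} \circ L\eta_d = f,
\]
where the third equality uses naturality of $\epsilon$ and the fourth uses the first zig-zag identity. The computation $\Phi(\Psi(g)) = g$ is dual, using naturality of $\eta$ and the second zig-zag identity.

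Finally, I would check that the two constructions are inverse to each other. Starting with an adjunction, extracting $(\eta, \epsilon)$, and then reconstructing the bijection via $\Phi$, one recovers the original hom-isomorphism: indeed, $\Phi(f) = R(f) \circ \eta_d$ is the image of $f$ under the hom-isomorphism, as can be seen by writing $f = f \circ \mathrm{id}_{Ld}$ and invoking naturality of the adjunction bijection in the first slot. Conversely, starting from $(\eta, \epsilon)$, constructing the hom-bijection via $\Phi$, and extracting its unit and counit returns the original $\eta$ and $\epsilon$ by direct computation: $\Phi(\mathrm{id}_{Ld}) = R(\mathrm{id}_{Ld}) \circ \eta_d = \eta_d$, and dually for $\epsilon$. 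The main obstacle is really only bookkeeping: keeping the direction of the natural transformations and the slots of the naturality squares straight; once that is done, every step reduces to one of the zig-zag identities or to naturality of $\eta$ or $\epsilon$.
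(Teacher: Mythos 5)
Your proposal is correct and complete; it is the standard textbook argument (unit/counit extracted from the hom-bijection in one direction, the formulas $\Phi(f) = R(f)\circ\eta_d$ and $\Psi(g) = \epsilon_c\circ L(g)$ shown mutually inverse via naturality plus the zig-zag identities in the other, followed by the round-trip check). The paper itself gives no proof of this proposition --- it sits in the background appendix with proofs deferred to \cite{MacLane, Borceux} --- and your argument is precisely the one found there, so there is nothing to reconcile.
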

Here is a list of basic properties of adjoint functors:
\begin{proposition}[e.g. {\cite[vol 1, prop. 3.4.1]{Borceux}}]
  \label{AdjointsOfFullEmbeddings}
  For a pair of adjoint functors  $(L \dashv R)$ (definition \ref{AdjointFunctor}) the following are equivalent
  \begin{enumerate}
    \item the right adjoint $R$ is a fully faithful functor;
    \item the adjunction counit $L \circ R \longrightarrow \mathrm{id}$ is a natural isomorphism
  \end{enumerate}
  and similarly the following are equivalent:
  \begin{enumerate}
    \item the left adjoint $R$ is a fully faithful functor;
    \item the adjunction unit $\mathrm{id} \longrightarrow R \circ L$ is a natural  isomorphism.
  \end{enumerate}
\end{proposition}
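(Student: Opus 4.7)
The plan is to reduce both equivalences to an application of the Yoneda lemma combined with the zig-zag identities of Proposition \ref{AdjunctionInTermsOfUnitAndCounit}. I will only spell out the first equivalence (the second is entirely dual, swapping the roles of $L,R$ and $\eta,\epsilon$).

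First I would unpack what full faithfulness of $R$ means: it amounts to saying that for every pair $c_1, c_2 \in \mathcal{C}$, the function
$$
  R \;:\; \mathrm{Hom}_{\mathcal{C}}(c_1, c_2) \longrightarrow \mathrm{Hom}_{\mathcal{D}}(R c_1, R c_2)
$$
is a bijection. Simultaneously, I would use the Yoneda lemma in the form that $\epsilon_{c_1} : L R c_1 \to c_1$ is an isomorphism if and only if, for every $c_2 \in \mathcal{C}$, precomposition with $\epsilon_{c_1}$
$$
  (-) \circ \epsilon_{c_1} \;:\; \mathrm{Hom}_{\mathcal{C}}(c_1, c_2) \longrightarrow \mathrm{Hom}_{\mathcal{C}}(L R c_1, c_2)
$$
is a bijection.

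The key step is then to identify these two maps via the adjunction isomorphism. Given $f : c_1 \to c_2$, the $(L \dashv R)$-adjunct of $f \circ \epsilon_{c_1}$ is, by the standard formula (proposition \ref{AdjunctionInTermsOfUnitAndCounit}), the composite
$$
  R c_1 \xrightarrow{\eta_{R c_1}} R L R c_1 \xrightarrow{R(f \circ \epsilon_{c_1})} R c_2,
$$
which factors as $R f \circ R \epsilon_{c_1} \circ \eta_{R c_1}$; but by the second zig-zag identity the tail $R \epsilon_{c_1} \circ \eta_{R c_1}$ equals $\mathrm{id}_{R c_1}$, so the adjunct is simply $R f$. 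Hence the composite
$$
  \mathrm{Hom}_{\mathcal{C}}(c_1, c_2)
    \xrightarrow{(-)\circ \epsilon_{c_1}}
  \mathrm{Hom}_{\mathcal{C}}(L R c_1, c_2)
    \xrightarrow{\simeq}
  \mathrm{Hom}_{\mathcal{D}}(R c_1, R c_2)
$$
coincides with the action of $R$ on morphisms out of $c_1$. Since the second arrow is the adjunction bijection, the first arrow is a bijection (for all $c_2$) iff $R$ is fully faithful on morphisms out of $c_1$. Running this argument for all $c_1$ yields the equivalence.

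I do not expect any serious obstacle: the proof is purely formal, and the only subtlety is choosing the right formula for the adjunct of $f \circ \epsilon_{c_1}$ and invoking precisely one zig-zag identity. The dual statement follows by the same argument applied to the opposite categories, where $(L \dashv R)$ becomes $(R^{\mathrm{op}} \dashv L^{\mathrm{op}})$, $\epsilon$ becomes the new unit, and fully faithfulness of $L$ corresponds to the new counit being an isomorphism.
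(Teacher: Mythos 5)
Your argument is correct: the identification of $(-)\circ\epsilon_{c_1}$ with the action of $R$ on hom-sets, via the adjunction bijection and one zig-zag identity, together with the (co-)Yoneda characterization of isomorphisms, is exactly the standard proof, and the dual case does follow formally by passing to opposite categories. The paper itself gives no proof but cites Borceux, and your argument is essentially the one found there, so there is nothing to reconcile.
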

\begin{proposition}[e.g {\cite[thm.V.5.1]{MacLane}}{\cite[vol 1, prop. 3.2.2]{Borceux}}]
  \label{rightadjointpreserveslimits}
	A right adjoint functor (definition~\ref{AdjointFunctor}) preserves
	all small limits. Dually, a left adjoint functor preserves all small
	colimits.
\end{proposition}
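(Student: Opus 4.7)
The plan is to prove that right adjoints preserve limits by the standard ``hom-calculation + Yoneda'' argument, and then to deduce the dual statement about left adjoints and colimits by passing to opposite categories.

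Fix an adjunction $L \dashv R$ with $L \colon \mathcal{D} \to \mathcal{C}$ and $R \colon \mathcal{C} \to \mathcal{D}$. Let $F \colon J \to \mathcal{C}$ be a small diagram admitting a limit $(\varprojlim_j F(j), (\pi_j)_{j\in J})$ in $\mathcal{C}$. I will show that the cone $(R(\varprojlim_j F(j)), (R(\pi_j))_{j\in J})$ over $R \circ F$ is a limit cone in $\mathcal{D}$. The strategy is to compute, for each $d \in \mathcal{D}$, the set of cones over $R \circ F$ with vertex $d$ and show that it is naturally in bijection with $\mathrm{Hom}_{\mathcal{D}}(d, R(\varprojlim_j F(j)))$.

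Concretely, I will chain together the following natural isomorphisms:
\begin{align*}
  \mathrm{Hom}_{\mathcal{D}}\bigl(d,\, R(\varprojlim_j F(j))\bigr)
  &\;\simeq\;
  \mathrm{Hom}_{\mathcal{C}}\bigl(L(d),\, \varprojlim_j F(j)\bigr) \\
  &\;\simeq\;
  \varprojlim_j \mathrm{Hom}_{\mathcal{C}}\bigl(L(d),\, F(j)\bigr) \\
  &\;\simeq\;
  \varprojlim_j \mathrm{Hom}_{\mathcal{D}}\bigl(d,\, R(F(j))\bigr) \\
  &\;\simeq\;
  \mathrm{Hom}_{\mathcal{D}}\bigl(d,\, \varprojlim_j R(F(j))\bigr),
\end{align*}
where the first and third steps are instances of the adjunction hom-isomorphism of definition~\ref{AdjointFunctor}, and the second step uses the fact that the covariant representable $\mathrm{Hom}_{\mathcal{C}}(L(d),-)$ sends limits in $\mathcal{C}$ to limits in $\mathrm{Set}$ (which is immediate from the very definition of a limit cone). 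The final step is the universal property of the limit $\varprojlim_j R(F(j))$ in $\mathcal{D}$, assuming it exists. A slight rephrasing that avoids presupposing existence: the composite bijection exhibits $R(\varprojlim_j F(j))$, equipped with the cone $(R(\pi_j))_{j\in J}$, as representing the functor of cones over $R\circ F$, so $R(\varprojlim_j F(j))$ \emph{is} the limit (and in particular it exists). Checking that the composite bijection is induced by postcomposition with the cone $(R(\pi_j))_{j\in J}$ is a routine unwinding using naturality of the unit $\eta$ and the explicit formula for the hom-isomorphism, and then Yoneda (applied naturally in $d$) upgrades the bijection to the desired identification of cones.

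For the dual statement, I will invoke the standard duality principle. An adjunction $L \dashv R$ in $\mathcal{C} \leftrightarrows \mathcal{D}$ induces an adjunction $R^{\mathrm{op}} \dashv L^{\mathrm{op}}$ between the opposite categories, in which $L^{\mathrm{op}}$ is the right adjoint. Limits in $\mathcal{D}^{\mathrm{op}}$ are colimits in $\mathcal{D}$, and similarly for $\mathcal{C}$, so applying the already-proven first half to $L^{\mathrm{op}}$ gives that $L$ preserves small colimits.

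The argument is entirely formal, so there is no real obstacle; the only point that warrants care is verifying that the four displayed isomorphisms are natural in $d$ and that their composite is indeed induced by postcomposition with the cone $(R(\pi_j))_{j\in J}$, so that Yoneda legitimately concludes the proof.
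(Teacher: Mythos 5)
Your proof is correct and is the standard argument; the paper itself gives no proof of this proposition but simply cites \cite{MacLane} and \cite{Borceux}, where essentially this same ``hom-isomorphism plus representability/Yoneda'' argument (together with duality for the colimit statement) is carried out. Your rephrasing that the composite bijection exhibits $R(\varprojlim_j F(j))$ as representing the cone functor, rather than presupposing that $\varprojlim_j R(F(j))$ exists, is exactly the right way to close the one potential gap.
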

%\begin{proof}
%  Consider an adjunction $L \dashv R$ as in definition \ref{AdjointFunctor}
%  and let $F : I \to \mathcal{C}$ be a diagram such that the limit $\underset{\underset{i \in I}{\longrightarrow}}{\lim}\, F_i$
%  exists in $\mathcal{C}$.
%  Observe that any hom-functor preserves limits in its second argument, by the very definition of
%  limits. Therefore there is the following sequence of natural isomorphisms:
%  $$
%    \begin{aligned}
%      \mathrm{Hom}_{\mathcal{D}}(-,R (\underset{\underset{i \in I}{\longrightarrow}}{\lim}\,F_i) )
%        & \simeq
%      \mathrm{Hom}_{\mathcal{C}}(L(-), \underset{\underset{i \in I}{\longrightarrow}}{\lim}\,  F_i )
%      \\
%      & \simeq
%      \underset{\underset{i \in I}{\longrightarrow}}{\lim}\,
%      \mathrm{Hom}_{\mathcal{C}}(L(-), F_i)
%      \\
%      & \simeq
%      \underset{\underset{i \in I}{\longrightarrow}}{\lim}\,
%      \mathrm{Hom}_{\mathcal{D}}(-,R(F_i))
%      \\
%      & \simeq
%      \mathrm{Hom}_{\mathcal{D}}(-,\underset{\underset{i \in I}{\longrightarrow}}{\lim}\, R(F_i))
%      \,.
%    \end{aligned}
%  $$
%  This composite is a natural isomorphism between representable functors on $\mathcal{D}^{\mathrm{op}}$,
%  and therefore the fully faithfulness of the Yoneda embdding implies that this comes from an
%  isomorphism of repreenting objects:
%  $$
%    R(\underset{\underset{i \in I}{\longrightarrow}}{\lim}\, F_i)
%    \simeq
%    \underset{\underset{i \in I}{\longrightarrow}}{\lim}\, R(F_i)
%    \,.
%  $$
%  The argument for the preservation of colimits by $L$ is formally dual.
%\end{proof}
\begin{proposition}
  \label{AdjunctsInTermsOfUnitAndCounit}
  Given an adjunction $(L \dashv R)$ as in definition~\ref{AdjointFunctor},
  then
  \begin{itemize}
     \item the adjunct of a morphism of the form $f :d \longrightarrow R c$ is
       equivalently the composite
       $$
         \xymatrix{
           L d \ar[r]^{L(f)} & L R c \ar[r]^-{\epsilon_c} &  c
         };
       $$
     \item the adjunct of a morphism of the form $g : L c \longrightarrow d$
     is equivalently the composite
     $$
       \xymatrix{
         c \ar[r]^-{\eta_c} & R L c \ar[r]^{R(g)} & R d
       }
       \,.
     $$
  \end{itemize}
\end{proposition}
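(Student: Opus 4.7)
The plan is to derive both formulas as direct consequences of the naturality of the hom-set bijection $\phi_{c,d}\colon \mathrm{Hom}_{\mathcal{C}}(Lc,d) \xrightarrow{\simeq} \mathrm{Hom}_{\mathcal{D}}(c,Rd)$ defining the adjunction, together with the defining identities $\eta_c = \phi_{c,Lc}(\mathrm{id}_{Lc})$ and $\epsilon_d = \phi^{-1}_{Rd,d}(\mathrm{id}_{Rd})$ from definition~\ref{AdjointFunctor}. There is essentially no obstacle beyond bookkeeping; the entire content is the functoriality of $\phi$ in each of its two arguments applied at an identity morphism.

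For the first claim, I would fix $f\colon d \to Rc$ and consider the naturality square of $\phi^{-1}$ in its first argument, induced by $f$ viewed as a morphism $d \to Rc$ in $\mathcal{D}$:
$$
\xymatrix{
\mathrm{Hom}_{\mathcal{D}}(Rc,Rc)
    \ar[rr]^{(-)\circ f}
    \ar[d]_{\phi^{-1}_{Rc,c}}
&&
\mathrm{Hom}_{\mathcal{D}}(d,Rc)
    \ar[d]^{\phi^{-1}_{d,c}}
\\
\mathrm{Hom}_{\mathcal{C}}(LRc,c)
    \ar[rr]_{(-)\circ L(f)}
&&
\mathrm{Hom}_{\mathcal{C}}(Ld,c)
}
$$
Chasing $\mathrm{id}_{Rc}$ around this square: across the top it becomes $f$, whose image under $\phi^{-1}_{d,c}$ is by definition the adjunct of $f$; down the left side it becomes $\epsilon_c$, and then across the bottom it becomes $\epsilon_c \circ L(f)$. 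Commutativity of the square identifies these two expressions.

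For the second claim, I would proceed dually. Fix $g\colon Lc \to d$ and apply naturality of $\phi$ in its second argument, now induced by $g$ viewed as a morphism $Lc \to d$ in $\mathcal{C}$:
$$
\xymatrix{
\mathrm{Hom}_{\mathcal{C}}(Lc,Lc)
    \ar[rr]^{g \circ (-)}
    \ar[d]_{\phi_{c,Lc}}
&&
\mathrm{Hom}_{\mathcal{C}}(Lc,d)
    \ar[d]^{\phi_{c,d}}
\\
\mathrm{Hom}_{\mathcal{D}}(c,RLc)
    \ar[rr]_{R(g)\circ (-)}
&&
\mathrm{Hom}_{\mathcal{D}}(c,Rd)
}
$$
Chasing $\mathrm{id}_{Lc}$ around this square: along the top it becomes $g$, which passes down to its adjunct $\phi_{c,d}(g)$; along the left side it becomes $\eta_c$, and then along the bottom $R(g)\circ \eta_c$. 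Commutativity gives the identity $\phi_{c,d}(g) = R(g)\circ \eta_c$, which is the second claimed formula.

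Both formulas are thus immediate. No step is truly delicate; the only thing to be careful about is being clear that the naturality used for the first claim is naturality of $\phi^{-1}$ in the $\mathcal{D}$-variable (covariant, since $L$ is covariant but precomposition is contravariant, and here $f$ appears as the source so both sides go contravariantly), while the second uses naturality in the $\mathcal{C}$-variable (covariant on both sides via postcomposition). I would also mention in passing that these two identities are equivalent to, and often taken as the defining relations for, the unit--counit description of the adjunction recorded in proposition~\ref{AdjunctionInTermsOfUnitAndCounit}.
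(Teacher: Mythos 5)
Your proof is correct and is exactly the standard naturality-chase argument; the paper gives no proof of this proposition at all, deferring (per the preamble of its appendix) to MacLane and Borceux, where the same argument appears. Nothing to add beyond noting that your two squares correctly account for the variance in each variable, which is the only place this could go wrong.
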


Key examples of adjoint pairs and adjoint triples are Kan extensions:
\begin{proposition}[Kan extension, e.g. {\cite[vol 1, section 3.7]{Borceux}}]
  \label{FactsAboutLeftKanExtension}
  Given a functor $f : C \longrightarrow D$ between small categories, then
  the induced functor on categories of presheaves $f^\ast : \mathrm{PSh}(D) \longrightarrow \mathrm{PSh}(C)$
  (given by precomposing a presheaf with $f$) has both a left and a right adjoint (definition~\ref{AdjointFunctor}), denoted
  $f_!$ and $f_\ast$ respectively, and called the operations of left and right Kan extension along $f$.
  $$
    (f_! \dashv f^\ast \dashv f_\ast)
    :
    \xymatrix{
      \mathrm{PSh}(C)
      \ar@<+6pt>[rr]^{f_!}
      \ar@{<-}@<+0pt>[rr]|{f^\ast}
      \ar@<-6pt>[rr]_{f_\ast}
      &&
      \mathrm{PSh}(D)
    }
    \,.
  $$
  Moreover, the left Kan extension of a presheaf $A \in \mathrm{PSh}(C)$ is equivalently the presheaf which to any object $d\in D$
  assigns the set expressed by the coend formula
  $$
    (f_! A)(d) \simeq \int^{c \in C} \mathrm{Hom}_D(d,f(c))\times \mathrm{Hom}_{\mathrm{PSh}(C)}(c,A)
    \,,
  $$
  where on the right we are identifying $c$ with the presheaf that it represents.
  Explicitly, this coend gives the set of equivalence classes of pairs of morphisms
  $$
    (d \to f(c), c \to A)
  $$
  where two such pairs are regarded as equivalent if there is a morphism $\phi : c_1 \to c_2$ in $C$ such that
  the following two triangles commute
  $$
    \begin{gathered}
    \xymatrix@R=8pt{
      & d
      \ar[dddl]
      \ar[dddr]
      \\
      \\
      \\
      f(c_1) \ar[rr]^{f(\phi)} && f(c_2)
      \\
      c_1 \ar[dddr] \ar[rr]^-{\phi} && c_2 \ar[dddl]
      \\
      \\
      \\
      & A
    }
    \end{gathered}
    \, .
  $$
  In particular the left Kan extension of a representable presheaf $y(c)$ is the presheaf represented by the image under
  the given functor $f$ of the representing object $c$:
  $$
    f_!(y(c)) \simeq y(f(x))
    \,.
  $$
\end{proposition}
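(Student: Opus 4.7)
The plan is to construct $f_!$ and $f_\ast$ explicitly by means of coend and end formulas and to verify the adjunctions directly from these, rather than invoking an abstract adjoint functor theorem. The guiding heuristic is that any presheaf $A \in \mathrm{PSh}(C)$ decomposes as a colimit of representables via the co-Yoneda lemma, $A \simeq \int^{c \in C} A(c) \cdot y(c)$, and any left adjoint to $f^\ast$ must preserve this colimit. Moreover, by the Yoneda lemma, the left adjoint on a representable is forced to be $f_!(y(c)) \simeq y(f(c))$, since
\[
\mathrm{Hom}_{\mathrm{PSh}(D)}(f_!\,y(c), B) \simeq \mathrm{Hom}_{\mathrm{PSh}(C)}(y(c), f^\ast B) \simeq (f^\ast B)(c) = B(f(c)) \simeq \mathrm{Hom}_{\mathrm{PSh}(D)}(y(f(c)), B).
\]
Substituting this back into the colimit decomposition, and using Yoneda again in the form $\mathrm{Hom}_{\mathrm{PSh}(C)}(y(c), A) \simeq A(c)$, yields the coend formula claimed in the proposition.

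Next, I would verify the adjunction $f_! \dashv f^\ast$ directly from the coend formula. Using that $\mathrm{Hom}_{\mathrm{PSh}(D)}(-, B)$ turns coends in its first argument into ends, and that by Yoneda $\mathrm{Hom}_{\mathrm{PSh}(D)}(\mathrm{Hom}_D(-, f(c)), B) \simeq B(f(c)) = (f^\ast B)(c)$, one computes
\[
\mathrm{Hom}_{\mathrm{PSh}(D)}(f_! A, B)
\;\simeq\;
\int_{c \in C} \mathrm{Hom}_{\mathrm{Set}}\bigl(A(c), (f^\ast B)(c)\bigr)
\;=\;
\mathrm{Hom}_{\mathrm{PSh}(C)}(A, f^\ast B),
\]
which is the required natural bijection. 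The explicit description of $(f_! A)(d)$ as equivalence classes of pairs $(d \to f(c),\, c \to A)$ subject to the relation generated by morphisms $\phi \colon c_1 \to c_2$ in $C$ is then just the unwinding of this coend as the coequalizer of two maps between coproducts indexed respectively by objects and by morphisms of $C$.

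For the right adjoint $f_\ast$, the dual construction gives the end formula $(f_\ast A)(d) := \int_{c \in C} \mathrm{Hom}_{\mathrm{Set}}\bigl(\mathrm{Hom}_D(f(c), d), A(c)\bigr)$, and an analogous manipulation (turning ends in the second argument of $\mathrm{Hom}$ into ends, and applying Yoneda) yields the adjunction $f^\ast \dashv f_\ast$. Finally, preservation of representables $f_!(y(c)) \simeq y(f(c))$ follows by plugging $A = y(c)$ into the coend formula and applying co-Yoneda in the variable $c' \in C$:
\[
(f_! y(c))(d) \;=\; \int^{c' \in C} \mathrm{Hom}_D(d, f(c')) \times \mathrm{Hom}_C(c, c') \;\simeq\; \mathrm{Hom}_D(d, f(c)).
\]
The main technical obstacle is really just the coend calculus: once one accepts the manipulation of weighted (co)limits and the Yoneda lemma in both variables, all the verifications are mechanical and of the form routinely carried out in the cited references.
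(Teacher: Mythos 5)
The paper itself offers no proof of this proposition: it is quoted as a standard fact with a pointer to Borceux, so there is nothing internal to compare against. Your coend-calculus argument is essentially the textbook proof from that reference, and it is correct: determining $f_!$ on representables by Yoneda, extending along the co-Yoneda decomposition $A \simeq \int^{c} A(c)\cdot y(c)$ to get the coend formula, and then verifying the adjunction by the continuity of $\mathrm{Hom}$ in the coend variable together with the end-form of the Yoneda lemma $\int_{d} \mathrm{Hom}\bigl(\mathrm{Hom}_D(d,f(c)), B(d)\bigr) \simeq B(f(c))$. The structure is also non-circular, since the first step is only motivation and the direct verification from the coend formula is what establishes existence. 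One small slip: in your final display the second factor should be $\mathrm{Hom}_C(c',c) \simeq \mathrm{Hom}_{\mathrm{PSh}(C)}\bigl(y(c'),y(c)\bigr)$ rather than $\mathrm{Hom}_C(c,c')$; as written the integrand is covariant in $c'$ in both factors and does not typecheck as a coend, whereas with the correct variance the reduction to $\mathrm{Hom}_D(d,f(c))$ is exactly co-Yoneda, as you intend.
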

\begin{remark}
For a presheaf $A$ on $\mathcal{C},$ the presheaf $f_*\left(A\right)$ can also be described explicitly. Namely, for $D$ an object of $\mathcal{D},$ one has $f_*\left(A\right)\left(D\right)\cong\mathrm{Hom}_{\mathrm{Psh}\left(\mathcal{C}\right)}\left(f^*y\left(D\right),A\right).$ This follows easily from the Yoneda lemma.
\end{remark}
\begin{proposition}
  \label{LeftKanExtensionAlongEmbedding}
  If $f : C \hookrightarrow D$ is a fully faithful functor, then so is its left Kan extension
  (proposition \ref{FactsAboutLeftKanExtension})
  $f_! : \mathrm{PSh}(C) \hookrightarrow \mathrm{PSh}(D)$, hence (by proposition \ref{AdjointsOfFullEmbeddings} then
  the adjunction unit $\mathrm{id} \overset{\simeq}{\longrightarrow} f^\ast f_!$
  is a natural isomorphism.
\end{proposition}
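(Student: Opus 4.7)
The plan is to invoke Proposition \ref{AdjointsOfFullEmbeddings}: since $f_!$ is the left adjoint to $f^\ast$, showing that $f_!$ is fully faithful is equivalent to showing that the unit $\eta\colon \mathrm{id}_{\mathrm{PSh}(C)} \to f^\ast f_!$ is a natural isomorphism. So the whole task reduces to computing $f^\ast f_! A$ for an arbitrary presheaf $A \in \mathrm{PSh}(C)$ and identifying the result with $A$ via $\eta_A$.

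First I would unpack the coend formula from Proposition \ref{FactsAboutLeftKanExtension}: for every object $c \in C$,
\[
  (f^\ast f_! A)(c) \;=\; (f_! A)(f(c))
  \;\simeq\;
  \int^{c' \in C} \mathrm{Hom}_D(f(c), f(c')) \times \mathrm{Hom}_{\mathrm{PSh}(C)}(c', A).
\]
Now I would use the hypothesis that $f$ is fully faithful, which gives a natural bijection $\mathrm{Hom}_D(f(c), f(c')) \simeq \mathrm{Hom}_C(c, c')$. Substituting, this reduces the coend to
\[
  \int^{c' \in C} \mathrm{Hom}_C(c, c') \times A(c'),
\]
which by the co-Yoneda (density) lemma is naturally isomorphic to $A(c)$. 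Putting the identifications together yields a natural isomorphism $f^\ast f_! A \xrightarrow{\simeq} A$ for every $A$.

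The key step I expect to be the main (minor) obstacle is verifying that the isomorphism obtained via these three identifications (coend formula, full faithfulness, co-Yoneda) really coincides with the adjunction unit $\eta_A$, rather than some other natural isomorphism between the same functors. To settle this, I would trace an element $x \in A(c)$ through both descriptions: on one side, $\eta_A$ sends $x$, viewed as the morphism $y(c) \to A$ corresponding to $x$ by Yoneda, to the class represented by the pair $(\mathrm{id}_{f(c)}, x)$ in the coend presentation of $(f^\ast f_! A)(c)$; on the other side, the three identifications above send the class $(\mathrm{id}_{f(c)}, x) \mapsto (\mathrm{id}_c, x) \mapsto x$, which matches. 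Hence $\eta_A$ is an isomorphism, so by Proposition \ref{AdjointsOfFullEmbeddings} the functor $f_!$ is fully faithful, proving the claim.
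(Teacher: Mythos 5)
Your proof is correct. Note that the paper itself offers no proof of this proposition: it sits in the appendix of ``standard facts,'' with a blanket citation to MacLane and Borceux, so there is nothing to compare against line by line. Your argument is the standard one, and all three reductions check out: the coend formula from proposition \ref{FactsAboutLeftKanExtension} specialized at $d = f(c)$, the substitution $\mathrm{Hom}_D(f(c),f(c')) \simeq \mathrm{Hom}_C(c,c')$ from full faithfulness of $f$, and the co-Yoneda collapse $\int^{c'}\mathrm{Hom}_C(c,c')\times A(c')\simeq A(c)$. You are also right to flag the identification of the resulting isomorphism with the adjunction unit as the one point needing care, and your element chase (the unit sends $x\in A(c)$ to the class of $(\mathrm{id}_{f(c)},\,\tilde x)$, which the three identifications carry back to $x$) settles it. For what it is worth, a slightly slicker route that avoids the element chase entirely: both $\mathrm{id}$ and $f^\ast f_!$ preserve colimits ($f_!$ is a left adjoint, and $f^\ast$ is one too since it has the right adjoint $f_\ast$), every presheaf is a colimit of representables, and on representables the unit is the map $\mathrm{Hom}_C(-,c)\to\mathrm{Hom}_D(f(-),f(c))$ induced by $f$, which is a bijection exactly by full faithfulness; hence the unit is invertible everywhere. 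Either way the conclusion via proposition \ref{AdjointsOfFullEmbeddings} is as you state.
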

\begin{definition}
  \label{SliceCategory}
  Given a category $\mathcal{C}$ and an object
  $c\in \mathcal{C}$, then the \emph{slice category} $\mathcal{C}_{/c}$
  has as objects the morphisms of $\mathcal{C}$ into $c$, and as morphisms between these the
  commuting triangles in $\mathcal{C}$ of the form
  $$
    \begin{gathered}
    \xymatrix{
      a_1 \ar[dr]_{f_1} \ar[rr] && a_2 \ar[dl]^{f_2}
      \\
      & c
    }
    \end{gathered}
    \,.
  $$
\end{definition}
\begin{proposition}
  The hom-spaces in a slice category $\mathcal{C}_{/c}$, definition \ref{SliceCategory}
  are equivalently given by the fiber product:
  $$
    \mathcal{C}_{/c}(f_1,f_2)
      \simeq
    \mathcal{C}(a_1,a_2) \underset{\mathcal{C}(a_2,c)}{\times} \{f_2\}
  $$
  of hom-spaces in $\mathcal{C}$:
  $$
    \begin{gathered}
    \xymatrix{
      \mathcal{C}_{/c}(f_1,f_2)
      \ar@{}[dr]|{\mbox{\tiny (pb)}} \ar[d]\ar[r]
      &
      \mathcal{C}(a_1, a_2)
      \ar[d]
      \\
      \ast
      \ar[r]_{\tilde f_2}
      &
      \mathcal{C}(a_2,c)
    }
    \end{gathered}
    \,.
  $$
  where $\tilde f_2$ picks the element $f_2$ in $\mathcal{C}(a_2,c)$.
\end{proposition}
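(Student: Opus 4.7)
The plan is to prove the statement by a direct unwinding of definitions, recognizing that the hom-set in the slice is literally defined as the set of lifts witnessing a commuting triangle, and that such a set of lifts is, by construction, the pullback in $\mathrm{Set}$ of the relevant post-composition map against the chosen basepoint. So this is not a theorem that requires clever ideas; the entire task is to transcribe the slice-category definition into the language of hom-set pullbacks.

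First, I would appeal to definition~\ref{SliceCategory}: a morphism in $\mathcal{C}_{/c}$ from $f_1 \colon a_1 \to c$ to $f_2 \colon a_2 \to c$ is precisely a morphism $g \colon a_1 \to a_2$ in $\mathcal{C}$ such that the triangle
$$
  \xymatrix{
    a_1 \ar[dr]_{f_1} \ar[rr]^{g} && a_2 \ar[dl]^{f_2}
    \\
    & c
  }
$$
commutes, i.e.\ such that $f_2 \circ g = f_1$. Hence the hom-set $\mathcal{C}_{/c}(f_1,f_2)$ is canonically a subset of $\mathcal{C}(a_1,a_2)$, namely the preimage of $\{f_1\} \subset \mathcal{C}(a_1,c)$ under the post-composition-with-$f_2$ function $(f_2)_\ast \colon \mathcal{C}(a_1,a_2) \to \mathcal{C}(a_1,c)$, $g \mapsto f_2 \circ g$.

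Next, I would invoke the explicit construction of pullbacks in $\mathrm{Set}$: for any function $\phi \colon X \to Z$ and any point $z \colon \ast \to Z$, the fiber $\phi^{-1}(z)$ fits into a Cartesian square whose two remaining sides are $\phi$ and $z$, and it is the universal such set. Applying this with $\phi = (f_2)_\ast$ and $z = \tilde f_1$ the point picking out $f_1 \in \mathcal{C}(a_1,c)$, the preimage identified in the previous paragraph is exactly the pullback
$$
  \mathcal{C}(a_1,a_2) \underset{\mathcal{C}(a_1,c)}{\times} \{f_1\},
$$
which is the content of the statement (up to the evident typographic correction that the fiber product is formed over $\mathcal{C}(a_1,c)$, the target of post-composition with $f_2$, and that the chosen basepoint $\tilde f_1$ selects the prescribed element $f_1$).

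There is no genuine obstacle here. The only point that warrants attention is matching the direction of post-composition with the correct hom-set: a morphism $g \colon a_1 \to a_2$ is postcomposed with $f_2 \colon a_2 \to c$ to land in $\mathcal{C}(a_1,c)$, and it is against the choice of $f_1 \in \mathcal{C}(a_1,c)$ that the equation $f_2 \circ g = f_1$ is imposed. Once this is fixed, the Cartesian square displayed in the statement exhibits $\mathcal{C}_{/c}(f_1,f_2)$ as the stated fiber product by the universal property of the pullback in $\mathrm{Set}$.
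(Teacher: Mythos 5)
Your proof is correct and is exactly the argument the paper leaves implicit (no proof is given there): a slice morphism $f_1 \to f_2$ is by definition a $g \in \mathcal{C}(a_1,a_2)$ with $f_2 \circ g = f_1$, i.e.\ the fiber of the post-composition map $(f_2)_\ast \colon \mathcal{C}(a_1,a_2) \to \mathcal{C}(a_1,c)$ over $f_1$, and fibers over points are pullbacks in $\mathrm{Set}$. You are also right to flag the typographic slip in the statement: as printed, the base of the fiber product is $\mathcal{C}(a_2,c)$ with basepoint $f_2$, but there is no natural map $\mathcal{C}(a_1,a_2) \to \mathcal{C}(a_2,c)$, so the intended base is $\mathcal{C}(a_1,c)$ with basepoint $f_1$, exactly as you correct it.
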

\begin{example}
  If $\ast \in \mathcal{C}$ is a terminal object, then there is an equivalence of categories
  $$
    \mathcal{C}_{/\ast}\simeq \mathcal{C}
  $$
  between the slice category over $\ast$ (definition \ref{SliceCategory}) and the original category.
\end{example}
\begin{example}
  \label{SliceCartesianProduct}
  If $\mathcal{C}$ is a category with finite limits, then for every object $c \in \mathcal{C}$
  the slice category $\mathcal{C}_{/c}$ (definition \ref{SliceCategory}) has terminal object given by
  $$
    [ c \stackrel{\mathrm{id}}{\longrightarrow} c ]
  $$
  and with Cartesian product given by the fiber product over $c$ in $\mathcal{C}$:
  $$
    [ a \to c ]
    \times
    [ b \to c ]
    \;\simeq\;
    [ a \times_c b \to c ]
    \, .
  $$
\end{example}
\begin{example}
  \label{SectionsOfBundles}
  If $\mathcal{C}$ is a category thought of as a category of spaces
  (as in section \ref{CategoriesOfSPaces}) then for $\Sigma \in \mathcal{C}$ any object,
  thought of as a base space, we may think of the slice category $\mathcal{C}_{/\Sigma}$
  (definition \ref{SliceCategory}) as the \emph{category of bundles} over $\Sigma$,
  in the generality where bundles are not required to be fiber bundles and in particular may have empty fibers.
  For
  $$
    [ E \stackrel{p}{\longrightarrow} \Sigma ]
    \in
    \mathcal{C}_{/\Sigma}
  $$
  any such bundle, then bundle morphisms from the terminal bundle
  $$
    [ \Sigma \stackrel{\mathrm{id}}{\longrightarrow} \Sigma ]
    \in
    \mathcal{C}_{/\Sigma}
  $$
  are equivalently sections of the bundle $ E \stackrel{p}{\to} \Sigma$. We write
  $$
    \Gamma_{\Sigma}(E)
     :=
    \mathrm{Hom}_{\mathcal{C}_{/\Sigma}}\left(
    [ \Sigma \stackrel{\mathrm{id}}{\longrightarrow} \Sigma ]
    \;,\;
    [ E \stackrel{p}{\longrightarrow} \Sigma ]
    \right)
  $$
\end{example}

%%%%%%%%%%%%%%%%%%%%%%%%%%%%%%%%%%%%%%%
\subsection{Categories of sheaves}
%%%%%%%%%%%%%%%%%%%%%%%%%%%%%%%%%%%%%%%

For reference, we collect here some facts about categories of sheaves (Grothendieck toposes) that we use in the main text,
see for instance \cite[vol 3]{Borceux}

\begin{definition}[site]
 \label{site}
 For $\mathcal{C}$ a small category, then a \emph{coverage} or
 \emph{Grothendieck pre-topology} on $\mathcal{C}$ is for each
 object $X \in \mathcal{C}$ a set of families of morphisms
 $\{U_i \overset{\phi_i}{\longrightarrow} X\}_{i \in I}$ into $X$,
 called the \emph{covering families}, which are such that for each morphism
 $Y \longrightarrow X$ there exists a covering family
 $\{V_i \overset{\psi_j}{\longrightarrow} Y\}_{j \in J}$ such that for each
 $i \in I$ there is a commuting square of the form
 $$
   \begin{gathered}
   \xymatrix{
     V_{j(i)}
     \ar[d]_{\psi_{j(i)}}
     \ar[r]
     &
     U_i
     \ar[d]^{\phi_i}
     \\
     Y \ar[r] & X
   }
   \end{gathered}
   \,.
 $$
 A small category $\mathcal{C}$ equipped with a coverage is called a \emph{site}.
\end{definition}
\begin{definition}
  \label{sheaf}
  For $\mathcal{S}$ a small site (definition \ref{site})
  then a presheaf $F \colon \mathcal{S}^{\mathrm{op}} \longrightarrow \mathrm{Set}$
  is called a \emph{sheaf} if for all covering families $\{U_i \overset{\phi_i}{\longrightarrow} U\}_{i \in I}$
  in $\mathcal{S}$ and for all tuples of elements $(x_i \in F(U_i))_{i \in I}$
  and for all pairs of morphisms $g \colon V \to U_{i_1}$ and $h \colon V \to U_{i_2}$
  in $\mathcal{S}$ such that $\phi_{i_1} \circ g = \phi_{i_2} \circ h$  and such that
  $F(g)(x_{i_1}) = F(h)(x_{i_2})$ then there exists a unique $x \in F(U)$ such that
  $x_i = F(\phi_i)(x)$ for all $i \in I$.

  We write
  $$
    \mathrm{Sh}(\mathcal{S})
      \hookrightarrow
    \mathrm{PSh}(\mathcal{S})
  $$
  for the full inclusion of the sheaves into the category of presheaves. We call this the
  \emph{category of sheaves} or the \emph{sheaf topos} or the \emph{Grothendieck topos} over $\mathcal{S}$.
\end{definition}
The following is an important structure theorem for categories of sheaves:
\begin{proposition}[{e.g. \cite{MacLaneMoerdijk92}}]
  \label{IncludingSheavesIntoPresheaves}
	Given a small category $\mathcal{S}$ with the structure of a site
	(definition \ref{site}) then the full inclusion of the sheaves into
	the presheaves (definition \ref{sheaf}) preserves filtered colimits
	and has a left adjoint functor $L_{\mathcal{S}}$ (`sheafification`'')
	which preserves finite limits.
  $$
    \xymatrix{
      \mathrm{Sh}(\mathcal{S})
        \ar@{^{(}->}@<-5pt>[rr]
        \ar@{<--}@<+5pt>[rr]^-{L_{\mathcal{S}}}_-{\bot}
        &&
      \mathrm{PSh}(\mathcal{S})
    }
    \,.
  $$
  Conversely, every full inclusion of this form into a category of presheaves on some small category $\mathcal{C}$
  is the inclusion of a sheaves for some site structure on $\mathcal{C}$.
\end{proposition}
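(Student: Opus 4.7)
The plan is to split the proposition into its two parts. For the forward direction, I would construct the sheafification functor $L_{\mathcal{S}}$ via Grothendieck's plus-construction, reformulated in terms of covering sieves so as not to require that $\mathcal{S}$ admit fiber products. Concretely, to each presheaf $F$ I associate the presheaf $F^+$ whose value on $U$ is the filtered colimit, over the poset of covering sieves $S \hookrightarrow y(U)$ ordered by reverse inclusion (refinement), of the set of matching families of $F$ indexed by $S$ modulo the natural equivalence. The coverage axiom of definition \ref{site} guarantees that this poset is indeed filtered, since any two covering sieves can be refined by a common one. The standard trio of lemmas then states that $F^+$ is always separated, that $(F^+)^+$ is always a sheaf, and that $F \to (F^+)^+$ is an isomorphism when $F$ is already a sheaf. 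Setting $L_{\mathcal{S}} := (-)^{++}$ and reading off the universal property from the colimit construction produces the adjunction $L_{\mathcal{S}} \dashv \iota$.

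For the two exactness claims, the key input is the classical fact that in $\mathrm{Set}$, and hence pointwise in $\mathrm{PSh}(\mathcal{S})$, filtered colimits commute with finite limits. Finite limits in presheaves are computed objectwise, so to show $L_{\mathcal{S}}$ preserves them it suffices to show $(-)^+$ does; but $(-)^+$ is by construction a filtered colimit of finite-limit expressions (each matching-family set is an equalizer of a pair of maps between products), so finite limits pass through. The same commutation implies that the sheaf condition — which is a finite limit condition at each stage — is preserved by filtered colimits of presheaves, so a filtered colimit of sheaves computed in $\mathrm{PSh}(\mathcal{S})$ is still a sheaf; equivalently, $\iota$ preserves filtered colimits. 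Both arguments are standard once the plus-construction is in hand.

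For the converse, given an arbitrary full reflective inclusion $\iota : \mathcal{R} \hookrightarrow \mathrm{PSh}(\mathcal{C})$ whose left adjoint $L$ preserves finite limits, the plan is to recover a site structure on $\mathcal{C}$ via the Lawvere-Tierney topology associated to the localization. Let $\Omega$ be the subobject classifier of $\mathrm{PSh}(\mathcal{C})$ and let $j : \Omega \to \Omega$ be the classifying morphism of the $L$-closure of the truth morphism $\top : 1 \hookrightarrow \Omega$; the left-exactness of $L$ is exactly what ensures that $j$ is a Lawvere-Tierney topology. One then declares a family $\{U_i \to U\}$ to be covering precisely when the generated sieve is $j$-dense as a subobject of $y(U)$. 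Verifying the coverage axiom from the idempotence and multiplicativity of $j$, and checking that the resulting sheaves in the sense of definition \ref{sheaf} are exactly the $j$-sheaves and hence coincide with the essential image of $\iota$, completes the argument.

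The main obstacle, as I see it, is the converse direction: the equivalence between left-exact reflective subcategories of $\mathrm{PSh}(\mathcal{C})$ and Grothendieck topologies on $\mathcal{C}$ is genuinely a theorem rather than an unwinding of definitions, and it rests on the fairly technical correspondence between Lawvere-Tierney topologies and coverages. In the forward direction the subtle point is verifying that the poset of covering sieves is filtered and that the plus-construction at each object is really expressible as a filtered colimit of finite limits of sets; once this is phrased correctly via matching families the rest is formal.
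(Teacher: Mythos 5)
The paper does not actually prove this proposition: it is quoted from Mac\,Lane--Moerdijk, under the appendix's blanket remark that proofs of the background facts are to be found in the cited references. So there is no internal argument to compare against, and your plus-construction route to the adjunction and to the left-exactness of $L_{\mathcal{S}}$ is precisely the standard one and is sound. In particular the left-exactness argument survives infinite covers: for a fixed covering sieve $S$ the matching-family set is a (possibly infinite) limit and hence preserves finite limits in $F$, and the filtered colimit over covering sieves then commutes with finite limits in $\mathrm{Set}$. (The only small point to spell out is that one must pass from the bare coverage of definition \ref{site} to its generated Grothendieck topology before the poset of covering sieves is filtered.) Your Lawvere--Tierney route to the converse is likewise the standard proof and correctly identifies where the real work lies.

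The genuine gap is in the clause that the inclusion preserves filtered colimits. You assert that ``the sheaf condition is a finite limit condition at each stage,'' but it is not: for a covering family $\{U_i \to U\}_{i \in I}$ the condition asks that $F(U)$ be the equalizer of a pair of maps between products indexed by $I$ and by pairs from $I$, and these products are infinite whenever the cover is. Filtered colimits in $\mathrm{Set}$ do not commute with infinite products, so an objectwise filtered colimit of sheaves need not be a sheaf. This is not a presentational slip that can be patched: already for the paper's own site $\mathrm{CartSp}$, the subsheaves $F_k \subseteq C^\infty(-,\mathbb{R})$ of smooth functions valued in $[-k,k]$ have as objectwise filtered colimit the presheaf of bounded smooth functions, which fails descent for the good open cover of $\mathbb{R}$ by the intervals $(i-1,i+1)$, $i \in \mathbb{Z}$ --- the identity function is locally but not globally bounded, so the matching family it defines has no amalgamation in the colimit presheaf. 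The filtered-colimit clause therefore needs a finiteness hypothesis on the coverage (e.g.\ that every cover admits a finite covering refinement), or must be dropped; none of your arguments for the remaining clauses, nor the paper's later uses of this proposition, depend on it.
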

Here is a list of some basic extra conditions on sites:
\begin{definition}
  \label{subcanonical}
  A site $\mathcal{C}$ (definition \ref{site}) is called \emph{sub-canonical}
  if every representable presheaf is a sheaf (definition \ref{sheaf}), i.e. if the Yoneda embedding
  factors through the full inclusion of sheaves (proposition \ref{IncludingSheavesIntoPresheaves})
  $$
    \xymatrix{
      \mathcal{C}
        \ar@{-->}[r]
        \ar@/_1pc/[rr]_{y}
        &
      \mathrm{Sh}(\mathcal{C})
        \ar@{^{(}->}[r]
        &
      \mathrm{PSh}(\mathcal{C})
    }
    \,.
  $$
\end{definition}
\begin{definition}
  \label{SitePoint}
  Let $\mathcal{C}$ be a small site (definition \ref{site}).
  \begin{enumerate}
  \item A \emph{point of the site}
  is a geometric morphism from the base topos to its sheaf topos (definition \ref{sheaf}), hence a pair of adjoint functors $(x^\ast \dashv x_\ast)$ (definition \ref{AdjointFunctor})
  of the form
  $$
    \xymatrix{
      \mathrm{Set}
        \ar@<-3pt>[rr]_{x_\ast}^{\bot}
        \ar@{<-}@<+7pt>[rr]^{x^\ast}
        &&
      \mathrm{Sh}(\mathcal{C})
    }
  $$
  and such that $x^\ast$ preserves finite colimits. In this case left adjoint $x^\ast$ is called
  \emph{forming the stalk at $x$}, hence for $X \in \mathrm{Sh}(\mathcal{C})$ any sheaf, then
  $x^\ast X \in Set$ is called the \emph{stalk} of $X$ at $x$.
  \item
  The site $\mathcal{C}$ is said to have \emph{enough points} if there exists a set $\{x_i\}_{i \in I}$
  of points, in the above sense, such that a morphism $ f: X \to Y$ in $\mathrm{Sh}(\mathcal{C})$
  is an isomorphism precisely if all its stalks $x_i^\ast f : x_i^\ast X \to x_i^\ast Y$ are bijections
  of sets.
  \end{enumerate}
\end{definition}
The following is a list of some basic properties of categories of sheaves that we need in the main text:
\begin{proposition}
  \label{EpimorphismsInCategoriesOfSheaves}
  Let $S$ be a small site (definition \ref{site}), and let $f : X \to Y$ be a morphism in the
  category of sheaves $\mathrm{Sh}(S)$ over it (definition \ref{sheaf}). Then  in generality:
  \begin{enumerate}
    \item $f$ is a monomorphism or isomorphism precisely it is so globally, hence
    if for all object $U \in S$ its component $f_U : X(U) \to Y(U)$ is an
       injection or bijection of sets, respectively;
    \item $f$ is an epimorphism precisely if it is so locally, hence if for each object $U \in S$ there exists a
     cover $\{U_i \stackrel{\phi_i}{\to} U\}_i$ such that for each element $y_U \in Y(U)$
     its restriction $y_{U_i} := f(\phi_i)(y_U)$ is in the image of $f_{U_i} : X(U_i) \to Y(U_i)$
     for all $i$.
  \end{enumerate}
  But if $S$ has \emph{enough points} $\{x_i\}_{i \in I}$ in the sense of definition \ref{SitePoint},
  then $f$ is an epi-/iso-/mono-morphisms precisely if it is so stalkwise, hence precisely if for
  each $i \in I$ then $x_i^\ast f : x_i^\ast X \to x_i^\ast Y$ is a surjection/bijection/injection of sets,
  respectively.
\end{proposition}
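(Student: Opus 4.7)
The plan is to treat the three classes of morphisms separately, reducing each to a question about presheaves where (co)limits are computed objectwise, and then translating back via the sheafification adjunction of proposition~\ref{IncludingSheavesIntoPresheaves}.

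First I would handle monomorphisms and isomorphisms. Since the inclusion $\mathrm{Sh}(S) \hookrightarrow \mathrm{PSh}(S)$ is a right adjoint (proposition~\ref{IncludingSheavesIntoPresheaves}), it preserves all limits, and in particular pullbacks; hence it reflects and preserves monomorphisms, since $f$ is a monomorphism iff its diagonal $X \to X \times_Y X$ is an isomorphism. The inclusion is also fully faithful, hence preserves and reflects isomorphisms. In $\mathrm{PSh}(S)$, pullbacks and isomorphisms are computed objectwise by the definition of the functor category, so in both cases the property reduces to the corresponding property of each component $f_U\colon X(U) \to Y(U)$ in $\mathrm{Set}$.

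For epimorphisms the strategy must be different, because the inclusion into presheaves is \emph{not} a left adjoint and generally fails to preserve epimorphisms; this is the step I expect to be the main obstacle. Here I would instead invoke the sheafification functor $L_S$, which is a left adjoint and hence preserves all colimits, including epimorphisms and coequalizers of kernel pairs. The plan is to factor $f$ as $X \twoheadrightarrow I \hookrightarrow Y$ in $\mathrm{Sh}(S)$ via the (regular epi, mono) factorization, and observe that $I = L_S(I^{\mathrm{psh}})$, where $I^{\mathrm{psh}}$ is the objectwise image presheaf $U \mapsto f_U(X(U)) \subseteq Y(U)$. Then $f$ is a sheaf epimorphism iff the monomorphism $L_S(I^{\mathrm{psh}}) \hookrightarrow Y$ is an isomorphism, which by the first part of the proof is equivalent to it being a bijection on every $Y(U)$. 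Unwinding the explicit plus-construction formula for $L_S$ applied to the subpresheaf $I^{\mathrm{psh}} \hookrightarrow Y$ (or equivalently using the sheaf condition for $Y$), this bijectivity amounts exactly to the local surjectivity condition: for each $U$ and each $y_U \in Y(U)$, there is a covering family $\{U_i \to U\}_i$ such that the restriction $y_{U_i}$ lies in the image of $f_{U_i}$. Conversely, any $f$ satisfying this local surjectivity factors through no proper subsheaf of $Y$, so is a sheaf epimorphism.

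Finally, for the stalkwise characterization under the hypothesis of enough points, the plan is to exploit that each stalk functor $x^\ast\colon \mathrm{Sh}(S) \to \mathrm{Set}$ is by definition~\ref{SitePoint} a left adjoint which in addition preserves finite limits. Hence each $x^\ast$ preserves both monomorphisms (via preservation of the diagonal pullback) and epimorphisms (via preservation of coequalizers), and the family $\{x_i^\ast\}_{i \in I}$ is jointly conservative by the definition of ``enough points.'' Thus $f$ is a mono, epi, or iso in $\mathrm{Sh}(S)$ iff every $x_i^\ast f$ has the corresponding property in $\mathrm{Set}$, which completes the proof.
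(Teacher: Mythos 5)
The paper does not actually prove this proposition: it sits in the appendix of collected standard facts, with the reader referred to the literature (Borceux, Mac Lane--Moerdijk), so there is no in-paper argument to compare yours against. On its own terms your plan is correct and is the standard proof. The mono/iso case via right-adjointness and full faithfulness of the inclusion into presheaves is fine; the epi case via the (epi, mono) factorization, identifying the sheaf-image with the closure $L_S(I^{\mathrm{psh}}) \hookrightarrow Y$ of the objectwise image and unwinding the plus construction, is exactly the right mechanism and correctly isolates why the presheaf-level characterization fails. One point worth making explicit in a full write-up: in the stalkwise part, preservation of monos and epis by each $x_i^\ast$ only gives one direction of each ``iff''; the reflection direction requires applying joint conservativity not to $f$ itself but to the diagonal $X \to X \times_Y X$ (for monos) and to the image inclusion $I \hookrightarrow Y$ (for epis), using that $x_i^\ast$ preserves image factorizations. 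You gesture at this for monos and have the factorization machinery from the epi case available, so this is a matter of spelling it out rather than a gap.
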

\begin{proposition}[universal colimits]
  \label{UniversalColimits}
	In a category of sheaves $\mathrm{Sh}(\mathcal{C})$ (definition
	\ref{sheaf}), colimits are compatible with fiber products : If $X : I
	\longrightarrow \mathrm{Sh}(\mathcal{C})$ is a diagram and
	$\varinjlim_i X_i \longrightarrow B$ a morphism
	out of its colimit, then for every morphism $f : A \longrightarrow B$
	the following square is Cartesian (is a pullback square)
  $$
    \begin{gathered}
    \xymatrix{
      \varinjlim_i f^\ast X_i
        \ar[r] \ar[d] \ar@{}[dr]|{\mbox{\tiny (pb)}} &
      \varinjlim_i X_i
      \ar[d]
      \\
      A
       \ar[r]_f
       &
      B
    }
    \end{gathered}
    \,.
  $$
\end{proposition}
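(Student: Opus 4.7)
The plan is to reduce the statement, via the sheafification adjunction (Proposition \ref{IncludingSheavesIntoPresheaves}), to the corresponding statement for presheaves, and then finally to the analogous fact in $\mathrm{Set}$. The key point is that in the presheaf category $\mathrm{PSh}(\mathcal{C})$, all (co)limits are computed object-wise, and in $\mathrm{Set}$ pullbacks commute with arbitrary colimits (which is a standard, easily verified computation; alternatively it is one of the defining properties of an elementary topos and $\mathrm{Set}$ is a topos).

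First I would observe that fiber products in $\mathrm{Sh}(\mathcal{C})$ are computed in $\mathrm{PSh}(\mathcal{C})$. This is because the inclusion $\mathrm{Sh}(\mathcal{C}) \hookrightarrow \mathrm{PSh}(\mathcal{C})$ is a right adjoint (Proposition \ref{IncludingSheavesIntoPresheaves}), hence preserves limits; equivalently, the presheaf pullback of two sheaves is already a sheaf. Next, denote by $L$ the sheafification functor (Proposition \ref{IncludingSheavesIntoPresheaves}). Then the colimit of $i \mapsto X_i$ in sheaves is
\[
  \varinjlim^{\mathrm{Sh}}_i X_i \;\simeq\; L\bigl(\varinjlim^{\mathrm{PSh}}_i X_i\bigr),
\]
because $L$ is left adjoint to the inclusion and so must carry the presheaf colimit to the sheaf colimit.

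With these two identifications in hand, the computation becomes
\[
  A \times_B \varinjlim^{\mathrm{Sh}}_i X_i
  \;\simeq\;
  A \times_B L\bigl(\varinjlim^{\mathrm{PSh}}_i X_i\bigr)
  \;\simeq\;
  L\bigl( A \times_B \varinjlim^{\mathrm{PSh}}_i X_i \bigr),
\]
where in the last step I use that $L$ preserves finite limits (Proposition \ref{IncludingSheavesIntoPresheaves}) together with the fact that $A$ and $B$ are sheaves, so $L A \simeq A$ and $L B \simeq B$. Now I apply the universal-colimits property in presheaves to commute the pullback past the colimit inside $L$:
\[
  L\bigl( A \times_B \varinjlim^{\mathrm{PSh}}_i X_i \bigr)
  \;\simeq\;
  L\bigl( \varinjlim^{\mathrm{PSh}}_i (A \times_B X_i) \bigr)
  \;\simeq\;
  \varinjlim^{\mathrm{Sh}}_i (A \times_B X_i),
\]
where the last isomorphism again uses that sheafification of a presheaf colimit gives the sheaf colimit.

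The only non-formal ingredient is the universal-colimits property for presheaves, which reduces object-wise to the universal-colimits property of $\mathrm{Set}$. Since limits and colimits of presheaves are computed point-wise, this follows from a direct verification in $\mathrm{Set}$: given $A \to B$ and a diagram $X_i \to B$ of sets, an element of $A \times_B \varinjlim_i X_i$ is a pair $(a,[x])$ with $a \in A$ and $[x]$ represented by some $x \in X_i$ with matching image in $B$, and such pairs biject with equivalence classes of pairs $(a,x) \in A \times_B X_i$. Hence the main obstacle is not really an obstacle at all: once the sheafification adjunction is invoked, the entire argument is a short chain of canonical isomorphisms, and the only content sits in the trivial set-theoretic check underlying universal colimits in $\mathrm{PSh}(\mathcal{C})$.
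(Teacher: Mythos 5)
Your argument is correct. Note that the paper itself offers no proof of this proposition: it appears in the appendix of background material with a blanket reference to \cite{MacLane, Borceux}, so there is no in-text argument to compare against. Your route --- compute the fiber product in presheaves (the inclusion $\mathrm{Sh}(\mathcal{C}) \hookrightarrow \mathrm{PSh}(\mathcal{C})$ is a right adjoint), identify the sheaf colimit as the sheafification $L$ of the presheaf colimit, commute $L$ past the finite limit, and then reduce pointwise to the universality of colimits in $\mathrm{Set}$ --- is the standard textbook proof and all the steps check out; the only place that deserves slightly more care than you give it is the set-level bijection, where injectivity of $\varinjlim_i(A\times_B X_i) \to A\times_B \varinjlim_i X_i$ requires observing that a zigzag connecting $x$ and $x'$ in $\coprod_i X_i$ lifts to a zigzag of pairs because the image in $B$ is constant along it. There is also a shorter argument available that stays entirely inside the machinery the paper already records: by proposition~\ref{basechange} the pullback functor $f^\ast : \mathbf{H}_{/B} \to \mathbf{H}_{/A}$ has a right adjoint $f_\ast$, hence preserves all colimits (proposition~\ref{rightadjointpreserveslimits}); since colimits in a slice category are created by the forgetful functor to $\mathbf{H}$, this immediately yields $A \times_B \varinjlim_i X_i \simeq \varinjlim_i (A\times_B X_i)$. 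That version buys brevity and works in any locally cartesian closed category with colimits, whereas yours makes the underlying set-theoretic content visible.
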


\begin{proposition}[base change]
 \label{basechange}
  \label{BaseChangeAdjunctions}
 Let $\mathbf{H} = \mathrm{Sh}(\mathcal{C})$ be a category of sheaves (definition \ref{sheaf}). Then for
 $$
   f : X \longrightarrow Y
 $$
 any morphism in $\mathbf{H}$, there is an adjoint triple of functors (definition \ref{AdjointFunctor}) between the slice categories
 (definition \ref{SliceCategory})
 $$
   (f_! \dashv f^\ast \dashv f_\ast)
   \;:\;
   \xymatrix{
     \mathbf{H}_{/X}
       \ar@<+10pt>[rr]^-{f_!}
       \ar@{<-}[rr]|-{f^\ast}
       \ar@<-10pt>[rr]_-{f_!}
     &&
     \mathbf{H}_{/Y}
   }
   \,,
 $$
 where
 \begin{enumerate}
  \item $f_!$ (left push-forward) is given by post-composition with $f$ in $\mathbf{H}$;
  \item $f^\ast$ is given by pullback along $f$ in $\mathbf{H}$.
\end{enumerate}
Accordingly, by example \ref{AdjunctionGivesMonad} composition of left and right pushforward with pullback, respectively, yields
an adjoint pair of a monad $L_f$ and a comonad $R_f$ (definition \ref{monad})
$$
  (L_f \dashv R_f)
   \; := \;
   \left(
     (\eta_f)^\ast \circ (\eta_f)_!
     \,\dashv\,
     (\eta_f)^\ast \circ (\eta_f)_\ast
   \right)
   \;:\;
     \mathbf{H}_{/X}
     \longrightarrow
     \mathbf{H}_{/X}
   \,.
$$
\end{proposition}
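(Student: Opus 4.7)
The plan is to establish the three adjunctions in turn, starting from the concrete definitions of $f_!$ and $f^\ast$ and deriving $f_\ast$ by a general existence argument. First I would verify that $f_!$, sending $[A\stackrel{p}{\to}X]$ to $[A\stackrel{f\circ p}{\to}Y]$ and acting as the identity on underlying morphisms, is a well-defined functor $\mathbf{H}_{/X}\to\mathbf{H}_{/Y}$; this is completely formal. Next I would define $f^\ast\colon\mathbf{H}_{/Y}\to\mathbf{H}_{/X}$ by choosing, for each $[B\stackrel{q}{\to}Y]$, a pullback $X\times_Y B\to X$ in $\mathbf{H}$. Such pullbacks exist because Grothendieck toposes have all finite limits, and functoriality is a standard consequence of the universal property.

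The adjunction $f_!\dashv f^\ast$ is then essentially the universal property of the pullback: a morphism $f_!A\to B$ in $\mathbf{H}_{/Y}$ is a morphism $A\to B$ in $\mathbf{H}$ with $q\circ g=f\circ p$, and by the universal property of $X\times_Y B$ such data are in natural bijection with morphisms $A\to f^\ast B$ in $\mathbf{H}_{/X}$. I would package this as a natural hom-isomorphism and identify unit and counit explicitly; the unit at $[A\to X]$ is the factorization $A\to X\times_Y A$, the counit at $[B\to Y]$ is the top projection $X\times_Y B\to B$.

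The construction of $f_\ast$ right adjoint to $f^\ast$ is the step where the topos hypothesis really enters. My approach is to verify the hypotheses of the adjoint functor theorem. Being a right adjoint, $f^\ast$ preserves finite limits; more importantly, it preserves all small colimits. This is precisely the content of universality of colimits in $\mathbf{H}$ (proposition \ref{UniversalColimits}) applied levelwise: given a colimit diagram in $\mathbf{H}_{/Y}$, its colimit is computed in $\mathbf{H}$, and pulling back along $f\colon X\to Y$ yields the colimit in $\mathbf{H}_{/X}$. Since $\mathbf{H}_{/Y}$ and $\mathbf{H}_{/X}$ are again Grothendieck toposes (the fundamental theorem of topos theory) and hence locally presentable categories, the cocontinuous functor $f^\ast$ between locally presentable categories admits a right adjoint $f_\ast$ by the (special) adjoint functor theorem.

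The main obstacle will be the verification that $f^\ast$ genuinely preserves \emph{all} small colimits, not merely those of a particular shape: one has to check that taking the pullback of a colimiting cocone in $\mathbf{H}_{/Y}$ along $f$ yields a colimiting cocone in $\mathbf{H}_{/X}$, and this requires invoking proposition \ref{UniversalColimits} uniformly across arbitrary diagram shapes. All other steps, including the compatibility with the monad/comonad structures of example \ref{AdjunctionGivesMonad}, follow automatically from the adjunction data once the three functors are in place.
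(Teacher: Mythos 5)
Your argument is correct, but note that the paper does not actually prove this proposition: it sits in the appendix of ``standard facts'' whose proofs are explicitly deferred to MacLane and Borceux, so there is no in-paper argument to compare against. Your route differs from the usual textbook one only in how $f_\ast$ is produced. The standard proof (e.g.\ MacLane--Moerdijk, IV.7) constructs $f_\ast$ explicitly from the local cartesian closedness of the topos, building the object of partial sections out of exponentials and the subobject classifier; you instead observe that $f^\ast$ preserves all small colimits --- which is exactly the universality of colimits recorded as proposition \ref{UniversalColimits}, transported to the slices, whose colimits are created by the forgetful functors to $\mathbf{H}$ --- and then invoke the special adjoint functor theorem for the locally presentable categories $\mathbf{H}_{/Y}$ and $\mathbf{H}_{/X}$ (slices of Grothendieck toposes being Grothendieck toposes, cf.\ proposition \ref{SliceSite}). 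This is shorter and uses only machinery already present in the paper, at the cost of giving no explicit description of $f_\ast$; since the paper only ever accesses $f_\ast$ through its adjunction isomorphism (e.g.\ in the proof of theorem \ref{ReproducingTraditionalJetBundle}), nothing is lost. The construction of $f_!$, of $f^\ast$, and the adjunction $f_!\dashv f^\ast$ via the universal property of the pullback are exactly as in the references, and the final passage to the adjoint monad/comonad pair is indeed automatic from example \ref{AdjointPairFromAdjointTriple}. One immaterial slip: after establishing $f_!\dashv f^\ast$ you say $f^\ast$ ``preserves finite limits'' as a right adjoint, whereas it preserves all small limits; this plays no role in your argument.
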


\begin{proposition}
 \label{SliceSite}
 Let $\mathcal{C}$ be a small category equipped with a subcanonical coverage. Let $X \in \mathrm{Sh}(\mathcal{C})$
 be an object in the category of sheaves over $\mathcal{C}$. Write $\mathcal{C}/X$ for the category
 whose objects are morphisms $y(c) \to X$ in $\mathrm{Sh}(\mathcal{C})$, with $c\in \mathcal{C}$
 (and $y$ denoting the Yoneda embedding) and whose morphisms are commuting triangles
 $$
   \begin{gathered}
   \xymatrix{
     y(c_1) \ar[dr] \ar[rr] && y(c_2) \ar[dl]
     \\
     & X
   }
   \end{gathered}
   \,.
 $$
 Say that a set of morphism in $\mathcal{C}/X$ is a covering family if the underlying horizontal morphisms
 in $\mathcal{C}$ form a covering family. Then sheaves on $\mathcal{C}/X$ are equivalently sheaves on $\mathcal{C}$
 sliced over $X$:
 $$
   \mathrm{Sh}(\mathcal{C}/X) \simeq \mathrm{Sh}(\mathcal{C})/X
   \,.
 $$
\end{proposition}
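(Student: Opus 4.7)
The plan is to exhibit an explicit pair of mutually quasi-inverse functors
\[
\Psi \colon \mathrm{Sh}(\mathcal{C})/X \longrightarrow \mathrm{Sh}(\mathcal{C}/X),
\qquad
\Phi \colon \mathrm{Sh}(\mathcal{C}/X) \longrightarrow \mathrm{Sh}(\mathcal{C})/X.
\]
Note that since the coverage on $\mathcal{C}$ is subcanonical, every representable $y(c)$ is a sheaf, so the objects of $\mathcal{C}/X$ are in bijection (by the Yoneda lemma) with pairs $(c, x)$, $c \in \mathcal{C}$, $x \in X(c)$. A morphism $(c', x') \to (c, x)$ over $f \colon c' \to c$ in $\mathcal{C}$ corresponds exactly to the condition $X(f)(x) = x'$. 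In short, $\mathcal{C}/X$ is the Grothendieck category of elements of $X$, and a covering family on $(c,x)$ is just a covering $\{c_i \to c\}_i$ of $c$ equipped with the unique lifts to $\mathcal{C}/X$.

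First, I would define $\Psi$ on an object $[F \overset{p}{\to} X] \in \mathrm{Sh}(\mathcal{C})/X$ by
\[
\Psi(p)(c,x) \;:=\; F(c) \times_{X(c)} \{x\}
\;=\; \{\, s \in F(c) \,\mid\, p_c(s) = x \,\}
\]
with the evident restriction maps induced by functoriality of $F$ and $X$. The presheaf $\Psi(p)$ is a sheaf on $\mathcal{C}/X$ because on any cover $\{(c_i, x|_{c_i}) \to (c,x)\}_i$, the $\Psi(p)$-matching-family condition is the fiber over $\{x\}$ of the matching-family condition for $F$ over the cover $\{c_i \to c\}$ of $c$, and fibers commute with the equalizers defining the sheaf condition. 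Conversely, I would define $\Phi$ on a sheaf $G$ on $\mathcal{C}/X$ by
\[
\Phi(G)(c) \;:=\; \coprod_{x \in X(c)} G(c, x),
\]
with the canonical projection $\Phi(G)(c) \to X(c)$ given on the $x$-th summand by the constant map to $x$. The functorial restriction maps are dictated by the unique lift of $f\colon c' \to c$ to $(c', X(f)(x)) \to (c,x)$ in $\mathcal{C}/X$, hence routine.

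The main technical content is verifying that $\Phi(G)$ is actually a sheaf on $\mathcal{C}$, not merely a presheaf. Given a cover $\{c_i \to c\}$ of $c$ and a matching family in $\Phi(G)$, each element $s_i \in \Phi(G)(c_i)$ lies in a specific summand indexed by some $x_i \in X(c_i)$; the compatibility of the $s_i$ on overlaps forces the $x_i$ to themselves form a matching family in $X$, and since $X$ is itself a sheaf this uniquely assembles into an element $x \in X(c)$. The $s_i$ now constitute a matching family for $G$ over the cover $\{(c_i, x_i) \to (c,x)\}$ of $(c,x)$ in $\mathcal{C}/X$, and the sheaf condition on $G$ yields a unique amalgamation in $G(c,x) \hookrightarrow \Phi(G)(c)$. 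This step is the main obstacle, as it crucially requires the sheaf property of $X$ to coordinate the disjoint summands, and it is the step that would fail for a non-subcanonical coverage or an arbitrary presheaf in place of $X$.

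Finally, I would verify that $\Phi$ and $\Psi$ are mutually quasi-inverse by direct computation: on one side,
\[
\Phi\Psi(p)(c) \;=\; \coprod_{x \in X(c)} \bigl(F(c) \times_{X(c)} \{x\}\bigr) \;\cong\; F(c),
\]
naturally in $c$ and compatibly with the projection to $X$; on the other side,
\[
\Psi\Phi(G)(c,x) \;=\; \Bigl(\coprod_{x' \in X(c)} G(c, x')\Bigr) \times_{X(c)} \{x\} \;\cong\; G(c,x),
\]
by inspection of the coproduct. Naturality in morphisms of $\mathcal{C}/X$ and of $\mathrm{Sh}(\mathcal{C})/X$ follows from the construction, giving the claimed equivalence of categories.
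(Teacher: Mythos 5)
Your proof is correct. Note that the paper itself gives no argument for this proposition: it sits in the appendix of ``standard facts'' whose proofs are deferred to the cited references, where the usual route is to first establish the presheaf-level equivalence $\mathrm{PSh}(\mathcal{C})_{/X}\simeq\mathrm{PSh}(\textstyle\int_{\mathcal{C}}X)$ (valid for any presheaf $X$) and then identify the sheaf subcategories on both sides via a localization or comparison-lemma argument. Your construction instead builds the equivalence directly and elementarily on sections, via the fiberwise functor $\Psi(p)(c,x)=p_c^{-1}(x)$ and its coproduct inverse $\Phi$; this is self-contained and makes visible exactly where the hypotheses enter, namely that subcanonicity identifies $\mathcal{C}/X$ with the category of elements of $X$, and that the sheaf (in particular separatedness) property of $X$ is what coordinates the disjoint summands of $\Phi(G)$ when amalgamating. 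Two small points you wave at but should state: in the uniqueness half of the sheaf condition for $\Phi(G)$, an alternative amalgamation a priori lives in a summand $G(c,x')$ with $x'|_{c_i}=x_i$, and it is separatedness of $X$ that forces $x'=x$ before the uniqueness clause for $G$ applies; and one should record (routinely) that the prescribed covering families on $\mathcal{C}/X$ do satisfy the coverage axiom of definition \ref{site}, inherited from that of $\mathcal{C}$. Neither affects the validity of the argument.
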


\begin{proposition}
  \label{AmbientCategoryGrothedieckTopology}
  Let $\mathcal{S}$ be a small site (definition \ref{site}) and let $i : \mathcal{S} \hookrightarrow \mathcal{C}$
  be a fully faithful inclusion of the underlying category into a small category $\mathcal{C}$.
  Then there exists a Grothendieck topology on $\mathcal{C}$
  and an equivalence of categories of sheaves (definition \ref{sheaf}) of the form
  $$
    \xymatrix{
      \mathrm{Sh}(\mathcal{S})
      \ar[r]^-{\simeq}
      &
      \mathrm{Sh}(\mathcal{C})
    }
    \,.
  $$

  If moreover the functor
  $$
    \mathcal{C}
      \overset{y_{\mathcal{C}}}{\longrightarrow}
    \mathrm{PSh}(\mathcal{C})
      \overset{i^\ast}{\longrightarrow}
    \mathrm{PSh}(\mathcal{S})
      \overset{L_{\mathcal{S}}}{\longrightarrow}
    \mathrm{Sh}(\mathcal{S})
  $$
  is fully faithful ($y$ denotes Yoneda embedding, $i^\ast$ denotes restriction of presheaves,
  proposition \ref{BaseChangeAdjunctions},
  and $L$ denotes sheafification, definition \ref{IncludingSheavesIntoPresheaves}),
  then this Grothendieck topology is subcanonical.
\end{proposition}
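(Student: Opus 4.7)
}
The plan is to work entirely at the level of reflective subcategories of presheaf toposes and appeal to the converse in Proposition~\ref{IncludingSheavesIntoPresheaves}: every left-exact reflective subcategory of a presheaf topos $\mathrm{PSh}(\mathcal{C})$ is the subcategory of sheaves for some Grothendieck topology on $\mathcal{C}$. Concretely, I would consider the adjoint triple $(i_! \dashv i^\ast \dashv i_\ast)$ on presheaves from Proposition~\ref{FactsAboutLeftKanExtension} together with the sheafification adjunction $(L_\mathcal{S} \dashv \iota_\mathcal{S})$ from Proposition~\ref{IncludingSheavesIntoPresheaves}, and form the composite adjunction
$$
  L \;:=\; L_\mathcal{S}\circ i^\ast
    \;:\;
  \mathrm{PSh}(\mathcal{C}) \longrightarrow \mathrm{Sh}(\mathcal{S})
  \,,
  \qquad
  R \;:=\; i_\ast \circ \iota_\mathcal{S}
    \;:\;
  \mathrm{Sh}(\mathcal{S}) \longrightarrow \mathrm{PSh}(\mathcal{C})
  \,.
$$

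Next I would check that $L \dashv R$ exhibits $\mathrm{Sh}(\mathcal{S})$ as a left-exact reflective subcategory of $\mathrm{PSh}(\mathcal{C})$. Left-exactness of $L$ is immediate, since $i^\ast$ is computed pointwise and hence preserves all limits, while $L_\mathcal{S}$ preserves finite limits by Proposition~\ref{IncludingSheavesIntoPresheaves}. Full faithfulness of $R$ is the condition, via Proposition~\ref{AdjointsOfFullEmbeddings}, that the counit $L R \to \mathrm{id}$ is an isomorphism. Using Proposition~\ref{LeftKanExtensionAlongEmbedding} (fully faithful $i$ implies $i^\ast i_\ast \simeq \mathrm{id}$) and the fact that $\iota_\mathcal{S}$ is fully faithful (so $L_\mathcal{S} \iota_\mathcal{S} \simeq \mathrm{id}$), one computes $L R = L_\mathcal{S} i^\ast i_\ast \iota_\mathcal{S} \simeq L_\mathcal{S} \iota_\mathcal{S} \simeq \mathrm{id}$. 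Applying the converse direction of Proposition~\ref{IncludingSheavesIntoPresheaves} then furnishes the Grothendieck topology on $\mathcal{C}$ whose sheaf category is equivalent (via $R$ and $L$) to $\mathrm{Sh}(\mathcal{S})$, proving the first claim.

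For the subcanonicity clause, the task is to show that each representable presheaf $y_\mathcal{C}(c) \in \mathrm{PSh}(\mathcal{C})$ lies in the essential image of $R$, which is equivalent to the unit $\eta_{y_\mathcal{C}(c)} \colon y_\mathcal{C}(c) \to R L y_\mathcal{C}(c)$ being an isomorphism. Evaluating the right-hand side at $c' \in \mathcal{C}$ and chasing through the adjunctions $i^\ast \dashv i_\ast$ and $L_\mathcal{S} \dashv \iota_\mathcal{S}$ gives
$$
  (R L y_\mathcal{C}(c))(c')
    \;\simeq\;
  \mathrm{Hom}_{\mathrm{PSh}(\mathcal{S})}\bigl(i^\ast y_\mathcal{C}(c'),\, \iota_\mathcal{S} L_\mathcal{S} i^\ast y_\mathcal{C}(c)\bigr)
    \;\simeq\;
  \mathrm{Hom}_{\mathrm{Sh}(\mathcal{S})}\bigl(L y_\mathcal{C}(c'),\, L y_\mathcal{C}(c)\bigr)
  \,.
$$
Tracing the universal property of the unit (together with the Yoneda lemma) shows that under this identification, $\eta_{y_\mathcal{C}(c)}$ at stage $c'$ is precisely the function induced by applying $L\circ y_\mathcal{C} = L_\mathcal{S}\circ i^\ast\circ y_\mathcal{C}$ to morphisms $c' \to c$. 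Hence $\eta$ is an isomorphism for every $c$ if and only if the functor $L_\mathcal{S} \circ i^\ast \circ y_\mathcal{C}$ is fully faithful, which is the hypothesis.

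The main obstacle, such as it is, will be the last identification of the unit map with the action of $L\circ y_\mathcal{C}$ on hom-sets: this demands a careful Yoneda chase across three nested adjunctions, and is the point where an error could easily slip in. Everything else reduces to repeated applications of Propositions~\ref{AdjointsOfFullEmbeddings}, \ref{FactsAboutLeftKanExtension}, \ref{LeftKanExtensionAlongEmbedding} and~\ref{IncludingSheavesIntoPresheaves}, which are already in the paper.
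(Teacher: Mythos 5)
Your proposal is correct and follows essentially the same route as the paper's own proof: the same composite adjunction $L_{\mathcal{S}}\circ i^\ast \dashv i_\ast\circ\iota_{\mathcal{S}}$, the same appeal to the converse direction of Proposition~\ref{IncludingSheavesIntoPresheaves} to produce the topology, and the same use of the full-faithfulness hypothesis on $L_{\mathcal{S}}\circ i^\ast\circ y_{\mathcal{C}}$ to identify each representable with its own sheafification (your "unit is an isomorphism" formulation and the paper's hom-set computation are the same argument). The only quibble is a citation slip: the identity $i^\ast i_\ast \simeq \mathrm{id}$ that you need for $LR\simeq\mathrm{id}$ is the dual of Proposition~\ref{LeftKanExtensionAlongEmbedding}, which as stated concerns $i_!$ rather than $i_\ast$ --- the fact itself is standard for fully faithful $i$, and your explicit check that the composite right adjoint is fully faithful is in fact more careful than the paper, which asserts the geometric embedding after verifying only left-exactness.
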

\begin{proof}
  Consider\footnote{We are grateful to Dave Carchedi for providing this argument.} the composition of pairs of adjoint functors
  $$
    \xymatrix{
      \mathrm{Sh}(\mathcal{S})
      \ar@<-5pt>@{^{(}->}[rr]^{\bot}_{i_{\mathcal{S}}}
      \ar@<+5pt>@{<-}[rr]^{L_{\mathcal{S}}}
      &&
      \mathrm{PSh}(\mathcal{S})
      \ar@<-5pt>@{^{(}->}[rr]_{i_\ast}^{\bot}
      \ar@<+5pt>@{<-}[rr]^{i^\ast}
      &&
      \mathrm{PSh}(\mathcal{C})
    }
    \,,
  $$
	where on the left we have the sheafification adjunction over
	$\mathcal{S}$ (proposition \ref{IncludingSheavesIntoPresheaves}),
	while on the right we have the pullback / right Kan extension
	adjunction along $i$ (proposition \ref{FactsAboutLeftKanExtension}).
	Here $L$ preserves finite limits (by proposition
	\ref{IncludingSheavesIntoPresheaves}) and $i^\ast$ preserves all
	limits (since it has a further left adjoint given by left Kan
	extension). Hence the composite adjunction is a geometric embedding
  $$
    \xymatrix{
      \mathrm{Sh}(\mathcal{S})
        \simeq
      \mathrm{Sh}(\mathcal{C})
        \ar@<-5pt>@{^{(}->}[rr]^-{\bot}_-{i_{\mathcal{C}}}
        \ar@<+5pt>@{<-}[rr]^-{L_{\mathcal{C}}}
        &&
      \mathrm{PSh}(\mathcal{C})
    }
  $$
  of the topos $\mathrm{Sh}(\mathcal{S})$
  into the presheaf topos $\mathrm{PSh}(\mathcal{C})$. By proposition \ref{IncludingSheavesIntoPresheaves} every such
  corresponds to the sheafification adjunction for some Grothendieck topology on
  $\mathcal{C}$, identifying the former with the category of sheaves
  over the latter.

  In particular
  $$
    L_{\mathcal{S}} \circ i^\ast \circ y_{\mathcal{C}}
    \simeq
    L_{\mathcal{C}} \circ y_{\mathcal{C}}
    \,,
  $$
  and hence if
  $L_{\mathcal{S}} \circ i^\ast \circ y_{\mathcal{C}}$ is fully faithful
  then so is $L_{\mathcal{C}} \circ y_{\mathcal{C}}$ and hence also
  $i_{\mathcal{C}} \circ L_{\mathcal{C}} \circ y_{\mathcal{C}}$.
  This means that for $C \in \mathcal{C}$, then the sheafification of $y(\mathcal{C})$
  is the presheaf given on $D \in \mathcal{C}$ by
  $$
    \begin{aligned}
      (L_{\mathcal{C}} \circ y_{\mathcal{C}}(C))(D)
      & \simeq
      (i_{\mathcal{C}} \circ  L_{\mathcal{C}} \circ y_{\mathcal{C}}(C))(D)
      \\
      & \simeq
      \mathrm{Hom}_{\mathrm{PSh}(\mathcal{C})}(y_{\mathcal{C}}(D), i_{\mathcal{C}} \circ  L_{\mathcal{C}} \circ y_{\mathcal{C}}(C) )
      \\
      & \simeq
      \mathrm{Hom}_{\mathrm{Sh}(\mathcal{C})}(L_{\mathcal{C}} \circ y_{\mathcal{C}}(D), L_{\mathcal{C}} \circ y_{\mathcal{C}}(C) )
      \\
      & =
      \mathrm{Hom}_{\mathrm{PSh}(\mathcal{C})}(i_{\mathcal{C}} \circ L_{\mathcal{C}} \circ y_{\mathcal{C}}(D), i_{\mathcal{C}} \circ L_{\mathcal{C}} \circ y_{\mathcal{C}}(C) )
      \\
      & \simeq
      \mathrm{Hom}_{\mathcal{C}}(C,D)
      \\
      & \simeq
      y_{\mathcal{C}}(D)
      \,.
    \end{aligned}
  $$
  This says that $y_{\mathcal{C}}(C)$ coincides with its sheafification, hence that the Grothendieck
  topology on $\mathcal{C}$ is subcanonical.
\end{proof}

%%%%%%%%%%%%%%%%%%%%%%%%%%%%%%%%%%%%%%
\subsection{Monads}
\label{monads}
%%%%%%%%%%%%%%%%%%%%%%%%%%%%%%%%%%%%%%

We collect some basic facts on (co-)monads and (co-)monadic descent, see for instance \cite[vol 2, section 4]{Borceux}.

\begin{definition}
  \label{monad}
  \label{comonad}
  For $\mathcal{C}$ a category, then a \emph{monad} on $\mathcal{C}$
  is an endofunctor
  $
    T \colon \mathcal{C} \to \mathcal{C}
  $
  equipped with natural transformations
  \begin{itemize}
    \item (product) $\nabla : T \circ T \longrightarrow T $;
    \item (unit) $\eta :  \mathrm{id}_{\mathcal{C}} \longrightarrow T$
  \end{itemize}
	such that these satisfy the following associativity and unitality
	properties:
  $$
    \begin{gathered}
  	\xymatrix{
  		&
  		T
				\ar@{<-}[dl]_{\mathrm{id}_\mathcal{C}}
				\ar@{<-}[dr]^{\mathrm{id}_\mathcal{C}}
				\ar@{<-}[d]^{\Delta}
			\\
  		T &
  		TT
  		\ar@{<-}[l]^{\epsilon(T)}
  		\ar@{<-}[r]_{T(\epsilon)} &
  		T
  	}
    \end{gathered}
  	\qquad \text{and} \qquad
    \begin{gathered}
  	\xymatrix{
  		T
				\ar@{<-}[r]^{\Delta}
				\ar@{<-}[d]_{\Delta} &
			TT
				\ar@{<-}[d]^{\Delta(T)}
			\\
  		TT
				\ar@{<-}[r]_{T(\Delta)} &
  		TTT
  	}
    \end{gathered}
  	\, .
  $$

	Dually, a \emph{comonad} on $\mathcal{C}$ is an endofunctor $J\colon
	\mathcal{C} \to \mathcal{C}$ equipped with natural transformations
  \begin{itemize}
    \item (coproduct) $\Delta : J \longrightarrow J \circ J$
    \item (counit) $\epsilon : J \longrightarrow \mathrm{id}_{\mathcal{C}}$
  \end{itemize}
	such that these satisfy the following coassociativity and counitality
	properties:
  $$
    \begin{gathered}
  	\xymatrix{
  		&
  		J
				\ar[dl]_{\mathrm{id}_\mathcal{C}}
				\ar[dr]^{\mathrm{id}_\mathcal{C}}
				\ar[d]^{\Delta}
			\\
  		J &
  		JJ
  		\ar[l]^{\epsilon(J)}
  		\ar[r]_{J(\epsilon)} &
  		J
  	}
    \end{gathered}
  	\qquad \text{and} \qquad
    \begin{gathered}
  	\xymatrix{
  		J
				\ar[r]^{\Delta}
				\ar[d]_{\Delta} &
			JJ
				\ar[d]^{\Delta(J)}
			\\
  		JJ
				\ar[r]_{J(\Delta)} &
  		JJJ
  	}
    \end{gathered}
  	\, .
  $$
\end{definition}
\begin{example}
  \label{AdjunctionGivesMonad}
	Given a pair of adjoint functors $(L \dashv R)$ (definition
	\ref{AdjointFunctor}) then
  \begin{enumerate}
    \item $R \circ L$ becomes a monad (definition \ref{monad})
    by taking the monad unit to be the adjunction unit, and taking the monad product to be
    the image of the adjunction counit on $L$ under $R$:
    $$
    	\xymatrix{
      (R \circ L) \circ (R \circ L)
        \ar[r]^-{R \epsilon_L}
      &
      R \circ L
      }
    $$
    \item $L \circ R$ becomes a comonad (definition \ref{comonad})
    by taking the comonad counit to be the adjunction counit, and taking the comonad coproduct to be
    the image of the adjunction unit on $R$ under $L$:
    $$
    	\xymatrix{
      L \circ R
        \ar[r]^-{L \eta_R}
      &
      (L \circ R) \circ (L \circ R)
      }
      \,.
    $$
  \end{enumerate}
\end{example}
\begin{definition}
  \label{EMOverComonad}
  Given a comonad $(J,\epsilon,\Delta)$ on $\mathcal{C}$, definition~\ref{comonad},
  then a \emph{coalgebra} over the comonad is an object $E \in \mathcal{C}$ equipped with a morphism
  $$
    \rho \colon E \longrightarrow J E
  $$
  such that
  \begin{enumerate}
    \item (coaction property) the following diagram commutes:
    $$
      \xymatrix{
        E \ar[r]^{\rho} \ar[d]_\rho
        & J E
        \ar[d]^{J\rho}
        \\
        J E \ar[r]_{\Delta_E} & J J E
      }
    $$
    \item
     (counitality) the following diagram commutes:
     $$
       \xymatrix{
          & J E
          \ar[dr]^{\Delta_E}
          \ar[dl]_{\simeq}
          \\
          \mathrm{id} \, J E & & J J E \ar[ll]_{\epsilon_{J E}}
       }
     $$
  \end{enumerate}

	A homomorphism of coalgebras $f : (E_1, \rho_1) \longrightarrow
	(E_2,\rho_2)$ is a morphism $f : E_1 \longrightarrow E_2$ in
	$\mathcal{C}$ which respects these coaction morphisms in that the
	following diagram commutes:
  $$
    \begin{gathered}
    \xymatrix{
      E_1 \ar[r]^f \ar[d]_{\rho_1} & E_2 \ar[d]^{\rho_2}
      \\
      J E_1 \ar[r]_{J f} & J E_2
    }
    \end{gathered}
    \, .
  $$
  The resulting category of coalgebras is denoted  $\mathrm{EM}(J)$
  (for ``Eilenberg-Moore category'').
\end{definition}
\begin{proposition}[Beck equalizer]
  \label{BeckEqualizer}
  For $J \colon \mathcal{C} \longrightarrow \mathcal{C}$
  any comonad (definition \ref{comonad}) and
  $\rho : E \longrightarrow J E$ a coalgebra over $J$ (definition \ref{EMOverComonad})
  then the diagram
  $$
    \xymatrix{
      E
       \ar[rr]^-\rho
       &&
      J E
        \ar@<+3pt>[rr]^-{\Delta_E}
        \ar@<-3pt>[rr]_-{J \rho}
        &&
      J J E
    }
  $$
  is an equalizer diagram, in fact it is an absolute equalizer
  (meaning that it is preserved by every functor $F : \mathcal{C} \to \mathcal{D}$).
  In particular therefore $\rho$ is a monomorphism.
\end{proposition}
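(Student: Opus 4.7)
The plan is to recognize this as a standard instance of an absolute (``split'') equalizer, dual to the split coequalizer characterization of algebras over a monad. Once the diagram is exhibited as split, absoluteness is immediate from the universal calculation being a pure identity of morphisms that any functor preserves, and the monomorphism clause is a free byproduct.

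First I would check that the diagram commutes: this is exactly the coaction identity of definition \ref{EMOverComonad}, namely $J\rho \circ \rho = \Delta_E \circ \rho$. Next I would introduce the candidate splittings from the counit of $J$, namely $\epsilon_E \colon J E \to E$ and $\epsilon_{J E} \colon J J E \to J E$. These three data together with $\rho$, $\Delta_E$ and $J\rho$ should satisfy the three identities that define a split equalizer:
\begin{enumerate}
  \item $\epsilon_E \circ \rho = \mathrm{id}_E$, which is the counitality clause of the coalgebra structure;
  \item $\epsilon_{J E} \circ \Delta_E = \mathrm{id}_{J E}$, which is the counitality clause of the comonad;
  \item $\epsilon_{J E} \circ J \rho = \rho \circ \epsilon_E$, which is the naturality square of $\epsilon$ at the morphism $\rho$.
\end{enumerate}

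Once these three identities are in hand, I would invoke the standard lemma that any diagram equipped with such splittings is an equalizer, by an elementary diagram chase: if $f \colon X \to J E$ satisfies $\Delta_E \circ f = J\rho \circ f$, then the unique factorization through $\rho$ is $\epsilon_E \circ f$, whose composition with $\rho$ equals $f$ because
$\rho \circ \epsilon_E \circ f = \epsilon_{J E} \circ J\rho \circ f = \epsilon_{J E} \circ \Delta_E \circ f = f$, using identities (3), the hypothesis, and (2) in that order; uniqueness comes from precomposing any candidate factorization with $\epsilon_E$ and using (1). Since every step of this argument consists solely of composing and identifying morphisms, and since every functor preserves composition and identities, applying any $F \colon \mathcal{C} \to \mathcal{D}$ to the entire configuration yields a split equalizer in $\mathcal{D}$; hence the equalizer is absolute.

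Finally, the monomorphism claim is immediate: identity (1) exhibits $\epsilon_E$ as a retraction of $\rho$, and any split monomorphism is in particular a monomorphism. The only part that requires any care at all is making sure identity (3) is correctly read off from the naturality square of $\epsilon$; I would write that square explicitly before combining the three identities. No serious obstacle is expected, since the argument is purely formal, but keeping the variance straight when dualizing from the (more commonly stated) monad/algebra version to the present comonad/coalgebra version is the step where a sign-type error is most likely to creep in.
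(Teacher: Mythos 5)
Your proposal is correct and is precisely the standard split-fork argument: the paper itself offers no proof of this proposition (it appears in the appendix of background facts with proofs deferred to MacLane and Borceux), and the argument you give — exhibiting $\epsilon_E$ and $\epsilon_{JE}$ as the splittings, verifying the three identities from coalgebra counitality, comonad counitality, and naturality of $\epsilon$ at $\rho$, then running the usual chase — is exactly the proof one finds in those references. All three identities are read off correctly (in particular identity (3), $\epsilon_{JE}\circ J\rho = \rho\circ\epsilon_E$, is indeed the naturality square of $\epsilon$ at $\rho$), the factorization and uniqueness computations check out, and the absoluteness and monomorphism conclusions follow as you state.
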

\begin{proposition}
  \label{CoMonadsFromAdjunctions}
  For $(L \dashv R) : \xymatrix{
      \mathcal{C}
      \ar@{<-}@<+4pt>[r]^{L}
      \ar@<-4pt>[r]_{R}
      &
      \mathcal{D}
    }
$ an adjunction, definition~\ref{AdjointFunctor}, then the endofunctor
$$
  T := L \circ R \;:\; \mathcal{C} \longrightarrow \mathcal{C}
$$
becomes a comonad on $\mathcal{C}$ (definition~\ref{comonad}) with
counit the adjunction counit $ L \circ R \to \mathrm{id}_{\mathcal{C}}$ (definition~\ref{AdjointFunctor}), and with coproduct
induced from the unit of the adjunction by
$$
  \Delta_T := L(\eta_{R(-)})
  \,.
$$

Dually, $R \circ L$ is canonically equipped with the structure of a monad.
\end{proposition}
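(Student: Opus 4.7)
The plan is to verify directly that the triple $(T, \epsilon_T, \Delta_T) := (L \circ R,\, \epsilon,\, L\eta R)$ satisfies the two counitality diagrams and the single coassociativity square of Definition~\ref{comonad}. Once this is done, the dual statement for $R \circ L$ as a monad follows by a formally identical argument with the roles of $L$ and $R$, and of $\eta$ and $\epsilon$, exchanged, and I would simply state this rather than repeat the bookkeeping.

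For the two counit laws I would read off directly the two triangle identities of the adjunction, as recalled in Proposition~\ref{AdjunctionInTermsOfUnitAndCounit}. The identity $\epsilon_{L(-)} \circ L\eta_{(-)} = \mathrm{id}_{L(-)}$, instantiated at $Rc$, gives $\epsilon_{LRc} \circ L\eta_{Rc} = \mathrm{id}_{LRc}$, which is one counit law $(\epsilon_T T) \circ \Delta_T = \mathrm{id}_T$. Dually, the identity $R\epsilon_{(-)} \circ \eta_{R(-)} = \mathrm{id}_{R(-)}$, evaluated at $c$ and whiskered on the left by $L$, becomes $LR\epsilon_c \circ L\eta_{Rc} = L(R\epsilon_c \circ \eta_{Rc}) = L(\mathrm{id}_{Rc}) = \mathrm{id}_{LRc}$, which is the remaining counit law $(T\epsilon_T) \circ \Delta_T = \mathrm{id}_T$.

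For coassociativity I would evaluate the two composites $(\Delta_T \, T) \circ \Delta_T$ and $(T \, \Delta_T) \circ \Delta_T$ at an object $c$; after unfolding the whiskerings, this amounts to comparing $L\eta_{RLRc} \circ L\eta_{Rc}$ with $LRL\eta_{Rc} \circ L\eta_{Rc}$ as morphisms $LRc \to LRLRLRc$. Factoring $L$ out of both composites, the claim reduces to the equality $\eta_{RLRc} \circ \eta_{Rc} = RL\eta_{Rc} \circ \eta_{Rc}$, which is precisely the naturality square of $\eta\colon \mathrm{id}_{\mathcal{D}} \Rightarrow R \circ L$ applied to the morphism $\eta_{Rc}\colon Rc \to RLRc$. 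Applying $L$ to this naturality identity concludes the verification.

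I do not anticipate any genuine obstacle: each of the three comonad axioms collapses to a single triangle identity or a single naturality square of the underlying adjunction. The only care required is bookkeeping — keeping the handedness of the whiskerings $\epsilon_T T$ versus $T\epsilon_T$, and $\Delta_T T$ versus $T\Delta_T$, consistent with the orientations drawn in Definition~\ref{comonad}, so that the correct triangle identity (the one on $L$ or the one on $R$) is invoked in each case.
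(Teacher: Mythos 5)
Your verification is correct: each counit law is exactly one of the two triangle identities of Proposition~\ref{AdjunctionInTermsOfUnitAndCounit} (instantiated at $Rc$, respectively whiskered by $L$), and coassociativity reduces, after factoring out $L$, to the naturality square of $\eta$ at the morphism $\eta_{Rc}$. The paper itself offers no proof of Proposition~\ref{CoMonadsFromAdjunctions}, deferring to the standard references for the appendix material, and your direct check is precisely the standard argument those references give; the only (harmless) blemish is writing $T\epsilon_T$ where $T(\epsilon)$ is meant.
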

\begin{example}
  \label{AdjointPairFromAdjointTriple}
  Given an adjoint triple $(L\ \dashv C \dashv R)$ of functors (definition \ref{AdjointFunctor})
  then the monad $T := C\circ L$ and the comonad $J := C \circ R$
  (induced via proposition~\ref{CoMonadsFromAdjunctions}) themselves form an adjoint pair:
  $$
    (T  \dashv J  ) : \mathcal{C} \longrightarrow \mathcal{C}
    \,.
  $$
\end{example}
\begin{proposition}
  \label{PropertiesOfAdjointPairsFromAdjointTriples}
	In the situation of example \ref{AdjointPairFromAdjointTriple} then
	the double $(T \dashv J)$-adjunct $\tilde{\tilde{f}}$ (definition
	\ref{AdjointFunctor}) of a morphism $f : X \to J J Y$ of the form
  $$
    \xymatrix{
    	&& J Y \ar[d]^{\Delta_Y}
    	\\
      X
        \ar[urr]^g
        \ar[rr]_{f}
        &&
      J J Y
    }
  $$
  is given by the $(T \dashv J)$-adjunct $\tilde g$ of $g$ via
  $$
    \begin{gathered}
    \xymatrix{
      T T X
      \ar[rr]^{\tilde{\tilde{f}}}
      \ar[d]_{\nabla_X}
      && Y
      \\
      T X \ar[urr]_-{\tilde g}
    }
    \end{gathered}
    \,.
  $$
	In fact both these kinds of morphisms are in natural bijection with
	those of the form
  $$
    \xymatrix{
      T X \simeq C L X \ar[rr]^-{C \overline{g}} && C R X \simeq J X
    }
    \,,
  $$
  where $\overline{g}$ denotes the $(C \dashv R)$-adjunct of $g$.
\end{proposition}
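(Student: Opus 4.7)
The plan is to unpack the composite adjunction $T \dashv J$ in terms of its two constituents $L \dashv C$ and $C \dashv R$ and then apply two naturality squares. By proposition~\ref{CoMonadsFromAdjunctions} the coproduct $\Delta_Y$ is $C$ applied to the unit of $C \dashv R$ at $RY$, while the product $\nabla_X$ is $C$ applied to the counit of $L \dashv C$ at $LX$. Similarly, applying proposition~\ref{AdjunctsInTermsOfUnitAndCounit} to the composite $T \dashv J$ from example~\ref{AdjointPairFromAdjointTriple} expresses its unit $\eta^{T\dashv J}_X$ as $C(\eta^{C\dashv R}_{LX}) \circ \eta^{L\dashv C}_X$ and its counit $\epsilon^{T\dashv J}_Y$ as $\epsilon^{C\dashv R}_Y \circ C(\epsilon^{L\dashv C}_{RY})$.

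The strategic step is to identify the \emph{halfway adjunct} $C\overline{g} : TX \to JY$, where $\overline{g} : LX \to RY$ is the $(L \dashv C)$-adjunct of $g$ so that $g = C(\overline{g}) \circ \eta^{L\dashv C}_X$, as the common intermediate morphism of the triple bijection $\mathrm{Hom}(X, JJY) \cong \mathrm{Hom}(TX, JY) \cong \mathrm{Hom}(TTX, Y)$. I would verify this by two parallel short computations, each reducing to a single naturality square. First, unfolding the $(T \dashv J)$-adjunct of $C\overline{g}$ at the source via proposition~\ref{AdjunctsInTermsOfUnitAndCounit} yields $J(C\overline{g}) \circ \eta^{T\dashv J}_X$; substituting the formulas above and applying naturality of the unit $\eta^{C\dashv R}$ at $\overline{g}$ rearranges this to $\Delta_Y \circ C(\overline{g}) \circ \eta^{L\dashv C}_X = \Delta_Y \circ g = f$. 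Second, unfolding the $(T \dashv J)$-adjunct of $C\overline{g}$ at the target yields $\epsilon^{T\dashv J}_Y \circ T(C\overline{g})$; substituting and applying naturality of the counit $\epsilon^{L\dashv C}$ at $\overline{g}$ rearranges this to $(\epsilon^{C\dashv R}_Y \circ C\overline{g}) \circ \nabla_X = \tilde{g} \circ \nabla_X$, where $\tilde{g} = \epsilon^{C\dashv R}_Y \circ C(\overline{g})$ is recognized as the $(T \dashv J)$-adjunct of $g$ via the same counit formula.

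These two calculations jointly exhibit the triple natural bijection and identify $f$, $C\overline{g}$, and $\tilde{\tilde{f}} = \tilde{g} \circ \nabla_X$ as mutually adjunct images of one another under the $(T \dashv J)$-adjunction applied to source or target. The main obstacle is purely notational --- keeping track of which unit or counit belongs to which of the three adjunctions $L \dashv C$, $C \dashv R$, $T \dashv J$ and identifying the correct naturality square at each stage. No deeper difficulty arises, since the essential mechanism, namely the triangle identities together with the permutation of units and counits by naturality, is dictated entirely by the composite adjunction structure asserted in example~\ref{AdjointPairFromAdjointTriple}.
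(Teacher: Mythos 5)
Your proposal is correct and follows essentially the same route as the paper: both arguments identify the ``halfway'' morphism $C\overline{g} \colon TX \to JY$ as the common middle term of the chain $\mathrm{Hom}(X,JJY)\cong\mathrm{Hom}(TX,JY)\cong\mathrm{Hom}(TTX,Y)$ and check that its two adjuncts are $f = \Delta_Y\circ g$ and $\tilde g\circ\nabla_X$ respectively. The only (cosmetic) difference is that you compute both adjuncts outward from $C\overline{g}$ via naturality squares, whereas the paper computes inward from $f$ using a triangle identity and invokes formal duality for the second half.
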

\begin{proof}
  By definition of the monad unit (definition \ref{CoMonadsFromAdjunctions}),
  $f$ is of the form
  $$
    \xymatrix{
      X \ar[rr]^-g && C R Y \ar[rr]^{C (\eta_{R Y}) } && C R C R Y
    }
    \,.
  $$
  By composition of adjunctions, the $(C L \dashv C R)$-adjunct of this morphism is the
  $(L \dashv C)$-adjunct of its $(C \dashv R)$-adjunct.
  Since the morphism on the right is in the image of $C$, the naturality of the adjunction isomorphism
  (definition \ref{AdjointFunctor}) implies that $f$ is in natural bijection to
  $$
    \xymatrix{
      L X \ar[rr]^-{\overline{g}} && R Y \ar[rr]^-{\eta_{R Y}} && R C R Y
      \,.
    }
  $$
  Now by the formula for adjuncts from proposition \ref{AdjunctsInTermsOfUnitAndCounit}, the further
  $(C \dashv R)$-adjunct of this morphism is
  $$
    \xymatrix{
      C L X
      \ar[rr]^-{C \overline{g}}
       &&
      C R Y
        \ar[rr]^-{C \eta_{R Y}}
        \ar@/_2pc/[rrrr]_{id}
      &&
      C R C R Y
        \ar[rr]^{\epsilon_{C R Y}}
      &&
      C R Y
    }
    \,.
  $$
  Here the composite on the right is the identity morphism, as shown, by proposition \ref{AdjunctionInTermsOfUnitAndCounit}.
  This shows that the single $(T \dashv J)$-adjunct of $f$ is $C \overline{g}$. In complete formal duality
  one finds that also the single $(T \dashv J)$-adjunct (in the other direction) of $T T X \overset{\nabla_X}{\longrightarrow} T X \overset{\tilde g}{\longrightarrow} Y$
  is $C \overline{g}$. Hence the statement follows.
\end{proof}
\begin{definition}
  \label{coKleisli}
  The full subcategory of $\mathrm{EM}(J)$ on the cofree coalgebras,
  i.e.,\ on the
  objects in the image of $\mathrm{F}$, proposition~\ref{CofreeForgetAdjunction},
  is denoted $\mathrm{Kl}(J)$ (from ``Kleisli category'', or more
  appropriately ``co-Kleisli category'' given that $J$ is a comonad).
\end{definition}
\begin{proposition}
  \label{CofreeForgetAdjunction}
	The category of coalgebras over a comonad on a category $\mathcal{C}$,
	definition~\ref{EMOverComonad}, is related to $\mathcal{C}$ by a pair
	of adjoint functors, definition~\ref{AdjointFunctor}, of the form
  $$
    (\mathrm{U} \dashv \mathrm{F})
    :
    \xymatrix{
      \mathcal{C}
      \ar@{<-}@<+3pt>[rr]^{\mathrm{U}}
      \ar@<-3pt>[rr]_{\mathrm{F}}
      \ar@{->>}[dr]
      &&
      \mathrm{EM}(J)
      \\
      & \mathrm{Kl}(J) \ar@{^{(}->}[ur]
    }
    \,,
  $$
	where the left adjoint $U$ (``underlying'') forgets the coalgebra
	structure, $U : (E,\rho) \mapsto E$, while the right adjoint $F$
	(``cofree'') sends an object $c \in \mathcal{C}$ to to the object $J
	c$ with coaction given by the coproduct $\Delta_J$. The comonad
	induced from this adjunction via
	proposition~\ref{CoMonadsFromAdjunctions} coincides with $J$:
  $$
    J \simeq \mathrm{U} \circ \mathrm{F}
    \,.
  $$
\end{proposition}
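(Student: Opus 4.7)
The plan is to verify the standard cofree/forgetful adjunction for a comonad, which is largely a bookkeeping exercise from the axioms in definition~\ref{comonad} and definition~\ref{EMOverComonad}.

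First I would check that the prescription $F(c) := (Jc, \Delta_c)$ actually lands in $\mathrm{EM}(J)$: the coaction square is the coassociativity square for $\Delta$, and the counitality triangle for $\rho := \Delta_c$ is the counitality triangle for the comonad. Functoriality of $F$ on a morphism $h \colon c \to c'$, defined by $F(h) := Jh$, is precisely the naturality square of $\Delta$ at $h$, exhibiting $Jh \colon (Jc, \Delta_c) \to (Jc', \Delta_{c'})$ as a coalgebra morphism.

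Next I would exhibit the hom-isomorphism explicitly. For $(E,\rho) \in \mathrm{EM}(J)$ and $c \in \mathcal{C}$, define
\[
	\phi \;:\; \mathrm{Hom}_\mathcal{C}(E, c) \longrightarrow \mathrm{Hom}_{\mathrm{EM}(J)}((E,\rho), (Jc, \Delta_c))
	\,, \quad \phi(f) := Jf \circ \rho,
\]
with inverse $\psi(g) := \epsilon_c \circ g$. That $\phi(f)$ is a morphism of coalgebras follows by pasting the naturality square for $\Delta$ at $f$ on top of the coaction square of $\rho$. The identity $\psi \circ \phi = \mathrm{id}$ is immediate from naturality of $\epsilon$ combined with the counitality axiom $\epsilon_E \circ \rho = \mathrm{id}_E$ for the coalgebra $(E,\rho)$. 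Conversely,
\[
	\phi(\psi(g)) \;=\; J(\epsilon_c \circ g)\circ \rho \;=\; J\epsilon_c \circ (Jg \circ \rho) \;=\; J\epsilon_c \circ \Delta_c \circ g \;=\; g,
\]
where the third equality uses that $g$ is a coalgebra morphism and the last uses counitality of $\Delta$. Naturality of $\phi$ in both variables is then a routine check against naturality of $\epsilon$, $\Delta$, and functoriality of $J$. This is the main obstacle in the sense that it is the one step that genuinely uses the coalgebra-morphism hypothesis on $g$; everything else is direct substitution of comonad axioms.

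Finally I would identify the induced comonad $U \circ F$. By construction $UF = J$ as an endofunctor. The adjunction counit at $c$ is the image of $\mathrm{id}_{Fc}$ under $\psi$, which is $\epsilon_c$, so the induced comonad counit is the original $\epsilon$. The adjunction unit at $(E,\rho)$ is $\phi(\mathrm{id}_E) = \rho$, hence at the object $F(c)$ it equals $\Delta_c$; by proposition~\ref{CoMonadsFromAdjunctions} the induced comonad coproduct is $L(\eta_{R(-)})$, which here is $U$ applied to this unit on $F(c)$, namely $\Delta_c$. Thus $J \simeq U \circ F$ as comonads, completing the proof. The co-Kleisli assertion is then just the observation that $F$ is fully faithful (its image is closed under all morphisms into objects of the form $F(c)$), which again is immediate from the bijection $\phi$ applied with $(E,\rho) = F(c')$.
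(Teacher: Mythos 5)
Your verification of the adjunction itself is correct and is the standard argument: $F(c) = (Jc,\Delta_c)$ is a coalgebra by coassociativity and counitality of the comonad, $\phi(f) = Jf\circ\rho$ and $\psi(g)=\epsilon_c\circ g$ are mutually inverse by exactly the computations you give, and the identification of the induced comonad with $(J,\epsilon,\Delta)$ via the unit $\rho$ and counit $\epsilon$ is right. The paper does not actually prove this proposition --- it is stated in the appendix with a blanket citation to MacLane and Borceux --- so there is no in-paper proof to compare against; yours is the expected textbook argument.

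One genuine error, though it is confined to a claim the proposition does not make: your final sentence asserts that $F$ is fully faithful, and this is false in general. What the bijection $\phi$ gives with $(E,\rho) = F(c')$ is
$$
  \mathrm{Hom}_{\mathrm{EM}(J)}(Fc',\,Fc)\;\simeq\;\mathrm{Hom}_{\mathcal{C}}(Jc',\,c)\,,
$$
which is the co-Kleisli hom-set (cf.\ remark~\ref{compositionalaKleisli}), not $\mathrm{Hom}_{\mathcal{C}}(c',c)$; for the jet comonad these are differential operators rather than bundle maps, so $F$ is very far from full. The parenthetical justification (``its image is closed under all morphisms into objects of the form $F(c)$'') is also not what full faithfulness means. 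None of this damages the proposition: the factorization $\mathcal{C}\twoheadrightarrow\mathrm{Kl}(J)\hookrightarrow\mathrm{EM}(J)$ holds by definition, since definition~\ref{coKleisli} \emph{defines} $\mathrm{Kl}(J)$ as the full subcategory of $\mathrm{EM}(J)$ on the objects in the image of $F$. So you should simply delete the full-faithfulness claim and replace it by the observation that the factorization is tautological given that definition.
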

\begin{remark}
  \label{compositionalaKleisli}
	In the situation of proposition~\ref{CofreeForgetAdjunction}, given
	objects $c_1,c_2 \in \mathcal{C}$, then by adjunction we have a
	bijection of morphisms of the form
  $$
  \mbox{
  \begin{tabular}{rcl}
    $F c_1$ & $\stackrel{f}{\longrightarrow}$ &  $F c_2$
    \\
    \hline
    \llap{$Jc_1 \simeq {}$ }$U F c_1$ &  $\stackrel{\tilde{f}}{\longrightarrow}$ & $c_2$
%    \\
%    \hline
%    $J c_1 $ &  $\longrightarrow$ & $ c_2$
  \end{tabular}
  }
  $$
	Hence morphisms $f$ in the co-Kleisli category $\mathrm{Kl}(J)$,
	definition~\ref{coKleisli}, are equivalently morphisms in
	$\mathcal{C}$ of the form $\tilde f : J c_1 \longrightarrow c_2$.
	According to proposition~\ref{AdjunctsInTermsOfUnitAndCounit}, the
	adjunct morphisms are related by $\tilde{f} = \epsilon_{c_2} \circ f$,
	where we have identified $f \simeq Uf$, and $f = J(\tilde{f}) \circ
	\Delta_{c_1}$, where we have identified $J(\tilde{f}) \simeq
	F(\tilde{f})$.

	Under identification between the adjunct morphisms $f$ and
	$\tilde{f}$, the composition of morphisms $g\circ f$ in
	$\mathrm{Kl}(J)$ is given by the ``co-Kleisli composite''
  $$
    \widetilde{g \circ f}
    :
    J c_1 \stackrel{\Delta_{c_1}}{\longrightarrow} J J c_1
    \stackrel{J (\tilde f)}{\longrightarrow}
    J c_2
    \stackrel{\tilde g}{\longrightarrow}
    c_3
    \,.
  $$

	The fact that a morphism between free coalgebras $f\colon Fc_1 \to
	Fc_2$ must be of the form $f = J(\tilde{f}) \circ \Delta_{c_1}$, as
	observed above, follows from the general
	proposition~\ref{CofreeForgetAdjunction}. But there is also an
	elementary way to see it. Consider the following diagram:
	$$
		\begin{gathered}
		\xymatrix{
			Jc_1
				\ar@/^2pc/[rrrr]^{\tilde{f} = \epsilon_{c_2} \circ f}
				\ar[rr]^f
				\ar[d]_{\Delta_{c_1}} &&
			Jc_2
				\ar[rr]^{\epsilon_{c_2}}
				\ar[d]^{\Delta_{c_2}}
				\ar[drr]^{\mathrm{id}} &&
			c_2
			\\
			JJc_1
				\ar@/_2pc/[rrrr]_{J(\tilde{f})}
				\ar[rr]_{Jf} &&
			JJc_2
				\ar[rr]_{\epsilon_{Jc_2}\simeq J\epsilon_{c_2}} &&
			Jc_2
		}
		\end{gathered}
		\, ,
	$$
	which commutes because $f$ is a morphism of free coalgebras and
	because of the counit-coproduct identities of a comonad. From the
	commutativity, as was desired, it follows that $f = J(\tilde{f}) \circ
	\Delta_{c_1}$, with $\tilde{f} = \epsilon_{c_2} \circ f$.
\end{remark}
\begin{definition}
  \label{conservativeFunctor}
  A functor $F : \mathcal{D}\longrightarrow \mathcal{C}$ is called \emph{conservative}
  if it reflects equivalences, hence if for a morphism $f$ in $\mathcal{D}$ we
  have that if $F(f)$ is an equivalence then already $f$ was an equivalence.
\end{definition}
\begin{theorem}[Beck monadicity theorem, e.g. {\cite[vol. 4 sect. 2]{Borceux}}]
  \label{BeckMonadicityTheorem}
  Sufficient conditions for an adjunction $(L \dashv R)$, definition~\ref{AdjointFunctor},
  to be equivalent to a comonadic adjunction $(\mathrm{U}\dashv \mathrm{F})$ as in proposition~\ref{CofreeForgetAdjunction} is that
  \begin{enumerate}
    \item $L$ is conservative, definition~\ref{conservativeFunctor};
    \item $L$ preserves certain limits called \emph{equalizers of $L$-split pairs}.
  \end{enumerate}
\end{theorem}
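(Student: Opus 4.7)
The plan is to construct an explicit right adjoint to the canonical comparison functor $K\colon \mathcal{D} \to \mathrm{EM}(J)$, where $J := L \circ R$ is the comonad on $\mathcal{C}$ induced by the adjunction via proposition~\ref{CoMonadsFromAdjunctions}, and where $K$ sends $d \in \mathcal{D}$ to the coalgebra $(Ld, L\eta_d\colon Ld \to LRLd)$, with $\eta$ the unit of $L \dashv R$. The goal is to show that $K$ is an equivalence of categories, under which the original adjunction $L \dashv R$ is identified with the cofree-forgetful adjunction $U \dashv F$ of proposition~\ref{CofreeForgetAdjunction}.

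For the right adjoint $M\colon \mathrm{EM}(J) \to \mathcal{D}$, I would assign to each coalgebra $(c, \rho\colon c \to LRc)$ the equalizer in $\mathcal{D}$
\[
  \xymatrix{
    M(c,\rho) \ar[r] & Rc \ar@<+3pt>[rr]^-{R\rho} \ar@<-3pt>[rr]_-{\eta_{Rc}} && RLRc
  } \, .
\]
The key observation is that this parallel pair is an $L$-split pair: on applying $L$, the morphism $\rho\colon c \to LRc$ equalizes $LR\rho$ and $L\eta_{Rc}$ (by the coassociativity and counitality axioms of the coalgebra together with the zig-zag identities of the adjunction from proposition~\ref{AdjunctionInTermsOfUnitAndCounit}), and the counit $\epsilon$ of the adjunction together with $\rho$ provides the splittings exhibiting an absolute equalizer in $\mathcal{C}$, essentially by the Beck equalizer diagram of proposition~\ref{BeckEqualizer}. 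Hypothesis~(2) then guarantees that $M(c,\rho)$ exists in $\mathcal{D}$ and is preserved by $L$.

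Next I would verify that $K$ is left adjoint to $M$ and that the unit and counit of this induced adjunction are isomorphisms. For the counit at $(c,\rho)$: applying $L$ and using that it preserves the defining equalizer yields a canonical isomorphism $LM(c,\rho) \stackrel{\simeq}{\to} c$ via the splitting, and one checks that this isomorphism intertwines the coalgebra structures. For the unit at $d \in \mathcal{D}$: the morphism $\eta_d\colon d \to RLd$ visibly equalizes the pair $(RL\eta_d, \eta_{RLd})$, yielding a canonical morphism $d \to MKd$; after applying $L$ this becomes the previously-established isomorphism, hence is itself an isomorphism in $\mathcal{D}$ by conservativity of $L$ (definition~\ref{conservativeFunctor}).

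The main obstacle will be the careful bookkeeping of the splitting data and the commutation of diagrams needed to exhibit $(R\rho, \eta_{Rc})$ as an $L$-split pair and to verify that all the canonical morphisms arising from the Beck equalizer, the adjunction, and the coalgebra structure maps are mutually compatible. The conservativity hypothesis plays its role only in the very last step: without it, one would know only that the unit becomes an isomorphism after applying $L$, not that it was an isomorphism to begin with.
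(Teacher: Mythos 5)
The paper does not prove this statement at all: it is quoted as standard background, with the proof deferred to the cited reference (Borceux, and the appendix's blanket remark that proofs of such facts may be found in \cite{MacLane, Borceux}). Your outline is precisely the standard comparison-functor proof of the (dual) Beck monadicity theorem that lives in those references: build the comparison $K \colon d \mapsto (Ld, L\eta_d)$, construct its right adjoint $M$ on a coalgebra $(c,\rho)$ as the equalizer of the $L$-split pair $(R\rho,\eta_{Rc})$, use preservation of that equalizer together with the split equalizer of proposition~\ref{BeckEqualizer} to get the counit isomorphism $LM(c,\rho)\simeq c$, and use conservativity only to upgrade the unit $d \to MKd$ from an isomorphism after applying $L$ to an isomorphism in $\mathcal{D}$. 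All the individual verifications you gesture at (that $\rho$ equalizes $J\rho$ and $\Delta_c$ by the coaction axiom, that adjuncts of coalgebra morphisms out of $Kd$ land in the equalizer, that $K R = F$ and $U K = L$) do go through. The one point worth making explicit is that hypothesis~(2) as stated in the theorem only asserts that $L$ \emph{preserves} equalizers of $L$-split pairs; for $M$ to be defined you also need these equalizers to \emph{exist} in $\mathcal{D}$, which is part of the precise formulation of the theorem (or is replaced by the stronger ``creates'' condition). You read existence into the hypothesis, which is the intended reading of the paper's informal statement, but it deserves a sentence.
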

Hence it is useful to record some facts about conservative functors:
\begin{proposition}[e.g. {\cite[lemma 1.3.2]{Johnstone02}}]
  \label{PullbackAlongEpisIsConservative}
  For $\mathbf{H}$ a category of sheaves and $f : X \longrightarrow Y$ an epimorphism
  in $\mathbf{H}$, then the pullback functor $f^\ast : \mathbf{H}_{/Y} \longrightarrow \mathbf{H}_{/X}$
  (proposition \ref{BaseChangeAdjunctions})
  is conservative, definition~\ref{conservativeFunctor}.
\end{proposition}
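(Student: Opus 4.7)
My plan is to reduce the question to the two statements that $g \colon A \to B$ in $\mathbf{H}_{/Y}$ is an isomorphism iff it is both a monomorphism and an epimorphism (since $\mathbf{H}$, being a Grothendieck topos, is balanced, and mono/epi in a slice coincide with mono/epi in $\mathbf{H}$). So let $g$ be such that $f^{\ast}g$ is an isomorphism; I want to show $g$ is both epi and mono.

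The core observation is the following lemma: pullback along the epimorphism $f$ reflects epimorphisms. Consider the naturality square
$$
\xymatrix{
f^{\ast}A \ar[r]^{\pi_A} \ar[d]_{f^{\ast}g} & A \ar[d]^{g} \\
f^{\ast}B \ar[r]_{\pi_B} & B
}
$$
which is a pullback (this is just the base-change square). Since $f$ is an epimorphism and colimits in $\mathbf{H}$ are universal (Proposition \ref{UniversalColimits}), epimorphisms are stable under pullback, so both $\pi_A$ and $\pi_B$ are epi. If $f^{\ast}g$ is epi, then the diagonal of the square, namely $\pi_B \circ f^{\ast}g = g \circ \pi_A$, is a composite of two epis and hence epi; and then $g$ itself is epi, because whenever a composite $g \circ h$ is epi the left factor $g$ must be epi.

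Application to the problem: since $f^{\ast}g$ is an isomorphism, it is in particular an epimorphism, so the lemma gives that $g$ is epi. For monicity, recall that $g$ is a monomorphism iff the diagonal $\Delta_g \colon A \to A \times_B A$ is an isomorphism, and $\Delta_g$ is always a monomorphism. Since $f^{\ast}$ is a right adjoint (Proposition \ref{basechange}) it preserves pullbacks, giving $f^{\ast}(\Delta_g) = \Delta_{f^{\ast}g}$, which is an isomorphism because $f^{\ast}g$ is. In particular $f^{\ast}(\Delta_g)$ is epi, so by the lemma applied to $\Delta_g$ (in the slice $\mathbf{H}_{/Y}$, viewing $A \times_B A$ with its induced structure map to $Y$) we conclude that $\Delta_g$ is epi. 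A morphism that is simultaneously mono and epi in the topos $\mathbf{H}$ is an isomorphism, so $\Delta_g$ is an isomorphism and $g$ is mono.

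No step is really the main obstacle; the argument uses only (i) universality of colimits in a Grothendieck topos, (ii) the fact that $f^{\ast}$ preserves pullbacks since it is a right adjoint, and (iii) the balanced property of toposes. The only point requiring slight care is the bookkeeping in applying the lemma to $\Delta_g$, since one must view $\Delta_g$ as a morphism in a suitable slice so that the pullback-reflection argument still applies; but pulling back along $f$ in $\mathbf{H}_{/Y}$ is effected by the same pullback square in $\mathbf{H}$, so the lemma transfers verbatim.
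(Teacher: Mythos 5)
Your proof is correct. Note that the paper does not actually prove this proposition --- it only cites Johnstone --- so there is no in-text argument to compare against; your write-up supplies a genuine, self-contained proof. The two ingredients you isolate are sound: the base-change square $f^\ast A \to A$, $f^\ast B \to B$ over $g$ is indeed a pullback (by the pasting law applied to the composite pullback $X \times_Y B \times_B A \simeq X \times_Y A$), the projections $\pi_A, \pi_B$ are epi because epis are pullback-stable in a topos, and your diagonal trick $f^\ast(\Delta_g) = \Delta_{f^\ast g}$ (using that $f^\ast$ preserves pullbacks as a right adjoint) correctly reduces monicity of $g$ to the epi-reflection lemma plus balancedness. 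One small streamlining worth knowing, and probably closer to the cited source: from $g \circ \pi_A = \pi_B \circ f^\ast g$ and $\pi_A$ epi one gets immediately that $f^\ast$ is \emph{faithful}; a faithful functor reflects both monos and epis, so if $f^\ast g$ is an iso then $g$ is mono and epi, hence an iso since the slice topos is balanced. This avoids the diagonal bookkeeping entirely, though your version is equally valid.
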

\begin{proposition}
  \label{ConservativeFunctorReflectsLimitsWhichItPreserves}
  A conservative functor reflects all the limits and colimits which it preserves.
\end{proposition}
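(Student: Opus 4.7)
The plan is to reduce reflection of limits to the uniqueness of the comparison morphism between two limit cones, and then use conservativity to transport the resulting isomorphism back from $\mathcal{D}$ to $\mathcal{C}$. Concretely, let $F \colon \mathcal{C} \to \mathcal{D}$ be conservative, let $D \colon I \to \mathcal{C}$ be a diagram whose limit $\ell = \varprojlim D$ exists in $\mathcal{C}$ and is preserved by $F$, and suppose $(c, \alpha)$ is a cone over $D$ whose image $(F(c), F\alpha)$ is a limit cone over $F \circ D$ in $\mathcal{D}$. The goal is to conclude that $(c, \alpha)$ was already a limit cone in $\mathcal{C}$.

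First I would invoke the universal property of $\ell$ in $\mathcal{C}$ to produce the unique comparison morphism $\phi \colon c \to \ell$ intertwining the legs of the two cones. Applying $F$, I obtain $F(\phi) \colon F(c) \to F(\ell)$, which by functoriality is a morphism of cones over $F \circ D$ between two limit cones: the cone $(F(c), F\alpha)$ by hypothesis, and the cone $(F(\ell), F(\text{legs of }\ell))$ because $F$ preserves the limit $\ell$ by assumption. Any morphism between two limit cones over the same diagram is an isomorphism (by the standard "two universal solutions are canonically isomorphic" argument), so $F(\phi)$ is an isomorphism in $\mathcal{D}$. Conservativity of $F$ (definition~\ref{conservativeFunctor}) then forces $\phi$ to be an isomorphism in $\mathcal{C}$, and hence identifies $(c, \alpha)$ with the limit cone $(\ell, \text{legs})$ up to unique isomorphism, so $(c, \alpha)$ is itself a limit cone.

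The colimit statement is entirely formally dual: given a colimit $\mathrm{colim}\, D$ existing in $\mathcal{C}$ and preserved by $F$, and a cocone $(c, \alpha)$ whose image is a colimit cocone in $\mathcal{D}$, the universal property furnishes $\phi \colon \mathrm{colim}\, D \to c$, $F(\phi)$ is the unique comparison between two colimit cocones hence an isomorphism, and conservativity lifts this back to $\mathcal{C}$. There is no substantial obstacle here: the argument is essentially one paragraph of diagram-chasing. The only point that needs to be flagged is interpretive rather than technical, namely that the phrase ``reflects the limits and colimits which it preserves'' presumes the reference (co)limits exist in $\mathcal{C}$ and are preserved by $F$, which is exactly what lets the comparison morphism $\phi$ be constructed in the first place.
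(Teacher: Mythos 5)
Your argument is correct and is the standard proof of this fact; the paper itself states proposition~\ref{ConservativeFunctorReflectsLimitsWhichItPreserves} without proof, deferring to the cited references, so there is no in-paper argument to compare against. Your closing caveat is also well taken: the statement implicitly presupposes that the relevant (co)limit exists in the source category, since that existence is exactly what produces the comparison morphism $\phi$ to which conservativity is applied.
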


\begin{proposition}[comonadic descent, e.g. {\cite[2.4]{JanelidzeTholen94}}]
  \label{comonadicdescent}
  Given an epimorphism $\xymatrix{ X \ar@{->>}[r]^{f} & Y}$ in a
  category of sheaves $\mathbf{H}$, with induced base change comonad
  $$
    J := f^\ast f_\ast : \mathbf{H}_{/X} \to \mathbf{H}_{/X}
  $$
  (via proposition~\ref{BaseChangeAdjunctions} and proposition~\ref{CoMonadsFromAdjunctions}),
  then there is an equivalence of categories
  $$
    \mathbf{H}_{/Y} \stackrel{\simeq}{\longrightarrow} \mathrm{EM}(J)
  $$
  between the slice category $\mathbf{H}_{/Y}$ (definition \ref{SliceCategory})
  and the Eilenberg-Moore category of $J$-coalgebras in $\mathbf{H}_{/X}$, definition~\ref{EMOverComonad}.

  Moreover, under this identification the comonadic adjunction $(U_J \dashv F_J)$ from proposition~\ref{CofreeForgetAdjunction}
  coincides with the base change adjunction $(f^\ast \dashv f_\ast)$ of proposition~\ref{BaseChangeAdjunctions}:
  $$
    (U_J \dashv F_J) \simeq (f^\ast \dashv f_\ast)
    \,.
  $$
\end{proposition}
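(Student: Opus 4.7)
\medskip

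\noindent\textbf{Proof plan for Proposition \ref{comonadicdescent}.} The strategy is to apply the (dual of the) Beck monadicity theorem (Theorem~\ref{BeckMonadicityTheorem}) to the base change adjunction $(f^\ast \dashv f_\ast) \colon \mathbf{H}_{/Y} \to \mathbf{H}_{/X}$ provided by Proposition~\ref{basechange}. Since the comonad induced by this adjunction via Proposition~\ref{CoMonadsFromAdjunctions} is precisely $J = f^\ast f_\ast$, establishing that the adjunction is comonadic will immediately yield the equivalence $\mathbf{H}_{/Y} \simeq \mathrm{EM}(J)$, and the identification $(U_J \dashv F_J) \simeq (f^\ast \dashv f_\ast)$ will be built into the statement of the Beck theorem.

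\medskip

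\noindent According to Theorem~\ref{BeckMonadicityTheorem} (phrased there for comonadic adjunctions $(L \dashv R)$), I need to verify two hypotheses for $L = f^\ast$: first, that $f^\ast$ is conservative, and second, that $f^\ast$ preserves equalizers of the relevant $f^\ast$-split pairs. The first condition is immediate from Proposition~\ref{PullbackAlongEpisIsConservative}, because by hypothesis $f \colon X \twoheadrightarrow Y$ is an epimorphism in the category of sheaves $\mathbf{H}$. The second condition is also immediate, and in fact much stronger: by Proposition~\ref{basechange}, the pullback functor $f^\ast$ sits in the middle of an adjoint triple $(f_! \dashv f^\ast \dashv f_\ast)$, so $f^\ast$ is itself a left adjoint (to $f_\ast$) and a right adjoint (to $f_!$); hence by Proposition~\ref{rightadjointpreserveslimits} it preserves all small limits, in particular all equalizers.

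\medskip

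\noindent With both Beck hypotheses confirmed, Theorem~\ref{BeckMonadicityTheorem} yields the promised equivalence of categories $\mathbf{H}_{/Y} \stackrel{\simeq}{\longrightarrow} \mathrm{EM}(J)$, sending an object $[E \to Y]$ in the slice to the $J$-coalgebra whose underlying object is $f^\ast E \in \mathbf{H}_{/X}$ and whose coaction is the image under $f^\ast$ of the unit $\eta_E \colon E \to f_\ast f^\ast E$ of the $(f^\ast \dashv f_\ast)$-adjunction (cf.\ Proposition~\ref{CoMonadsFromAdjunctions}). Since the cofree-forgetful adjunction $(U_J \dashv F_J)$ of Proposition~\ref{CofreeForgetAdjunction} is, by construction, the adjunction to which Beck's equivalence identifies $(f^\ast \dashv f_\ast)$, the final sentence of the proposition follows automatically.

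\medskip

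\noindent I do not anticipate any real obstacle in this argument: both Beck hypotheses are essentially free here, the first from the assumption that $f$ is epic and the second from the fact that we are in a category of sheaves, where base change has both adjoints. The only content-laden input is the hypothesis that $f$ is an epimorphism, which is exactly what is needed to make pullback conservative via Proposition~\ref{PullbackAlongEpisIsConservative}.
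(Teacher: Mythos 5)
Your proof is correct and follows exactly the paper's own argument: both verify the two hypotheses of the Beck (co)monadicity theorem by citing that $f^\ast$ is conservative because $f$ is an epimorphism (proposition~\ref{PullbackAlongEpisIsConservative}) and that $f^\ast$ preserves all small limits because it is right adjoint to $f_!$ (propositions~\ref{BaseChangeAdjunctions} and~\ref{rightadjointpreserveslimits}). No gaps; your additional remarks on the explicit form of the coalgebra structure are consistent with the paper's statement of theorem~\ref{generalPDEsIsSliceOverInfinitesimalShape}.
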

\begin{proof}
	Since $f$ is assumed to be epi,
	proposition~\ref{PullbackAlongEpisIsConservative} says that $f^\ast$
	is conservative. Moreover, since $f^\ast$ is right adjoint to $f_!$ by
	proposition~\ref{BaseChangeAdjunctions}, it preserves all small
	limits (proposition~\ref{rightadjointpreserveslimits}). Therefore
	the conditions in the monadicity theorem \ref{BeckMonadicityTheorem}
	are satisfied:
  $$
    \begin{gathered}
    \xymatrix{
      \mathbf{H}_{/X}
        \ar@<-4pt>[rr]_{f_\ast}
        \ar@{<-}@<+4pt>[rr]^{f^\ast}
        \ar@{=}[d]
        &&
      \mathbf{H}_{/Y}
      \ar[d]^\simeq
      \\
      \mathbf{H}_{/X}
        \ar@<-4pt>[rr]_{F_J}
        \ar@{<-}@<+4pt>[rr]^{U_J}
        &&
      \mathrm{EM}(J)
    }
    \end{gathered}
    \, .
  $$
\end{proof}

\newpage
%%%%%%%%%%%%%%%%%%%%%%%%%%%%%%%%%%%%%%%%%%%%%%%%%%%
\bibliographystyle{utphys-alpha}
\bibliography{toposjets}
%%%%%%%%%%%%%%%%%%%%%%%%%%%%%%%%%%%%%%%%%%%%%%%%%%%

\end{document}